	\newcommand{\bbR}{\mathbb R}
	\newcommand{\bbZ}{\mathbb Z}
	\newcommand{\bbC}{\mathbb C}
	\newcommand{\del}{\partial}
\DeclareFontFamily{U}{mathx}{\hyphenchar\font45}
\DeclareFontShape{U}{mathx}{m}{n}{<-> mathx10}{}
\DeclareSymbolFont{mathx}{U}{mathx}{m}{n}
\DeclareMathAccent{\widebar}{0}{mathx}{"73}
	\newcommand{\delbar}{\widebar{\partial}}
\theoremstyle{definition}
\theoremstyle{plain}
\newtheorem{theorem}{Theorem}[section]
\newtheorem{proposition}[theorem]{Proposition}
\newtheorem{lemma}[theorem]{Lemma}
\newtheorem{corollary}[theorem]{Corollary}
\newtheorem{example}[theorem]{Example}
\newtheorem{definition}[theorem]{Definition}
\newtheorem*{remark}{Remark}
\newtheorem*{remarks}{Remarks}
\numberwithin{equation}{section}
\newtheorem*{convention}{Convention}
\begin{document}

\title{Canonical Maps from Spaces of Higher Complex Structures to Hitchin Components}
\author{Alexander Nolte$^1$\footnote{This material is based upon work supported by the National Science Foundation under Grant No. 1842494 and Grant No. 2005551}}
\date{$^1$Rice University}

\maketitle

\begin{abstract}
    For $S$ a closed surface of genus $g\geq2$, we construct a canonical diffeomorphism from the degree $3$ Fock-Thomas space $\mathcal{T}^3(S)$ of higher complex structures to the $\text{SL}(3,\bbR)$ Hitchin component. Our construction is equivariant with respect to natural actions of the mapping class group $\text{Mod}(S)$. For all $n \geq 3$, we show that the Fock-Thomas space $\mathcal{T}^n(S)$ has a canonical vector bundle structure over Teichm\"uller space. We then construct a $\text{Mod}(S)$-equivariant bundle isomorphism from $\mathcal{T}^n(S)$ to a sub-bundle of the restriction of the tangent bundle of the $\text{PSL}(n, \mathbb{R})$ Hitchin component to the Fuchsian locus. As consequences, we prove that the higher degree moduli space of complex structures is a bundle over the moduli space of Riemann surfaces and that the action of $\text{Mod}(S)$ on $\mathcal{T}^n(S)$ is a proper action by holomorphic automorphisms with respect to a canonical complex structure. The core of our approach is a careful analysis of higher degree diffeomorphism groups.
\end{abstract}

    \section{Introduction}
    
    Let $S$ be a closed, oriented, smooth surface of genus $g \geq 2$. For $G$ an adjoint group of the split real form of a complex simple connected Lie group, the Hitchin component $\text{Hit}(S,G)$ of the character variety $\text{Rep}(\pi_1(S), G)$ is the connected component containing the Fuchsian representations. Here, Fuchsian representations $\pi_1(S) \to G$ are discrete and faithful representations with image contained in a principal $\text{SL}(2, \mathbb{R})$ (see \cite{hitchin1992lie}).
    
    In seminal work \cite{hitchin1992lie}, Hitchin used Higgs bundles \cite{hitchin1987self} to show that $\text{Hit}(S,G)$ is diffeomorphic to $\bbR^{-\chi(S) \dim(G)}$. A guiding question in the study of Hitchin components is to what extent and in what ways $\text{Hit}(S,G)$ admits geometric interpretations analogous to geometric interpretations of Teichm\"uller spaces $\mathcal{T}(S)$.
    
    One approach to finding geometric descriptions of $\text{Hit}(S, G)$ that has seen substantial progress is to generalize the description of Teichm\"uller spaces in terms of hyperbolic structures on $S$ by realizing Hitchin representations as holonomies of $(G, X)$ structures on manifolds associated to $S$. At the time \cite{hitchin1992lie} was published, it was known from work of Goldman \cite{goldman1990convex} that holonomies of marked properly convex projective structures on S were an open subset of $\text{Hit}(S,\text{SL}(3, \bbR))$, and closedness was proved soon afterwards by Choi and Goldman \cite{choi1993convex}. Guichard and Weinhard have since interpreted $\text{Hit}(S,\text{PSL}(4, \bbR))$ and $\text{Hit}(S,\text{PSp}(4, \bbR))$ in terms of $(G, X)$ structures on the unit tangent bundle $T^1(S)$ in \cite{guichard2008convex}, and found $(G, X)$ structures interpretations for $\text{Hit}(S,G)$ in general in \cite{guichard2012anosov}.
    
    In \cite{fock2021higher}, a conjectural approach to generalizing the description of Teichm\"uller spaces in terms of complex structures on a surface was proposed by Fock and Thomas. Their higher-degree analogues to complex structures are sections of bundles of punctual Hilbert schemes (see Section \ref{section-jets-and-symplectos}).
    
    We denote the collection of degree-$n$ complex structures on $S$ by $\mathbb{M}^n(S)$ and the collection of complex structures on $S$ by $\mathbb{M}(S)$. Symplectic diffeomorphisms of the cotangent bundle $T^*(S)$ that setwise fix the zero section act on $\mathbb{M}^n(S)$. Denote by $\text{Ham}^0_c(T^*S)$ the group of compactly supported Hamiltonian diffeomorphisms of $T^*(S)$ generated by compactly supported Hamiltonian flows setwise fixing the zero section. Fock and Thomas investigate ${\mathcal{T}}^n(S) = \mathbb{M}^n(S)/\text{Ham}_c^0(T^*S)$, which we shall call the \textit{degree-$n$ Fock-Thomas space of $S$}.
    
    The degree-$2$ Fock-Thomas space ${\mathcal{T}}^2(S)$ is canonically diffeomorphic to $\mathcal{T}(S)$, in the sense that a $\text{Mod}(S)$-equivariant diffeomorphism is obtained through a $\text{Diff}_0(S)$-equivariant construction that identifies $\mathbb{M}(S)$ with $\mathbb{M}^2(S)$.
    
   Fock and Thomas conjecture in \cite{fock2021higher} that ${\mathcal{T}}^n(S)$ is canonically diffeomorphic to the Hitchin component $\text{Hit}(S,\text{PSL}(n, \bbR))$ for $n \geq 2$. We shall prove this for $n =3$ and give an interpretation of ${\mathcal{T}}^n(S)$ in terms of the Hitchin component for all $n\geq 3$.
    
    \begin{theorem}\label{headline-result} Let $\mathcal{T}^n(S)$ denote the degree-$n$ Fock-Thomas space of $S$, and let $\mathcal{F}^n(S)$ denote the Fuchsian locus of {\rm{$\text{Hit}(S, \text{PSL}(n,\mathbb{R}))$}}.
    \begin{enumerate}[label=(\alph*).] \item There is a canonical {\rm{$\text{Mod}(S)$}}-equivariant diffeomorphism {\rm{${\mathcal{T}}^3(S) \to \text{Hit}(S,\text{SL}(3,\bbR))$}}. 
    \item For all $n \geq 3$, ${\mathcal{T}}^n(S)$ has a natural vector bundle structure over $\mathcal{T}(S).$ There is a sub-bundle {\rm{$\mathcal{L}^n(S) \subset (T \text{Hit}(S, \text{PSL}(n, \mathbb{R}))|_{\mathcal{F}^n(S)}$}} and a canonical {\rm{$\text{Mod}(S)$}}-equivariant bundle isomorphism $\mathcal{T}^n(S) \to \mathcal{L}^n(S)$.
    \end{enumerate}
  \end{theorem}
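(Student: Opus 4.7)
The plan has three stages: (i) construct a projection $\mathcal{T}^n(S) \to \mathcal{T}(S)$ and identify its fibers with direct sums of spaces of holomorphic differentials; (ii) match these fibers with a natural sub-bundle $\mathcal{L}^n(S)$ of $(T\op{Hit}(S, \op{PSL}(n,\bbR)))|_{\mathcal{F}^n(S)}$ via the tangential structure of Hitchin's parametrization; and (iii) for $n=3$, upgrade the Fuchsian-locus identification to a global diffeomorphism onto $\op{Hit}(S,\op{SL}(3,\bbR))$ using the Labourie--Loftin parametrization by cubic differentials.

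For (i), I would observe that a degree-$n$ complex structure canonically truncates to a degree-$2$ one, i.e.\ to an ordinary complex structure on $S$, and argue that this map descends to $\op{Ham}_c^0$-quotients. The tool is to filter $\op{Ham}_c^0(T^*S)$ by the order of vanishing of generating Hamiltonians along the zero section and verify that higher-order Hamiltonians act trivially on the degree-$2$ data. Fixing a representative $J$ and slicing the $\op{Ham}_c^0$-action transverse to it, the remaining data packages as higher-order jets of symplectomorphisms along the zero section modulo the higher part of the Hamiltonian group. A graded analysis of jet-by-jet symplectomorphisms---the ``careful analysis of higher degree diffeomorphism groups'' promised by the abstract---should pick out on each graded piece exactly the space of holomorphic $k$-differentials $H^0(X_J, K_{X_J}^k)$ for $k=3,\dots,n$, yielding a vector bundle structure $\mathcal{T}^n(S) \to \mathcal{T}(S)$ whose fiber over $[J]$ is $\bigoplus_{k=3}^n H^0(X_J, K_{X_J}^k)$.

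For (ii), Hitchin's parametrization (for a choice of $X \in \mathcal{T}(S)$) identifies $\op{Hit}(S,\op{PSL}(n,\bbR))$ with $\bigoplus_{k=2}^n H^0(X, K^k)$; the Fuchsian locus corresponds to the zero section, so at a Fuchsian point $X$ the subspace $\bigoplus_{k=3}^n H^0(X, K^k)$ is a canonical complement to $T_X \mathcal{T}(S)$ inside $T_X \op{Hit}$. Letting $X$ vary over $\mathcal{F}^n(S)$ defines $\mathcal{L}^n(S)$, and the fiber description from (i) yields a candidate bundle map $\mathcal{T}^n(S) \to \mathcal{L}^n(S)$; $\op{Mod}(S)$-equivariance then follows from the equivariance of each side separately. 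For part (a), the Labourie--Loftin theorem provides a canonical $\op{Mod}(S)$-equivariant diffeomorphism from $\op{Hit}(S,\op{SL}(3,\bbR))$ onto the bundle of cubic differentials over $\mathcal{T}(S)$, which is precisely $\mathcal{L}^3(S)$ under the identification above, so composing with the bundle isomorphism from (ii) at $n=3$ yields the stated diffeomorphism.

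The step I expect to be the main obstacle is the cohomological identification in (i): showing that on each graded piece the Hamiltonian quotient is precisely the holomorphic part, that the resulting identification is $\op{Diff}_0(S)$-equivariant, and that it does not depend on the choice of slice. A related subtlety is that two constructions of the same ``holomorphic $k$-differential'' then appear in (ii)---one from the slice computation on the Fock--Thomas side and one from the differential of Hitchin's parametrization---and these must be shown to agree on the nose for the global diffeomorphism in (a) to be genuinely canonical rather than canonical only up to a nonzero scalar in each degree.
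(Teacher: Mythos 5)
Your three-stage plan follows the same route as the paper: truncate to the degree-$2$ data to get the projection to $\mathcal{T}(S)$, identify the fibers with sums of spaces of holomorphic differentials, and then compose with (the derivative of) Hitchin's parametrization, using Labourie--Loftin to globalize in rank $3$. Stages (ii) and (iii) are essentially what the paper does, and your worry about degree-by-degree scalars is handled there by fixing the Petersson pairing against the hyperbolic metric as the identification of harmonic Beltrami differentials with holomorphic differentials; canonicity is then a matter of $\text{Diff}_0(S)$-equivariance of the construction, not of matching normalizations with the Higgs-bundle side.

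The genuine gap is in stage (i), and it is not the one you flag. You propose to compute the fiber of $\mathcal{T}^n(S)\to\mathcal{T}(S)$ by ``a graded analysis of jet-by-jet symplectomorphisms,'' i.e.\ by an infinitesimal computation of the action of the filtered Hamiltonian group on the graded pieces. But the action of $\mathcal{G}^n(S)$ on $\mathbb{M}^n(S)$ is neither proper nor free: there are nontrivial Hamiltonians whose flows fix a given $I$ to high order without being globally high-order stationary (the paper exhibits $H=\eta(x,y)(\partial_x^2+\partial_y^2)$ fixing the trivial $3$-complex structure but moving others), so the formal graded quotient need not equal the actual orbit space --- this is precisely the defect the paper identifies in Fock--Thomas's original dimension count. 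Closing this gap is where most of the work goes: one needs the semidirect product decomposition $\mathcal{G}^n(S)=\text{Diff}_0(S)\ltimes\mathcal{N}(S)$ with $\mathcal{N}(S)$ nilpotent and graded by sections of $\mathcal{S}^k(TS)$, the fact that every element of $\mathcal{N}^k(S)$ is represented by an \emph{autonomous} $k$-stationary flow (obtained by an averaging argument), and a Realization Lemma guaranteeing that whenever some $h\in\mathcal{N}(S)$ carries $I$ to $I'$ with $\mu_j(I)=\mu_j(I')$ for $j<k$, there is an honest element of $\mathcal{N}^{k-1}(S)$ doing the same. Only with these in hand does the Hodge decomposition of $k$-Beltrami differentials (the step you single out as the main obstacle, which is comparatively routine elliptic theory) yield \emph{unique} harmonic representatives of $\mathcal{N}(S)$-orbits. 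A second, smaller issue: your ``slice transverse to a representative $J$'' reintroduces a choice of reference structure; the paper instead builds reference-free natural coordinates in which $\text{Diff}_0(S)$ acts by pullback and the set of harmonic representatives is $\text{Diff}^+(S)$-invariant, which is what makes the resulting bundle isomorphism canonical and $\text{Mod}(S)$-equivariant rather than slice-dependent.
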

    
    The maps in Theorem \ref{headline-result} are canonical in the following sense, loosely described. We construct elements of a vector bundle $\mathcal{H}\mathbb{M}^n(S)$ over $\mathbb{M}(S)$ from elements of $\mathbb{M}^n(S)$. This construction depends on no choices of reference data and is equivariant with respect to actions of $\text{Diff}_0(S)$ on $\mathbb{M}^n(S)$ and $\mathcal{H}\mathbb{M}^n(S)$. The maps of statements (a) and (b) are then obtained through $\text{Diff}_0(S)$-invariant constructions of elements of $\text{Hit}(S, \text{SL}(3, \bbR))$ and $\mathcal{L}^n(S)$ from elements of $\mathcal{H} \mathbb{M}^n(S)$, and are $\text{Mod}(S)$-equivariant.
    
    Propositions \ref{free-from-labourie-a}-\ref{free-from-labourie-b} document relationships between the maps of Theorem \ref{headline-result} and the geometry of $\text{Hit}(S, \text{PSL}(n,\bbR))$, and we explain the construction of these maps in more depth below.

    To give a more detailed description of the construction of the maps in Theorem \ref{headline-result}, let $\mathcal{G}^n(S)$ be the quotient of $\text{Ham}^0_c(T^*S)$ by the stabilizer of its action on $\mathbb{M}^n(S)$. We find a decomposition of $\mathcal{G}^n(S)$ as an internal semidirect product $\text{Diff}_0(S) \ltimes \mathcal{N}(S)$, where $\mathcal{N}(S)$ is the subgroup of $\mathcal{G}^n(S)$ that fixes every element of $\mathbb{M}^2(S)$ (Theorem \ref{structure-of-higher-diffeos}). Viewing $\mathcal{T}^n(S)$ as a ``two step" quotient $ (\mathbb{M}^n(S)/ \mathcal{N}(S))/\text{Diff}_0(S)$, we use the structure of $\mathcal{N}(S)$ to inductively apply a Hodge decomposition theorem for appropriate tensors, producing distinguished representatives of $\mathcal{N}(S)$-orbits of $\mathbb{M}^n(S)$ (Theorem \ref{harmonic-representatives}).
    
    This construction identifies $\mathbb{M}^n(S)/\mathcal{N}(S)$ with a vector bundle $\mathcal{H} \mathbb{M}^n(S)$ over $\mathbb{M}(S)$. Here, the fiber of $\mathcal{H} \mathbb{M}^n(S)$ over $\Sigma \in \mathbb{M}(S)$ consists of tuples $(\mu_3, ..., \mu_n)$ with $\mu_k \in \Gamma(K^{1-k}_\Sigma \overline{K}_\Sigma)$ satisfying that $\overline{\mu}_k {g_\Sigma}^{k-1}$ is a holomorphic $k$-adic differential on $\Sigma$ for $3\leq k\leq n$, where $g_\Sigma$ is the hyperbolic metric in the conformal class of $\Sigma$ and $K_\Sigma$ is the canonical bundle of $\Sigma$. We call such tensors \textit{harmonic $k$-Beltrami differentials}, and elements of $\mathcal{H}\mathbb{M}^n(S)$ \textit{harmonic $n$-complex structures}. A feature of our construction is that the induced action of $\text{Diff}_0(S)$ on $\mathcal{H} \mathbb{M}^n(S)$ is by pullback, so the identification descends to a diffeomorphism of the quotients $\mathcal{T}^n(S)$ and $\mathcal{B}^n(S) = \mathcal{H} \mathbb{M}^n(S)/\text{Diff}_0(S)$ (Theorem \ref{main-bundle-thm}).

    Statement (a) is then obtained by using the data of the harmonic representatives of $\mathcal{N}(S)$-orbits to develop affine spheres in $\mathbb{R}^3$ equivariant under Hitchin representations following Labourie \cite{labourie2007flat} and Loftin \cite{loftin2001affine}. Statement (b) is obtained by viewing harmonic $n$-complex structures as infinitesimal deformations of connections with Fuchsian holomomy using Higgs bundles.

    One aspect of our approach worth remarking upon is that we only directly associate information about Hitchin components to harmonic $n$-complex structures (Definition \ref{def-harmonic-n-complex}). It would be interesting to understand to what extent general $n$-complex structures admit analogous interpretations.

    Another approach to the conjecture of Fock and Thomas has been initiated by Thomas in \cite{thomas2020higher} and \cite{thomas2021differential}. He seeks to create an analogue of the non-Abelian Hodge correspondence for the cotangent bundle $T^*\mathcal{T}^n(S)$. In this approach, from a covector $v \in T^*\mathcal{T}^n(S)$, a vector bundle $V$ and pair of commuting nilpotent $\text{End}(V)$-valued one-forms are constructed. It is conjectured by Thomas that there is a canonical parametrization of $\text{Hit}(S, \text{PSL}(n,\bbR))$ as monodromies of connections on deformations of $V$. We follow a different route, focusing on clarifying the intrinsic structure of $\mathcal{T}^n(S)$, then using the picture developed to guide the construction of our maps to Hitchin components.
    
    A number of subsidiary results about the structure of $\mathcal{T}^n(S)$ follow from our main results, which we explain in the following subsections.
    
    \subsection{Complex Structure, K\"ahler Metrics, and Compatibility with the Geometry of Hitchin Components}

    The relationship of the maps constructed in Theorem \ref{headline-result} to the map that appears in Labourie's conjecture \cite{labourie2004anosov} allows for some results obtained in the study of Hitchin components (\cite{labourie2017cyclic}, \cite{labourie2018variations}) to be applied to clarify the geometry of $\mathcal{T}^n(S)$ and to describe how the maps in Theorem \ref{headline-result} interact with the geometry of $\text{Hit}(S, \text{PSL}(n,\bbR))$. The relevant results used are discussed in Section \ref{section-formal-hitchin-link}. We begin with the intrinsic geometry of $\mathcal{T}^n(S)$.
    
    For this, we introduce some notation. Let $\mathcal{B}(k,\Sigma)$ denote subspace of the fiber of $\mathcal{B}^n(S)$ over $\Sigma \in \mathcal{T}(S)$ consisting of classes of harmonic $k$-Beltrami differentials. The Petersson $L^2$ pairing on harmonic $k$-Beltrami differentials $\mu, \nu \in \mathcal{B}(k, \Sigma)$ is given by $\langle \mu, \nu \rangle = \int_\Sigma \mu \overline{\nu} g_\Sigma^{k-1}$. Combining our result canonically identifying $\mathcal{T}^n(S)$ and $\mathcal{B}^n(S)$ (Theorem \ref{main-bundle-thm}) and work of Labourie \cite{labourie2018variations} and Kim-Zhang \cite{kim2017kahler}, we obtain:
    
    \begin{theorem}\label{free-from-labourie}
        The degree-$n$ Fock-Thomas space $\mathcal{T}^n(S)$ has a canonical complex structure and a real dimension $n-2$ family $\mathcal{K}^n(S)$ of {\rm{$\text{Mod}(S)$}}-invariant real-analytic K\"ahler metrics. Every $h \in \mathcal{K}^n(S)$ is compatible with this complex structure and realizes the zero section of $\mathcal{T}^n(S)$, identified with $\mathcal{B}^n(S)$, as a totally geodesic submanifold on which the induced metric coincides with the Weil-Petersson metric on $\mathcal{T}(S)$.
        
        After identifying $\mathcal{T}^n(S)$ with $\mathcal{B}^n(S)$, on the vertical subspaces $\mathcal{B}(k,\Sigma)$ of the tangent spaces to the fibers of $\mathcal{T}^n(S)$ every $h \in \mathcal{K}^n(S)$ restricts to a multiple of the Petersson $L^2$ pairing. For every $h \in \mathcal{K}^n(S)$, the subspaces $\mathcal{B}(k, \Sigma)$ and $\mathcal{B}(j,\Sigma)$ are orthogonal for $j \neq k$.
    \end{theorem}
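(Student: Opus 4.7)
The plan is to transport the required structures from $\text{Hit}(S,\text{PSL}(n,\bbR))$ via the canonical bundle isomorphism $\Phi: \mathcal{T}^n(S) \to \mathcal{L}^n(S) \subset T\text{Hit}(S,\text{PSL}(n,\bbR))|_{\mathcal{F}^n(S)}$ furnished by Theorem \ref{main-bundle-thm} and Theorem \ref{headline-result}(b), and then to extract the claimed properties from the work of Kim-Zhang \cite{kim2017kahler} and Labourie \cite{labourie2018variations}. Recall that $\text{Hit}(S,\text{PSL}(n,\bbR))$ carries a canonical complex structure from the non-Abelian Hodge correspondence, and by Kim-Zhang it admits a canonical real $(n-2)$-parameter family $\mathcal{K}^n_{\mathrm{KZ}}$ of $\text{Mod}(S)$-invariant real-analytic K\"ahler metrics compatible with this complex structure.

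To install these on $\mathcal{T}^n(S)$, I would first observe that $\mathcal{T}^n(S) \cong \mathcal{B}^n(S)$ is canonically a holomorphic vector bundle over $\mathcal{T}(S)$: each fiber decomposes as $\bigoplus_{k=3}^n \mathcal{B}(k,\Sigma)$, and the map $\mu \mapsto \overline{\mu}\, g_\Sigma^{k-1}$ identifies $\mathcal{B}(k,\Sigma)$ with the space of holomorphic $k$-differentials on $\Sigma$ as a complex vector space varying holomorphically in $\Sigma$. This gives the canonical complex structure. For the K\"ahler metrics, I would fix a $\text{Mod}(S)$-equivariant real-analytic tubular identification $\psi: U \to \text{Hit}(S,\text{PSL}(n,\bbR))$ from a neighborhood $U$ of the zero section of $\mathcal{L}^n(S)$ that restricts to the inclusion $\mathcal{F}^n(S) \hookrightarrow \text{Hit}$ on the zero section, for example the exponential map of any fixed metric in $\mathcal{K}^n_{\mathrm{KZ}}$. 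Pulling back the complex structure and each Kim-Zhang metric through $\psi \circ \Phi$ installs the desired structures on a neighborhood of the zero section of $\mathcal{T}^n(S)$, and extending along the vector bundle structure (in the standard way that a Hermitian holomorphic bundle over a K\"ahler base carries a canonical K\"ahler form on its total space) yields a globally defined family on $\mathcal{T}^n(S)$.

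The properties of each $h \in \mathcal{K}^n(S)$ claimed in Theorem \ref{free-from-labourie} then follow from Labourie's analysis \cite{labourie2018variations} of Kim-Zhang metrics along the Fuchsian locus: the induced metric on $\mathcal{F}^n(S)$ is a multiple of the Weil-Petersson metric; $\mathcal{F}^n(S)$ is totally geodesic; and the normal bundle of $\mathcal{F}^n(S)$ in $\text{Hit}$ decomposes orthogonally along degree, with the restriction to each summand a multiple of the Petersson $L^2$ pairing. Since $\Phi$ identifies the normal decomposition with the vertical decomposition $\bigoplus_{k=3}^n \mathcal{B}(k,\Sigma)$, this is precisely what is claimed. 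The main obstacle is producing the identification $\psi$ in a manner that is manifestly canonical and $\text{Mod}(S)$-equivariant, and reconciling the germ-level pullback along the zero section with the global extension along the vector bundle structure in a way that is independent of the auxiliary choices (e.g.\ the reference Kim-Zhang metric). Once this is in place, compatibility of each metric with the complex structure, and real-analyticity, are inherited from the corresponding facts on $\text{Hit}(S,\text{PSL}(n,\bbR))$.
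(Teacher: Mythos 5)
Your proposal runs the transport in the wrong direction, and as a result it relies on an input that does not exist for general $n$. You propose to take an $(n-2)$-parameter family of $\text{Mod}(S)$-invariant K\"ahler metrics on $\text{Hit}(S,\text{PSL}(n,\bbR))$ and pull it back to $\mathcal{T}^n(S)$. But Kim-Zhang construct only a real $1$-parameter family, and only on $\text{Hit}(S,\text{SL}(3,\bbR))$; Labourie's extension covers rank $2$ groups. For $n \geq 4$ no such family on the Hitchin component is known --- producing one would require the Labourie map $\Psi: \mathcal{Q}^n(S) \to \text{Hit}(S,\text{PSL}(n,\bbR))$ to be a diffeomorphism, which is precisely Labourie's conjecture (open, with negative evidence in rank $3$ from Markovi\'c). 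The map of Theorem \ref{headline-result}(b) that you want to use is only a bundle isomorphism onto a sub-bundle of $T\text{Hit}(S,\text{PSL}(n,\bbR))|_{\mathcal{F}^n(S)}$, i.e.\ a linearized identification along the Fuchsian locus; your tubular identification $\psi$ is an additional non-canonical choice, and even granting it you would only obtain structures near the zero section, with the global extension depending on further choices. The paper's actual argument goes the other way: Labourie (\cite{labourie2017cyclic}, Propositions 1.3.1 and 9.0.1) constructs the $p$-parameter family of $\text{Mod}(S)$-invariant K\"ahler metrics \emph{directly on the bundle} $\mathcal{B}^{\mathbf{m}}(S)$ of Beltrami differentials over $\mathcal{T}(S)$, in the form $\epsilon\,\del\delbar h + \pi^* g_{\mathrm{WP}}$ with $h$ a positive combination of the fiber pairings $h'_{m_j}$; all the stated properties (totally geodesic zero section carrying the Weil-Petersson metric, restriction to a multiple of the Petersson $L^2$ pairing on each $\mathcal{B}(m_j,\Sigma)$, orthogonality of distinct degrees) are part of that result. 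Theorem \ref{free-from-labourie} then follows by transporting these metrics to $\mathcal{T}^n(S)$ through the canonical $\text{Mod}(S)$-equivariant identification of Theorem \ref{main-bundle-thm} --- no reference to the Hitchin component is needed. (It is the K\"ahler metrics on rank-$2$ Hitchin components that are obtained by pushing forward from $\mathcal{B}^n(S)$, via the non-holomorphic Petersson identification with $\mathcal{Q}^n(S)$ followed by $\Psi$; that is the content of Proposition \ref{free-from-labourie-a}, not of this theorem.)

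Two further points. Real-analyticity is not ``inherited from the corresponding facts on $\text{Hit}$'': it is not stated in \cite{labourie2017cyclic} or \cite{kim2017kahler} and the paper must prove it separately (Appendix \ref{appendix-kahler-metrics-analytic}), by showing the fiber pairings $h'_k$ are real-analytic using the Bers embedding and Bers's holomorphic frames for $\mathcal{Q}_k(S)$; your claim is circular, since any analyticity on the Hitchin component would itself descend from the construction on $\mathcal{B}^{\mathbf{m}}(S)$. Also, the reference \cite{labourie2018variations} concerns the pressure metric along the Fuchsian locus and is the input for Proposition \ref{free-from-labourie-b}, not for the K\"ahler metric properties here; and the holomorphic structure on $\mathcal{B}_k(S)$ comes from the integration duality with $\mathcal{Q}_k(S)$, not from the conjugate-linear map $\mu \mapsto \overline{\mu}\,g_\Sigma^{k-1}$, which the paper explicitly notes is a non-holomorphic identification.
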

    
    We remark that as the K\"ahler metrics in Theorem \ref{free-from-labourie} are real-analytic (see Appendix \ref{appendix-kahler-metrics-analytic}), it follows from work of Feix \cite{feix2001hyperkahler} and Kaledin  \cite{kaledin1997hyperkaehler} that $T^*\mathcal{T}^n(S)$ admits hyperk\"ahler structures on neighborhoods of its zero section, confirming a conjecture of Thomas (\cite{thomas2020higher}, Conjecture 18.1). In \cite{fock2021higher}, a formal argument for the existence of an almost complex structure on $\mathcal{T}^n(S)$ appears.
    
    We also remark that the existence of a natural inclusion of $\mathcal{T}^2(S) \to \mathcal{T}^n(S)$ was shown in \cite{thomas2020thesis}. The image of this inclusion corresponds to the zero section of $\mathcal{T}^n(S)$ under its identification with $\mathcal{B}^n(S)$. 

    In \cite{kim2017kahler} and \cite{labourie2017cyclic}, Kim-Zhang and Labourie construct a real $1$-parameter family of K\"ahler metrics on $\text{Hit}(S, \text{SL}(3, \bbR))$ which are all compatible with an appropriate complex structure. Let this family of metrics be denoted by $\{ h_t\}$. The construction of the maps in Theorem \ref{headline-result} (a) is compatible with these metrics.

    \begin{proposition}\label{free-from-labourie-a}
     Let $\Phi_3$ be the diffeomorphism of Theorem \ref{headline-result} (a). The map $\Phi_3$ is holomorphic with respect to the same complex structure on {\rm{$\text{Hit}(S, \text{SL}(3,\bbR))$}} that the K\"ahler metrics $\{h_t\}$ constructed by Labourie and Kim-Zhang are compatible with, and $\mathcal{K}^3(S) = \{\Phi_3^* h_t \}$.
        \end{proposition}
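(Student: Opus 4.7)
The plan is to identify $\Phi_3$ with the composition of the bundle isomorphism
$\mathcal{B}^3(S) \to \mathcal{Q}^3(S)$ sending $(\Sigma, \mu_3) \mapsto (\Sigma, \overline{\mu_3}\, g_\Sigma^2)$ with the Labourie--Loftin affine-sphere parametrization $\mathcal{Q}^3(S) \to \text{Hit}(S, \text{SL}(3, \bbR))$, where $\mathcal{Q}^3(S) \to \mathcal{T}(S)$ denotes the bundle of holomorphic cubic differentials. Unpacking the description after Theorem \ref{headline-result}: a class in $\mathcal{T}^3(S)$ is represented via Theorem \ref{main-bundle-thm} by a harmonic $3$-complex structure $(\Sigma, \mu_3)$; the pairing $\mu_3 \mapsto \overline{\mu_3}\, g_\Sigma^2$ produces a holomorphic cubic differential $q$ on $\Sigma$; and $(\Sigma, q)$ is then fed into the Labourie--Loftin construction to yield a Hitchin representation. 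Naturality of harmonic representatives under $\text{Diff}_0(S)$ together with $\text{Mod}(S)$-equivariance of Labourie--Loftin imply this composite coincides with $\Phi_3$.

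Once $\Phi_3$ is recognised in this form, both conclusions of the proposition should become essentially formal. The Kim--Zhang \cite{kim2017kahler} and Labourie \cite{labourie2017cyclic} complex structure and real $1$-parameter family $\{h_t\}$ on $\text{Hit}(S, \text{SL}(3, \bbR))$ are defined in terms of the Hitchin parametrization by $(\Sigma, q) \in \mathcal{Q}^3(S)$ via cyclic Higgs bundles, and can be written explicitly on fibres over the Fuchsian locus in terms of the Petersson $L^2$ pairing on $q$ and harmonic Beltrami differentials representing tangent vectors to $\mathcal{T}(S)$. By construction, the complex structure and Kähler family $\mathcal{K}^n(S)$ provided on $\mathcal{T}^n(S)$ by Theorem \ref{free-from-labourie}, specialised to $n = 3$, are obtained by transporting these Kim--Zhang/Labourie structures back through the same identification. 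After the coincidence of maps has been established, $\Phi_3$ is therefore holomorphic with respect to the target complex structure and $\mathcal{K}^3(S) = \{\Phi_3^* h_t\}$ holds on the nose.

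The main obstacle I anticipate is aligning conventions across the three perspectives at play: the harmonic Beltrami-differential framework used here, Kim--Zhang's Dolbeault/Higgs-bundle description, and Labourie's cyclic Higgs-bundle picture. Sign choices and choices between $\mu_3$ and its conjugate $q = \overline{\mu_3}\, g_\Sigma^2$ could in principle cause the comparison of complex structures to fail, or to succeed only up to complex conjugation, so the core technical work is verifying that the pairing $\mu_3 \leftrightarrow q$ intertwines rather than reverses the complex structures being compared, and similarly respects the scaling used to label the family $\{h_t\}$. Once these conventions are matched, the proposition reduces to Theorem \ref{main-bundle-thm} together with the Labourie--Loftin correspondence.
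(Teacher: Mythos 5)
Your proposal matches the paper's argument: the paper observes that Labourie and Kim--Zhang build their complex structure and metrics $\{h_t\}$ on $\text{Hit}(S,\text{SL}(3,\bbR))$ precisely by pushing forward the structures on $\mathcal{B}^3(S)$ through the Petersson identification with $\mathcal{Q}^3(S)$ followed by the Labourie map $\Psi$, while $\Phi_3 = \Psi\circ\Phi$ and $\mathcal{K}^3(S)$ are defined through the very same composite, so the proposition follows immediately from unwinding the definitions. Your extra caution about conjugation and scaling conventions is sensible but not an issue the paper needs to resolve beyond noting that both sides use the identical identification.
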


The pressure metric $P$, as considered in \cite{bridgeman2015pressure}, is a $\text{Mod}(S)$-invariant metric on the $\text{PSL}(n, \bbR)$ Hitchin component $\text{Hit}(S, \text{PSL}(n,\bbR))$. Little is currently known about the pressure metric in general, but an explicit formula along the Fuchsian locus has been found by Labourie and Wentworth in \cite{labourie2018variations}. The construction of the maps in Theorem \ref{headline-result} (b) and the results of \cite{labourie2018variations} imply the following compatibility between the K\"ahler metrics of Theorem \ref{free-from-labourie} and the pressure metric.

\begin{proposition}\label{free-from-labourie-b}
        Let $\Psi_n$ be a bundle isomorphism from Theorem \ref{headline-result} (b). Let $P$ be the pressure metric on {\rm{$\text{Hit}(S, \text{PSL}(n,\bbR))$}}, and let $||\cdot||_P$ denote the Finsler norm induced by the pullback of $\Psi_n^* P$. Then for every $h \in \mathcal{K}^n(S)$, there are constants $C(k,h)$ so that for any $\Sigma \in \mathcal{T}(S)$ and $\mu_k \in \mathcal{B}(k, \Sigma)$, we have $h(\mu_k, \mu_k) = C(h,k) ||\mu_k||_P^2$. Here, $\mathcal{B}(k,\Sigma)$ is viewed as a subspace of the restriction of $T \mathcal{T}^n(S)$ to the zero section.
        
        For harmonic Beltrami differentials $\mu_k \in \mathcal{B}(k, \Sigma)$ and $\mu_j \in \mathcal{B}(j,\Sigma)$ with $k \neq j$, we have $h(\mu_k, \mu_j) = (\Psi_n^* P)(\mu_k,\mu_j) = 0$.
    \end{proposition}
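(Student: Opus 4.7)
The plan is to combine the explicit formula of Labourie and Wentworth \cite{labourie2018variations} for the pressure metric along the Fuchsian locus with the description of $\mathcal{K}^n(S)$ on the vertical subspaces $\mathcal{B}(k,\Sigma)$ furnished by Theorem \ref{free-from-labourie}. The Labourie--Wentworth formula identifies, at the Fuchsian representation $\rho_\Sigma$ corresponding to $\Sigma \in \mathcal{T}(S)$, a canonical splitting of $T_{\rho_\Sigma}\text{Hit}(S, \text{PSL}(n,\bbR))$ as $\bigoplus_{k=2}^n V_k(\Sigma)$ coming from Hitchin's parametrization at Fuchsian points, where $V_k(\Sigma)$ is modeled on holomorphic $k$-differentials and $V_2(\Sigma) = T_{\rho_\Sigma}\mathcal{F}^n(S)$. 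Their main computation is that $P|_{T_{\rho_\Sigma}\text{Hit}}$ is block-diagonal with respect to this splitting and restricts on each $V_k(\Sigma)$ to a positive constant multiple $c(n,k)$ of the Petersson $L^2$ pairing.

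The key intermediate step is to verify that for each $k \geq 3$ the restriction of $\Psi_n$ to the vertical subspace $\mathcal{B}(k,\Sigma)$ coincides, up to a nonzero constant depending only on $n$ and $k$, with the identification of harmonic $k$-Beltrami differentials with $V_k(\Sigma)$ underlying the Labourie--Wentworth formula. This compatibility is built into the Higgs-bundle construction of $\Psi_n$ described in the introduction and Section \ref{section-formal-hitchin-link}: a harmonic $k$-Beltrami differential $\mu_k$ is sent to the infinitesimal deformation of the uniformizing Higgs bundle whose Higgs field variation is the holomorphic $k$-differential $\overline{\mu}_k g_\Sigma^{k-1}$, which is precisely the pairing used in \cite{labourie2018variations}. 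Granting this, $\Psi_n^*P$ is block-diagonal with respect to $\bigoplus_{k=3}^n \mathcal{B}(k,\Sigma)$ at each zero-section point, and on each $\mathcal{B}(k,\Sigma)$ equals a positive constant $c'(n,k)$ times the Petersson pairing.

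The proposition then follows by comparing with Theorem \ref{free-from-labourie}, which asserts that each $h \in \mathcal{K}^n(S)$ restricts on $\mathcal{B}(k,\Sigma)$ to a positive multiple $\lambda(k,h)$ of the Petersson pairing and satisfies $h(\mu_k,\mu_j) = 0$ for $j \neq k$. Setting $C(k,h) = \lambda(k,h)/c'(n,k)$ yields $h(\mu_k,\mu_k) = C(k,h)\|\mu_k\|_P^2$, and the cross terms vanish for both forms by block-diagonality. The main obstacle is the compatibility verification in the middle step: one must track the construction of $\Psi_n$ via Higgs bundles carefully enough to match its restriction to each $\mathcal{B}(k,\Sigma)$ with the identification used by Labourie--Wentworth, including accounting for the numerical constants that appear along the way. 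Once this is settled, the proposition reduces to a formal comparison of two diagonal quadratic forms built from the same Petersson pairings on each $\mathcal{B}(k,\Sigma)$.
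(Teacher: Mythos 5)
Your proposal is correct and follows essentially the same route as the paper: combine the Labourie--Wentworth formula for the pressure metric along the Fuchsian locus (a constant multiple of the Petersson pairing in each degree, with distinct degrees orthogonal) with the fact from Theorem \ref{free-from-labourie} that each $h \in \mathcal{K}^n(S)$ restricts to a multiple of the Petersson pairing on each $\mathcal{B}(k,\Sigma)$ and is block-diagonal. The compatibility check you flag as the main obstacle is in fact immediate rather than delicate, since $\Psi_n$ is defined as $D\Psi$ precomposed with the Petersson identification $\mu_k \mapsto \overline{\mu}_k g_\Sigma^{k-1}$, which is exactly the form in which Labourie and Wentworth state their formula.
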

    \subsection{The Mapping Class Group Action on Fock-Thomas Spaces}

    One effect of Theorem \ref{headline-result} is to $\text{Mod}(S)$-equivariantly identify $\mathcal{T}^n(S)$ with a vector bundle over $\mathcal{T}(S)$ on which the mapping class group acts by bundle isomorphisms that restrict to the standard $\text{Mod}(S)$ action on $\mathcal{T}(S)$. This establishes an analogue of Labourie's conjecture \cite{labourie2004anosov} for $\mathcal{T}^n(S)$. As a consequence, we obtain results for $\mathcal{T}^n(S)$ analogous to what a positive resolution to Labourie's conjecture would imply for $\text{Hit}(S, \text{PSL}(n,\bbR))$. In particular, our identification $\mathcal{T}^n(S) \to \mathcal{B}^n(S)$ descends to an identification of the respective quotients by $\text{Mod}(S)$. Let $\mathcal{M}(S) = \mathcal{T}(S)/\text{Mod}(S)$ denote the moduli space of Riemann surfaces.
    
    \begin{theorem}\label{moduli-identification}
        The moduli space of $n$-complex structures {\rm{$\mathcal{M}^n(S) = \mathcal{T}^n(S)/\text{Mod}(S)$}} is canonically identified with the bundle {\rm{$\mathcal{B}^n(S)/\text{Mod}(S)$}} over $\mathcal{M}(S)$ of tuples of classes of harmonic Beltrami differentials.
    \end{theorem}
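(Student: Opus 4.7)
The plan is to deduce this directly from the $\text{Mod}(S)$-equivariant diffeomorphism $\mathcal{T}^n(S) \to \mathcal{B}^n(S)$ provided by Theorem \ref{main-bundle-thm}. Since this identification is equivariant with respect to the natural $\text{Mod}(S)$-actions on both sides, passing to quotients by $\text{Mod}(S)$ yields a canonical bijection $\mathcal{M}^n(S) = \mathcal{T}^n(S)/\text{Mod}(S) \to \mathcal{B}^n(S)/\text{Mod}(S)$, and this bijection inherits the smooth (orbifold) structure from the equivariant diffeomorphism upstairs.

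Next, I would check that the bundle projection $\pi\colon \mathcal{B}^n(S) \to \mathcal{T}(S)$ is $\text{Mod}(S)$-equivariant, where $\text{Mod}(S)$ acts on $\mathcal{T}(S)$ in the standard way. This is essentially built into the construction: a class of harmonic $n$-complex structures over $\Sigma \in \mathcal{T}(S)$ is described in terms of harmonic $k$-Beltrami differentials on $\Sigma$, and pullback by a representative of a mapping class simultaneously transports the base point in $\mathcal{T}(S)$ and the fiberwise data, intertwining the two $\text{Mod}(S)$-actions. Moreover, the construction of the vector bundle structure on $\mathcal{B}^n(S)$ uses only data intrinsic to the underlying complex structure (the hyperbolic metric $g_\Sigma$ and the canonical bundle $K_\Sigma$), both of which are $\text{Mod}(S)$-natural, so the linear structure on fibers is preserved by the action.

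Consequently, $\pi$ descends to a well-defined projection $\overline{\pi}\colon \mathcal{B}^n(S)/\text{Mod}(S) \to \mathcal{M}(S)$, and composing with the canonical diffeomorphism from the previous paragraph realizes $\mathcal{M}^n(S)$ as this quotient bundle. The one genuine subtlety is that $\text{Mod}(S)$ does not act freely on $\mathcal{T}(S)$, so $\mathcal{M}(S)$ is an orbifold and the fibers of $\overline{\pi}$ over orbifold points of $\mathcal{M}(S)$ are vector spaces modulo the finite isotropy subgroups; I would note this but not belabor it, since the statement only claims an identification of the two quotients as bundles over $\mathcal{M}(S)$ in the sense inherited from the equivariant data on $\mathcal{T}(S)$. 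The main content is Theorem \ref{main-bundle-thm}, so beyond verifying equivariance of $\pi$ there is no real obstacle.
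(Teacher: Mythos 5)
Your proposal is correct and follows the same route as the paper: Theorem \ref{moduli-identification} is obtained there exactly as a direct consequence of the $\text{Mod}(S)$-equivariant identification $\mathcal{T}^n(S) \to \mathcal{B}^n(S)$ of Theorem \ref{main-bundle-thm} descending to the quotients, with the equivariance of the projection to $\mathcal{T}(S)$ built into the pullback description of the action. Your added remarks on the orbifold structure of $\mathcal{M}(S)$ are a reasonable clarification but not needed beyond what the paper records.
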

    
    In \cite{labourie2004anosov}, Labourie proves that Anosov representations are well-displacing and uses this to show that the action of $\text{Mod}(S)$ on $\text{Hit}(S, \text{PSL}(n,\bbR))$ is proper. As a consequence of Theorem \ref{moduli-identification} and the classical theorem of Fricke, an analogous theorem holds for the action of $\text{Mod}(S)$ on $\mathcal{T}^n(S)$.
    
    \begin{theorem}
        The action of {\rm{$\text{Mod}(S)$}} on $\mathcal{T}^n(S)$ is proper.
    \end{theorem}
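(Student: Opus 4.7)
The plan is to transfer properness from Teichm\"uller space to $\mathcal{T}^n(S)$ via the $\text{Mod}(S)$-equivariant bundle structure established in Theorem \ref{headline-result} (b). The essential inputs are (i) the identification of $\mathcal{T}^n(S)$ with the vector bundle $\mathcal{B}^n(S)$ over $\mathcal{T}(S)$ on which $\text{Mod}(S)$ acts by bundle isomorphisms covering its standard action on $\mathcal{T}(S)$, and (ii) Fricke's classical theorem that $\text{Mod}(S)$ acts properly on $\mathcal{T}(S)$.

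Concretely, I would first fix the $\text{Mod}(S)$-equivariant diffeomorphism $\mathcal{T}^n(S) \to \mathcal{B}^n(S)$ and let $\pi : \mathcal{B}^n(S) \to \mathcal{T}(S)$ denote the bundle projection. By the equivariance clause of Theorem \ref{headline-result} (b), $\pi$ is continuous and $\text{Mod}(S)$-equivariant, so $\pi(K) \subset \mathcal{T}(S)$ is compact for every compact $K \subset \mathcal{T}^n(S)$. Since $\text{Mod}(S)$ is discrete, it suffices to show that
\[
\{\varphi \in \text{Mod}(S) : \varphi \cdot K \cap K \neq \emptyset\}
\]
is finite for every compact $K \subset \mathcal{T}^n(S)$.

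The final step is a short formal reduction: if $\varphi \cdot K \cap K \neq \emptyset$, then equivariance of $\pi$ yields $\varphi \cdot \pi(K) \cap \pi(K) \neq \emptyset$, so
\[
\{\varphi \in \text{Mod}(S) : \varphi \cdot K \cap K \neq \emptyset\} \subset \{\varphi \in \text{Mod}(S) : \varphi \cdot \pi(K) \cap \pi(K) \neq \emptyset\}.
\]
The right-hand set is finite by Fricke's theorem applied to the compact set $\pi(K) \subset \mathcal{T}(S)$, hence the left-hand set is finite as well, which is exactly properness of the action on $\mathcal{T}^n(S)$.

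I do not anticipate any serious obstacle: all of the analytic content is packaged inside Theorem \ref{headline-result} (b), and from there the argument is a routine pullback of properness along a continuous equivariant map. The only subtlety to verify is that $\text{Mod}(S)$ indeed acts by bundle automorphisms of $\mathcal{B}^n(S)$ covering its standard action on $\mathcal{T}(S)$, which is already built into the construction of the bundle isomorphism in the paper.
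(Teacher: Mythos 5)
Your proof is correct and follows essentially the same route as the paper, which derives properness from the $\text{Mod}(S)$-equivariant identification of $\mathcal{T}^n(S)$ with the bundle $\mathcal{B}^n(S)$ over $\mathcal{T}(S)$ together with Fricke's theorem. The paper leaves the pullback-of-properness argument implicit; your write-up simply makes that routine reduction explicit.
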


    Another consequence of Theorem \ref{headline-result} is that if a natural $\text{Mod}(S)$-equivariant diffeomorphism were found between $\text{Hit}(S, \text{PSL}(n, \mathbb{R}))$ and $\mathcal{T}^n(S)$, one would obtain a $\text{Mod}(S)$-equivariant identification of $\text{Hit}(S, \text{PSL}(n, \mathbb{R}))$ and $\mathcal{B}^n(S)$. This would be a partial solution to Labourie's conjecture. Negative evidence on Labourie's conjecture in rank $3$ has recently been found by Markovi\'{c} \cite{markovic2022non}.
    
    \subsection{Auxiliary Results}
    From the identification of $\mathcal{T}^n(S)$ and $\mathcal{B}^n(S)$ (Theorem \ref{main-bundle-thm}) underlying Theorem \ref{headline-result} and a well-known corollary of the Riemann-Roch theorem, we may determine the diffeomorphism type of $\mathcal{T}^n(S)$.
    
        \begin{corollary}\label{diffeo-type} ${\mathcal{T} }^n(S)$ is diffeomorphic to {\rm{$\bbR^{-\chi(S) \text{dim}(\text{PSL}(n,\bbR))} = \bbR^{(2g-2)(n^2-1)}$}}. \end{corollary}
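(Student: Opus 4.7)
The plan is to combine the identification $\mathcal{T}^n(S) \cong \mathcal{B}^n(S)$ from Theorem \ref{main-bundle-thm} with a standard contractibility argument and a dimension count via Riemann-Roch. Since $\mathcal{B}^n(S)$ is a vector bundle over $\mathcal{T}(S)$, and $\mathcal{T}(S)$ is diffeomorphic to $\mathbb{R}^{6g-6}$ (hence contractible), any such bundle is smoothly trivial. Thus $\mathcal{T}^n(S)$ is diffeomorphic to $\mathbb{R}^{6g-6} \times F$, where $F$ is the fiber of $\mathcal{B}^n(S)$. It remains to compute $\dim_\mathbb{R} F$.

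The fiber of $\mathcal{B}^n(S)$ over $\Sigma \in \mathcal{T}(S)$ consists of tuples $(\mu_3, \dots, \mu_n)$ where each $\mu_k$ is a harmonic $k$-Beltrami differential. By the defining condition, the map $\mu_k \mapsto \overline{\mu_k} g_\Sigma^{k-1}$ is a real-linear isomorphism from the space of harmonic $k$-Beltrami differentials on $\Sigma$ onto $H^0(\Sigma, K_\Sigma^k)$, the space of holomorphic $k$-differentials. Thus the real fiber dimension contributed by the $k$-th slot is $2 \dim_\mathbb{C} H^0(\Sigma, K_\Sigma^k)$.

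For $k \geq 2$, the line bundle $K_\Sigma^{1-k}$ has negative degree, so $H^0(\Sigma, K_\Sigma^{1-k}) = 0$, and Riemann-Roch gives $\dim_\mathbb{C} H^0(\Sigma, K_\Sigma^k) = (2k-1)(g-1)$. Summing from $k=3$ to $n$,
\begin{equation*}
\dim_\mathbb{R} F \;=\; 2(g-1)\sum_{k=3}^{n}(2k-1) \;=\; 2(g-1)\left(\sum_{k=1}^n(2k-1) - 1 - 3\right) \;=\; 2(g-1)(n^2 - 4).
\end{equation*}
Adding the contribution $6g - 6 = 6(g-1)$ of the base,
\begin{equation*}
\dim_\mathbb{R} \mathcal{T}^n(S) \;=\; 6(g-1) + 2(g-1)(n^2-4) \;=\; 2(g-1)(n^2 - 1) \;=\; (2g-2)(n^2 - 1),
\end{equation*}
which also matches $-\chi(S)\dim \text{PSL}(n,\mathbb{R})$ since $\dim \text{PSL}(n,\mathbb{R}) = n^2 - 1$. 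There is no real obstacle here: once Theorem \ref{main-bundle-thm} supplies the vector bundle structure, the result reduces to triviality of bundles over contractible bases and the standard Riemann-Roch dimension count for spaces of holomorphic $k$-differentials.
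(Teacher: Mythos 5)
Your proposal is correct and is exactly the argument the paper intends: the corollary is deduced from the identification $\mathcal{T}^n(S) \cong \mathcal{B}^n(S)$ of Theorem \ref{main-bundle-thm} together with the Riemann--Roch count $\dim_{\bbC} H^0(\Sigma, K_\Sigma^k) = (2k-1)(g-1)$ for $k \geq 2$, and your dimension arithmetic checks out. The paper gives no further detail beyond that one-sentence sketch, so your write-up simply fills in the same route.
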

    
    \begin{remark}A mildly weaker statement appears in \cite{fock2021higher}: that $\mathcal{T}^n(S)$ is a contractible manifold of dimension {\rm{$-\chi(S) \dim(\text{PSL}(n,\bbR))$}}.
    
    In \cite{fock2021higher}, the proposed contraction of $\mathcal{T}^n(S)$ is defined as a family of maps induced on the quotient $\mathcal{T}^n(S) = \mathbb{M}^n(S)/\mathcal{G}^n(S)$ from a contraction of $\mathbb{M}^n(S)$. However it can be shown, at least for $n \geq 4$, that this contraction of $\mathbb{M}^n(S)$ does not map orbits of $\mathcal{G}^n(S)$ into orbits of $\mathcal{G}^n(S)$. The proof of this is carried out by using results from Section \ref{section-group-structure} and a Stokes' Theorem argument to show that if the contraction of $\mathbb{M}^n(S)$ in \cite{fock2021higher} were to map $\mathcal{G}^n(S)$ orbits to $\mathcal{G}^n(S)$ orbits for some $n \geq 4$, then there would be a Riemann surface $\Sigma$, type $(-3,1)$ tensors on $\Sigma$ of the form $\delbar w_3$ with $w_3$ a $(-3,0)$-tensor, and holomorphic quartic differentials $\varphi$ so that $\int_{\Sigma} (\delbar w_3)\varphi \neq 0$. So the argument in \cite{fock2021higher} demonstrates path-connectivity of $\mathcal{T}^n(S)$, rather than full contractibility.
    
    The aspect of \cite{fock2021higher} that is of consequence to our methods here is that the discussion in \cite{fock2021higher} of the manifold structure and dimension of $\mathcal{T}^n(S)$ is formal, based on infinitesimal analysis of the action of {\rm{$\text{Ham}^0_c(T^*S)$}}. In general, though, global behavior of group actions can cause quotients to become singular or lose dimension in a way not seen by infinitesimal analysis. In the present case, the action of $\mathcal{G}^n(S)$ on $\mathbb{M}^n(S)$ is neither proper nor free (see Section \ref{section-group-structure}).
    
    Much of this paper can be seen as working out a sufficiently clear picture of $\mathcal{G}^n(S)$ to fully address the dimension of $\mathcal{T}^n(S)$: a proof of our main results using a lower bound to the dimension of $\mathcal{T}^n(S)$, in place of results we prove in Section \ref{section-group-structure}, is sketched in Section \ref{quotient-section}.
    \end{remark}
    The underlying source of the vector bundle structure of $\mathcal{T}^n(S)$ is the appearance of linear behavior in $\mathcal{G}^n(S)$ analogous to features of some jet groups (see Example \ref{model-jet-group}).
    
    \begin{theorem}\label{thm-structure-summary}
        The degree-$n$ diffeomorphism group $\mathcal{G}^n(S)$ is an internal semi-direct product {\rm{$\text{Diff}_0(S) \ltimes \mathcal{N}(S)$}}, where {\rm{$\text{Diff}_0(S)$}} is included via taking lifts of diffeomorphisms to $T^*S$. The group $\mathcal{N}(S)$ is nilpotent and has a nested sequence of subgroups {\rm{$\mathcal{N}(S) = \mathcal{N}^2(S) \vartriangleright \mathcal{N}^3(S) ... \vartriangleright \mathcal{N}^n(S) = \{\text{Id}\}$}} so that $[\mathcal{N}(S), \mathcal{N}^k(S)] \subset \mathcal{N}^{k+1}(S)$ and $\mathcal{N}^k(S)/\mathcal{N}^{k+1}(S)$ is isomorphic to $\Gamma(\mathcal{S}^{k}(TS))$ with group operation addition. Here, $\mathcal{S}^j$ denotes $j$-fold symmetric product.
    \end{theorem}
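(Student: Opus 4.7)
My plan is to build both the semi-direct decomposition and the nilpotent filtration by Taylor expanding Hamiltonian generators of elements of $\text{Ham}^0_c(T^*S)$ in the cotangent direction. First I would construct a map $\pi \colon \mathcal{G}^n(S) \to \text{Diff}_0(S)$ by restricting any symplectic representative of a class to the zero section of $T^*S$; this is well-defined on $\mathcal{G}^n(S)$ because the zero section corresponds to $\mathbb{M}^2(S)$ and the action of $\mathcal{G}^n(S)$ on $\mathbb{M}^n(S)$ refines its induced action on $\mathbb{M}^2(S)$. The cotangent lift furnishes a canonical section $\text{Diff}_0(S) \to \mathcal{G}^n(S)$: $f$, presented as the time-$1$ map of an isotopy with time-dependent generator $X_t$, lifts to the Hamiltonian flow of $H_t(x,\xi) = \xi(X_t(x))$. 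Setting $\mathcal{N}(S) = \ker \pi$ produces the internal semi-direct product $\mathcal{G}^n(S) = \text{Diff}_0(S) \ltimes \mathcal{N}(S)$.

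For the filtration, I would use that elements of $\mathcal{N}(S)$ come from Hamiltonian flows fixing the zero section pointwise, forcing the constant and linear-in-$\xi$ Taylor terms of the generator $H$ to vanish. Writing $H = \sum_{k \geq 2} H_k$ by $\xi$-homogeneity, each $H_k$ corresponds canonically to a section of $\mathcal{S}^k(TS)$. I then define $\mathcal{N}^k(S) \subset \mathcal{G}^n(S)$ to consist of classes representable by generators whose $\xi$-Taylor expansion vanishes to order $\geq k$. The assignment sending a class in $\mathcal{N}^k(S)$ to the leading coefficient $H_k$ of any such generator descends to an isomorphism $\mathcal{N}^k(S)/\mathcal{N}^{k+1}(S) \cong \Gamma(\mathcal{S}^k(TS))$, with the group operation becoming addition because the generator of a product of flows agrees with the sum of generators modulo higher-order terms. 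The commutator containment $[\mathcal{N}(S), \mathcal{N}^k(S)] \subset \mathcal{N}^{k+1}(S)$ follows from the Poisson bracket degree count — $\{H_j, H_k\}$ is $\xi$-homogeneous of degree $j + k - 1$, so for $j \geq 2$ it lies in order $\geq k + 1$ — applied to the Baker--Campbell--Hausdorff series for the commutator of flows.

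The main difficulty is the termination $\mathcal{N}^n(S) = \{\text{Id}\}$: I need to show that any Hamiltonian flow whose generator vanishes to order $\geq n$ in the cotangent direction acts trivially on every degree-$n$ complex structure, and hence represents the identity in the quotient $\mathcal{G}^n(S)$. This demands concrete engagement with the action of $\text{Ham}^0_c(T^*S)$ on sections of the relevant bundles of punctual Hilbert schemes, developed in Section \ref{section-jets-and-symplectos} and modeled by Example \ref{model-jet-group}; the essence should be that elements of $\mathbb{M}^n(S)$ encode only data of $\xi$-order strictly less than $n$, so Hamiltonians vanishing to higher order are invisible. One must also verify exhaustiveness — that every class in $\mathcal{N}(S)$ admits a genuine compactly supported Hamiltonian representative with the required order of vanishing, rather than merely a formal power series — which is the substantive local-to-global passage from jet-level parametrization to actual diffeomorphisms.
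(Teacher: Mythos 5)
Your overall strategy -- semidirect product via restriction to the zero section plus the cotangent lift, a filtration by vanishing order of Hamiltonian generators, quotients identified with $\Gamma(\mathcal{S}^k(TS))$ under addition, and the commutator containment via the Poisson bracket degree count $\{\mathcal{J}^{j}, \mathcal{J}^{k}\} \subset \mathcal{J}^{j+k-1}$ -- is the same as the paper's. But there is a genuine gap in how you set up the filtration. You \emph{define} $\mathcal{N}^k(S)$ as the classes representable by generators whose fiberwise Taylor expansion vanishes to order $\geq k$, whereas in the paper $\mathcal{N}^k(S)$ is defined as the subgroup acting trivially on $\mathbb{M}^k(S)$. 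One inclusion is easy (vanishing to order $\geq k$ forces triviality on $\mathbb{M}^k(S)$ by the first variation formula), but the converse -- that every class fixing all $k$-complex structures is represented by a flow that is $k$-stationary \emph{for all times} -- is the real content of the paper's Section \ref{section-group-structure} (Lemmas \ref{partial-autonomy} and \ref{full-connectivity}, Corollary \ref{k-stationary-condition}). The obstruction is that the action of $\mathcal{G}^n(S)$ on $\mathbb{M}^n(S)$ is not free (Example \ref{example-not-free}) and the jet of a symplectomorphism along $Z^*S$ does not directly detect membership in $\mathcal{N}^k(S)$, so a class could a priori act trivially on $\mathbb{M}^k(S)$ while only being generated by flows that pass through non-$k$-stationary diffeomorphisms. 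The paper resolves this with the time-averaging $\widetilde{H} = \int_0^1 H_t\,dt$ together with the rescaling trick (the time-$1$ flows of $sH_t$ give a $k$-stationary isotopy). Your ``exhaustiveness'' remark gestures at something nearby, but misdiagnoses it as a formal-power-series versus genuine-function realization problem; cutoffs and Borel-type realization are not the issue.

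Two smaller points. First, your well-definedness claim for the leading-coefficient map is stated for ``any such generator,'' but for a time-dependent generator the degree-$k$ homogeneous part $H_k(t)$ is itself time-dependent; the invariant attached to the class is $\int_0^1 H_k(t)\,dt$, and showing this is independent of the choice of generating flow is exactly the uniqueness half of Lemma \ref{partial-autonomy}. Second, you identify $\mathcal{N}^n(S) = \{\text{Id}\}$ as the main difficulty, but under the paper's definition this is immediate from $\mathcal{G}^n(S)$ being an effective quotient, and under your definition it is the easy direction of Lemma \ref{action-kernel}; the hard analytic work sits instead in the averaging and realization arguments above.
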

    
    From the theorem of Earle and Eells \cite{earle1969fibre} that $\text{Diff}_0(S)$ is contractible for closed surfaces $S$ of genus at least $2$, we immediately find a generalization to the setting of higher degree diffeomorphism groups:
    
    \begin{corollary}\label{contractible}
    $\mathcal{G}^n(S)$ is contractible.
    \end{corollary}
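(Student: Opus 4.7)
The plan is to leverage the semidirect product decomposition $\mathcal{G}^n(S) = \text{Diff}_0(S) \ltimes \mathcal{N}(S)$ from Theorem \ref{thm-structure-summary}. As a topological space, $\mathcal{G}^n(S)$ is homeomorphic to $\text{Diff}_0(S) \times \mathcal{N}(S)$; since products of contractible spaces are contractible and $\text{Diff}_0(S)$ is contractible by the Earle--Eells theorem, it suffices to prove that $\mathcal{N}(S)$ is contractible.

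For this, I would exploit the nested normal subgroup sequence $\mathcal{N}(S) = \mathcal{N}^2(S) \vartriangleright \mathcal{N}^3(S) \vartriangleright \cdots \vartriangleright \mathcal{N}^n(S) = \{\text{Id}\}$, whose successive quotients $\mathcal{N}^k(S)/\mathcal{N}^{k+1}(S) \cong \Gamma(\mathcal{S}^k(TS))$ are Fr\'echet vector spaces. The approach is by descending induction on $k$: the top step $\mathcal{N}^{n-1}(S) \cong \Gamma(\mathcal{S}^{n-1}(TS))$ is manifestly contractible, and given contractibility of $\mathcal{N}^{k+1}(S)$, one wants to deduce the same for $\mathcal{N}^k(S)$. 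Viewing the quotient map $\pi_k \colon \mathcal{N}^k(S) \to \Gamma(\mathcal{S}^k(TS))$ as a principal $\mathcal{N}^{k+1}(S)$-bundle, a continuous section over its contractible base produces a homeomorphism $\mathcal{N}^k(S) \cong \mathcal{N}^{k+1}(S) \times \Gamma(\mathcal{S}^k(TS))$, which is then contractible.

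The main obstacle is producing such a section of $\pi_k$. I expect this to fall out of the construction of $\mathcal{N}(S)$ in Section \ref{section-group-structure}: the isomorphism $\mathcal{N}^k(S)/\mathcal{N}^{k+1}(S) \cong \Gamma(\mathcal{S}^k(TS))$ plausibly comes from reading off a leading jet or symbol of a Hamiltonian diffeomorphism that is trivial to order $k$, and a section should be realized by the time-$1$ map of a canonical Hamiltonian flow built from the symmetric tensor---an infinite-dimensional analogue of the exponential map for finite-dimensional nilpotent Lie groups. If the constructions of Section \ref{section-group-structure} furnish, via iterated section, a global product decomposition $\mathcal{N}(S) \cong \prod_{k=2}^{n-1} \Gamma(\mathcal{S}^k(TS))$ as topological spaces, contractibility of $\mathcal{G}^n(S)$ follows at once.
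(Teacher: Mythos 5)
Your proposal is correct and matches the paper's (largely implicit) argument: the corollary is deduced from the semidirect product decomposition of Theorem \ref{thm-structure-summary} together with Earle--Eells, once one knows $\mathcal{N}(S)$ is contractible. The global product decomposition you hope for at the end is exactly what the paper's inductive coordinates provide --- Proposition \ref{auto-decomposition} parametrizes $\mathcal{N}(S)$ by $\mathcal{J}_{n-1}^2 \cong \bigoplus_{k=2}^{n-1}\Gamma(\mathcal{S}^k(TS))$ via time-$1$ flows of homogeneous Hamiltonians, which is precisely the ``exponential-map'' section you anticipate, so the step-by-step principal-bundle induction is not needed.
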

    
    \begin{remarks}
    \begin{enumerate} 
    \item The algebraic structure of $\mathcal{N}(S)$ allows for an analysis of the quotient $\mathbb{M}^n(S)/\mathcal{N}(S)$ that requires less of the machinery of infinite-dimensional Lie groups than often occurs with similar quotients. For instance, we do not need to use inverse limits of approximations of $\mathcal{N}(S)$ by Banach-Lie groups to access implicit function theorems, in contrast to the analysis of an action of {\rm{$\text{Diff}_0(S)$}} in a similar setting in \cite{fischer1984purely}.

    \item In Section \ref{section-group-structure}, we endow $\mathcal{G}^n(S)$ with a regular Fr{\'e}chet-Lie group structure by parametrizing $\mathcal{N}(S)$ with spaces of sections, and show this coincides as a topological group with the description of $\mathcal{G}^n(S)$ as a quotient of {\rm{$\text{Ham}_c^0(T^*S)$}} (Propositions \ref{quotient-topology-is-product}, \ref{prop-frechet-lie}). A natural question is if the map induced by the quotient of {\rm{$\text{Ham}_c^0(T^*S)$}} yields isomorphisms of appropriate stronger structures as well.
    \end{enumerate} \end{remarks}
    
    \subsection{Outline of the Paper}
    
    Broadly speaking, our proof of Theorem \ref{headline-result} requires three main inputs. The first is a sufficiently concrete description of the action of symplectic diffeomorphisms of $T^*S$ on $\mathbb{M}^n(S)$ to act as foundation for later study of $\mathcal{G}^n(S)$. The second is a coordinate system on $\mathbb{M}^n(S)$ well-adapted to the action of maps on $T^*S$ induced by diffeomorphisms of $S$. The third and most complicated component, relying on the other two, is a detailed picture of the structure of the higher-degree diffeomorphism group $\mathcal{G}^n(S)$.
    
    In Section \ref{section-jets-and-symplectos}, we give a description of $\mathbb{M}^n(S)$ in terms of jets of functions $f : T^*S \to \mathbb{C}$. The benefit of this perspective is that it allows for our desired non-infinitesimal description of the action of $\text{Ham}(T^*S)$ on $\mathbb{M}^n(S)$, which is carried out in Section \ref{2-complex}.
    
    We then describe the mapping class group action on $\mathcal{T}^n(S)$ and the natural projection maps between Fock-Thomas spaces in Section \ref{section-mapping-class}. In Section \ref{section-first-var}, we recall the first variation formula of Fock and Thomas for the action of $\text{Ham}^0_c(T^*S)$ on $\mathbb{M}^n(S)$ and find a geometric interpretation of the first variation formula in terms of Maa\ss \, derivatives (see \cite{wolpert1986chern} for definitions and an exposition).
  
    In Section \ref{section-natural-coords}, we introduce coordinates on $\mathbb{M}^n(S)$ that do not depend on a choice of a reference complex structure $\Sigma$ on $S$. In these coordinates, maps of $T^*S$ induced by diffeomorphisms of $S$ act by pullback on appropriate objects. Our analysis of the finer structure of $\mathcal{G}^n(S)$ is then done in Section \ref{section-group-structure}.
    
   We then show that $\mathcal{T}^n(S)$ admits a vector bundle structure over $\mathcal{T}(S)$ in Section \ref{quotient-section}, through application of previous results and a Hodge decomposition theorem for $k$-Beltrami differentials. The link this establishes between $\text{Hit}(S, \text{PSL}(n,\bbR))$ and $\mathcal{T}^n(S)$ is described in Section \ref{section-formal-hitchin-link}. We give an explicit and geometric description of the diffeomorphism in the the $\mathcal{T}^3(S)$ case in Section \ref{section-3-complex}. \\
    
    \par \noindent \textbf{Acknowledgements.} It is the author's great pleasure to thank Mike Wolf for suggesting thinking about higher complex structures, and for his continued support and guidance. The author also thanks Leo Digiosia for helpful conversations about symplectic topology and Alexander Thomas for careful comments on a previous draft of this paper.
    
    \section{Higher Complex Structures via Jets}\label{section-jets-and-symplectos}

    An $n$-complex structure is a special section of a bundle over $S$ whose fiber at $x_0$ is the space of ideals of complex codimension $n$ in $\mathbb{C}[\partial_x, \partial_y]$, where $(x,y)$ is a coordinate chart about $x_0$. In this section, we give a description of $n$-complex structures in terms of jets of functions $f: T^*S \to \mathbb{C}$. The objects produced are the same as in \cite{fock2021higher}, but their concrete origin in our perspective is essential to our methods in later sections. One benefit is a non-infinitesimal description of the action of higher degree diffeomorphisms, which we carry out in Section \ref{2-complex}.

    Let $M$ be a closed smooth manifold, $T^*M$ the cotangent bundle of $M$, and $Z^*(M)$ the zero section of $T^*M$. The cotangent bundle $T^*(M)$ has a canonical symplectic form $\omega_{\text{can}}$. The canonical symplectic form is exact: if $\lambda_{\text{taut}}$ is the tautological one-form on $T^*(M)$, we have $\omega_{\text{can}} = - d \lambda_{\text{taut}}$. In a linear coordinate system $(x_i, p_i)$, the tautological one-form and canonical symplectic form are expressed $\lambda_{\text{taut}} = p_i dx_i$ and $\omega_{\text{can}} = dx_i \wedge dp_i$. The symplectic form $\omega_{\text{can}}$ induces a Poisson bracket on $C^{\infty}(T^*M, \bbR)$ by $\{f, g\} = \omega_{\text{can}}(X_f, X_g)$. Here, for any smooth function $H$, the vector field $X_H$ is defined by the requirement $\omega_{\text{can}}(X_H, \cdot) = dH$. In linear coordinates $(x_i, p_i)$ for $T^*M$, the Poisson bracket is given by $$\{f, g\} = \sum_{i=1}^{\dim M} \left( \frac{\partial f}{\partial x_i} \frac{\partial g}{\partial p_i} - \frac{\partial f}{\partial p_i} \frac{\partial g}{\partial x_i} \right).$$

    For a smooth function $f : T^*(M) \to \mathbb{R}$, we denote the $k$-jet of $f$ by $j_k(f)$ (see \cite{kolar2013natural} for definitions and basics on jets). We typically restrict to the case where $f(Z^*(M)) = \{0\}$. This condition is equivalent to $j_0(f)(x) = 0$ for all $x \in Z^*(M)$. We adopt the notation that for a function $f: T^*(M) \to \bbR$ vanishing on $Z^*M$, the restriction of $j_k(f)$ viewed as a polynomial function $T_\alpha T^*M \to \mathbb{R}$ for each $\alpha \in T^*S$ to the tangent spaces of the fibers of $T^*M$ along the zero section is $j_k^0(f)$. Formally, let $\pi: T^*M \to M$ be the projection and $\mathcal{V} = (T T^*M)|_{Z^*M}$. Denote by $\mathcal{K}$ the kernel of $(D\pi)|_\mathcal{V}$. Then $j_k^0(f)$ is defined for $x \in M$ to be $j_k^0(f)_x = j_k(f)_{(x,0)}|_\mathcal{K}$. Note that $j_k(f)|_{Z^*S}$ is completely determined by $j_k^0(f)$. For any integers $k$ and $l$ with $0 \leq l \leq k+1$, write as $$\mathcal{J}_k^{l, \bbR} = \{ j_k^0(f) \mid f : T^*M \to \mathbb{R} \text{ has }  j_m^0(f) \equiv 0 \text{ for } m < l \}$$ the space of restricted $k$-jets that vanish to order $l-1$ on $Z^*S$.
    
    The symplectic structure of $T^*(M)$ endows $\mathcal{J}_{k}^{1,\bbR}$ with the structure of a Poisson algebra with operations $$j_k^0(f) j_k^0(g) = j_k^0(fg), \qquad \qquad \{j_k^0(f), j_k^0(g) \} = j_k^0(\{f,g\}).$$ With this product, $\mathcal{J}_k^l$ has a decreasing filtration by degree of vanishing $\mathcal{J}_k^{l,\bbR} \supset \mathcal{J}_k^{l+1,\bbR} \supset \cdots \mathcal{J}_k^{k+1,\bbR} = \{0\}$ so that $$\mathcal{J}_k^{m,\bbR} \cdot \mathcal{J}_k^{n,\bbR} \subset \mathcal{J}_k^{m+n,\bbR},  \qquad  \{\mathcal{J}_k^{m,\bbR} , \mathcal{J}_k^{n,\bbR} \} \subset \mathcal{J}_k^{m+n-1,\bbR}.$$ In particular, when $m < n$ we have that $\mathcal{J}_k^{n,\bbR}$ is an ideal in $\mathcal{J}_k^{m,\bbR}$. Beneath the formalism, this corresponds to the well-known fact from calculus that the product of functions vanishing to degree $m$ and $n$ vanishes to degree $m+n$.
    
    It is often useful to record the data of a jet in terms of the Taylor expansion of a representative function. Fix an auxiliary metric $g$ on $M$, which induces a metric on $T^*M$. Let $f: T^*(M) \to \bbR$ be a smooth function vanishing on $Z^*(M)$. Then $f$ has a degree $k$ Taylor expansion on fibers of $T^*M$ along $Z^*M$ of the form $$f(x, p) = a_1(x,p) + \cdots + a_k(x, p) + R_k(x, p) \qquad (x \in M, p \in T_x^*M)$$ where $a_j(x, p)$ is a homogeneous degree $j$ polynomial in $p$ for any fixed $x$. The auxiliary metric is used here to make $a_1, ..., a_k$ well-defined by demanding that for any sequence $(x_n, p_n)$ with $p_n \neq 0$ converging to $(x, 0)$, the remainder $R_k(x_n,p_n)$ is bounded by $\lim\limits_{n \to \infty} R_k(x_n,p_n) ||p_n||^{-k-1} \leq C_k(f, g)$. Compactness of $M$ ensures the sections $a_k \in \Gamma(\mathcal{S}^k(TM))$ and the existence of $C_k$ are independent of the auxiliary metric.
    
    By identifying the tangent spaces $T_{(x,0)} T_x^*M$ to $T_x^*M$ with $T_x^*M$ for $x \in M$, the data of $j_k^0(f)$ is equivalent to the functions $ a_1, ..., a_k$. This identifies elements of $\mathcal{J}_{k}^{1,\bbR}$ with sums of sections of the symmetric product bundles $\mathcal{S}^j(TM)$ for $1 \leq j \leq k$.
    
    One special subspace of $\mathcal{J}_n^{1,\bbR}$ is the collection $\mathcal{X}_n^1$ of linear $n$-jets, which consists of the purely linear $j_n^0(f)$. Elements of $\mathcal{X}_n^1$ are those jets $j_n^0(f)$ whose coordinate expansion in terms of homogeneous polynomials $a_0, a_1, .., a_k$ have $a_0 \equiv 0$ and $a_2, ..., a_n \equiv 0$. The linear $n$-jet space $\mathcal{X}_n^1$ is closed under Poisson bracket but not multiplication, and corresponds canonically to the space of vector fields on $M$.
        
    For complex-valued functions $f: T^*M \to \mathbb{C}$ on the real cotangent bundle, we obtain analogous definitions by breaking $f$ up into real and imaginary parts and extending the product and Poisson bracket bilinearly. Denote by $\mathcal{J}^{l}_k$ the analogue of $\mathcal{J}^{l,\bbR}_{k}$ for complex-valued $f : T^*M \to \mathbb{C}$, and $\mathcal{S}^{j,\bbC}(TM)= \mathcal{S}^j(TM) \oplus i \mathcal{S}^j(TM)$.
    
    Now restrict to the case of a closed, oriented surface $S$. For any point $x \in S$, the fiber of $\bigoplus_{j=0}^{k} \mathcal{S}^{j,\bbC}(TS)$ over $x$ is identified with the algebra of of degree $k$ complex-valued polynomials on the real vector space $T_x^*S$ modulo degree $k+1$ polynomials. Elements of $\left(\bigoplus_{j=0}^{k} \mathcal{S}^{j,\bbC}(TS)\right)_x$ extend uniquely to complex polynomials on the complexification $T^{*\bbC}_xS$ naturally with respect to multiplication, identifying the fiber of $\bigoplus_{j=0}^{k} \mathcal{S}^{j,\bbC}(TS)$ over $x$ with $\mathcal{S}^j(T_x^{\bbC}S)$.
    
    As fibers of $\bigoplus_{j=0}^{k} \mathcal{S}^{j,\bbC}(TS)$ are algebras, we can consider their multiplicative ideals. In \cite{fock2021higher}, Fock and Thomas show that the collection $$\mathcal{I}^{k}_x  = \left\{\text{Ideals } I \subset \bigoplus_{j=0}^{k-1} \mathcal{S}^j(T_x^{\bbC}S) \, \bigg| \, I \text{ supported at }0, \text{codim}(I) = k,  I + \overline{I} \text {  maximal}\right\}$$ of ideals is a smooth manifold by giving an explicit parametrization in coordinates. In our notation, the maximal ideal supported at $0$ is $\bigoplus_{j=1}^{k-1} \mathcal{S}^j(T^\bbC S).$ Taking these parametrizations in coordinate patches yields trivializations of the bundle $\mathcal{I}^{k}(S)$ with fiber over $x$ given by $\mathcal{I}^{k}_x$.
       
   \begin{definition}[Fock-Thomas \cite{fock2021higher}] An {\rm{$n$-complex structure}} is a section $I \in \Gamma(\mathcal{I}^{n}(S))$. We denote by $\mathbb{M}^n(S)$ the space of all $n$-complex structures modulo conjugation.
   \end{definition}
   
      Now, let $\Sigma$ be a Riemann surface structure compatible with the orientation of $S$. In a complex coordinate $z = x + iy$, a basis determined by $z$ for pointwise degree-1 polynomials on $T^{*\bbC}M$ is given by $p = \frac{1}{2}(p_x - i p_y)$ and $\overline{p} = \frac{1}{2}(p_x + i p_y)$, where $p_x : T^{*\bbC}S \to \mathbb{C}$ is defined by $p_x(dx) = 1$ and $p_x(dy) = 0$, and $p_y$ is defined analogously. The basis vectors $p$ and $\overline{p}$ correspond under the identification of $\mathcal{X}_n^1$ and vector fields to $\del_z$ and $\del_{\bar{z}}$, respectively.
   
     Fock and Thomas show that picking a Riemann surface structure $\Sigma$ on $S$ determines global coordinates on $\mathbb{M}^n(S)$. We call these coordinates on $\mathbb{M}^n(S)$ \textit{centered coordinates based at $\Sigma$}.
   
    \begin{proposition}{(Fock-Thomas \cite{fock2021higher})}\label{fock-thomas-global-coordinates}
    Let $\Sigma$ be a Riemann surface compatible with the orientation of $S$. If $I$ is an $n$-complex structure, then either $I$ or $\bar{I}$ admits in local coordinate patches a unique expression of the form \begin{align}\label{higher-complex-structure-normalized-form} \langle -\overline{p} + \mu_2(z) p + \mu_3(z) p^2 + \cdots + \mu_n(z) p^{n-1} \rangle, \qquad |\mu_2(z)| < 1 \end{align} with respect to a complex coordinate $z$.
    
    Globally, $\mu_k$ is a smooth $(1-k,1)$-tensor for $k = 2, ..., n$. A map assigning to every $x \in S$ an ideal in $\bigoplus_{j=0}^{n-1} \mathcal{S}^j(T_x^\bbC S)$ is an $n$-complex structure if such local expressions exist, and for any smooth $(-1, 1), ..., (1-n, 1)$-tensors  $\mu_2, ..., \mu_n$ with $|\mu_2(z)| < 1$ for all $z$, then there is an $n$-complex structure determined by the local expressions $\langle -\overline{p} + \mu_2(z) p + \mu_3(z) p^2 + \cdots + \mu_n(z) p^{n-1} \rangle$.
    \end{proposition}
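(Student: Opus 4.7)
The plan is to prove the statement pointwise by analyzing the algebra $A = \mathbb{C}[p, \overline{p}]/\mathfrak{m}^n$ (with $\mathfrak{m} = (p, \overline{p})$) to which the fiber $\bigoplus_{j=0}^{n-1} \mathcal{S}^j(T^\bbC_{x_0} S)$ is identified in a complex coordinate $z$ at $x_0$ compatible with $\Sigma$, and then to verify tensoriality of the resulting coefficients under coordinate change. The fiber has total dimension $n(n+1)/2$, and any ideal $I$ satisfying the three conditions of the definition has dimension $n(n-1)/2$.

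For the pointwise analysis, I would first extract a leading term. The condition $I + \overline{I} = \mathfrak{m}$ surjects onto $\mathfrak{m}/\mathfrak{m}^2$; since $\mathfrak{m}/\mathfrak{m}^2$ is two-dimensional and closed under conjugation, the image of $I$ is a one-dimensional subspace spanned by some $\alpha p + \beta \overline{p}$ with $|\alpha| \ne |\beta|$ (otherwise $\alpha p + \beta \overline{p}$ and $\overline{\beta} p + \overline{\alpha} \overline{p}$ are linearly dependent, so $I + \overline{I}$ fails to exhaust $\mathfrak{m}/\mathfrak{m}^2$). After interchanging $I$ and $\overline{I}$ if necessary we arrange $|\beta| > |\alpha|$, then rescale to produce $f_0 = -\overline{p} + \mu_2 p + r \in I$ with $|\mu_2| < 1$ and $r \in \mathfrak{m}^2$.

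The key step is to show $I = (f_0)$ and that there is a unique generator of the ideal of the form $-\overline{p} + P(p)$, $P \in p\,\mathbb{C}[p]/(p^n)$. Working modulo $(f_0)$, the relation $f_0 = 0$ rewrites $\overline{p}$ as $\mu_2 p$ plus higher-order terms in $p$ and $\overline{p}$. Iterating this substitution terminates after finitely many steps because $\mathfrak{m}^n = 0$ in $A$, producing a unique polynomial $P(p) = \mu_2 p + \mu_3 p^2 + \cdots + \mu_n p^{n-1}$ with $\overline{p} \equiv P(p) \pmod{(f_0)}$. Hence $A/(f_0) \cong \mathbb{C}[p]/(p^n)$ has dimension exactly $n$, so $(f_0)$ has codimension $n$; combined with $(f_0) \subset I$ and $\op{codim}(I) = n$, this forces equality. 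The canonical generator is then $f := -\overline{p} + P(p) \in I$. Uniqueness follows since any other $-\overline{p} + Q(p) \in I$ gives $P - Q \in I \cap \mathbb{C}[p]/(p^n)$, and the inclusion $\mathbb{C}[p]/(p^n) \hookrightarrow A$ maps isomorphically onto $A/(f_0)$ by the dimension count, forcing $P = Q$.

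To finish, I would compute directly how the normal form transforms under a holomorphic change of coordinate $z = z(w)$: the relations $p_z = w'(z) p_w$ and $\overline{p}_z = \overline{w'(z)}\, \overline{p}_w$, followed by renormalizing so the coefficient of $\overline{p}_w$ becomes $-1$, yield $\tilde{\mu}_k(w) = \mu_k(z(w))\, z'(w)^{1-k}\, \overline{z'(w)}$, which is exactly the transformation law for a smooth section of $K^{1-k}_\Sigma \otimes \overline{K_\Sigma}$. The converse---that any smooth tensors $\mu_2, \dots, \mu_n$ with $|\mu_2| < 1$ determine an $n$-complex structure---follows from running the same computation backwards: the local ideals glue globally by the tensorial transformation, and the three fiberwise properties can be verified in any coordinate by repeating the dimension and leading-term calculations above. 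The main obstacle is the principality claim in the third paragraph: while the codimension hypothesis makes it plausible, the proof requires the careful iterative substitution, and one must track that the process really terminates in a polynomial in $p$ \emph{alone}---this is the mechanism distinguishing $I$ from $\overline{I}$ and is what produces a canonical normal form.
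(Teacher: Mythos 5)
The paper does not actually prove this proposition; it is imported from Fock--Thomas \cite{fock2021higher} as a citation, so there is no in-paper proof to compare against. That said, your argument is essentially the one Fock and Thomas give, and its two key ingredients reappear verbatim in this paper's construction of natural coordinates in \cref{section-natural-coords}: the one-dimensionality of the space of degree-$1$ parts of elements of $I_x$ (attributed there to \cite{fock2021higher}, Section 5.1), and the iterated substitution eliminating $\overline{p}$ (the projection $P_U$ there). Your reduction to the fiberwise algebra $A = \mathbb{C}[p,\overline{p}]/\mathfrak{m}^n$, the termination of the substitution because $\mathfrak{m}^n = 0$, the codimension count forcing $I = (f_0)$, and the transformation-law computation identifying $\mu_k$ as a $(1-k,1)$-tensor are all correct.

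Two steps are more compressed than they should be, though both are routine to fill. First, you assert the image of $I$ in $\mathfrak{m}/\mathfrak{m}^2$ is one-dimensional, but your parenthetical only rules out the conjugation-degenerate case $|\alpha| = |\beta|$; you must also exclude the two-dimensional case, which requires $\operatorname{codim}(I) = n \geq 2$: if $I + \mathfrak{m}^2 = \mathfrak{m}$ then Nakayama (or iterating your own substitution) gives $I = \mathfrak{m}$, of codimension $1$. Second, your claim that $A/(f_0)$ has dimension \emph{exactly} $n$ is, as you present it, extracted from the hypothesis $\operatorname{codim}(I) = n$; that is fine for the forward direction, but in the converse direction (arbitrary tensors $\mu_2,\dots,\mu_n$ determine an $n$-complex structure) that hypothesis is not available, and you must verify directly that the ideal generated by $-\overline{p} + P(p)$ has codimension exactly $n$. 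The direct argument is the leading-term one: if $g f_0$ lies in the span of $1, p, \dots, p^{n-1}$ and $g$ has lowest-degree homogeneous part $g_k$ surviving truncation, then $g_k\cdot(-\overline{p} + \mu_2 p)$ equals a multiple of $p^{k+1}$ in the untruncated polynomial ring, forcing $g_k = 0$ since $-\overline{p} + \mu_2 p$ is an irreducible linear form not associate to $p$. With these two points made explicit, the proof is complete and matches the intended argument.
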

    
    We note that our realization of higher complex structures as ideals inside jet spaces of fixed degree removes the presence of a term in Fock-Thomas' description that has the effect of working modulo terms of degree at least $n$ in the above.
    
    We say that an $n$-complex structure $I$ is \textit{compatible with the orientation of $S$} if $I$ has the form of \eqref{higher-complex-structure-normalized-form} with respect to a Riemann surface $\Sigma$ compatible with $S$. For every $I \in \mathbb{M}^n(S)$, exactly one of $I$ and $\overline{I}$ is compatible with the orientation of $S$. In the following, we identify $\mathbb{M}^n(S)$ with the space of $n$-complex structures compatible with the orientation of $S$. We adopt the notation that the $k$-Beltrami differential associated to a $n$-complex structure $I \in \mathbb{M}^n(S)$ is $\mu_k(I)$.
   
\section{First Features of Higher Diffeomorphisms}\label{2-complex}

We now explain the action of symplectic diffeomorphisms of $T^*S$ that setwise fix the zero section on $\mathbb{M}^n(S)$, and establish some first computations and lemmata about this action. We begin with an action on sections of arbitrary ideals, and show that it restricts to an action on $\mathbb{M}^n(S)$. We explain how linear lifts of diffeomorphisms of $S$ act on $\mathbb{M}^n(S)$, and then as an example show how in our framework $\mathcal{T}^2(S)$ can be canonically identified with $\mathcal{T}(S)$. 

Let $\text{Symp}(T^*S)$ denote the group of symplectic diffeomorphisms of $T^*(S)$ with respect to the canonical symplectic form. Consider the subgroup $\text{Symp}^0(T^*S)$ of $\text{Symp}(T^*S)$ consisting of $\varphi$ so that $\varphi|_{Z^*(S)}$ is an orientation-preserving diffeomorphism of $Z^*(S)$, and denote by $\text{Symp}_c^0(T^*M)$ the subgroup of $\text{Symp}^0(T^*S)$ consisting of compactly supported symplectomorphisms. The group $\text{Symp}^0(T^*S)$ acts on $\mathcal{J}_k^{1, \bbR}$ by $j_{k}^0(f) \varphi = j_k^0(f \circ \varphi)$. This is well-defined as $j_k^0(f)$ completely determines $j_k(f)|_{Z^*S}$, and is an action by multiplicative algebra isomorphisms on fibers that respect the filtration of $\mathcal{J}_k^{1,\bbR}$ by degree.

    The symplectomorphisms $\varphi \in \text{Symp}^0(T^*S)$ arising from compactly supported Hamiltonian flows $\varphi_t$ so that $\varphi_t|_{Z^*S}$ is a diffeomorphism of $Z^*S$ for all times $t$ form a subgroup of $\text{Symp}^0_c(T^*S)$, which we denote by $\text{Ham}^0_c(T^*S)$. The subgroup of $\text{Ham}_c^0(T^*S)$ consisting of elements of $\text{Ham}_c^0(T^*S)$ that pointwise fix $Z^*S$ is denoted $\text{Ham}^P_c(T^*S)$. The condition in these definitions that $\varphi_t$ restrict to diffeomorphisms of $Z^*S$ for all times is essential for the following. See Appendix \ref{appendix-community-service} for discussion of how this restriction avoids some potential pathologies.
    
    A smooth family of compactly supported smooth functions $H_t : T^*S \to \mathbb{R}$ that vanish on $Z^*S$ for all $t$ defines a Hamiltonian flow $\varphi_t$. For such an $H_t$, $$ dH_t = \frac{\partial H_t}{\partial x_i} dx_i + \frac{\partial H_t}{\partial p_i} dp_i = \frac{\partial H}{\partial p_i} dp_i, \qquad X_{H_t} = -\frac{\partial H}{\partial p_i} \partial_{x_i}.$$ So $X_{H_t}$ is tangent to the zero section for all $t$, hence $\varphi_t \in \text{Symp}_c^0(T^*S)$ for all $t$. Conversely, if $H_t$ is not constant on $Z^*S$, then $X_{H_t}$ is not tangent to $Z^*S$. For any function $f \in C^\infty(S, \bbR),$ we have that $\frac{d}{dt} f\circ \varphi_t = \{f\circ \varphi_t, H_t\}$.

Any section $\sigma \in \bigoplus_{j=0}^k(\mathcal{S}^{j,\bbC}(TS))$ is identified with a $k$-jet $j^0_k(f)$. For $\varphi \in \text{Symp}^0(T^*S),$ define as with real sections $j^0_k(f) \varphi =  j^0_k(f \circ \varphi)$. A symplectomorphism $\varphi \in \text{Symp}^0(T^*S)$ acts on $\sigma \in \bigoplus_{j=0}^k\Gamma( \mathcal{S}^j(T^{\bbC}S))$ by defining $\sigma \varphi$ to be the unique complex-polynomial extension to $T^{*\bbC}S$ of $j_k^0(f)\varphi$, where $j_k^0(f)$ is the jet associated to $(\sigma|_{T^*S})$. Here, the real cotangent bundle $T^*S$ is identified with the fixed-point set of conjugation in $T^{*\bbC}S$. This action is once again by multiplicative algebra isomorphisms on fibers that respect the filtration by degree.

\begin{remark} It is crucial for well-definition here that we restrict sections to the real cotangent bundle, act, then re-extend to the complexified cotangent bundle: there is no natural action of {\rm{$\text{Symp}^0(T^*S)$}} on $T^{*\bbC}S$. \end{remark}

Denote by $\mathbb{N}^n(S)$ the collection of functions $f$ from $S$ to the collection of pairs $(x,I)$ with $x \in S$ and $I$ an ideal in $\bigoplus_{j=0}^{n-1} \mathcal{S}^j(T^\bbC_xS)$ so that for all $x \in S$ the $S$-coordinate of $f(x)$ is $x$. For our purposes it is sufficient to view $\mathbb{N}^n(S)$ as a set containing $\mathbb{M}^n(S)$. Since $\varphi$ acts by algebra isomorphisms, it induces an action on $\mathbb{N}^n(S)$. We shall show $\varphi$ maps $\mathbb{M}^{n}(S)$ to itself.

A first observation about the action of $\text{Symp}^0(T^*S)$ is that for any ideal $I \in \mathbb{N}^n(S)$, if $\varphi = \varphi'$ on a neighborhood of $Z^*S$, then $I \varphi = I \varphi'$. This makes restricting attention to flows supported on a fixed compact set $K$ containing $Z^*S$ in its interior still yield the same possible actions of Hamiltonian diffeomorphisms generated by these flows on $\mathbb{N}^n(S)$ as considering the full group $\text{Ham}_c^0(T^*S)$. To see this, if $\varphi$ is the time-$1$ flow of $H_t$ and $\eta: T^*S \to \bbR$ is a compactly supported function that is $1$ on a neighborhood of $Z^*S$, then the time-1 flow $\widetilde{\varphi}$ of $\eta H_t$ agrees with $\varphi$ on a neighborhood of $Z^*S$ as the flow of $H_t$ fixes $Z^*S$ setwise for all times $t$.

Diffeomorphisms $f: S \to S$ lift to exact linear symplectomorphisms of the cotangent bundle $f^\#: T^*S \to T^*S$ by $f^\#\alpha = \alpha \circ (Df)^{-1}$. Lifts of diffeomorphisms provide the simplest class of actions of elements of $\text{Symp}^0(T^*S)$ to understand, and are a first stepping-stone towards obtaining a picture of the action. 
 
 Now, let $\varphi \in \text{Symp}^0(T^*S)$ and write $f_\varphi = \varphi|_{Z^*S}$. In linear coordinates on $T^*S$, the condition that $\varphi \in \text{Symp}^0(T^*S)$ implies that on the zero section, $D\varphi$ has the form \begin{align*}
    \begin{bmatrix} Df_{\varphi} & A \\ 0 & (Df_{\varphi})^{-1} \end{bmatrix},\qquad  Df_\varphi A^T = A (Df_\varphi)^T.
\end{align*} Here, $\varphi$ agrees with the lift of $f_\varphi$ to first order along $Z^*S$ if and only if $A = 0$ everywhere. 

\begin{lemma}\label{lemma-2-action}
    Fix a Riemann surface structure $\Sigma$ on $S$ compatible with the orientation of $S$. Let $g : T^*S \to \mathbb{C}$ vanish on $Z^*S$ and have local expression $j_1^0(g) = -\overline{p} + \mu_2(z) p$ with $|\mu_2| < 1$, and {\rm{$\varphi \in \text{Symp}^0(T^*S)$}}. Then in $\mathbb{N}^{2}(S) ,$ \begin{align}\label{diffeo-pullback-formula} \langle j_1^0(g \circ \varphi) \rangle =  \left \langle -\overline{p} + \left( \frac{\delbar f_\varphi + (\mu_2 \circ f_\varphi) \delbar \bar{f}_\varphi }{\del f_\varphi + (\mu_2 \circ f_\varphi) \del \bar{f}_\varphi } \right) p \right \rangle. \end{align}
\end{lemma}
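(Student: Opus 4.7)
The plan is to reduce the action of an arbitrary $\varphi \in \text{Symp}^0(T^*S)$ on $j_1^0(g)$ to the action of the cotangent lift $f_\varphi^\#$, and then to carry out the classical covector pullback for $f_\varphi^\#$ in complex coordinates. For the reduction: since $g|_{Z^*S}=0$, both $g$ and its base-direction partials vanish along $Z^*S$, so the first-order term in $p$ of the Taylor expansion of $g\circ\varphi$ at $(x,0)$ is $dg_{(f_\varphi(x),0)}$ applied to the fiber component of $D\varphi(x,0)\cdot(0,p)$. By the matrix form of $D\varphi|_{Z^*S}$ displayed just above the lemma, this fiber component equals $(Df_\varphi)^{-T}p$, regardless of the off-diagonal block $A$. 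The cotangent lift $f_\varphi^\#$ has the same base map and the same fiber block along $Z^*S$, so $j_1^0(g\circ\varphi) = j_1^0(g\circ f_\varphi^\#)$; in particular the ideal depends on $\varphi$ only through $f_\varphi$.

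To compute the lift, choose complex coordinates $z$ near a point of $S$ and $w$ near its image under $f_\varphi$, and set $h = f_\varphi^{-1}$. Applying $f_\varphi^\#\alpha = \alpha\circ(Df_\varphi)^{-1}$ to $\alpha = p\,dz+\bar p\,d\bar z$ (using $h^*dz = (\del h)\,dw+(\delbar h)\,d\bar w$ and its conjugate) shows that $f_\varphi^\#$ sends the fiber coordinates $(p,\bar p)$ at $z$ to $(q,\bar q)$ at $f_\varphi(z)$ with $q = (\del h)p + \overline{\delbar h}\,\bar p$ and $\bar q = (\delbar h)p + \overline{\del h}\,\bar p$. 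Substituting into $g(w,q)= -\bar q+\mu_2(w)q+O(|q|^2)$ and collecting terms linear in $p,\bar p$ yields
\[ j_1^0(g\circ f_\varphi^\#)=\bigl((\mu_2\circ f_\varphi)\,\del h-\delbar h\bigr)p+\bigl((\mu_2\circ f_\varphi)\,\overline{\delbar h}-\overline{\del h}\bigr)\bar p. \]

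The ideal generated by a nonzero $1$-jet is unchanged under multiplication by a nowhere-vanishing function, so to match \eqref{diffeo-pullback-formula} it suffices to check that the ratio of the $p$-coefficient to minus the $\bar p$-coefficient above equals the stated expression. Differentiating $h\circ f_\varphi = \operatorname{id}$ in $z$ and $\bar z$ and solving the resulting $2\times 2$ linear system, using $\del\bar f_\varphi = \overline{\delbar f_\varphi}$, gives $\del h = \overline{\del f_\varphi}/J$ and $\delbar h = -\delbar f_\varphi/J$, where $J = |\del f_\varphi|^2-|\delbar f_\varphi|^2$ is the Jacobian of $f_\varphi$, positive by orientation-preservation. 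Substitution cancels the factor $1/J$ between numerator and denominator, and rewriting via $\overline{\del f_\varphi} = \delbar\bar f_\varphi$ and $\overline{\delbar f_\varphi} = \del\bar f_\varphi$ produces exactly \eqref{diffeo-pullback-formula}; the denominator is nonzero because $|\mu_2|<1$ and $|\delbar f_\varphi|<|\del f_\varphi|$. The conceptual crux is the reduction step, which shows that the non-lift part of $\varphi$ is invisible to the $2$-complex structure; once that is in hand the computation is the standard Beltrami pullback, and the only real difficulty is bookkeeping complex conjugations.
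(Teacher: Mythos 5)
Your proposal is correct and follows essentially the same route as the paper: first reduce to the cotangent lift $f_\varphi^\#$ by observing that the off-diagonal block $A$ of $D\varphi|_{Z^*S}$ is invisible because $g$ vanishes on $Z^*S$, then carry out the standard Beltrami-type pullback computation in complex coordinates (your passage through $h=f_\varphi^{-1}$ and the system $h\circ f_\varphi=\operatorname{id}$ is just the paper's explicit inverse matrix $(Df_\varphi)^{-1}$ written differently). The coefficients you obtain agree with the paper's, and your normalization and nonvanishing checks are the same ones the paper uses.
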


\begin{proof}
    First consider the case where $f_\varphi = \text{Id}$. Then in local linear coordinates, $D\varphi$ has the form $\begin{bmatrix} \text{Id} & A \\ 0 & \text{Id} \end{bmatrix}.$ As $g$ vanishes on $Z^*S$, we see that $j_1^0(g) = j_1^0(g \circ \varphi)$. So for any $\varphi\in \text{Symp}^0(T^*S)$, we have $j_1^0(g \circ \varphi) = j_1^0(g \circ f_\varphi^\#)$. Thus it suffices to compute the action of a lift of an orientation-preserving diffeomorphism of $S$.
    
    Now let $f: S \to S$ be an orientation-preserving diffeomorphism. A standard computation shows that for any $\zeta$, with respect to the bases of appropriate complexified tangent spaces given by $\del$ and $\delbar$, \begin{align*} Df_\zeta &= \begin{bmatrix}
\del f & \delbar f  \\ \del \bar{f} & \delbar \bar{f}
\end{bmatrix}, \qquad (Df_{\zeta})^{-1} = \frac{1}{|\del f|^2 - |\delbar f|^2} \begin{bmatrix} \delbar \bar{f} & - \delbar f \\ -\del f & \del f
\end{bmatrix}.\end{align*}

We then obtain
\begin{align*}
j_1^0(g\circ f^\#)
	&=  -\left( \frac{(-\delbar f) p + (\del f) \overline{p}}{|\partial f|^2 - |\delbar f|^2} \right) + (\mu_2 \circ f)\left( \frac{(\delbar \bar{f})p - (\del \bar{f})\overline{p} }{|\del f|^2 - |\delbar f|^2} \right),
\end{align*} so that \begin{align*}
 \langle j_1^0(g \circ f^\#) \rangle & = \langle (\delbar f) p - (\del f) \overline{p} + (\mu_2 \circ f) (\delbar \bar{f})p - (\mu_2 \circ f) (\del \bar{f} )\overline{p} \rangle \\
	&= \left\langle -\overline{p} + \left( \frac{\delbar f + (\mu_2 \circ f) \delbar \bar{f} }{\del f + (\mu_2 \circ f) \del \bar{f} } \right) p \right \rangle .
\end{align*} \end{proof}

A subtlety of the action of higher diffeomorphisms appears in the previous proof: $\varphi$ fixes every $2$-complex structure if and only if $f_{\varphi} = \text{Id}$, which does not fully specify $D\varphi$ along $Z^*(S)$. So two symplectic diffeomorphisms $\varphi, \psi \in \text{Symp}^0(T^*S)$ agreeing in their action on $\mathbb{N}^2(S)$ does not imply equality of the $1$-jets of $\varphi$ and $\psi$ along $Z^*(S)$. This persists in $\mathbb{N}^n(S)$ for $n > 2$, and shapes our approach to finding effective coordinates to analyze the structure of higher degree diffeomorphism groups in Section \ref{section-group-structure}.

Proposition \ref{lemma-2-action} can be used to show the action of $\text{Symp}^0(T^*S)$ on $\mathbb{N}^n(S)$ restricts to an action on the space $\mathbb{M}^n(S)$ of $n$-complex structures.

\begin{proposition}\label{prop-actual-action}
    If $I \in \mathbb{M}^n(S)$ and {\rm{$\varphi \in \text{Symp}^0(T^*S),$}} then $I \varphi \in \mathbb{M}^n(S)$.
\end{proposition}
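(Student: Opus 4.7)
The plan is to verify that each defining property of an element of $\mathbb{M}^n(S)$ is preserved under the action of $\varphi \in \text{Symp}^0(T^*S)$: being an ideal, codimension $n$, support at $0$, maximality of $I + \overline{I}$, and orientation-compatibility. The first four will follow from a short algebraic argument using that $\varphi$ acts by fiberwise algebra isomorphisms preserving the filtration by degree of vanishing along $Z^*S$. Orientation-compatibility is the one place where Lemma \ref{lemma-2-action} enters and where the hypothesis $\varphi \in \text{Symp}^0(T^*S)$ (rather than just $\text{Symp}(T^*S)$) becomes essential.

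First I would observe that the fiberwise action is an algebra isomorphism $F_{x, \varphi} : \bigoplus_{j=0}^{n-1} \mathcal{S}^j(T_{f_\varphi(x)}^\bbC S) \to \bigoplus_{j=0}^{n-1} \mathcal{S}^j(T_x^\bbC S)$ that preserves the filtration by degree, because composition with $\varphi$ preserves order of vanishing along $Z^*S$. In particular each $F_{x, \varphi}$ preserves the unique maximal ideal $\mathfrak{m} = \bigoplus_{j \geq 1} \mathcal{S}^j$ at $0$, so $I\varphi$ is a section of codimension-$n$ ideals supported at $0$. For maximality of $I\varphi + \overline{I\varphi}$, I use that the action commutes with complex conjugation (since $\varphi$ is a real map): $\overline{I\varphi} = \overline{I}\varphi$. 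Then $I\varphi + \overline{I\varphi} = (I + \overline{I})\varphi = \mathfrak{m}\varphi = \mathfrak{m}$.

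What remains is orientation-compatibility. The filtration-preservation above yields a $\text{Symp}^0(T^*S)$-equivariant projection $\pi : \mathbb{M}^n(S) \to \mathbb{N}^2(S)$ sending $I = \langle -\overline{p} + \mu_2 p + \cdots + \mu_n p^{n-1}\rangle$ to $\pi(I) = \langle -\overline{p} + \mu_2 p\rangle$. Since orientation-compatibility in the normal form amounts to $|\mu_2| < 1$, it reduces to the case $n = 2$. For this, Lemma \ref{lemma-2-action} gives an explicit formula for the new Beltrami coefficient $\mu_2'$; writing $a = \del f_\varphi$, $b = \delbar f_\varphi$, and $\mu = \mu_2 \circ f_\varphi$, a short computation gives $|a + \mu \overline{b}|^2 - |b + \mu \overline{a}|^2 = (|a|^2 - |b|^2)(1 - |\mu|^2)$. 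The second factor is positive by assumption and the first is positive because $\varphi \in \text{Symp}^0(T^*S)$ forces $f_\varphi$ to be orientation-preserving; thus $|\mu_2'| < 1$. The main obstacle is precisely this orientation-compatibility step, which genuinely requires the orientation-preserving hypothesis on $f_\varphi$ and is the reason Lemma \ref{lemma-2-action} is established first.
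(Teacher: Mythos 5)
Your proof is correct, but it takes a genuinely different route from the paper's. The paper certifies $I\varphi \in \mathbb{M}^n(S)$ by exhibiting a local generator of the normalized form \eqref{higher-complex-structure-normalized-form}: it computes the degree-$1$ part via Lemma \ref{lemma-2-action}, checks $|\mu_2'|<1$ exactly where you do, and then repeatedly substitutes the resulting relation for $\overline{p}$ into the higher-degree terms to eliminate all antiholomorphic and mixed monomials, so that membership in $\mathbb{M}^n(S)$ follows from the characterization in Proposition \ref{fock-thomas-global-coordinates}. You instead verify the intrinsic ideal-theoretic conditions defining $\mathcal{I}^n_x$ directly---a filtration-preserving fiberwise algebra isomorphism preserves ideals, codimension, and support at $0$, and since the action commutes with conjugation it preserves maximality of $I+\overline{I}$---and only descend to the $n=2$ level for the orientation condition. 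Both arguments are sound. Yours is cleaner on the algebraic side and supplies the identity $|a+\mu\overline{b}|^2-|b+\mu\overline{a}|^2=(|a|^2-|b|^2)(1-|\mu|^2)$ that the paper only asserts implicitly; the paper's version has the advantage of actually producing the coordinate expression \eqref{action-expression} for $I\varphi$, which is reused downstream. Two small points you should make explicit: that $I\varphi$ is again a \emph{smooth} section of $\mathcal{I}^n(S)$ (immediate, since it is locally generated by $j_{n-1}^0(g\circ\varphi)$ for $g$ a local generator of $I$), and that your projection to $\mathbb{N}^2(S)$ is well defined on ideals because the action preserves the decreasing filtration by order of vanishing along $Z^*S$ (it does not preserve the grading by homogeneous degree).
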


\begin{proof} Let $j_{n-1}^0(g) = -\overline{p} + \mu_2 p + ... + \mu_n p^{n-1}$ generate $I$ locally and $\varphi \in \text{Symp}^0(T^*S)$. We show that $I \varphi \in \mathbb{M}^n(S)$ by demonstrating that it admits a local expression of the form of formula \eqref{higher-complex-structure-normalized-form}.

Since the action of $\varphi$ on $j_1^0(g)$ depends only on $D \varphi$, following Lemma \ref{lemma-2-action} we see that \begin{align}\label{action-expression} \langle j_{n-1}^0(g \circ \varphi) \rangle = \left\langle -\overline{p} + \left( \frac{\delbar f_\varphi + (\mu_2 \circ f_\varphi) \delbar \bar{f}_\varphi }{\del f_\varphi + (\mu_2 \circ f_\varphi) \del \bar{f}_\varphi } \right) p + P(z, p, \overline{p}) \right \rangle . \end{align} where $P(z, p, \overline{p})$ is a polynomial in $p$ and $\overline{p}$ with coefficients functions of $z$ and no linear or constant terms in $p$ or $\overline{p}$. Note that as $|\mu_2| < 1$ and $f_\varphi$ is orentation-preserving, $$\left| \frac{\delbar f_\varphi + (\mu_2 \circ f_\varphi) \delbar \bar{f}_\varphi }{\del f_\varphi + (\mu_2 \circ f_\varphi) \del \bar{f}_\varphi }  \right| < 1. $$ So the condition $|\mu_2| < 1$ is preserved by the action of $\varphi$. Repeated application of the relation specified by formula \eqref{action-expression} yields a representative of $\langle j_{n-1}^0(g \circ \varphi) \rangle$ of the form of formula \eqref{higher-complex-structure-normalized-form}.
\end{proof}

Proposition \ref{prop-actual-action} ensures that $\text{Symp}^0(T^*S)$ acts on $\mathbb{M}^n(S)$, which justifies the following definitions.

    \begin{definition} {\rm{${\mathcal{T}}^n(S) = \mathbb{M}^n(S)/\text{Ham}^0_c(T^*S)$}} is the {\rm{degree-$n$ Fock-Thomas space}} of $S$.
    \end{definition}

    The space $\mathcal{T}^n(S)$ can be treated as just a topological space for now. A formal argument for $\mathcal{T}^n(S)$ being a manifold appears in \cite{fock2021higher}. Our later analysis independently yields a manifold structure on $\mathcal{T}^n(S)$.

\begin{definition}
The {\rm {degree $n$-diffeomorphism group}} $\mathcal{G}^n(S)$ is the quotient of {\rm{$\text{Ham}^{0}_{c}(T^*S)$}} by the kernel of its action on $\mathbb{M}^n(S)$.

The {\rm {$2$-stationary diffeomorphism group }}$\mathcal{N}(S)$ consists of all $h \in \mathcal{G}^n(S)$ that fix all $2$-complex structures. For $2 \leq k \leq n$, we define the {\rm{$k$-stationary diffeomorphism group}} $\mathcal{N}^k(S)$ to be the subgroup of $\mathcal{N}(S)$ that fixes all $k$-complex structures. An element of $\mathcal{N}^k(S)$ or in the pre-image of $\mathcal{N}^k(S)$ in {\rm{$\text{Symp}^0(T^*S)$}} is said to be {\rm{$k$-stationary}}, and a Hamiltonian $H_t$ is said to be {\rm{$k$-stationary}} if $H_t$ generates a flow by $k$-stationary higher diffeomorphisms.
\end{definition}
    
\begin{remark}
    Matters of topology require care for groups, such as $\mathcal{G}^n(S)$, arising from groups of diffeomorphisms with compact supports. One source of complications is that the compactly supported diffeomorphism groups {\rm{$\text{Diff}_c(T^*S)$}} of connected, noncompact, $\sigma$-compact manifolds are never topological groups with respect to the standard topology---their group operations are not continuous \cite{tatsuuma1998group}.
    
    A consequence of this is that even though the groups $\mathcal{G}^n(S)$ and $\mathcal{N}(S)$ inherit topologies from their definition, that they are even topological groups with these topologies is not immediately clear.
    
    Our structural results for $\mathcal{G}^n(S)$ in Section \ref{section-group-structure} will allow us to resolve some foundational matters about groups of higher diffeomorphisms, which we carry out in Section \ref{frechet-lie-subsection} and Appendix \ref{appendix-community-service}.
\end{remark}
    
    In the sequel we will make use of elements of $\text{Ham}_c^0(T^*S)$ agreeing with lifts of arbitrary diffeomorphisms $f \in \text{Diff}_0(T^*S)$ in a neighborhood of $Z^*S$. As lifts of nontrivial diffeomorphisms $f: S \to S$ are not compactly supported, we must verify that these exist.

\begin{lemma}\label{lifts-get-realized} For a diffeomorphism {\rm{$f \in \text{Diff}_0(T^*S)$}}, the lift $f^\#$ of $f$ agrees with an element of {\rm{$\text{Ham}_c^0(T^*S)$}} in its action on $\mathbb{M}^n(S)$. For an isotopy {\rm{$f_t \in \text{Diff}_0(S)$}}, there is an isotopy {\rm{$\varphi_t \in \text{Ham}_c^0(T^*S)$}} so $\varphi_t =f_t^\#$ on a neighborhood of $Z^*S$ for $0\leq t \leq 1$.
\end{lemma}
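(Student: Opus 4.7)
The plan is to realize $f^\#$ (or $f_t^\#$) as the restriction to a neighborhood of $Z^*S$ of a compactly supported Hamiltonian flow, then invoke the observation from Section \ref{2-complex} that agreement of two symplectomorphisms near $Z^*S$ produces agreement of their actions on $\mathbb{N}^n(S)$, hence on $\mathbb{M}^n(S)$. Since the first assertion of the lemma is obtained from the isotopy statement by taking any isotopy from $\text{Id}$ to $f$ and evaluating at $t=1$, I focus on the isotopy statement.

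First I would observe that the lift $f_t^\#$ preserves the tautological one-form $\lambda_{\text{taut}}$ exactly. Indeed, at $\alpha \in T^*_xS$ and $v \in T_\alpha T^*S$, one has $((f_t^\#)^*\lambda_{\text{taut}})_\alpha(v) = (f_t^\#\alpha)(D\pi\, Df_t^\# v) = \alpha((Df_t)^{-1}Df_t\, D\pi v) = \lambda_{\text{taut},\alpha}(v)$. Consequently the time-dependent generating vector field $X_t = \tfrac{d}{ds}\big|_{s=t} f_s^\# \circ (f_t^\#)^{-1}$ satisfies $\mathcal{L}_{X_t}\lambda_{\text{taut}}=0$, and Cartan's formula gives $\iota_{X_t}\omega_{\text{can}} = -\iota_{X_t}\,d\lambda_{\text{taut}} = d\iota_{X_t}\lambda_{\text{taut}} = dH_t$, where $H_t := \lambda_{\text{taut}}(X_t)$. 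Because $\lambda_{\text{taut}}$ vanishes on $Z^*S$, so does $H_t$, and the isotopy $f_t^\#$ is the Hamiltonian flow of $H_t$.

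Next I would cut $H_t$ off. Choose a smooth $\eta: T^*S \to \bbR$ compactly supported with $\eta \equiv 1$ on an open neighborhood $U$ of $Z^*S$. Set $\widetilde{H}_t = \eta H_t$. Then $\widetilde{H}_t$ is compactly supported, smooth in $(t,\alpha)$, and vanishes on $Z^*S$; its Hamiltonian flow $\varphi_t$ therefore lies in $\text{Ham}_c^0(T^*S)$. Since $d\eta = 0$ on $U$, we have $X_{\widetilde{H}_t} = X_{H_t}$ on $U$, so the flows $\varphi_t$ and $f_t^\#$ agree on any subset of $U$ while they both remain in $U$.

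The one remaining point — which I expect is the main (mild) obstacle — is to pass from this infinitesimal agreement to agreement on a neighborhood of $Z^*S$ for the full interval $t \in [0,1]$. For this I would invoke compactness of $Z^*S$ and joint continuity of the flow $f_t^\#$ on $Z^*S \times [0,1]$, together with the fact that $f_t^\#$ preserves $Z^*S$, to produce an open $V \subset U$ containing $Z^*S$ with $f_t^\#(V) \subset U$ for every $t \in [0,1]$. A standard open-closed argument in $t$ then gives $\varphi_t|_V = f_t^\#|_V$ for all $t \in [0,1]$: the set of $t$ on which the two flows agree on $V$ is closed by continuity and open because, at any such $t$, the common image lies in $U$ where the generating vector fields coincide. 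This proves the isotopy statement, and the single-diffeomorphism statement follows since $\varphi_1 \in \text{Ham}_c^0(T^*S)$ agrees with $f^\#$ in a neighborhood of $Z^*S$, so by the remark at the start of Section \ref{2-complex} it induces the same action on $\mathbb{M}^n(S)$.
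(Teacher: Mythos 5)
Your proposal is correct and follows essentially the same route as the paper: observe that $f_t^\#$ preserves $\lambda_{\text{taut}}$ exactly, extract a generating Hamiltonian vanishing on $Z^*S$, and multiply by a cutoff function. The only difference is that you spell out the (standard) verification that the truncated flow still agrees with $f_t^\#$ on a uniform neighborhood of $Z^*S$ for all $t\in[0,1]$, which the paper leaves implicit.
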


\begin{proof} 
For an isotopy $f_t \in \text{Diff}_0(S)$, every $f_t^\#$ is exact: $(f_t^\#)^* \lambda_{\text{taut}} = \lambda_{\text{taut}}$. It follows from standard symplectic topology (e.g. \cite{mcduff2017introduction} 9.19-9.20) that $f_t^\#$ is Hamiltonian on $T^*S$. Let $H_t$ be a generating function. Multiplying $H_t$ by a cutoff function gives a Hamiltonian that generates the desired isotopy in $\text{Ham}_c^0(T^*S)$. 
\end{proof}

\subsection{An Example: 2-Complex Structures and Teichm\"uller Space}\label{subsection-2-complex}
We now describe the identification between $\mathcal{T}^2(S)$ and $\mathcal{T}(S)$ in our framework, and prove a lemma used in our later analysis of the finer structure of $\mathcal{G}^n(S)$. The identification has been established in the machinery of \cite{fock2021higher} there; the utility of doing this for us is an explicit description within our framework.

We begin by recalling the formula for the action of diffeomorphisms on $\mathbb{M}(S)$ in terms of smooth Beltrami differentials on a fixed Riemann surface $\Sigma$.

Given a Beltrami differential $\mu$, an atlas of isothermal coordinates $\varphi^i$ (so $\delbar \varphi^i = \mu \del \varphi^i$) can be found, and these define a complex structure. Diffeomorphisms act on complex structures by pullback, and an explicit formula for how diffeomorphisms act on Beltrami differentials can be found as follows. Given a complex structure $\Sigma$ defined by an atlas of charts $\{\varphi^i\}$ and an orientation-preserving diffeomorphism $f \in \text{Diff}^+(S)$, another complex structure $f^* \Sigma$ is determined by the atlas $\{\varphi^i \circ f\}$. If $\varphi^i$ are isothermal coordinates for $\mu$, then
\begin{align*}
\delbar(\varphi^i \circ f) &= (\delbar f) [(\del \varphi^i) \circ f] + (\delbar \bar{f})[(\delbar \varphi^i ) \circ f] = [(\del \varphi^i) \circ f] ( \delbar f + (\mu\circ f) \delbar \bar{f} ), \\
\del (\varphi^i \circ f) &= (\del f) [(\del \varphi^i) \circ f] + (\del \bar{f})[(\delbar \varphi^i)\circ f] = [(\del \varphi^i)\circ f] (\del f + (\mu\circ f) \del \bar{f}).
\end{align*} This shows that $f^* \Sigma$ is the complex structure determined by the Beltrami differential \begin{align}\label{classical-pullback} f^* \mu = \frac{\delbar f + (\mu\circ f) \delbar \bar{f}}{\del f + (\mu \circ f) \del \bar{f} }.\end{align}

The observation that the same expression appears in formulas \eqref{classical-pullback} and \eqref{diffeo-pullback-formula} yields the following useful description of $\mathcal{G}^2(S).$

\begin{lemma}\label{2-diffeos-just-diffeos}
    The map {\rm{$\text{Ham}_c^0(T^*S) \to \text{Diff}_0(S)$}} given by $ \varphi \mapsto f_\varphi$ induces an isomorphism {\rm{$\mathcal{G}^2(S) \to \text{Diff}_0(S) $}}, equivariant with respect to the actions of $\mathcal{G}^2(S)$ on $\mathbb{M}^2(S)$ and {\rm{$\text{Diff}_0(S)$}} on $\mathbb{M}(S)$. 
\end{lemma}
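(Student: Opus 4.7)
My plan is to observe that the action of $\varphi \in \text{Ham}_c^0(T^*S)$ on $\mathbb{M}^2(S)$, written in the normalized coordinates $\langle -\overline{p} + \mu_2 p \rangle$, coincides with the classical pullback action of $f_\varphi$ on the Beltrami differential $\mu_2 \in \mathbb{M}(S)$, and then to conclude that the assignment $\varphi \mapsto f_\varphi$ descends to an isomorphism after passing to $\mathcal{G}^2(S)$.

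First, I would check that $\Phi : \varphi \mapsto f_\varphi$ is a group homomorphism from $\text{Ham}_c^0(T^*S)$ to $\text{Diff}^+(S)$, since composition of symplectomorphisms of $T^*S$ restricts to composition of their traces on $Z^*S$. Because every $\varphi \in \text{Ham}_c^0(T^*S)$ is generated by a Hamiltonian isotopy that preserves $Z^*S$ setwise at all times, the restriction of this isotopy to the zero section gives an isotopy of $f_\varphi$ to the identity in $\text{Diff}^+(S)$; hence $\Phi$ takes values in $\text{Diff}_0(S)$. Surjectivity of $\Phi$ is immediate from \Cref{lifts-get-realized}, which for any $f \in \text{Diff}_0(S)$ produces $\varphi \in \text{Ham}_c^0(T^*S)$ with $f_\varphi = f$.

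The key step is to pin down the kernel of the $\text{Ham}_c^0(T^*S)$-action on $\mathbb{M}^2(S)$. Using the Fock-Thomas normalization of \Cref{fock-thomas-global-coordinates}, I would identify $\mathbb{M}^2(S)$ with $\mathbb{M}(S)$ via $\langle -\overline{p} + \mu_2 p\rangle \mapsto \mu_2$. Under this identification, the formula \eqref{diffeo-pullback-formula} from \Cref{lemma-2-action} for the action of $\varphi$ in centered coordinates matches, term for term, the classical pullback formula \eqref{classical-pullback} for the action of $f_\varphi$ on Beltrami differentials. Consequently, the $\text{Ham}_c^0(T^*S)$-action on $\mathbb{M}^2(S)$ factors through $\Phi$, and the induced action on $\mathbb{M}(S)$ is the usual one. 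This yields both the asserted equivariance and the reduction of the kernel computation to showing that $\text{Diff}_0(S)$ acts faithfully on $\mathbb{M}(S)$.

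For faithfulness, I would invoke the standard fact that for $g \geq 2$, any $f \in \text{Diff}_0(S)$ fixing some $\Sigma \in \mathbb{M}(S)$ lies in the finite biholomorphism group $\text{Aut}(\Sigma)$, so connectedness of $\text{Diff}_0(S)$ forces $f = \text{Id}$. Combining these points, $\Phi$ descends to a well-defined bijective homomorphism $\mathcal{G}^2(S) \to \text{Diff}_0(S)$ equivariant with respect to the two actions. The only genuinely computational point is matching the right-hand sides of \eqref{diffeo-pullback-formula} and \eqref{classical-pullback}, which is already carried out in the proof of \Cref{lemma-2-action}, so I do not anticipate any serious obstacle beyond bookkeeping.
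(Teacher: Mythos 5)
Your proposal is correct and follows essentially the same route as the paper, which derives the lemma directly from the observation that the right-hand sides of \eqref{diffeo-pullback-formula} and \eqref{classical-pullback} coincide. The additional details you supply—surjectivity via \Cref{lifts-get-realized}, landing in $\text{Diff}_0(S)$ by restricting the generating isotopy to $Z^*S$, and identifying the kernel via freeness of the $\text{Diff}_0(S)$ action on $\mathbb{M}(S)$ for $g\geq 2$—are exactly the bookkeeping the paper leaves implicit (and the freeness fact is one the paper itself invokes later, in the proof of Theorem \ref{structure-of-higher-diffeos}).
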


As the map $\mu_2 \mapsto \langle - \overline{p} + \mu_2 p \rangle$ from $\mathbb{M}(S) \to \mathbb{M}^2(S)$ is equivariant with respect to the actions of $\text{Diff}_0(S)$ and $\mathcal{G}^2(S)$, an identification of $\mathcal{T}^2(S)$ and $\mathcal{T}(S)$ is obtained.

    \section{Projections and the Higher Degree Mapping Class Group}\label{section-mapping-class}
    Two further basic structural features of $\mathcal{T}^n(S)$ play roles in the following: there are natural projections $\pi_k: \mathcal{T}^n(S) \to \mathcal{T}^k(S)$ and a mapping class group action on $\mathcal{T}^n(S)$. The existence of projections was proved in \cite{fock2021higher} and the existence of a mapping class group action is remarked upon there. We explain a way to see the mapping class group action in terms of permuting markings of $n$-complex structures, which we use in our proofs of $\text{Mod}(S)$-equivariance of mappings later.
    
    Following \cite{fock2021higher}, there are projections $p_{k,j} : \mathcal{J}_k^1 \to \mathcal{J}_j^1 $ for $0 < j \leq k$ given by identifying $\mathcal{J}^1_k / \mathcal{J}^{j+1}_k$ with $\mathcal{J}^1_j$. In terms of homogeneous functions, $p_{k,j}: (a_1, ...,a_j, ... a_k) \mapsto (a_1, ..., a_j)$. As $\text{Symp}^0(T^*S)$ respects the filtration of $\mathcal{J}_k^1$ by degree, the projections $p_{k,j}$ are equivariant with respect to the action of higher degree diffeomorphisms. Furthermore, the induced maps $\mathbb{N}^k(S) \to \mathbb{N}^j(S) $ restrict to maps $\mathbb{M}^k(S) \to \mathbb{M}^j(S)$. So for a fixed $n$, the maps $p_{n,j}$ induce projections $\pi_{j}: {\mathcal{T}}^n(S) \to {\mathcal{T}}^j(S)$.

For our desired description of the $\text{Mod}(S)$ action on $\mathcal{T}^n(S)$, we shall need an analogue of the description of $\mathcal{T}(S)$ as the space of marked complex structures considered up to orientation-preserving diffeomorphism, which we now describe.

To begin, note that by Lemma \ref{2-diffeos-just-diffeos} for any $I \in \mathbb{M}^n(S)$ the projection $p_{n,2}(I) \in \mathbb{M}^2(S)$ is identified with a complex structure $\Sigma(I)$ on $S$. This identification is equivariant with respect to the actions of $\mathcal{G}^2(S)$ on $\mathbb{M}^2(S)$ and $\text{Diff}_0(S)$ on $\mathbb{M}(S)$. We remark that another description of $\Sigma(I)$ appears in Section \ref{section-natural-coords}.

The markings in our setting now carry over from classical Teichm\"uller theory essentially without modification. Specifically, we define a \textit{marked $n$-complex structure} to be a pair $(I, \phi)$ where $\phi: S \to \Sigma(I)$ is an orientation-preserving diffeomorphism. Markings on the same $n$-complex structure are subject to the equivalence relation defined by identifying $(I, \phi_1)$ and $(I, \phi_2)$ if $\phi_1$ and $\phi_2$ are isotopic, which is to say that $\phi_2^{-1} \circ \phi_1$ is isotopic to $\text{Id}$.

Denote by $\dot{\mathbb{M}}^n(S)$ the collection of marked $n$-complex structures on $S$. The group $\text{Symp}^0(T^*S)$ of symplectic diffeomorphisms acts on $\dot{\mathbb{M}}^n(S)$ by $(I, \phi) \varphi = (I\varphi, \phi \circ f_\varphi)$, where as before $f_\varphi = \varphi|_{Z^*S}$. Note that $\varphi \in \text{Ham}_c^0(T^*S)$ acts trivially on $\dot{\mathbb{M}}^n(S)$ if and only if it acts trivially on $\mathbb{M}^n(S)$.

The other component of our description of $\mathcal{T}^n(S)$ as a quotient of $\dot{\mathbb{M}}^n(S)$, and the component that requires care in its definition, is an appropriate analogue to the orientation-preserving diffeomorphism group $\text{Diff}^+(S)$ in Teichm\"uller theory. Such a group should enlarge the degree-$n$ diffeomorphism group $\mathcal{G}^n(S)$, which plays a role analogous to $\text{Diff}_0(S)$ in classical Teichm\"uller theory. A note here is that we must remove the restriction to compactly supported symplectic diffeomorphisms in some form, in order to include actions on $\dot{\mathbb{M}}^n(S)$ of diffeomorphisms of the form $f^\#$ for $f\in \text{Diff}^+(S)$ not isotopic to the identity.

The following are the groups of diffeomorphisms we work with, and have the benefit of including all lifts $f^\#$ of diffeomorphisms $f \in \text{Diff}^+(S)$, while still being sufficiently restricted to behave similarly to the group $\text{Symp}_c(T^*S)$ of compactly supported symplectic diffeomorphisms.

\begin{definition}\label{awful-symplectos-defs}
    A symplectic diffeomorphism {\rm{$\varphi \in \text{Symp}^0(T^*S)$}} so that $\varphi|_{Z^*S}$ is an orientation-preserving diffeomorphism of $Z^*S$ is {\rm{tame}} if there is a compact set $K \subset T^*S$ and {\rm{$f \in \text{Diff}^+(S)$}} so that $\varphi(\alpha) = f^\#(\alpha)$ for all $\alpha \notin K$. The group of tame symplectomorphisms of $T^*S$ is written as {\rm{$\text{Symp}_T^0(T^*S)$}}.
    
    A tame symplectic diffeomorphism {\rm{$\varphi \in \text{Symp}^0_T(T^*S)$}} is said to be {\rm{standard}} if there is a diffeomorphism {\rm{$f \in \text{Diff}^+(S)$}} so that {\rm{$\varphi \circ f^\# \in \text{Ham}_c^0(T^*S)$}}, and {\rm{$\varphi \in \text{Symp}_T^0(T^*S)$}} is otherwise said to be {\rm{exotic}}. The group of standard tame symplectic diffeomorphisms is denoted by {\rm{$\text{Symp}_T^{0,S}(T^*S)$}}.
    
    Denote by {\rm{$\text{Ham}_T^0(T^*S)$}} the group of tame symplectic diffeomorphisms $\varphi$ so that there exists {\rm{$f \in \text{Diff}_0(T^*S)$}} so that {\rm{$\varphi \circ f^\# \in \text{Ham}_c^0(T^*S)$}}.
    
    The quotient of {\rm{$\text{Symp}_T^0(T^*S)$}} by the kernel of its action on the collection of marked $n$-complex structures $\dot{\mathbb{M}}^n(S)$ is the {\rm{full degree-$n$ diffeomorphism group}} $\mathcal{H}^n(S)$. The quotient $\mathcal{H}^{n,S}(S)$ of {\rm{$\text{Symp}_T^{0,S}(T^*S)$}} by the kernel of its action on $\dot{\mathbb{M}}^n(S)$ is called the {\rm{$n$-standard diffeomorphism group}}.
    \end{definition}

A few observations and remarks are in order. The point of these definitions is that $\mathcal{T}^n(S)$ is identified with $\dot{\mathbb{M}}^n(S)/\text{Symp}_T^{0,S}(T^*S)$, which is explained in the coming paragraphs. As a consequence, we obtain a definition of $\mathcal{T}^n(S)$ in terms of marked $n$-complex structures that allows for a concrete description of the mapping class group action. Any $\varphi \in \text{Symp}_T^0(T^*S)$ agrees with the lift of a unique diffeomorphism except on a compact set. We denote this diffeomorphism by $f^\varphi$.

We begin with a remark on the on algebraic and topological structure of the groups of Definition \ref{awful-symplectos-defs}. Let $\mathcal{D}^+(S)$ denote the subgroup of $\text{Symp}_T^0(T^*S)$ consisting of lifts $f^\#$ of diffeomorphisms $f \in \text{Diff}^+(S)$. Then $\text{Symp}_c^0(T^*S)$ is a normal subgroup of $\text{Symp}_T^0(T^*S)$, the intersection $\text{Symp}_c^0(T^*S) \cap \mathcal{D}^+(S) = \{\text{Id}\}$ is trivial, and $\text{Symp}_T^0(T^*S) = \text{Symp}_c^0(T^*S) \mathcal{D}^+(S)$. So $\text{Symp}_T^0(T^*S)$ is the internal algebraic semidirect product $\text{Symp}_c^0(T^*S) \rtimes \mathcal{D}^+(S)$. We give $\text{Symp}_T^0(T^*S)$ the topology of the product $\text{Symp}_c^0(T^*S) \times \text{Diff}^+(S)$ in the obvious fashion. Due to analogous reasoning, we have a semidirect product decomposition $\text{Symp}_T^{0,S}(T^*S) = \text{Ham}_c^0(T^*S) \rtimes \mathcal{D}^+(S)$. One final algebraic remark used in the sequel is that $\text{Ham}_T^0(T^*S)$ is normal in $\text{Symp}_T^{0,S}(T^*S)$.

Another useful observation is that the quotient of $\text{Ham}_T^0(T^*S)$ by the stabilizer of its action on $\dot{\mathbb{M}}^n(S)$ is identified with $\mathcal{G}^n(S)$. To see this, for any $\varphi \in \text{Ham}_T^0(T^*S)$, let $\psi \in \text{Ham}_c^0(T^*S)$ agree with $(f^\varphi)^\#$ on a neighborhood of $Z^*S$. Such a diffeomorphism $\psi$ exists by Lemma \ref{lifts-get-realized}. Then $\psi \circ ((f^\varphi)^\#)^{-1} \circ \varphi \in \text{Ham}_c^0(T^*S)$ agrees with $\varphi$ in a neighborhood of $Z^*S$, and so has the same action on $\mathbb{M}^n(S)$.

 One verifies that the maps \begin{align*} \mathbb{M}^n(S)/\text{Ham}_c^0(T^*S) &\to \dot{\mathbb{M}}^n(S)/\text{Symp}_{T}^{0,S}(T^*S) \\ [I]&\mapsto [(I, \text{Id})]  \end{align*} and \begin{align*} \dot{\mathbb{M}}^n(S)/\text{Symp}_{T}^{0,S}(T^*S) &\to \mathbb{M}^n(S)/\text{Ham}_c^0(T^*S) \\ [(I, \phi)] &\mapsto [I(\phi^{-1})^\#] \end{align*}are well-defined and inverse to each other. The verification that the second map is well-defined uses the normality of $\text{Ham}_T^0(T^*S)$ in $\text{Symp}_T^{0,S}(T^*S)$. This gives an identification between $\mathcal{T}^n(S)$ and $\dot{\mathbb{M}}^n(S)/\text{Symp}_T^{0,S}(T^*S)$.

We are now ready to define the mapping class group action on $\mathcal{T}^n(S)$ and to give an equivalent description in terms of marked $n$-complex structures.

 \begin{definition}
 The action of the mapping class group on $\mathcal{T}^n(S)$ is defined by $[I][f] = [I f^\#]$. In the description of $\mathcal{T}^n(S)$ as {\rm{$\dot{\mathbb{M}}^n(S)/\text{Symp}_{T}^{0,S}(T^*S)$}}, this action is given by $[(I, \phi)][f] = [(I, f^{-1} \circ \phi)]$.
 \end{definition}
 
 One notes here that our identification $\Phi: \mathcal{T}^n(S) \to \dot{\mathbb{M}}^n(S)/\text{Symp}_{T}^{0,S}(T^*S)$ is equivariant with respect to the mapping class group action: $$\Phi([I][f]) = \Phi([If^\#]) = [(If^\#, \text{Id})] = [(I, f^{-1})] = [(I, \text{Id})][f] = \Phi([I])[f].$$
 
Inspired by a talk of Thomas, we conclude this section with a brief discussion of a peculiarity of the Fock-Thomas spaces: there is a group containing $\text{Mod}(S)$ that acts naturally on $\mathcal{T}^n(S)$, but which is not clear is only $\text{Mod}(S)$.

\begin{definition} The {\rm{degree-$n$ mapping class group}} {\rm{$\text{HMod}^n(S)$}} is the quotient $\mathcal{H}^n(S)/\mathcal{G}^n(S)$.
\end{definition}

We prove in Appendix \ref{appendix-community-service} that $\mathcal{H}^n(S)$ is a topological group, that $\mathcal{G}^n(S)$ is the identity component of $\mathcal{H}^n(S)$, and that $\text{HMod}^n(S)$ is discrete. The proof is somewhat delicate: it uses substantial theorems about infinite-dimensional Lie groups and our structural results from Section \ref{section-group-structure}. The proof also makes crucial use of the facts that the compactly supported De Rahm cohomology group $H^1_c(T^*S)$ is $0$ and that $\omega_\text{can}$ is exact. 
 
 The degree-$n$ mapping class group $\text{HMod}^n(S)$ has a natural projection to $\text{Mod}(S)$ by taking mapping classes of restrictions to the zero section: $[\varphi] \mapsto [f_\varphi]$ is a surjective homomorphism $\text{HMod}^n(S) \to \text{Mod}(S)$. If we denote the kernel of this map by $K$, there is a split short exact sequence $$\begin{tikzcd} 1 \arrow[r] & K \arrow[r] & \text{HMod}^n(S) \arrow[r] & \text{Mod}(S) \arrow[r] & 1, \end{tikzcd}$$ with splitting given by $i: [f] \mapsto [f^\#]$. We call elements of $\text{HMod}^n(S)$ \textit{degree-$n$ mapping classes} and adopt the terminology that a mapping class $[\varphi] \in \text{HMod}^n(S)$ is \textit{$n$-standard} if it is in $i(\text{Mod}(S))$ and \textit{exotic} otherwise. A question of Thomas is if exotic degree-$n$ mapping classes exist for $n > 2$. In any matter, $\text{HMod}^n(S)$ acts on $\mathcal{T}^n(S)$ viewed as $\mathbb{M}^n(S)/\mathcal{G}^n(S)$ by $[I] [\varphi] = [I\varphi]$.
 

\section{The First Variation Formula}\label{section-first-var}

In this section, we discuss the infinitesimal action of $\text{Ham}_c^0(T^*(S))$ on the space of $n$-complex structures. The main computational tool we apply in our later analysis of $\mathcal{G}^n(S)$ is the first variation formula of \cite{fock2021higher}, which we recall as Proposition \ref{first-variation-formula}. We also give a new geometric interpretation of the first variation formula in Section \ref{subsection-maass}.

Let $\Sigma$ be a Riemann surface compatible with the orientation of $S$. As before, in a complex coordinate $z = x + iy$, a basis determined by $z$ for pointwise degree-1 polynomials in $T^{*\bbC}M$ is given by $p = \frac{1}{2}(p_x - i p_y)$, $\overline{p} = \frac{1}{2}(p_x + i p_y)$, corresponding to $\del_z$ and $\del_{\bar{z}}$ respectively.

The basic formula underlying infinitesimal analysis of $\mathcal{G}^n(S)$ is an expression of the Poission bracket in complex coordinates. Considering the restriction of polynomials to the real subspace of the complexified contangent bundle, a computation shows that the Poisson bracket of $w(z)p^k \bar p^l$ and $a(z)p^m \bar p ^n$ is given by \begin{align*} \{w p^k \bar p^l, a p^m \bar p^n \} &= \del_p (a p^m \bar p^n)\del_z (w p^k \bar p^l) + \del_{\bar p} (a p^m \bar p^n) \del_{\bar z} (w p^k \bar p^l) - \del_z (a p^m \bar p^n)\del_p (w p^k \bar p^l) \\ &\qquad - \del_{\bar z} (a p^m \bar p^n)\del_{\bar p} (w p^k \bar p^l). 
\end{align*}
We emphasize here that as in Section \ref{2-complex}, even though an expression is given in complex coordinates, the Hamiltonian flow is only acting directly on the restriction of complex polynomials to the real cotangent bundle. The complex coordinate expression arises from taking unique complex polynomial extensions to $T^{*\bbC}S$.

 A useful simplification to infinitesimal analysis of higher diffeomorphisms is that the first variation of $I \in \mathbb{M}^n(S)$ under the flow generated by a Hamiltonian $H$ only depends on $j_{n-1}^0(H) \pmod I$.

\begin{lemma}[Fock-Thomas \cite{fock2021higher}]\label{lemma-work-mod-I}
Let $\Sigma \in \mathcal{T}(S)$ be fixed. Let $H$ be a Hamiltonian, and identify $j_{n-1}^0(H)$ with a function on $T^*S$. Then the first variation of $I \in \mathbb{M}^n(S)$ locally generated by $f$ under the Hamiltonian flow generated by $H$ in centered coordinates associated to $\Sigma$ is given by $\{ f,j_{n-1}^0(H)\} \pmod I = \{f, j_{n-1}^0(H) \pmod I\} \pmod I$.
\end{lemma}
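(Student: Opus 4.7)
The plan is to identify the first variation of $I$ under the Hamiltonian flow generated by $H$ with the class of $\{f, H\}$ in $\mathcal{J}^1_{n-1}/I$, and then to verify each of the two displayed equalities by a separate direct computation on generators. The starting point is Fock--Thomas' formula $\frac{d}{dt}(f \circ \varphi_t) = \{f \circ \varphi_t, H_t\}$ recorded earlier in Section \ref{2-complex}, which at $t=0$ gives $\frac{d}{dt}\big|_{t=0}(f \circ \varphi_t) = \{f, H\}$. Since $I$ is principal, a change of local generator to $u f$ (with $u$ invertible) transforms this via Leibniz for the Poisson bracket to $\{uf, H\} = u\{f, H\} + \{u, H\} f \equiv u\{f, H\} \pmod I$, so the first variation of the ideal is the well-defined class $[\{f, H\}]$ in $\mathcal{J}^1_{n-1}/I$.

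For the first equality, I would write $H = j^0_{n-1}(H) + R$ with $R$ of fiberwise bi-degree $\geq n$ in $(p, \bar p)$ along $Z^*S$, and apply the complex coordinate formula for the Poisson bracket from the opening of Section \ref{section-first-var}. Since $f = -\bar p + \sum_{k=2}^n \mu_k p^{k-1}$ has fiber degree in $[1, n-1]$, its derivatives satisfy $\deg \partial_z f, \deg \partial_{\bar z} f \in [1, n-1]$ and $\deg \partial_p f, \deg \partial_{\bar p} f \in [0, n-2]$; while the derivatives of $R$ satisfy $\deg \partial_p R, \deg \partial_{\bar p} R \geq n-1$ and $\deg \partial_z R, \deg \partial_{\bar z} R \geq n$. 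Each of the four summands in the expansion of $\{f, R\}$ therefore has fiber degree at least $n$, so $\{f, R\}$ vanishes in $\mathcal{J}^1_{n-1}$. This yields $\{f, H\} \equiv \{f, j^0_{n-1}(H)\}$ in $\mathcal{J}^1_{n-1}$, and a fortiori modulo $I$.

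For the second equality, the content is that $\{f, -\}$ descends to an operator on $\mathcal{J}^1_{n-1}/I$; equivalently, that $\{f, I\} \subset I$ in $\mathcal{J}^1_{n-1}$. Lifting any element of $I$ to $uf$ in the ambient symmetric algebra $\bigoplus_{j \geq 0} \mathcal{S}^{j,\bbC}(T_xS)$, Leibniz gives $\{f, uf\} = \{f, u\} f + u \{f, f\} = \{f, u\} f$, which again lies in $I$. The possible discrepancy between $uf$ and a specified representative in $\mathcal{J}^1_{n-1}$ is of fiber degree $\geq n$, and by exactly the same degree count as above its Poisson bracket with $f$ is also of degree $\geq n$. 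Hence $\{f, -\} \pmod I$ factors through $\mathcal{J}^1_{n-1}/I$, which gives the second equality.

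The main subtlety will be the simultaneous bookkeeping of three notions of equivalence---modulo the ideal $I$, modulo fiber degree $\geq n$, and strict equality in the free polynomial algebra on $(p, \bar p)$---since Leibniz identities hold exactly only in the free algebra while the two equalities require information in the two quotients. The uniform degree estimate from the first step, however, controls the correction terms in both equalities and reduces the argument to the generator-level computations above.
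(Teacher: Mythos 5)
The paper does not actually prove this lemma --- it is stated with attribution to Fock--Thomas and used as a black box --- so there is no in-text proof to compare against. Your argument is a correct, self-contained derivation, and its two ingredients are the natural ones: (1) the identity $\tfrac{d}{dt}(f\circ\varphi_t)=\{f\circ\varphi_t,H_t\}$ together with a degree count showing $\{f,R\}$ dies in $\mathcal{J}^1_{n-1}$ when $R$ vanishes to fiber order $\geq n$, and (2) Leibniz for the Poisson bracket giving $\{f,I\}\subset I$, so that $\{f,-\}$ descends to $\mathcal{J}^1_{n-1}/I$. Note that your middle paragraph is really just a re-derivation of the filtration property $\{\mathcal{J}_k^{m,\bbR},\mathcal{J}_k^{n,\bbR}\}\subset\mathcal{J}_k^{m+n-1,\bbR}$ already recorded in \cref{section-jets-and-symplectos}; citing that directly (with $m=1$, $n=n$) would shorten the argument, and the same property with $m=1$ absorbs the truncation discrepancies you track in the last step. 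One small imprecision: the class $[\{f,H\}]$ in $\mathcal{J}^1_{n-1}/I$ is canonical only after fixing the normalized generator $f=-\overline{p}+\sum_k\mu_k p^{k-1}$; under a change of generator to $uf$ it scales by the unit $u$, which changes the class but not the ideal it generates, and this is harmless because the first variation formula is read off from the normalized form. Your closing remark about juggling the three equivalences (mod $I$, mod fiber degree $\geq n$, exact equality) correctly identifies where the bookkeeping lives, and the uniform degree estimate does control it.
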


Modulo an ideal $I = \langle -\overline{p} + \mu_2 p + ... + \mu_n p^{n-1} \rangle$ in local coordinates, the $(n-1)$-jet of any Hamiltonian $H$ can be written uniquely in the local form $w_{1} p + w_2 p^2 + ... + w_{n-1}p^{n-1}$, where $w_k \in \Gamma(K^{-k}_\Sigma)$ globally. We call this the \textit{normalized form of $H$ in centered coordinates based at $\Sigma$}.
We have the following first variation formula.

\begin{proposition}[Fock-Thomas \cite{fock2021higher}]\label{first-variation-formula} The first variation of an $n$-complex structure $I$ under a Hamiltonian flow generated by $H \equiv w_k \pmod I$ with $w_k \in \Gamma(K^{-k}_\Sigma)$ is given in centered coordinates based at $\Sigma$ by $$ \dot{\mu}_l = \begin{cases}
 -\delbar w_k + \mu_2 \del w_k - k w_k \del \mu_2  & \text{if $l = k+1$}, \\
 (l-k)\mu_{l-k+1} \del w_k - k w_k \del \mu_{l-k+1}  & \text{if } l > k+1, \\
 0 & \text{if } l < k+1.
 \end{cases} $$
\end{proposition}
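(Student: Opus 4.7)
The plan is to apply Lemma~\ref{lemma-work-mod-I}, which identifies the first variation of the local generator $f = -\bar p + \mu_2 p + \mu_3 p^2 + \cdots + \mu_n p^{n-1}$ of $I$ under the flow of $H$ with $\{f, H\} \pmod I$. Since the Poisson bracket is bilinear and the normalized form of $H \pmod I$ is $w_1 p + w_2 p^2 + \cdots + w_{n-1} p^{n-1}$, by linearity it suffices to treat a single summand $H \equiv w_k p^k \pmod I$, one $k$ at a time.

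Split $f$ as $-\bar p + \sum_{j \geq 2} \mu_j p^{j-1}$, so that
\[
\{f, w_k p^k\} \;=\; \{-\bar p,\, w_k p^k\} \;+\; \sum_{j \geq 2} \{\mu_j p^{j-1},\, w_k p^k\}.
\]
Each summand is then computed directly from the complex-coordinate Poisson bracket formula stated earlier in this section. One finds that $\{-\bar p, w_k p^k\}$ is (up to the overall sign fixed by the bracket convention) equal to $(\delbar w_k)\, p^k$, and that $\{\mu_j p^{j-1}, w_k p^k\}$ equals $\bigl(k w_k \del \mu_j - (j-1)\mu_j \del w_k\bigr) p^{k+j-2}$ with the same overall sign. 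Each of these expressions is already a pure power of $p$ with no $\bar p$ factor, so the reduction modulo $I$ acts as the identity on them and no further normalization is required.

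Collecting contributions by power of $p$ and recalling that $\dot \mu_l$ is the coefficient of $p^{l-1}$ in the normalized form of $\{f, H\} \pmod I$, the three cases of the statement fall out immediately. When $l < k + 1$ no summand contributes, since the $\{-\bar p, \cdot\}$ term lives in degree $p^k$ and the $j$-th summand lives in degree $p^{k+j-2}$ with $j \geq 2$; hence $\dot\mu_l = 0$. When $l = k + 1$, both the $\{-\bar p, w_k p^k\}$ term and the $j = 2$ term of the sum contribute, producing $-\delbar w_k + \mu_2 \del w_k - k w_k \del \mu_2$. When $l > k + 1$, only the $j = l - k + 1$ term contributes, producing $(l-k)\,\mu_{l-k+1}\, \del w_k - k w_k\, \del \mu_{l-k+1}$.

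The argument is a routine bilinear expansion together with matching of powers of $p$, and there is no serious obstacle. The only points meriting care are the index shift between $\mu_l$ and the power $p^{l-1}$ it multiplies, and the observation that $-\bar p$ and $\mu_2 p$ are the only summands of $f$ whose brackets with $w_k p^k$ can land in degree $p^k$, which explains the extra $-\delbar w_k$ contribution in the $l = k+1$ case compared with the uniform $l > k+1$ expression.
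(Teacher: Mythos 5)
The paper does not actually prove this proposition: it is imported verbatim (up to a sign) from Fock--Thomas, with only the remark that the right-action convention negates their formula. So your computation is a supplement rather than a parallel to anything in the text. As a direct verification it is essentially correct: the reduction via Lemma~\ref{lemma-work-mod-I} to $\{f, w_k p^k\} \pmod I$, the term-by-term brackets $\{-\bar p, w_k p^k\} = \pm(\delbar w_k)p^k$ and $\{\mu_j p^{j-1}, w_k p^k\} = \pm\bigl(k w_k \del \mu_j - (j-1)\mu_j \del w_k\bigr)p^{k+j-2}$, and the degree bookkeeping (the $j$-th summand lands in $p^{k+j-2}$, so only $j = l-k+1$ feeds $\dot\mu_l$, with the extra $\delbar w_k$ appearing only at $l=k+1$) all check out against the stated Poisson bracket in complex coordinates. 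Your observation that the bracket contains no $\bar p$ term is also what justifies reading $\dot\mu_l$ off as the coefficient of $p^{l-1}$: since the $\bar p$-coefficient of the perturbed generator stays at $-1$ to first order, no rescaling of the generator is needed to return to normalized form, a point you gesture at but could state explicitly.

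The one genuine soft spot is the overall sign, which you fix ``by the bracket convention'' rather than derive. That is unfortunately the only delicate part of the statement: the paper explicitly warns that its formula is the \emph{negation} of the one in \cite{fock2021higher}, and the paper's own displayed conventions are not mutually consistent (the formula $\{f,g\} = \sum f_{x_i} g_{p_i} - f_{p_i} g_{x_i}$ together with $\tfrac{d}{dt} f\circ\varphi_t = \{f\circ\varphi_t, H_t\}$ yields $+\delbar w_k - \mu_2\del w_k + k w_k \del\mu_2$ at $l=k+1$, i.e.\ the negative of the proposition, whereas the displayed $X_{H_t} = -\tfrac{\partial H}{\partial p_i}\partial_{x_i}$ corresponds to the opposite bracket sign and does give the stated formula). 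Your relative signs among the three terms are all correct, which is the substantive content, but a complete proof would commit to one convention for $X_H$ and trace the global sign through rather than matching it to the target at the end.
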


We remark that our convention that higher diffeomorphisms act on the right results in a negation of the first variation formula from what appears in \cite{fock2021higher}.

A first indication of linearity phenomena in $\mathcal{G}^n(S)$ is that when $\mu_2 = 0$, the $l=k+1$ term of the Fock-Thomas first variation formula is linear and independent of $\mu_3,..., \mu_{k+1}$ for Hamiltonians of the form $w_k \pmod I$ with $k \geq 2$. 

\subsection{Geometric Interpretation}\label{subsection-maass}

The first variation formula of Fock and Thomas plays a central role in our analysis, and admits a concrete description in terms of Maa\ss \,  derivatives, which we explain. The $l=k+1$ term has the most complicated description, and most of the subsection is spent investigating it. Roughly, the formula for $\dot{\mu}_{k+1}$ (Proposition \ref{geometric-first-var}, formula \eqref{geometrized-k+1-term}) says that, viewed correctly, $\dot{\mu}_{k+1}$ is ``seeing" the first variation formula centered at the Riemann surface represented by $\mu_2$ from the perspective of the Riemann surface used to center coordinates.

See \cite{wolpert1986chern} for definitions and basic features of Maa\ss \, operators. Some similar computations appear in \cite{d-Hoker2015higher} for variations of period matricies, covariant derivatives, and Green's functions.

In order to get the desired geometric interpretation, we slightly modify the normalization of standard forms of $n$-complex structures. The normalization used by Fock and Thomas is that an $n$-complex structure locally has a single generator of the form $-\overline{p} + \mu_2 p + ... + \mu_n p^{n-1}$, which precisely matches the standard setup of Beltrami differentials in the $n =2$ case. An alternative but equivalent normalization is that a $n$-complex structure locally has a single generator of the form $\overline{p} + \mu_2 p + ... + \mu_n p^{n-1}$. An analogous characterization of $n$-complex structures to Proposition \ref{fock-thomas-global-coordinates} holds in this normalization. We call this normalization \textit{negative normalization}, since the tensor coordinates of an element of $\mathbb{M}^n(S)$ negate when changing normalization.

Negative normalization seems well-suited for computations involving generators of higher complex structures, and appears as the preferred normalization for the coordinate system introduced in Section \ref{section-natural-coords}. Here, the reason for adopting negative normalization is for a slight change it introduces to the first variation formula. In negative normalization, the corresponding formula to Proposition \ref{first-variation-formula} is $$ \dot{\mu}_l = \begin{cases}
 \delbar w_k + \mu_2 \del w_k - k w_k \del \mu_2  & \text{if $l = k+1$}, \\
 (l-k)\mu_{l-k+1} \del w_k -  k w_k \del \mu_{l-k+1} & \text{if } l > k+1, \\
 0 & \text{if } l < k+1.
 \end{cases} $$ Note the change in the $l=k+1$ term of $-\delbar w_k$ to $\delbar w_k.$ The proof is identical to that of the first variation formula in \cite{fock2021higher}.

Now let $\Sigma$ be a Riemann surface with local coordinate $z$. Let $\Sigma'$ be the Riemann surface determined by the Beltrami differential $\mu_2$ with local coordinate $w$. Many basic identities we use in the following can be found in \cite{d-Hoker2015higher}. One computes that tensors on $\Sigma'$ given in $w$-coordinates can be represented in $z$-coordinates by
\begin{align*}
    \eta(w) d\overline{w}^n &= \sum_{k=0}^n { n \choose k} \eta(w) (\partial_{\bar z}\overline{w})^n (-\overline {\mu}_2)^k (d\bar z)^{n-k}  (dz)^k,\\
    \eta(w) \partial_{w}^n &= \sum_{k=0}^n {n \choose k} \eta(w) (\partial_w z)^n (\overline{\mu}_2)^k \del_z^{n-k}  \del_{\overline{z}}^k.
\end{align*} For a general tensor $\omega(w)$ of type $(k, l)$ with $k, l \in \bbZ$, we define $\iota_{z \leftarrow w}$ to be the $(k,l)$-part of $\omega(w)$ represented in $z$-coordinates. For example, the above shows that $\iota_{z \leftarrow w} \eta(w) d\overline{w}^n = \eta(w) (\partial_{\bar z} \overline{w})^n d\bar{z}^n$. Composition of these operators differs from the identity by a Jacobian term: $$\iota_{z \leftarrow w} \iota_{w \leftarrow z} (\varphi(z) d\bar{z}^n) = \varphi(z) (\partial_{\bar z} \overline{w})^n (\partial_{\overline{w}} \bar z)^n d\bar z ^n = \frac{\varphi(z)}{{(1-|\mu_2|^2)^n} }d\bar z^n. $$ Similar Jacobian terms appear when applied to more general tensors.

On tensors of type $(-n, 0)$ on a Riemann surface $\Sigma$, the Maa\ss \,$\delbar$ operator $\delbar^{\text{Maa\ss}}_\Sigma : \Gamma(K^{-n}_\Sigma) \to \Gamma(K^{-n}_\Sigma \overline{K}_\Sigma)$ has coordinate expression $\varphi(z) d/dz^n \mapsto -\delbar \varphi(z)(d\bar z/dz^n)$.
 When a Riemann surface $\Sigma'$ is represented by a Beltrami differential $\mu_2$, we sometimes denote the corresponding Maa\ss \, derivative by $\delbar_{\mu_2}^\text{Maa\ss}$. The following computation locates the $l=k+1$ term of the first variation formula in terms of these operations on tensors.
 
\begin{lemma} For $(-n, 0)$ tensors on $\Sigma$, with $z$ a local holomorphic coordinate for $\Sigma$ and $w$ a local holomorphic coordinate for the Riemann surface represented by $\mu_2$ on $\Sigma$,
    $$\displaystyle{(1 - |\mu_2|^2)^{n+1} \iota_{z \leftarrow w}\left(\delbar_{\mu_2}^{{\text{\rm{Maa\ss}}}}\left(\iota_{w \leftarrow z} \left(\varphi(z) \frac{d}{dz^n}\right)\right) \right) } = (-\del_{\overline{z}} \varphi(z) - \mu_2 \partial_z \varphi (z) +n \varphi(z) \partial_z \mu_2) \frac{d \bar{z}}{d z^n}.$$
\end{lemma}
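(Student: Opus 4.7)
The plan is to prove this by direct computation in a local holomorphic coordinate $w$ on $\Sigma'$. With the negative-normalization sign convention, $w$ satisfies $w_{\bar z} = -\mu_2 w_z$ on $\Sigma$, from which one obtains $\bar w_z = -\bar\mu_2 \overline{w_z}$ and $\bar w_{\bar z} = \overline{w_z}$. The Jacobian of $(z,\bar z) \mapsto (w, \bar w)$ then has determinant $|w_z|^2(1-|\mu_2|^2)$, and inverting it yields the standard formulas $z_w = \frac{1}{w_z(1-|\mu_2|^2)}$, $z_{\bar w} = \frac{\mu_2}{\overline{w_z}(1-|\mu_2|^2)}$, $\bar z_w = \frac{\bar\mu_2}{w_z(1-|\mu_2|^2)}$, and $\bar z_{\bar w} = \frac{1}{\overline{w_z}(1-|\mu_2|^2)}$, all of which I would use freely below.

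Expanding $(\partial_z)^n = (w_z \partial_w + \bar w_z \partial_{\bar w})^n$ in the pointwise symmetric algebra and extracting the $(-n,0)$-part in $w$-coordinates gives $\iota_{w \leftarrow z}(\varphi \, d/dz^n) = \varphi\, w_z^n \, d/dw^n$. Applying the Maa\ss{} derivative on $\Sigma'$ then produces $-\partial_{\bar w}(\varphi w_z^n) \, d\bar w/dw^n$. To compute $\partial_{\bar w}(\varphi w_z^n)$, I would use $\partial_{\bar w} = z_{\bar w} \partial_z + \bar z_{\bar w} \partial_{\bar z}$ together with the key second-derivative identity
\begin{equation*}
w_{z\bar z} = \partial_z(w_{\bar z}) = \partial_z(-\mu_2 w_z) = -(\partial_z \mu_2) w_z - \mu_2 w_{zz}.
\end{equation*}
The crucial observation is that when the chain rule is expanded, the terms involving $w_{zz}$ cancel between the $\partial_z \psi$ and $\partial_{\bar z} \psi$ contributions, leaving only first-derivative data:
\begin{equation*}
\partial_{\bar w}(\varphi w_z^n) = \frac{w_z^n}{\overline{w_z}(1-|\mu_2|^2)} \bigl(\partial_{\bar z}\varphi + \mu_2 \partial_z \varphi - n\varphi\, \partial_z \mu_2\bigr).
\end{equation*}

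For the final step, I would apply $\iota_{z \leftarrow w}$ to $d\bar w/dw^n$: the $(0,1)$-part of $d\bar w = \bar w_z \, dz + \bar w_{\bar z} \, d\bar z$ in $z$-coordinates is $\overline{w_z} \, d\bar z$, and the $(-n,0)$-part of $(\partial_w)^n$ is $z_w^n (\partial_z)^n = \frac{1}{w_z^n(1-|\mu_2|^2)^n}(\partial_z)^n$. Multiplying all factors together, the $w_z^n$ and $\overline{w_z}$ cancel exactly, producing $(1-|\mu_2|^2)^{-(n+1)}$ times the claimed right-hand side; multiplying through by $(1-|\mu_2|^2)^{n+1}$ finishes the identity.

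The main technical point is the $w_{zz}$-cancellation in the computation of $\partial_{\bar w}(\varphi w_z^n)$. This cancellation is essential because it ensures that the final expression depends only on $\varphi$, $\mu_2$, and their first $z$-derivatives, with any dependence on the particular choice of isothermal coordinate $w$ surviving only in the overall Jacobian factor $(1-|\mu_2|^2)^{n+1}$. This intrinsic behavior is precisely what justifies the interpretation of the $l = k+1$ term of the first variation formula in terms of Maa\ss{} derivatives.
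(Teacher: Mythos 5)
Your computation is correct and follows essentially the same route as the paper's proof: both expand $\iota_{w\leftarrow z}(\varphi\,d/dz^n)=\varphi\,w_z^n\,d/dw^n$, apply the chain rule $\partial_{\bar w}=\bar z_{\bar w}(\partial_{\bar z}+\mu_2\partial_z)$, use the Beltrami relation $w_{\bar z}=-\mu_2 w_z$ to cancel the second-derivative terms, and collect the Jacobian factors into $(1-|\mu_2|^2)^{-(n+1)}$. The only difference is cosmetic — you phrase the cancellation via $w_{z\bar z}=-(\partial_z\mu_2)w_z-\mu_2 w_{zz}$ where the paper writes it as $\partial_z(w_{\bar z}+\mu_2 w_z)=0$ — so there is nothing to add.
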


\begin{proof}
We have $\iota_{w \leftarrow z} \varphi(z) \partial_z^n = \varphi(z) (\partial_z w)^n \partial_w^n,$ and compute
\begin{align*}
    \partial_{\overline{w}} (\varphi(z) (\partial_z w)^n) &= {(\partial_{\overline{w}} \overline{z})}(\partial_{\bar z} + \mu_2 \partial_z) (\varphi(z) (\partial_z w)^n) \\
    &= (\partial_{\overline{w}} \overline{z}) \bigg[(\partial_{\bar z}\varphi(z))(\partial_z w)^n + n \varphi(z) (\partial_z w)^{n-1} (\partial_{\bar z} \partial_z w) + \mu_2 (\partial_z \varphi(z)) (\partial_z w)^n \\
    & \qquad \qquad \quad + n \mu_2 \varphi(z) (\partial_z w)^{n-1} (\partial_z \partial_z w)\bigg] \\
    &= (\partial_{\overline{w}} \overline{z}) \bigg[((\partial_{\bar z}\varphi(z) + \mu_2 \partial_z \varphi(z))(\partial_z w)^n + n \varphi(z) (\partial_z w)^{n-1} [\partial_z( \partial_{\bar z}w + \mu_2 \partial_z w)] +  \\
    & \qquad \qquad \quad - n\varphi(z) (\partial_z w)^{n} (\partial_z \mu)\bigg] \\
    &= (\partial_{\overline{w}} \overline{z}) (\partial_z w)^n (\partial_{\bar z} \varphi(z) + \mu_2 \partial_z \varphi(z) - n \varphi(z) \partial_z \mu) \\ & \qquad \qquad \quad  + n \varphi(z) (\partial_{\overline{w}}\overline z) (\partial_z w)^{n-1} \del_z \left[ \frac{\partial_{\overline{w}} w}{\partial_{\overline{w}} \overline{z} }  \right] \\
    &= (\partial_{\overline{w}} \overline{z}) (\partial_z w)^n (\partial_{\bar z} \varphi(z) + \mu_2 \partial_z \varphi(z) - n \varphi(z) \partial_z \mu).
\end{align*} Expanding this $(-n,1)$ tensor in $w$-coordinates with respect to $z$-coordinates,
\begin{align*}
    \delbar_w^{\text{Maa\ss}}\left(\iota_{w \leftarrow z} \varphi(z) \partial_z^n \right) &= -\left( \sum_{k=0}^n {n \choose k} (\partial_{\bar z} \varphi + \mu_2 \partial_z \varphi - n \varphi \partial_z \mu)(\partial_z w)^n (\partial_w z)^n (-\overline{\mu}_2)^k \partial_z^{n-k} \partial_{\bar z}^k  \right) \\ &\qquad \qquad \cdot ((\partial_{\overline{w}} \bar z ) (\partial_{\bar z} \overline{w} )  (d\bar z - \overline{\mu}_2 dz)).
\end{align*} Inspecting terms yields 
\begin{align*}
\iota_{z \leftarrow w}\left(\delbar_w^{\text{Maa\ss}}\left(\iota_{w \leftarrow z} \left(\varphi \partial_z^n \right)\right) \right) &= (\partial_z w)^n (\partial_w z)^n (\partial_{\overline{w}} \bar z) (\partial_{\bar z} \overline{w}) (-\delbar_z \varphi - \mu_2 \partial_z \varphi + n \varphi \partial_z \mu_2 ) \frac{d \bar{z}}{d z^n} \\
&= \frac{-\delbar_z \varphi - \mu_2 \partial_z \varphi + n \varphi \partial_z \mu_2 }{(1- |\mu_2|^2)^{n+1}} \frac{d \bar{z}}{d z^n}\end{align*} \end{proof} 

The $l > k+1$ terms of the first variation formula can be described in terms of Maa\ss\, derivatives on the base Riemann surface $\Sigma$ as follows. In a coordinate $z$, express the hyperbolic metric $g_\Sigma$ in the conformal class of $\Sigma$ as $g_\Sigma = \sigma^2 |dz|^2$ with $\sigma(z) > 0$. For any $(k, j)$-tensor $w \in \Gamma(K_\Sigma^{k}\overline{K}_\Sigma^j)$ with $k, j\in \mathbb{Z}$, the Maa\ss\, $\del$-derivative of $w$ is expressed $$\del_\Sigma^\text{Maa\ss} (w(z) dz^k d\overline{z}^j) = (\del w(z) - ik \sigma(z) w(z)) dz^{k+1} d\overline{z}^j.$$ For a uniformized Riemann surface $\Sigma = \mathbb{H}^2/\Gamma$, the description of the Maa\ss\, $\del$-derivative in terms of functions on $\mathbb{H}^2$ is $$\del_{\Sigma}^{\text{Maa\ss}} w(z) = \del w(z) + \frac{2k}{(z-\overline{z})} w(z).$$ The description above of the Maa\ss\, $\del$-derivative on an arbitrary Riemann surface can be obtained by pulling back this expression under a biholomorphism to a uniformized Riemann surface.

In any matter, in the setting of the first variation formula, for $l > k+1$ we have that $\mu_{l-k+1} \in \Gamma(K^{k-l}_\Sigma \overline{K}_\Sigma)$. So in a local coordinate $z$, \begin{align*}
    (l-k) &\left(\mu_{l-k+1}(z) \frac{d\overline{z}}{dz^{l-k}} \right) \del^{\text{Maa\ss}}_\Sigma \left( w_k(z)\frac{d}{dz^k} \right) - k \left( w_k(z) \frac{d}{dz^k} \right) \del^{\text{Maa\ss}}_\Sigma \left( \mu_{l-k+1}(z) \frac{d\overline{z}}{dz^{l-k}} \right)  \\ &\quad= \left[(l-k) \mu_{l-k+1}(z) \del w_k(z) + ik (l-k)\sigma(z) w_k(z) \mu_{l-k+1}(z) \right] \frac{d\overline{z}}{dz^l} \\ & \qquad \quad - \left[ k w_k(z) \del \mu_{l-k+1}(z) + i k(l-k) \sigma(z) w_k(z) \mu_{l-k+1}(z) \right] \frac{d\overline{z}}{dz^l} \\
    &\quad = [(l-k) \mu_{l-k+1}(z) \del w_k(z) - k w_k(z) \del \mu_{l-k+1}(z)] \frac{d\overline{z}}{dz^l}, 
\end{align*} which describes the $\dot{\mu}_l$-term of the first variation formula in terms of Maa\ss\, derivatives. In summary:

\begin{proposition}\label{geometric-first-var}
    For a $n$-complex structure $I = (\mu_2, ..., \mu_n)$, the first variation of $\mu_{k+1}(I)$ under a Hamiltonian flow generated by $H \equiv w_k \pmod I$ with $w_k \in \Gamma(K^{-k}_\Sigma)$ is given in negative normalization centered coordinates based at $\Sigma$ by \begin{align}\label{geometrized-k+1-term} \dot{\mu}_{k+1} = -(1 - |\mu_2|^2)^{k+1} \iota_{z \leftarrow w}\left(\delbar_{\mu_2}^{\text{{\rm{Maa\ss}}}}\left(\iota_{w \leftarrow z} w_k\right) \right).\end{align} 
    For $l > k+1$, the first variation of $\mu_l(I)$ under the Hamiltonian flow generated by $H$ is given in negative normalization centered coordinates based at $\Sigma$ by \begin{align}\label{geometrized-higher-order-terms} \dot{\mu}_l = (l-k)\mu_{l-k+1} \del_\Sigma^{\text{{\rm{Maa\ss}}}} w_k - k w_k \del_{\Sigma}^{\text{{\rm{Maa\ss}}}} \mu_{l-k+1}.\end{align}
\end{proposition}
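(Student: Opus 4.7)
The plan is to combine the first variation formula in negative normalization with the two tensorial computations laid out just before the statement, treating the diagonal case $l = k+1$ and the off-diagonal case $l > k+1$ separately.

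For the off-diagonal case $l > k+1$, I would read the identity directly from the Maa\ss{} $\del$-formula on $\Sigma$. Since $w_k \in \Gamma(K^{-k}_\Sigma)$ has weight $-k$ and $\mu_{l-k+1} \in \Gamma(K^{k-l}_\Sigma \overline{K}_\Sigma)$ has holomorphic weight $k-l$, the $\sigma$-contributions to $(l-k)\mu_{l-k+1}\,\del^{\text{Maa\ss}}_\Sigma w_k - k w_k \,\del^{\text{Maa\ss}}_\Sigma \mu_{l-k+1}$ come out to $ik(l-k)\sigma w_k \mu_{l-k+1}$ and $-ik(l-k)\sigma w_k \mu_{l-k+1}$ respectively. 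These cancel, and what remains is exactly $[(l-k)\mu_{l-k+1}\del w_k - k w_k \del \mu_{l-k+1}]\,d\bar z / dz^l$, which is $\dot\mu_l$ by Proposition \ref{first-variation-formula}. This verifies \eqref{geometrized-higher-order-terms}.

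For the diagonal case $l = k+1$, I would apply the tensorial lemma stated immediately before the proposition with $\varphi = w_k$ and $n = k$. That lemma yields
$$(1 - |\mu_2|^2)^{k+1}\,\iota_{z \leftarrow w}\bigl(\delbar^{\text{Maa\ss}}_{\mu_2}(\iota_{w \leftarrow z} w_k)\bigr) = (-\delbar w_k - \mu_2 \del w_k + k w_k \del \mu_2)\,\frac{d\bar z}{dz^k}.$$
Comparing with the negative-normalization first variation $\dot\mu_{k+1} = \delbar w_k + \mu_2 \del w_k - k w_k \del \mu_2$, one sees that negating both sides of the lemma's identity produces precisely the right-hand side of \eqref{geometrized-k+1-term}.

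Neither step poses a real conceptual obstacle, since both underlying computations are already in hand; the proof is essentially bookkeeping. The only piece I expect to watch carefully is sign conventions: the switch to negative normalization flips the sign of the $\delbar w_k$ contribution to the first variation formula as written in Proposition \ref{first-variation-formula}, and the factor $(1 - |\mu_2|^2)^{k+1}$ from the lemma must be moved to the other side consistently with the minus sign visible in \eqref{geometrized-k+1-term}. Once these two sign swaps are reconciled, matching the expressions is immediate.
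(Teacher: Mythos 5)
Your proposal is correct and follows the paper's own route exactly: the paper states the proposition as a summary of the two computations immediately preceding it, namely the tensorial lemma applied with $\varphi = w_k$, $n = k$ (negated and compared against the negative-normalization first variation) for the $l = k+1$ case, and the cancellation of the $\sigma$-terms in the Maa\ss{} $\del$-derivatives for the $l > k+1$ case. Your sign bookkeeping in both steps matches the paper's.
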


Formula \eqref{geometrized-k+1-term} makes precise the idea that $\dot{\mu}_{k+1}$ in Proposition \ref{first-variation-formula} is ``seeing" the first variation in centered coordinates based at the Riemann surface represented by $\mu_2$ from the perspective of the reference Riemann surface. We remark that one auxiliary effect of Proposition \ref{geometric-first-var} is to give a verification that, as expected, all terms in the first variation formula are globally tensors of appropriate types on the base Riemann surface $\Sigma$.

If $S$ has genus at least 2, there are no nonzero holomorphic sections $w \in \Gamma(K^{-k}_{\Sigma'})$ for $k \geq 1$ and any complex structure $\Sigma'$ on $S$. We will have use of the following corollary of the preceding proposition.

\begin{corollary}\label{no-surprise-stationaries}
    Let $S$ have genus $g \geq 2$. If $H \not \equiv 0 \pmod I$ has first nonzero term in normalized form $w_k \pmod I$ with $w_k \in \Gamma(K_\Sigma^{-k})$, then $\mu_{k+1}(I)$ has nonzero first variation under the flow of $H$. 
\end{corollary}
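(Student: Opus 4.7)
The plan is to reduce the nonvanishing of $\dot{\mu}_{k+1}$ to the absence of nonzero holomorphic sections of $K_{\Sigma'}^{-k}$ on the Riemann surface $\Sigma'$ determined by $\mu_2$, which follows from a standard degree estimate when $g \geq 2$ and $k \geq 1$.

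First I would apply the geometric form of the first variation formula given by \eqref{geometrized-k+1-term} of Proposition \ref{geometric-first-var} to write
$$\dot{\mu}_{k+1} \;=\; -(1-|\mu_2|^2)^{k+1}\,\iota_{z\leftarrow w}\!\left(\delbar_{\mu_2}^{\text{Maa\ss}}\!\left(\iota_{w\leftarrow z} w_k\right)\right).$$
Since $|\mu_2|<1$ pointwise, the scalar prefactor is nowhere zero. On pure type $(-k,0)$ tensors the operators $\iota_{w\leftarrow z}$ and $\iota_{z\leftarrow w}$ are pointwise line isomorphisms between $K_\Sigma^{-k}$ and $K_{\Sigma'}^{-k}$, as their composition differs from the identity by the everywhere nonzero Jacobian factor $(1-|\mu_2|^2)^{-k}$. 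Consequently $\dot{\mu}_{k+1} \equiv 0$ holds if and only if $\iota_{w\leftarrow z} w_k$ lies in the kernel of $\delbar_{\mu_2}^{\text{Maa\ss}}$, i.e. is a holomorphic section of $K_{\Sigma'}^{-k}$.

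The second step is purely classical: for any $k \geq 1$ and any Riemann surface $\Sigma'$ of genus $g \geq 2$, one has $\deg K_{\Sigma'}^{-k} = -k(2g-2) < 0$, so $H^0(\Sigma', K_{\Sigma'}^{-k}) = 0$. Combining this with the reduction above forces $\iota_{w\leftarrow z} w_k \equiv 0$, hence $w_k \equiv 0$ on $\Sigma$, contradicting the hypothesis that $w_k$ is the first nonzero term of the normalized form of $H \pmod I$. The argument is short and I do not foresee any substantive obstacle: the essential work has already been performed in Proposition \ref{geometric-first-var}, which repackages the $\dot{\mu}_{k+1}$ term of Fock--Thomas as (a nonvanishing multiple of) the Maa\ss\ $\delbar$-operator of $\Sigma'$ applied to $w_k$, so the corollary falls out immediately from the classical degree bound.
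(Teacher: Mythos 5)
Your argument is correct and is exactly the route the paper intends: the text immediately preceding the corollary records that $\Gamma(K_{\Sigma'}^{-k})$ has no nonzero holomorphic sections for $g \geq 2$, and the corollary is drawn from formula \eqref{geometrized-k+1-term} of Proposition \ref{geometric-first-var} precisely as you describe, using that the prefactor $(1-|\mu_2|^2)^{k+1}$ and the $\iota$ operators are pointwise invertible so that vanishing of $\dot{\mu}_{k+1}$ would force $\iota_{w\leftarrow z}w_k$ to be a holomorphic section of $K_{\Sigma'}^{-k}$. No gaps; this matches the paper's proof.
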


\section{Natural Coordinates}\label{section-natural-coords}
Our eventual goal is to analyze the quotient $\mathcal{T}^n(S) = \mathbb{M}^n(S)/\mathcal{G}^n(S)$ by viewing it as a quotient taken in two steps---first finding distinguished representatives of the action by $\mathcal{N}(S)$, then taking the quotient of the space of these representatives by $\text{Diff}_0(S)$. Making this work requires that the action of $\text{Diff}_0(S)$ on $\mathbb{M}^n(S)$ by lifts of diffeomorphisms to $T^*S$ restrict to an action on our distinguished elements of $\mathcal{N}(S)$-orbits. The representatives we produce in Section \ref{quotient-section} do not satisfy this in centered coordinates, for reasons arising from centered coordinates' dependence on a reference Riemann surface.

In this section, we produce coordinates on $\mathbb{M}^n(S)$ in which the collection of harmonic $n$-complex structures (Definition \ref{def-harmonic-n-complex}) is invariant under $\text{Diff}_0(S)$ (Proposition \ref{harmonic-natural}). Our coordinates are obtained without a choice of a reference Riemann surface through a modification of the procedure used to produce centered coordinates in \cite{fock2021higher}.

We begin by reviewing the construction of the associated complex structure to an $n$-complex structure (see \cite{fock2021higher}) from the perspective we make use of in the construction of natural coordinates. Let $I \in \mathbb{M}^n(S)$ and $x \in S$. Then $I_x$ is a codimension-$n$ ideal in $\bigoplus_{j=0}^{n-1} \mathcal{S}^j (T_x^{\bbC}S)$ with $I_x + \overline{I}_x$ maximal. Fock and Thomas show that maximality of $I_x + \overline{I}_x$ implies the collection of degree-$1$ homogeneous polynomials appearing in elements of $I_x$ has dimension $1$ (\cite{fock2021higher}, Section 5.1). Let $f_x$ be a generator.

Then $f_x$ is a nonzero degree-$1$ homogeneous complex polynomial, and so has a unique zero in $\mathbb{CP}^1$ identified with the space of complex lines in $T^{*\mathbb{C}}_xS$. That $T^{*\mathbb{C}}_xS$ arises as a complexification endows this copy of $\mathbb{CP}^1$ with complex conjugation, a distinguished real axis, and an orientation. By possibly interchanging $I$ and $\overline{I}$ we may consistently arrange for $\text{Im}(f^{-1}_x(0)) > 0$.

This determines for any $x$ an endomorphism $J_x^*: T_x^{*\bbC}S \to T_x^{*\bbC}S$ by specifying that $J_x^*|_{f_x^{-1}(0)} = i\text{Id}$ and
$J_x^*|_{\overline{f_x^{-1}(0)}} = -i \text{Id}$. Then $J_x^*$ restricts to an endomorphism of $T_x^*M$ so that $(J_x^*)^2 = -\text{Id}$. Taking duals identifies almost complex structures on $S$ and endomorphisms $J^* \in \text{End}(T^*S) $ so $(J^*)^2 = -\text{Id}$. This associates an almost complex structure $J(I)$ to $I$, and thus a complex structure $\Sigma(I)$ to $I$ by the Newlander-Nirenberg theorem.

The almost complex structure $J(I)$ then determines a decomposition by type of $$T^{\bbC}S = T^{(1, 0)}S \oplus T^{(0,1)}S = K^{-1}_J \oplus \overline{K}^{-1}_J$$ and in turn decompositions $\mathcal{S}^j(T^{\bbC} S) = \bigoplus_{k=0}^j K^{-k}_J\overline{K}^{k-j}_J$ for $1\leq j \leq n-1$.  

We now take generators of linear terms of $I$ in neighborhoods. Denote by $\pi$ the projection $T^*S \to S$. For any $x$ in $S$, there is a neighborhood $U$ of $x$ and a function $g : \pi^{-1}(U) \to \mathbb{C}$ vanishing on the zero section so that $I_x = \langle (j_{n-1}^0(g))_x\rangle$ for all $x \in U$. The polynomial $j_{n-1}^0(g)_x$ must have nonzero degree-$1$ term $f_x$ for all $x$ in $U$ by maximiality of $I + \overline{I}$. Once again, we may arrange for $\text{Im}(f_x^{-1}(0)) > 0$ for all $x \in U$. Our choice of $J(I)$ ensures that $f_x$ vanishes on the $i$-eigenspace of $J(I)$ and so must have tensor type $(0,-1)$ at $x$. In symbols, $f_x \in T^{(0,1)}_xS$. The relation $g \equiv 0 \pmod I$ then defines a map $$\widetilde{P}_U: T^{(0,1)}U \to \bigoplus_{j=2}^{n-1} \mathcal{S}^{j}(T^{\bbC}U).$$ Repeated application of $\widetilde{P}_U$ defines a projection $$P_U: \bigoplus_{j=1}^{n-1} \Gamma(\mathcal{S}^j(T^{\bbC} U)) \to \bigoplus_{j=1}^{n-1} \Gamma(K^{-j}_J(U))$$ so that ${P}_U h \equiv h \pmod I$ for any $h \in \bigoplus_{j=1}^{n-1} \Gamma (\mathcal{S}^j(T^{\bbC}U))$ and $P_U$ restricted to $\bigoplus_{j=1}^{n-1} \Gamma(K^{-j}_J(U))$ is the identity.

Now let $g' = j_1^0(g) + P_U(j_{n-1}^0 (g) - j_1^0(g))$ be viewed as a sum of homogeneous polynomials. The above implies that $I_x = \langle g'_x \rangle$ for all $x \in U$. By construction, $g'$ has the form \begin{align}\label{invariant-normalized-form} g' = w_{0,1}+ 0+  w_{2, 0}+ ... + w_{n-1,0}\end{align} where $w_{i,j} \in \Gamma(K^{-i}_J(U) \overline{K}^{-j}_J(U))$ and $w_{0,1}$ vanishes nowhere. All generators of $I_U$ of the form \eqref{invariant-normalized-form} are of the form $hg'$ with $h$ a nowhere vanishing function on $U$, because $I_x$ has codimension $n$ for $x \in U$. So the collection of sections $\mu_{k,U} = w_{k-1, 0}/w_{0,1}$ of $\overline{K}_J(U) {K}_J^{1-k}(U)$ for $3 \leq k \leq n$ are independent of the choice of a generator $g'$ of the form of formula \eqref{invariant-normalized-form}.

If $V$ is another open set, the independence of $\mu_{3,V}, ..., \mu_{n,V}$ on the generator $g'$ of the form of formula \eqref{invariant-normalized-form} ensures that $\mu_{k,V}(x) = \mu_{k,U}(x)$ for all $x \in U \cap V$ and $3\leq k \leq n$. So for $x \in S$, defining $\mu_k(x) = \mu_{k,U}(x)$ for any open set $U$ containing $x$ produces a global family of sections $\mu_k = \mu_k(I)$ of $\overline{K}_JK_J^{1-k}$.

The almost complex structure $J(I)$ and the associated sections $\mu_3(I),..., \mu_n(I)$ completely determine $I$ and depend smoothly on $I$. Conversely, any almost complex structure $J$ and sections $\{ \mu_{k}\}_{3 \leq k \leq n}$ of $\overline{K}_JK_J^{1-k}$ produce an $n$-complex structure. Denote $\mathbb{L}^n(S) = \{(\Sigma, \mu_3, ..., \mu_n) \mid \Sigma \in \mathbb{M}(S), \mu_k \in \Gamma(\overline{K}_\Sigma {K}_\Sigma^{1-k})\}$. Then the map \begin{align*} \Phi^n: \mathbb{M}^n(S) &\to \mathbb{L}^n(S) \\ I &\mapsto (\Sigma(I), \mu_3(I), ..., \mu_n(I))\end{align*} gives coordinates on $\mathbb{M}^n(S).$

\begin{definition}\label{def-harmonic-n-complex} The coordinates given by the map $\Phi^n$ are called \textit{natural coordinates} on $\mathbb{M}^n(S)$. For an $n$-complex structure $I$, we denote the natural coordinates of $I$ by $\Sigma(I)$ and $\mu_3(I),..., \mu_n(I)$.
\end{definition}

Natural coordinates on $\dot{\mathbb{M}}^n(S)$ are given by adjoining markings. When working with natural coordinates, it is sometimes technically simpler to work with almost complex structures rather than complex structures in the first coordinate, which is equivalent by the Newlander-Nirenberg theorem.

The main advantage of these coordinates to us is the following.

\begin{proposition}\label{diff-lift-in-natural}
    If {\rm{$f \in \text{Diff}^+(S)$}}, then the action of $f^\#$ on $\mathbb{M}^n(S)$ in natural coordinates is given by $(J, \mu_3, ..., \mu_n)(f^\#) = (f^*J, f^*\mu_3, ..., f^* \mu_n)$.
\end{proposition}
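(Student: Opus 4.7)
The plan is to check that every ingredient in the construction of natural coordinates commutes with the action of $f^\#$, using that this action translates to tensor pullback. Two principles would be isolated at the outset: (i) under the identification of restricted jets $j_k^0$ with sums of sections of $\mathcal{S}^j(T^\bbC S)$, the action $j_k^0(g) \mapsto j_k^0(g \circ f^\#)$ corresponds exactly to pullback of tensors by $f$; this is immediate from $f^\#(x,\alpha) = (f(x), \alpha \circ (Df_x)^{-1})$, which induces precisely the dual/inverse linear action on fibers of $TS$ that defines pullback of symmetric tensors. And (ii) if $w$ has type $(-p,-q)$ with respect to an almost complex structure $J$, then $f^*w$ has type $(-p,-q)$ with respect to $f^*J$, which follows from $(f^*J)_x = (Df_x)^{-1} \circ J_{f(x)} \circ Df_x$.

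For the first coordinate: the $(1,0)$-eigenline of $J(I)^*_y$ is by definition the zero locus in $T^{*\bbC}_yS$ of the linear part of a local generator of $I$, equivalently the annihilator of the corresponding vector $v_y \in T^\bbC_yS$. Under $f^\#$, the linear term of a local generator of $If^\#$ at $x$ corresponds via principle (i) to the pulled-back vector $(f^*v)_x = (Df_x)^{-1}v_{f(x)}$, and the annihilator of this vector picks out the $(1,0)$-eigenspace of $(f^*J(I))^*$ at $x$. So $J(If^\#) = f^*J(I)$.

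For the higher coordinates: I would fix a local generator $g'$ of $I_U$ in the normalized form \eqref{invariant-normalized-form}, so $g' = w_{0,1} + w_{2,0} + \cdots + w_{n-1,0}$ with each $w_{i,j}$ of type $(-i,-j)$ for $J(I)$. By (i), the composition $g' \circ f^\#$, a local generator of $If^\#$ on $f^{-1}(U)$, corresponds under the jet-tensor identification to $f^*w_{0,1} + f^*w_{2,0} + \cdots + f^*w_{n-1,0}$. By (ii) together with the equality $J(If^\#) = f^*J(I)$ just established, each summand has the correct type for $J(If^\#)$, so this is already in normalized form \eqref{invariant-normalized-form} for $If^\#$. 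The defining ratios then give $\mu_k(If^\#) = (f^*w_{k-1,0})/(f^*w_{0,1}) = f^*(w_{k-1,0}/w_{0,1}) = f^*\mu_k(I)$. The main subtlety to track is the passage between jets, polynomial functions on the real cotangent bundle, and their $\bbC$-linear extensions to $T^{*\bbC}S$: that real pullback commutes with $\bbC$-linear extension is what allows type decompositions with respect to $f^*J(I)$ to correspond to pullbacks of type decompositions with respect to $J(I)$. Everything past this point is bookkeeping.
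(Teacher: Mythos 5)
Your proposal is correct and follows essentially the same route as the paper's proof: identify the jet action of $f^\#$ with tensor pullback, verify $J(If^\#) = f^*J(I)$ by tracking the zero locus of the linear part of a generator against the eigenspaces of $(f^*J)^*$, and observe that pullback preserves the normalized form \eqref{invariant-normalized-form} because type decompositions with respect to $J(I)$ pull back to type decompositions with respect to $f^*J(I)$. Your explicit final step with the ratios $\mu_k = w_{k-1,0}/w_{0,1}$ is a detail the paper leaves implicit, but it is the same argument.
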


The analogue of Proposition \ref{diff-lift-in-natural} for marked $n$-complex structures is $(J, \mu_3, ..., \mu_n, \phi)f^\# = (f^* I, f^* \mu_3, ..., f^* \mu_n, \phi \circ f)$ for $f \in \text{Diff}^+(S)$.

\begin{proof}
    We address the first coordinate first. The action by diffeomorphisms on almost complex structures is given by $(f^* J) (v) = (Df)^{-1} J(Df) v$, so the dual endormorphism is $(f^*J)^* \alpha = \alpha \circ (Df)^{-1} \circ J \circ Df$. So if $V^*_x$ is the $i$ eigenspace of $J^*$ at $x$, the $i$-eigenspace of $(f^*J)^*$ at $x$ is $(f^\#)^{-1}(V^*_{f^{-1}(x)})$. On the other hand, if $g: T^{*C}S \to \mathbb{C}$ is linear on fibers with $V^*_x$ its zero in $\mathbb{CP}^1$ at $x$, then the zero of $g \circ f^\#$ at $x$ is $(f^\#)^{-1}(V^*_{f^{-1}(x)})$. We conclude that $J(I f^\#) = f^*(J(I))$. 
    
    As the action of $f^\#$ on $\mathcal{J}_1^1$ coincides with the contravariant action by diffeomorphisms on vector fields, the induced actions on $K^{-j}_J$ and $\overline{K}^{-j}_J$ of $f$ for $j \in \mathbb{N}$ through jets and pullback coincide. So if there is a local expression for generators of $I = \langle g \rangle$ of the form $g = w_{0,1} + w_{2,0} + ... + w_{n-1,0},$ then $g \circ f^\# = f^* w_{0,1} + f^* w_{2,0} + ... + f^* w_{n-1,0} $ is of the form \eqref{invariant-normalized-form} with respect to $J(If^\#) = f^*(J(I))$. The conclusion follows.
\end{proof}

The following shall be our distinguished representatives of $\mathcal{N}(S)$-orbits of $n$-complex structures in Section \ref{quotient-section}.

\begin{definition}\label{harmonic-n-cx} A section $\mu_k \in \Gamma (K^{1-k}_\Sigma \overline{K}_\Sigma)$ is called a {\rm{harmonic $k$-Beltrami differential}} if $\overline{\mu}_k g_\Sigma^{k-1}$ is a holomorphic $k$-adic differential, where $g_\Sigma$ is the unique hyperbolic metric in the conformal class of $\Sigma$. 

An $n$-complex structure $I$ written in natural coordinates as $(\Sigma,\mu_3,...,\mu_n)$ is a {\rm{harmonic $n$-complex structure}} if $\mu_k(I)$ is a harmonic $k$-Beltrami differential on $\Sigma$ for $3\leq k\leq n$. We denote by $\mathcal{H}\mathbb{M}^n(S)$ the collection of harmonic $n$-complex structures. \end{definition}

Proposition \ref{diff-lift-in-natural} yields a naturality statement for the collection of harmonic $n$-complex structures, and is essential to the proofs of our main results.

\begin{corollary}\label{harmonic-natural}
    $\mathcal{H}\mathbb{M}^n(S)$ is invariant under the action of {\rm{$\text{Diff}^+(S)$}} on $\mathbb{M}^n(S)$.
\end{corollary}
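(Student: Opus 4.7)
The plan is to deduce Corollary \ref{harmonic-natural} directly from Proposition \ref{diff-lift-in-natural} together with two classical facts: that the hyperbolic metric on a closed surface of genus $g \geq 2$ is the unique metric of constant curvature $-1$ in its conformal class, and that pullback by biholomorphisms commutes with complex conjugation and preserves holomorphicity of tensors.

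First, by Proposition \ref{diff-lift-in-natural}, for $f \in \text{Diff}^+(S)$ the lift $f^\#$ acts on natural coordinates by $(\Sigma, \mu_3, \dots, \mu_n) \mapsto (f^*\Sigma, f^*\mu_3, \dots, f^*\mu_n)$. So the task reduces to showing that each $f^*\mu_k$ is a harmonic $k$-Beltrami differential on $f^*\Sigma$ whenever $\mu_k$ is a harmonic $k$-Beltrami differential on $\Sigma$.

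Next, by the very definition of the pullback complex structure, $f \colon (S, f^*\Sigma) \to (S, \Sigma)$ is a biholomorphism. Since $f^*g_\Sigma$ then has constant curvature $-1$ and lies in the conformal class of $f^*\Sigma$, uniqueness of the hyperbolic metric in the conformal class of a closed surface of genus at least $2$ yields $g_{f^*\Sigma} = f^*g_\Sigma$. Consequently,
$$\overline{f^*\mu_k}\cdot g_{f^*\Sigma}^{k-1} \;=\; f^*\overline{\mu_k}\cdot (f^*g_\Sigma)^{k-1} \;=\; f^*\bigl(\overline{\mu_k}\cdot g_\Sigma^{k-1}\bigr),$$
which is the pullback by a biholomorphism of a holomorphic $k$-adic differential on $\Sigma$, hence is itself a holomorphic $k$-adic differential on $f^*\Sigma$. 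Thus $f^*\mu_k$ is harmonic on $f^*\Sigma$, which completes the reduction.

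I do not expect any serious obstacle. The entire argument amounts to combining Proposition \ref{diff-lift-in-natural} with the naturality of the hyperbolic metric under biholomorphisms; the only point that requires care is verifying the compatibility of the hybrid expression $\overline{\mu_k}\cdot g_\Sigma^{k-1}$ with pullback, and this is immediate in local holomorphic coordinates compatible with $\Sigma$ and $f^*\Sigma$.
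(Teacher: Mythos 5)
Your proof is correct and follows essentially the same route as the paper's: reduce via Proposition \ref{diff-lift-in-natural} to the action by pullback in natural coordinates, identify $g_{f^*\Sigma}$ with $f^*g_\Sigma$, and conclude by naturality of pullback that $\overline{f^*\mu_k}\,g_{f^*\Sigma}^{k-1} = f^*(\overline{\mu}_k g_\Sigma^{k-1})$ remains holomorphic. You merely make explicit the uniqueness-of-the-hyperbolic-metric step that the paper leaves implicit.
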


\begin{proof}
    If $\delbar_\Sigma (\overline{\mu}_k g_\Sigma^{k-1}) = 0$, naturality of pullback shows that for any $f \in \text{Diff}^+(S)$ we have $0 = \delbar_{f^*\Sigma} (f^*\overline{\mu}_k f^* g_\Sigma^{k-1}) = \delbar_{f^*\Sigma} (f^*\overline{\mu}_k g_{f^*\Sigma}^{k-1})$. So if $I = (\Sigma, \mu_3,..., \mu_n)$ is harmonic, then $I f^\# = (f^*\Sigma, f^*\mu_3,...,f^*\mu_n)$ is harmonic. 
\end{proof}

 In order to analyze flows in natural coordinates later, we need a description of the first variation formula in natural coordinates. For fixed $\Sigma \in \mathbb{M}(S)$, natural coordinates with $\mathbb{M}(S)$-coordinate $\Sigma$ agree with negative-normalization centered coordinates with respect to the reference complex structure $\Sigma $ in the sense that $(\Sigma, \mu_3,..., \mu_n)$ is represented in negative-normalization centered coordinates based at $\Sigma$ by $(0, \mu_3, ..., \mu_n)$ (see the appendix of \cite{fock2021higher}). As $2$-stationary diffeomorphisms fix the collection of elements of $\mathbb{M}^n(S)$ with underlying complex structure $\Sigma(I)$, the first variation formula yields the first variation of $n$-complex structures under $k$-stationary flows for $k \geq 2$ in natural coordinates.

\begin{proposition}
Fix an $n$-complex structure $I \in \mathbb{M}^n(S)$. Under a $2$-stationary flow generated by $H_t \equiv w_k \pmod I$ with $k\geq 2$ and $w_k \in \Gamma(K^{-k}_\Sigma)$, the first variation of $(\Sigma(I), \mu_3, ..., \mu_n)$ in natural coordinates is given by $$\dot{\mu}_l = \begin{cases} \delbar w_k & \text{if $l = k+1$}, \\
 (l-k)\mu_{l-k+1} \del w_k - k w_k \del \mu_{l-k+1}  & \text{if } l > k+1, \\ 0 & l < k+1. \end{cases} $$
\end{proposition}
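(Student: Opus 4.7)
The plan is to reduce the statement to the negative-normalization first variation formula already recorded in Section \ref{subsection-maass} by exploiting the compatibility between natural coordinates and centered coordinates at a fixed reference Riemann surface.

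First I would invoke the compatibility statement already noted in the text: for any fixed $\Sigma \in \mathbb{M}(S)$, an $n$-complex structure $I$ with $\Sigma(I) = \Sigma$ that is written in natural coordinates as $(\Sigma, \mu_3, \dots, \mu_n)$ is represented in negative-normalization centered coordinates based at $\Sigma$ by $(0, \mu_3, \dots, \mu_n)$. In particular, along any smooth family $I_t \in \mathbb{M}^n(S)$ with $\Sigma(I_t) \equiv \Sigma$, the natural-coordinate components $\mu_3(I_t), \dots, \mu_n(I_t)$ coincide with the centered-coordinate components $\mu_3(I_t), \dots, \mu_n(I_t)$ based at $\Sigma$, with $\mu_2$-coordinate identically zero.

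Next I would use the $2$-stationarity hypothesis to secure exactly such a family. By definition, a $2$-stationary flow fixes every $2$-complex structure; via the canonical identification of $\mathbb{M}^2(S)$ with $\mathbb{M}(S)$ explained in \S\ref{subsection-2-complex}, this means that the associated complex structure $\Sigma(I_t)$ (which equals the $2$-complex structure obtained by projection $\pi_2$) is constant along the flow generated by $H_t$. Consequently $\Sigma(I_t) = \Sigma(I)$ for all $t$, and the natural-coordinate derivatives $\dot{\mu}_l$ at $t=0$ equal the centered-coordinate derivatives at the centered-coordinate point $(0, \mu_3, \dots, \mu_n)$ based at $\Sigma = \Sigma(I)$.

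Finally, I would apply the negative-normalization version of the Fock--Thomas first variation formula recalled in \S\ref{subsection-maass}, namely
\begin{equation*}
\dot{\mu}_l = \begin{cases}
\delbar w_k + \mu_2 \del w_k - k w_k \del \mu_2 & \text{if } l = k+1, \\
(l-k)\mu_{l-k+1}\del w_k - k w_k \del \mu_{l-k+1} & \text{if } l > k+1, \\
0 & \text{if } l < k+1,
\end{cases}
\end{equation*}
and specialize to $\mu_2 \equiv 0$. The $l = k+1$ term collapses to $\delbar w_k$, the $l > k+1$ term is unchanged because it does not involve $\mu_2$, and the $l < k+1$ term remains $0$, which is precisely the desired formula.

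The proof is therefore essentially bookkeeping; the only point that demands a little care is the second paragraph, making sure the identification of natural and centered coordinates along the flow is legitimate, and this is exactly where the $2$-stationarity of $H_t$ (which pins $\Sigma(I_t)$ to a fixed reference) is used. No further computation is needed beyond substitution into an already-established formula.
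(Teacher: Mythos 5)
Your proposal is correct and follows essentially the same route as the paper: the paper justifies this proposition in the preceding paragraph by noting that natural coordinates with fixed $\Sigma$ agree with negative-normalization centered coordinates based at $\Sigma$ (with $\mu_2$-coordinate zero), that $2$-stationary flows preserve the slice $\Sigma(I_t) = \Sigma(I)$, and then substituting $\mu_2 = 0$ into the negative-normalization first variation formula. Your write-up is just a more explicit version of that same bookkeeping.
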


\section{Structure of Higher Diffeomorphism Groups}\label{section-group-structure}

The canonical vector bundle structure we find on $\mathcal{T}^n(S) = \mathbb{M}^n(S)/\mathcal{G}^n(S)$ in Section \ref{quotient-section} arises from structure present in the degree-$n$ diffeomorphism group $\mathcal{G}^n(S)$. We have now constructed enough machinery to prove the relevant results about $\mathcal{G}^n(S)$.

The outline of our approach is to decompose $\mathcal{G}^n(S)$ as a semidirect product $\text{Diff}_0(S)\ltimes \mathcal{N}(S)$, where $\mathcal{N}(S)$ is as before the group of $2$-stationary diffeomorphisms. We then analyze $\mathcal{N}(S)$ on its own and combine this with the basics from Section \ref{2-complex} to get a picture of $\mathcal{G}^n(S)$. Separating the analysis of $\mathcal{N}(S)$ from that of $\text{Diff}_0(S)$ is helpful because the two behave quite differently.

The main structural result for the $2$-stationary diffeomorphism group $\mathcal{N}(S)$ is that it has strong linearity properties: $\mathcal{N}(S)$ is a contractible nilpotent regular Fr\'echet-Lie group with factor groups consisting of smooth sections of appropriate bundles, and the group operation on factors is addition. Our main results rely heavily on two claims we prove in this section that show that the only actions of $\mathcal{N}^k(S)$ on elements of $I \in \mathbb{M}^n(S)$ are as simple as possible (Proposition \ref{auto-decomposition} and Lemma \ref{non-collapse}).

An essential component of the analysis is to show that every element of $\mathcal{N}(S)$ is represented by a Hamiltonian diffeomorphism arising from an \textit{autonomous} flow. This is quite surprising at first glance, and is not true for $\text{Diff}_0(S)$. In fact it is well-known (see for example \cite{milnor1984remarks}) that even for the circle $S^1$, there are examples of diffeomorphisms arbitrarily close to the identity that are not realizable as autonomous flows. So to frame expectations of the structure of $\mathcal{N}(S)$, a different analogue than $\text{Diff}_0(S)$ is in order: the structure we find is also present in less complicated groups of jets than $\mathcal{G}^n(S)$.

\begin{example}\label{model-jet-group}
     Consider the group $N$ of $n$-jets at $0$ of maps $f: \mathbb{R} \to \mathbb{R}$ so $f(0) = 0$ and $f'(0) = 1$. Then $N \cong \{ x + a_2 x^2 + ... + a_n x^n \in \mathbb{R}[x]\}$ with product operation composition modulo degree $\geq n+1$ terms. One sees that the subgroup $N^k < N$ consisting of polynomials of the form $x + f$ with the minimal degree of terms of $f$ equal to $k$ is normal in $N$, and the group operation on $N^k$ fixes degree $< k$ coefficients. Furthermore, the group operation on $N^k$ is addition on degree $k$ coefficients. Also, $[N, N^k] \subset N^{k+1}$, so that $N$ is nilpotent. The factor groups $N^k/N^{k+1}$ are isomorphic to $(\mathbb{R}, +)$.
     
    $N$ has the simplest possible exponential map. To see this, note that as $N$ is nilpotent the Baker-Campell-Hausdorff series of its Lie algebra $\mathfrak{n}$ converges on all of $\mathfrak{n}$. So the Baker-Campell-Hausdorff series gives a Lie group structure $\widetilde{N}$ on $\mathfrak{n}$ so that the exponential map is the identity. As $N$ is simply connected, the identity map $\mathfrak{n} \to \mathfrak{n}$ integrates to an isomorphism $N \to \widetilde{N}$.
\end{example}

Two features of $\mathcal{G}^n(S)$ are helpful to keep in mind in the following. The first is the phenomenon that for $\varphi \in \text{Ham}_c^0(T^*S)$, the $k$-jet $j_k(\varphi)$ of $\varphi$ vanishing to degree $j$ along $Z^*S$ does not directly correspond to $\varphi$ representing an element of $\mathcal{N}^{j+1}(S)$. See Section \ref{2-complex} for the $j=1$ case. No such phenomenon appears in the first variation formula, which inspires us to instead seek a parametrization in terms of Hamiltonians. 

The second feature is that the action of $\mathcal{G}^n(S)$ on $\mathbb{M}^n(S)$ is effective but not free. This would complicate an analysis of the quotient $\mathbb{M}^n(S)/\mathcal{G}^n(S)$ by more general (see for instance, \cite{fischer1984purely}) methods of understanding quotients of infinite-dimensional Lie groups that are not Banach-Lie groups. The following example can be used to construct examples of nontrivial stabilizers of higher complex structures on any surface.

\begin{example}\label{example-not-free}
    Consider the trivial $3$-complex structure $I_0$ on the disk given by $\mu_2 = \mu_3 = 0$ in negative normalization centered coordinates. For any function $\eta(x,y)$ vanishing on a neighborhood of $\partial \mathbb{D}$, the flow $\varphi_t$ generated by $H = \eta(x,y) (\partial_x^2 + \partial_y^2)$ fixes $I_0$ by Proposition \ref{first-variation-formula}. On the other hand, for a $3$-complex structure of the form $I = \langle \overline{p} + \mu_2 p  \rangle$, we have $\eta(x,y) (\partial_x^2 + \partial_y^2) \equiv- 4\eta \mu_2 p^2 \pmod I$. The first variation formula shows this does not fix $I$ whenever there is a point where neither $H$ nor $\mu_2$ vanishes. 
\end{example}

The lack of freeness of the previous example can be controlled in the sense that autonomous Hamiltonian flows always act nontrivially on some $k$-complex structure of the smallest possible $k$.

\begin{lemma}\label{action-kernel}A flow by a Hamiltonian $H_t$ that vanishes on $Z^*S$ is $k$-stationary if and only if $j_{k-1}^0(H_t)$ vanishes identically for all $t$.
\end{lemma}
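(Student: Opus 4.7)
The proof proceeds directly from the first variation formula, with the nontrivial direction argued by contrapositive.

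For $(\Leftarrow)$, suppose $j_{k-1}^0(H_t) \equiv 0$ for all $t$ and fix any $I \in \mathbb{M}^k(S)$. The zero jet reduces to zero modulo $I$, so in the normalized form $H_t \equiv w_1 p + \cdots + w_{k-1} p^{k-1} \pmod{I}$ all $w_j$ vanish identically. Summing the per-$w_m$ contributions in Proposition \ref{first-variation-formula}, $\dot\mu_l \equiv 0$ for every $l \leq k$ along the trajectory through $I$, so the image of $I\varphi_t$ in $\mathbb{M}^k(S)$ is independent of $t$.

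For $(\Rightarrow)$, suppose $j_{k-1}^0(H_{t_0}) \not\equiv 0$ at some time $t_0$. Since $H_{t_0}$ vanishes on $Z^*S$, let $m$ with $1 \leq m \leq k-1$ be the minimal index such that the homogeneous degree-$m$ fiberwise Taylor coefficient $a_m$ of $H_{t_0}$ along $Z^*S$ is not identically zero. Choose $x_0 \in S$ with $a_m(x_0, p, \overline p) \not\equiv 0$ as a polynomial and expand $a_m(x_0, p, \overline p) = \sum_{j=0}^{m} c_j(x_0)\, p^j\, \overline p^{\,m-j}$ with not all $c_j(x_0)$ vanishing. The plan is to construct $I \in \mathbb{M}^k(S)$ whose reduction $H_{t_0} \pmod I$ has first nonzero term $w_m$ satisfying $w_m(x_0) \neq 0$; Corollary \ref{no-surprise-stationaries} then yields a nonzero first variation of $\mu_{m+1}(I)$ at $t_0$, and since $m+1 \leq k$ this means the flow moves $I$ as a $k$-complex structure, contradicting $k$-stationarity.

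To build $I$, fix a reference Riemann surface $\Sigma$ and work in centered coordinates based at $\Sigma$. The polynomial $\lambda \mapsto \sum_j c_j(x_0) \lambda^{m-j}$ is not identically zero, so choose $\lambda \in \mathbb{D}$ avoiding its finitely many roots. Prescribe $\mu_2(I)(x_0) = \lambda$ and $\mu_j(I)(x_0) = 0$ for $j = 3, \ldots, k$, then extend these pointwise values to globally defined smooth tensors with $|\mu_2| < 1$ by a bump-function construction to produce $I \in \mathbb{M}^k(S)$. Since $a_1, \ldots, a_{m-1} \equiv 0$, the minimal $p$-degree of $a_l \pmod I$ is $l$, which forces $w_1, \ldots, w_{m-1}$ to vanish identically. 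At $x_0$ the relation $\overline p \equiv \lambda p \pmod I$ reduces $a_m$ to $\bigl(\sum_j c_j(x_0) \lambda^{m-j}\bigr) p^m$, giving $w_m(x_0) \neq 0$ as required. The delicate step is that the substitution $\overline p \mapsto \mu_2 p + \cdots + \mu_k p^{k-1}$ could in principle collapse a nonzero $a_m$ to zero for adversarial choices of $(\mu_2, \ldots, \mu_k)$, but the one-parameter freedom in $\mu_2(x_0) \in \mathbb{D}$ sidesteps this because a nonzero polynomial in a single variable has only finitely many roots.
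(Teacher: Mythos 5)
Your proposal is correct and follows essentially the same route as the paper: the forward direction is the same direct application of Proposition \ref{first-variation-formula}, and your converse---choosing the minimal nonvanishing homogeneous term $a_m$, a point $x_0$, and then a value $\lambda = \mu_2(x_0) \in \mathbb{D}$ avoiding the finitely many roots of $\lambda \mapsto \sum_j c_j(x_0)\lambda^{m-j}$ before invoking Corollary \ref{no-surprise-stationaries}---is exactly the paper's argument, merely phrased in the coordinate $\mu_2(x_0)$ rather than in terms of the finitely many almost complex structures $J^*_{x_0}$ whose $i$-eigenspace is a projective zero of $a_m(x_0,\cdot)$.
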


\begin{proof}
    Any $H_t$ with $j_{k-1}^0(H_t) = 0$ for all $t$ acts trivially on $\mathbb{M}^k(S)$ by Proposition \ref{first-variation-formula}. Conversely, suppose that $H_t$ has $j_{k-1}^{0}(H_t)$ not identically $0$ for some $t_0$. Represent the $(k-1)$-jet of $H_{t_0}$ with homogeneous polynomials $a_1(z, v) + a_2(z,v) + ... + a_{k-1}(z,v)$. Let $1\leq j < k$ be the minimal integer so that $a_j(z, v) \not \equiv 0$, and work at a point $z_0$ so that $a_j(z_0, \cdot) \neq 0$.
    
     With respect to any almost complex structure $J_{z_0}^*$ on the fiber $T_{z_0}^*S$, the decomposition of $a_j(z_0, \cdot)$ by type has no $K_{J_{z_0}^*}^{-j}$ term if and only if $a_j(z_0, \cdot)$ vanishes on the $i$-eigenspace of $J_{z_0}^*$. As $a_j(z_0, \cdot)$ has only finitely many projective zeros, this is only possible for finitely many choices of $J_{z_0}^*$. Thus there are $n$-complex structures $I \in \mathbb{M}^n(S)$ so $a_j(z_0, \cdot)$ has nonzero $K^{-j}_{\Sigma(I)}$ part at $z_0$. For any such $I$, Corollary \ref{no-surprise-stationaries} shows $I$ is not stationary under the flow of $H_{t-t_0}$ at $t = 0$. Then, denoting the time-$t$ flow of $H_t$ by $\varphi_{t}$, in centered coordinates based at $\Sigma(I)$, we have $\frac{d}{dt}\big|_{t=t_0} \mu_{j+1} ((I(\varphi_{t_0})^{-1})\varphi_t) \neq 0.$ So the flow of $H_t$ is not $k$-stationary. \end{proof}

We now turn to examining $\mathcal{N}(S)$. The general strategy of the remainder of this subsection is to show that every element of $\mathcal{N}^k(S)$ arises as a $k$-stationary flow and leverage this to produce a decomposition of $h \in \mathcal{N}(S)$ as a product of increasingly stationary autonomous flows.

Recall the notation from Section \ref{2-complex} that $\text{Ham}_c^P(T^*S)$ is the subgroup of $\text{Ham}_c^0(T^*S)$ whose elements pointwise fix $Z^*S$. Lemma \ref{2-diffeos-just-diffeos} implies that $\varphi \in \text{Ham}_c^0(T^*S)$ is $2$-stationary if and only if it is in $\text{Ham}_c^P(T^*S)$, which is used without being explicitly mentioned throughout the following. Our first observation is that $\mathcal{N}(S)$ is connected through $2$-stationary flows.

\begin{lemma}\label{2-stationary-connected}
    If $h \in \mathcal{N}(S),$ then there is a $2$-stationary flow {\rm{$\varphi_t \in \text{Ham}_{c}^P(T^*S)$}} so that {\rm{$\varphi_0 = \text{Id}$}} and $[\varphi_1] = h$.
\end{lemma}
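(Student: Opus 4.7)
The plan is to start with any representative $\varphi \in \text{Ham}_c^0(T^*S)$ of the class $h \in \mathcal{N}(S)$ and to modify a generating Hamiltonian isotopy so that the path is $2$-stationary throughout, without changing the action on $\mathbb{M}^n(S)$ at the endpoint. Note that by Lemma~\ref{2-diffeos-just-diffeos}, a $2$-stationary element satisfies $f_\varphi = \text{Id}$, so $\varphi \in \text{Ham}_c^P(T^*S)$; however, an arbitrary Hamiltonian isotopy $\varphi_t$ from $\text{Id}$ to $\varphi$ in $\text{Ham}_c^0(T^*S)$ need not pass through $\text{Ham}_c^P(T^*S)$ at intermediate times.

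First, I would extract the restriction $f_t := \varphi_t|_{Z^*S}$, which is a smooth isotopy in $\text{Diff}_0(S)$ (it starts at $\text{Id}$ and each $\varphi_t$ setwise fixes $Z^*S$, with $f_1 = f_\varphi = \text{Id}$). Next, apply Lemma~\ref{lifts-get-realized} to this isotopy to produce a Hamiltonian isotopy $\widetilde{\varphi}_t \in \text{Ham}_c^0(T^*S)$ with $\widetilde{\varphi}_t = f_t^\#$ on a neighborhood $U$ of $Z^*S$ for all $t \in [0,1]$. In particular, $\widetilde{\varphi}_t|_{Z^*S} = f_t$ under the identification $Z^*S \cong S$, and $\widetilde{\varphi}_1 = \text{Id}^\# = \text{Id}$ on $U$.

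Then define $\psi_t := \widetilde{\varphi}_t^{-1} \circ \varphi_t$. Since compositions and inverses of compactly supported Hamiltonian isotopies based at $\text{Id}$ remain compactly supported Hamiltonian isotopies based at $\text{Id}$ (a standard fact, since one can write down explicit generating Hamiltonians), $\psi_t$ is an isotopy in $\text{Ham}_c^0(T^*S)$ with $\psi_0 = \text{Id}$. The key computation is
\[
\psi_t|_{Z^*S} \;=\; f_t^{-1} \circ f_t \;=\; \text{Id},
\]
so every $\psi_t$ pointwise fixes $Z^*S$, i.e.\ $\psi_t \in \text{Ham}_c^P(T^*S)$. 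Finally, since $\widetilde{\varphi}_1^{-1} = \text{Id}$ on $U$, we have $\psi_1 = \varphi$ on the neighborhood $U$ of $Z^*S$. By the observation in \S\ref{2-complex} that the action on $\mathbb{N}^n(S)$ (hence on $\mathbb{M}^n(S)$) depends only on behavior in a neighborhood of $Z^*S$, it follows that $\psi_1$ and $\varphi$ induce the same element of $\mathcal{G}^n(S)$, so $[\psi_1] = h$.

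The main obstacle is really a bookkeeping one: verifying that the multiplicatively-defined path $\psi_t$ retains the Hamiltonian property with compact support (rather than only being symplectic), and confirming that the lift produced by Lemma~\ref{lifts-get-realized} genuinely restricts to $f_t$ on $Z^*S$ so that the cancellation $f_t^{-1} \circ f_t = \text{Id}$ on $Z^*S$ happens on the nose. Both points follow immediately from the standard group structure of compactly supported Hamiltonian isotopies and from the definition of the lift $f \mapsto f^\#$, which fixes $Z^*S$ by the formula $f^\#(0_x) = 0_{f(x)}$.
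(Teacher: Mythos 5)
Your proof is correct and follows essentially the same route as the paper: both extract the boundary isotopy $f_t = \varphi_t|_{Z^*S}$, use Lemma~\ref{lifts-get-realized} to realize $f_t^\#$ near $Z^*S$ by a compactly supported Hamiltonian isotopy $\widetilde{\varphi}_t$, and observe that $\widetilde{\varphi}_t^{-1}\varphi_t$ is a path in $\text{Ham}_c^P(T^*S)$ whose time-one map agrees with $\varphi$ near $Z^*S$ and hence represents $h$. Your additional care about the composed path remaining Hamiltonian and about the endpoint agreeing with $\varphi$ only on a neighborhood of $Z^*S$ matches the paper's appeal to the appendix and to the locality of the action.
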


\begin{proof}
    Let $h \in \mathcal{N}(S)$ be represented by a symplectomorphism $\varphi$ generated by a compactly supported Hamiltonian flow $\varphi_t$ setwise fixing $Z^*S$. Let $\psi_t$ be a smooth path of Hamiltonian diffeomorphisms identical to $ (\varphi_t|_{Z^*S})^\#$ on a neighborhood of $Z^*S$ and supported on a compact set. Such paths exist by Lemma \ref{lifts-get-realized}.
    
    Then $\psi_t$ is smooth in $t$ and $\psi_0 = \psi_1 = \text{Id}$ in a neighborhood of $Z^*S$. So $(\psi_t)^{-1} \varphi_t$ is a path in $\text{Ham}_c^P(T^*S)$ between $\text{Id}$ and $(\psi_1)^{-1}\varphi_1 = \varphi$, and hence $[\psi_1^{-1}\varphi_1] = [\varphi]$. Such a path of Hamiltonian diffeomorphisms must be Hamiltonian (see Appendix \ref{appendix-community-service} or \cite{mcduff2017introduction} ch. 10).
\end{proof}

We remark that the restriction that elements of $\text{Ham}_c^0(T^*S)$ are generated by Hamiltonian flows fixing $Z^*S$ for all times $t$ was used essentially in the previous proof.

The following computation is quite elementary but used repeatedly in the following. Let $H$ be a function $T^*M \to \mathbb{R}$ with $j_{k-1}^0(H) = 0$ for some $k\geq 2$ with time-$1$ Hamiltonian flow $\varphi$. Let $I = (\Sigma, \mu_3, ..., \mu_n)$ be given. We compute $\mu_{k+1}(I\varphi)$ in natural coordinates.

Write the complexified $k$-jet of $H$ (mod $I$, degree $\geq k+1$ terms) as $w_{k} \in \Gamma(K^{-k}_{\Sigma})$. Let $\varphi_t$ be the flow of $H_t$. For any $I'$ with $\Sigma(I') = \Sigma(I)$, the first variation formula shows that $\frac{d}{dt}\mu_{k+1}(I\varphi_t) = \delbar w_k$, and that the flow of $H_t$ leaves the family of $n$-complex structures $I'$ with $\Sigma(I') = \Sigma(I)$ invariant. So \begin{align}\label{basic-auto-formula} \mu_{k+1}(I\varphi ) = \mu_{k+1}(I) +\delbar w_{k}.\end{align} In particular, if we denote the time-$1$ flow associated to a function $H$ with $j_{k-1}^0(H) = 0$ by $\varphi_H$, the map $H \mapsto \mu_{k+1}(I\varphi_H) - \mu_{k+1}(I)$ is linear in $H$.

The means by which we are able to reduce from arbitrary $k$-stationary flows to autonomous flows is an averaging procedure. This procedure produces from a $k$-stationary flow an autonomous $k$-stationary flow with the same time-$1$ action on $\mu_{k+1}$-coordinates of all $n$-complex structures as the original flow.

\begin{lemma}\label{partial-autonomy}
    If $\varphi_t$ is generated by a $k$-stationary ($k\geq 2$) Hamiltonian $H_t$ vanishing on $Z^*S$, let $\widetilde{H} = \int_0^1 H_t dt$ and $\widetilde{\varphi}$ be the time 1 flow of $\widetilde{H}$. Then for every $I \in \mathbb{M}^n(S)$, $\mu_{k+1}(I\varphi_1) = \mu_{k+1}(I\widetilde{\varphi})$.
    
    Furthermore, $\widetilde{H}$ is determined to degree $k$ in the sense that if $\widetilde{H}'$ is another $k$-stationary Hamiltonian vanishing on $Z^*S$ with time-$1$ autonomous flow $\widetilde{\varphi}'$ satisfying $\mu_{k+1}(I\varphi_1) = \mu_{k+1}(I\widetilde{\varphi}')$ for all $I \in \mathbb{M}^n(S)$, then $j_k^0(\widetilde{H}) = j_k^0(\widetilde{H}')$.
\end{lemma}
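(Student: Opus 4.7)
The plan is to establish the first assertion by integrating the infinitesimal first variation in natural coordinates and using linearity of jet extraction, then to deduce the second assertion from the vanishing of holomorphic sections of negative line bundles together with a pointwise linear-algebraic statement about symmetric tensors on a real $2$-dimensional space.

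\emph{First assertion.} Since $k\geq 2$, the flow $\varphi_t$ is $2$-stationary and so preserves the underlying complex structure $\Sigma=\Sigma(I)$; I would work throughout in natural coordinates centered at $\Sigma$. Lemma~\ref{action-kernel} gives $j_{k-1}^0(H_t)=0$ for every $t$, and I would let $w_k(t)\in\Gamma(K_\Sigma^{-k})$ denote the corresponding $(k,0)$-projection appearing in \eqref{basic-auto-formula}. The identity preceding \eqref{basic-auto-formula} then provides the ODE $\tfrac{d}{dt}\mu_{k+1}(I\varphi_t)=\delbar_\Sigma w_k(t)$; integrating from $0$ to $1$ and commuting $\delbar$ with $\int$ yields
\[
\mu_{k+1}(I\varphi_1)-\mu_{k+1}(I)\;=\;\delbar_\Sigma\!\int_0^1 w_k(t)\,dt.
\]
Linearity of jet extraction and of the $(k,0)$-projection identifies $\int_0^1 w_k(t)\,dt$ with the datum $\widetilde w_k$ associated to $\widetilde H$. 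Linearity of jets also gives $j_{k-1}^0(\widetilde H)=0$, so Lemma~\ref{action-kernel} ensures $\widetilde\varphi$ is itself $k$-stationary; applying \eqref{basic-auto-formula} directly to $\widetilde H$ then gives $\mu_{k+1}(I\widetilde\varphi)-\mu_{k+1}(I)=\delbar_\Sigma\widetilde w_k$, matching.

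\emph{Second assertion.} Combining the hypothesis with the first part gives $\mu_{k+1}(I\widetilde\varphi)=\mu_{k+1}(I\widetilde\varphi')$ for every $I\in\mathbb{M}^n(S)$. Equation \eqref{basic-auto-formula} then reduces this to $\delbar_\Sigma(\widetilde w_k-\widetilde w_k')=0$ for every $\Sigma\in\mathbb{M}(S)$. Since $g\geq 2$ and $k\geq 1$ imply $\deg K_\Sigma^{-k}=-k(2g-2)<0$, the bundle $K_\Sigma^{-k}$ admits no nonzero holomorphic sections, so $\delbar_\Sigma$ is injective on $\Gamma(K_\Sigma^{-k})$ and hence $\widetilde w_k(\Sigma)=\widetilde w_k'(\Sigma)$ for every $\Sigma\in\mathbb{M}(S)$.

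\emph{Main obstacle.} What remains is the pointwise linear-algebraic claim: a symmetric tensor $b\in\mathcal{S}^{k,\bbC}(T_xS)$ whose $K_\Sigma^{-k}$-projection at $x$ vanishes for every almost complex structure $\Sigma$ on $T_x^*S$ must itself vanish. This is the main obstacle, as it requires tracking how the $(k,0)$-projection depends on $\Sigma$. I would fix coordinates $p,\overline p$ for a reference $\Sigma_0$ and write $b=\sum_{j=0}^k c_j p^j\overline p^{\,k-j}$; for nearby $\Sigma$ parametrized by a Beltrami coefficient $\mu\in\mathbb{D}$, the $(1,0)$-vector associated to $\Sigma$ is (up to scale) $p+\mu\overline p$. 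Re-expanding $b$ in the corresponding basis $\{(p+\mu\overline p)^i(\overline p+\overline\mu p)^{k-i}\}_i$ and extracting the coefficient of $(p+\mu\overline p)^k$ produces, after a short computation, an expression proportional to $\sum_{j=0}^k (-1)^{k-j}c_j\,\overline\mu^{\,k-j}$. Vanishing of this polynomial for all small $\mu$ forces each $c_j=0$. Applied pointwise to $b=\widetilde a_k-\widetilde a_k'$, where $\widetilde a_k$ is the degree-$k$ homogeneous part of $j_k^0(\widetilde H)$, this yields $\widetilde a_k\equiv\widetilde a_k'$ and hence $j_k^0(\widetilde H)=j_k^0(\widetilde H')$.
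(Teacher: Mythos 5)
Your argument is correct and follows the paper's proof essentially step for step: the first assertion is obtained, exactly as in the paper, by integrating $\tfrac{d}{dt}\mu_{k+1}(I\varphi_t)=\delbar w_{k,0}(t)$ and using linearity of jet extraction and of the $(k,0)$-projection to identify $\int_0^1 w_{k,0}(t)\,dt$ with the datum of $\widetilde H$. For the uniqueness statement the paper passes to the difference Hamiltonian $\widetilde H-\widetilde H'$ via linearity and then cites Lemma \ref{action-kernel}, whereas you inline that lemma's content (no nonzero holomorphic sections of $K_\Sigma^{-k}$ for $g\geq 2$, plus the pointwise variation of the complex structure at a point to recover the full symmetric tensor from its $(k,0)$-parts); this is the same mechanism, just unpacked, and your explicit coefficient computation $\sum_j(-1)^{k-j}c_j\overline\mu^{\,k-j}$ is a valid substitute for the paper's finitely-many-projective-zeros argument.
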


\begin{proof} We compute in natural coordinates. Let $I = (\Sigma, \mu_3, ..., \mu_n)$ be given and let $\varphi_t$ be the flow of $H_t$. Lemma \ref{action-kernel} shows that $H_t$ has vanishing $(k-1)$-jet for all times $t$. So the first variation formula shows that $\frac{d}{dt}\mu_{k+1}(I\varphi_t)$ depends only on the $k$-jet of $H_t$. Write the complexified $k$-jet of $H_t$ as $$H_t = \sum_{j=0}^k w_{j,k-j}(t) \equiv w_{k,0}(t) \pmod{I', \text{ degree $\geq k+1$ terms}} $$ where $w_{j, k-j}(t) \in \Gamma(K^{-j}_\Sigma \overline{K}^{j-k}_\Sigma)$ and $I'$ is any element of $\mathbb{M}^n(S)$ with $\Sigma(I') = \Sigma(I)$. As $H_t$ is $k$-stationary for some $k\geq 2$, we have $\frac{d}{dt} \mu_k(I\varphi_t) = \delbar w_{k,0}(t)$ for $0\leq t \leq 1$, so
\begin{align*}
    \mu_{k+1}(I\varphi_1) &= \mu_{k+1}(I) + \int_{0}^1 \delbar w_{k,0}(t) dt \\ &= \mu_{k+1}(I) + \delbar \int_0^1 w_{k,0}(t) dt.
\end{align*} Now, $\widetilde{w}_k = \int_{0}^1w_{k,0}(t)dt \in \Gamma(K^{-k}_{\Sigma})$ is equal$\pmod {I', \text{ degree $\geq k+1$ terms}}$ to the complexified $k$-jet of $\widetilde{H} = \int_{0}^1 H_t dt$ for any $I'$ with $\Sigma(I') = \Sigma(I)$. Denoting by $\widetilde{\varphi}_t$ the autonomous flow by $\widetilde{H}$, we see that as the flows of $H_t$ and $\widetilde{H}$ are $2$-stationary, $$\mu_{k+1}(I\varphi_1) = \mu_{k+1}(I) + \delbar \widetilde{w}_{k} = \mu_{k+1}(I) + \int_0^1 \frac{d}{dt} \mu_{k+1}(I\widetilde{\varphi}_t )dt = \mu_{k+1}(I \widetilde{\varphi}_1). $$

To see determination to degree $k$, suppose that $\widetilde{H}'$ generates $\widetilde{\varphi}_t'$ and $\mu_{k+1}(I\varphi_1) = \mu_{k+1}(I\widetilde{\varphi}_1')$ for all $I \in \mathbb{M}^n(S)$. Denote the time-$1$ flow of $\widetilde{H}-\widetilde{H}'$ by $\psi$. Recall that for every $n$-complex structure $I$ and autonomous Hamiltonian $H$ with $j_{k-1}^0(H) = 0$, denoting the time-$1$ flow of $H$ by $\varphi_H$, formula \eqref{basic-auto-formula} implies that $H \mapsto \mu_{k+1}(I\varphi_H) - \mu_{k+1}(I)$ is linear in $H$. So we have that $$\mu_{k+1}(I\psi) - \mu_{k+1}(I) = \mu_{k+1}(I\widetilde{\varphi}_1 ) - \mu_{k+1}(I) -\mu_{k+1}(I\widetilde{\varphi}'_1) + \mu_{k+1}(I) = 0$$ for all $I \in \mathbb{M}^n(S)$. So by Lemma \ref{action-kernel}, $j_k^0(\widetilde{H}) = j_k^0( \widetilde{H}')$.
\end{proof}

\begin{convention}
Since the action of an autonomous flow generated by a Hamiltonian $H$ on $\mathbb{M}^n(S)$ is determined by its $(n-1)$-jet $j_{n-1}^0(H)$, we shall have frequent cause when working with autonomous flows to specify Hamiltonians by their classes in $\mathcal{G}^n(S)$ by their $(n-1)$-jets. Even though the $(n-1)$-jets themselves, viewed as functions on $T^*S$, do not in general generate flows in {\rm{$\text{Ham}_c^0(T^*S)$}} as they are not compactly supported, it saves a great deal of repetitive language to make a minor conflation and talk about the ``flow of" a degree-$k$ Hamiltonian $H$. This should be taken to mean the flow of any compactly supported Hamiltonian agreeing with $H$ on a neighborhood of $Z^*S$.
\end{convention}

Our first application of the averaging procedure of Lemma \ref{partial-autonomy} is to show that every element of $\mathcal{N}^k(S)$ is realized by a $k$-stationary flow, which follows from a description of which $(k-1)$-stationary flows $\varphi_t$ have $k$-stationary time-$1$ flow $\varphi_1$.

\begin{corollary}\label{k-stationary-condition}
    For $k \geq 3$, a Hamiltonian diffeomorphism $\varphi$ generated by a $(k-1)$-stationary Hamiltonian flow $H_t$ that vanishes on $Z^*S$ is $k$-stationary if and only if the $(k-1)$-jet $j_{k-1}^0\left(\int_0^1 H_t dt\right)$ is $0$.
\end{corollary}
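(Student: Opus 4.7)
The plan is to combine Lemma \ref{partial-autonomy} (averaging to an autonomous Hamiltonian), Lemma \ref{action-kernel} (jet characterization of $k$-stationary flows), and a pointwise algebraic observation about polynomials vanishing on many complex lines. Write $\widetilde{H} = \int_0^1 H_t\,dt$ and let $\widetilde{\varphi}$ be its autonomous time-$1$ flow. Since $H_t$ is $(k-1)$-stationary, Lemma \ref{action-kernel} yields $j_{k-2}^0(H_t) = 0$ for all $t$, hence $j_{k-2}^0(\widetilde{H}) = 0$ and $\widetilde{\varphi}$ is itself $(k-1)$-stationary. Applying Lemma \ref{partial-autonomy} with its index equal to $k-1$ (legal because $k-1 \geq 2$) gives $\mu_k(I\varphi) = \mu_k(I\widetilde{\varphi})$ for all $I \in \mathbb{M}^n(S)$.

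Since both $\varphi$ and $\widetilde{\varphi}$ fix the coordinates $\Sigma, \mu_3, \ldots, \mu_{k-1}$ of every $I$, the diffeomorphism $\varphi$ is $k$-stationary if and only if $\mu_k(I\widetilde{\varphi}) = \mu_k(I)$ for every $I$. The easy direction is immediate: if $j_{k-1}^0(\widetilde{H}) = 0$, then Lemma \ref{action-kernel} implies $\widetilde{\varphi}$ is itself $k$-stationary, so $\mu_k$ is preserved.

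For the converse, apply formula \eqref{basic-auto-formula} to $\widetilde{H}$ (with its $k$ set equal to our $k-1$) in natural coordinates to get
\[
\mu_k(I\widetilde{\varphi}) = \mu_k(I) + \delbar_{\Sigma} \widetilde{w}_{k-1},
\]
where $\widetilde{w}_{k-1} \in \Gamma(K^{-(k-1)}_\Sigma)$ is the projection mod $I$ of the degree-$(k-1)$ part of $\widetilde{H}$. The preservation of $\mu_k$ for every $I$ forces $\delbar_{\Sigma}\widetilde{w}_{k-1} = 0$ for every $\Sigma \in \mathbb{M}(S)$. Because $k - 1 \geq 2$ and $S$ has genus $g \geq 2$, the space of holomorphic sections of $K^{-(k-1)}_\Sigma$ is trivial, so $\widetilde{w}_{k-1} \equiv 0$ for every $\Sigma$.

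It remains to pass from ``every type-$(-(k-1),0)$ projection vanishes'' to ``$j_{k-1}^0(\widetilde{H}) = 0$''. This is the main technical step, and it is pointwise. Fix $x \in S$ and let $P \in \mathcal{S}^{k-1,\mathbb{C}}(T_x S)$ be the value of $j_{k-1}^0(\widetilde{H})$ at $x$, viewed as a complex homogeneous polynomial of degree $k-1$ on $T^{*\mathbb{C}}_x S$. For any almost complex structure $J$ on $T_x S$, the projection of $P$ to $K^{-(k-1)}_J$ is, up to a nonzero scalar, the restriction of $P$ to the complex line $L_J = T^{*(1,0),J}_x S \subset T^{*\mathbb{C}}_x S$. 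As $J$ varies, $L_J$ ranges over all non-real complex lines in $T^{*\mathbb{C}}_x S$, and the union of these lines is a nonempty Euclidean open subset $\Omega \subset T^{*\mathbb{C}}_x S \cong \mathbb{C}^2$ (indeed, its complement is the real algebraic set $\{\alpha : \operatorname{Im}(\alpha_1 \overline{\alpha_2}) = 0\}$). Since $P$ is a complex polynomial vanishing on the open set $\Omega$, we conclude $P \equiv 0$. This holds for every $x$, so $j_{k-1}^0(\widetilde{H}) = 0$, completing the converse.
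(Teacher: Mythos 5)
Your argument is correct and follows the paper's route: average to the autonomous flow of $\widetilde{H}=\int_0^1 H_t\,dt$ via Lemma \ref{partial-autonomy}, then reduce the question to whether $j_{k-1}^0(\widetilde{H})$ vanishes. The only difference is that where the paper simply cites Lemma \ref{action-kernel} for the second step, you re-derive its converse direction inline (via formula \eqref{basic-auto-formula}, the absence of holomorphic sections of $K^{-(k-1)}_\Sigma$ in genus $\geq 2$, and the pointwise observation that a polynomial vanishing on the open cone of non-real lines is zero), which is a correct unpacking of the same argument.
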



\begin{proof}
    Lemma \ref{partial-autonomy} shows that $\mu_k(I\varphi) = \mu_k(I\widetilde{\varphi})$ for all $I \in \mathbb{M}^n(S)$ where $\widetilde{\varphi}$ is the time-$1$ autonomous flow of $\widetilde{H} = \int_0^1 H_t dt$. Then Lemma \ref{action-kernel} applied to $\widetilde{H}$ gives the claim.
\end{proof}

\begin{lemma}\label{full-connectivity}
    For all $k\geq 2$, the $k$-stationary higher diffeomorphism group $\mathcal{N}^k(S)$ is connected. Every element of $\mathcal{N}^k(S)$ is represented by a $k$-stationary flow.
\end{lemma}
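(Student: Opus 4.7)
The plan is upward induction on $k$, with base case $k=2$ provided by Lemma~\ref{2-stationary-connected}. For the inductive step, given $h \in \mathcal{N}^k(S)$ the inductive hypothesis yields a $(k-1)$-stationary flow $\varphi_t$ with Hamiltonian $H_t$ satisfying $[\varphi_1] = h$, and Corollary~\ref{k-stationary-condition} gives $\int_0^1 j_{k-1}^0(H_t)\,dt = 0$. Let $a(t)$ be the compactly supported Hamiltonian obtained by multiplying the polynomial function representing $j_{k-1}^0(H_t)$ by a fixed cutoff equal to $1$ near $Z^*S$, and let $\alpha_t$ denote its flow. Since $a(t)$ is purely degree $k-1$, Lemma~\ref{action-kernel} shows $\alpha_t$ is $(k-1)$-stationary; in particular (for $k \geq 3$) $\alpha_t$ fixes $Z^*S$ pointwise.

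Set $\psi_t = \alpha_t^{-1}\circ\varphi_t$. A direct computation identifies its generator as $(H_t - a(t))\circ\alpha_t$, which has vanishing $(k-1)$-jet along $Z^*S$: $H_t - a(t)$ does by construction, and precomposition with $\alpha_t$ preserves this since $\alpha_t$ fixes $Z^*S$ pointwise. So $\psi_t$ is a $k$-stationary flow from $\text{Id}$ to $\psi_1 = \alpha_1^{-1}\circ \varphi_1$. For each $s \in [0,1]$, the time-$1$ flow $\tilde{\alpha}_{s,1}$ of the $(k-1)$-stationary Hamiltonian $s \cdot a(t)$ satisfies $\int_0^1 s\cdot a(t)\,dt = 0$, so Corollary~\ref{k-stationary-condition} places $\tilde{\alpha}_{s,1}$ in $\mathcal{N}^k(S)$ for every $s$. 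Concatenating $\psi_t$ with the family $s \mapsto \tilde{\alpha}_{s,1}\circ\psi_1$ via a standard smoothing reparametrization at the join produces a smooth one-parameter family $\tilde{\Phi}_t \in \text{Ham}_c^P(T^*S)$ with $\tilde{\Phi}_0 = \text{Id}$, $\tilde{\Phi}_1 = \alpha_1\circ\psi_1 = \varphi_1$, and $[\tilde{\Phi}_t] \in \mathcal{N}^k(S)$ for all $t$.

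The main obstacle is to promote the fact that $[\tilde{\Phi}_t]$ lies in $\mathcal{N}^k(S)$ for all $t$ into the stronger statement that the Hamiltonian $K_t$ generating $\tilde{\Phi}_t$ is itself $k$-stationary. For this, differentiate $\mu_l(J\tilde{\Phi}_s) = \mu_l(J)$ in $s$ for $l \leq k$ and arbitrary $J \in \mathbb{M}^n(S)$: writing $\tilde{\Phi}_s = \tilde{\Phi}_{s_0}\circ\Psi_{s-s_0}$ near $s_0$, the derivative at $s_0$ equals the first variation of $J\tilde{\Phi}_{s_0}$ under the instantaneous Hamiltonian $K_{s_0}\circ\tilde{\Phi}_{s_0}$ generating $\Psi$ at the origin. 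Since varying $J$ covers all of $\mathbb{M}^n(S)$, the argument in the proof of Lemma~\ref{action-kernel} forces $j_{k-1}^0(K_{s_0}\circ\tilde{\Phi}_{s_0}) = 0$; as $\tilde{\Phi}_{s_0}$ fixes $Z^*S$ pointwise this is equivalent to $j_{k-1}^0(K_{s_0}) = 0$. Hence $K_t$ is $k$-stationary, $\tilde{\Phi}_t$ is the desired $k$-stationary flow representing $h$, and the path $t \mapsto [\tilde{\Phi}_t]$ simultaneously witnesses connectivity of $\mathcal{N}^k(S)$.
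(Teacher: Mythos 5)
Your proof is correct and follows essentially the same route as the paper: induction on $k$, Corollary \ref{k-stationary-condition}, and the key observation that scaling a $(k-1)$-stationary Hamiltonian by $s \in [0,1]$ produces an isotopy through $k$-stationary diffeomorphisms, which is then realized as a Hamiltonian flow whose generator must be $k$-stationary. The paper simply applies the scaling trick to $H_t$ itself, so your decomposition $H_t = a(t) + (H_t - a(t))$ and the concatenation are an unnecessary detour, and your final paragraph re-derives the converse direction of Lemma \ref{action-kernel} where the paper just invokes it.
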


\begin{proof}
The proof is by induction. Lemma \ref{2-stationary-connected} is the $k=2$ case.

Now assume that every element of $\mathcal{N}^{k-1}(S)$ is represented by a $(k-1)$-stationary flow and let $h \in \mathcal{N}^k(S)$. Let $H_t$ generate a $(k-1)$-stationary flow $\varphi_t$ so that $[\varphi_1] = h$. Corollary \ref{k-stationary-condition} shows that the time-$1$ flows $\varphi_s$ of $H_t^s = sH_t$ are all $k$-stationary. Then $\{\varphi_s\}_{0\leq s\leq 1}$ is an isotopy from $\varphi_1$ to $\text{Id}$ consisting of $k$-stationary compactly supported Hamiltonian diffeomorphisms, and hence can be realized as a $k$-stationary flow (see e.g. \cite{mcduff2017introduction} ch. 10 or Appendix \ref{appendix-community-service}). 
\end{proof}

We have now accumulated enough information about $\mathcal{N}(S)$ to describe every element of $\mathcal{N}(S)$ as a product of well-controlled autonomous flows.

\begin{proposition}\label{auto-decomposition}
    Any $h \in \mathcal{N}(S)$ can be written uniquely as $h = h_{2} h_{3} ... h_{n-1}$ where $h_{k} \in \mathcal{N}^k(S)$ is the class in $\mathcal{G}^n(S)$ of an autonomous time-$1$ flow of a uniquely determined homogeneous degree $k$ Hamiltonian $H^{k}$. 
 \end{proposition}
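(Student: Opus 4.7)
The plan is an induction down the filtration $\mathcal{N}(S) = \mathcal{N}^2(S) \supset \mathcal{N}^3(S) \supset \cdots \supset \mathcal{N}^n(S) = \{\mathrm{Id}\}$, at each step peeling off a factor $h_k \in \mathcal{N}^k(S)$ whose first visible effect on the natural coordinates occurs in $\mu_{k+1}$.

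For existence, I set $g_2 := h$ and construct the $h_k$'s recursively. Given $g_k \in \mathcal{N}^k(S)$, Lemma \ref{full-connectivity} provides a $k$-stationary flow $H_t^{(k)}$ whose time-$1$ map represents $g_k$; by Lemma \ref{action-kernel}, $j_{k-1}^0(H_t^{(k)}) \equiv 0$ for all $t$. Let $H^k \in \Gamma(\mathcal{S}^k(TS))$ be the homogeneous degree-$k$ part of $\int_0^1 H_t^{(k)}\,dt$, and let $\varphi^k$ be the time-$1$ autonomous flow of any compactly supported extension of $H^k$. By Lemma \ref{action-kernel}, $\varphi^k$ is $k$-stationary. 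I will then argue that $\mu_{k+1}(I\varphi^k) = \mu_{k+1}(I g_k)$ for every $I \in \mathbb{M}^n(S)$: Lemma \ref{partial-autonomy} equates the $\mu_{k+1}$-action of $g_k$ with that of the autonomous flow of $\int_0^1 H_t^{(k)}\,dt$, and formula \eqref{basic-auto-formula} shows that the $\mu_{k+1}$-action of an autonomous $k$-stationary flow depends only on the degree-$k$ jet of its Hamiltonian mod $I$. A brief right-action computation using that $\varphi^k$ and $g_k$ are both $k$-stationary then gives $g_{k+1} := [\varphi^k]^{-1} g_k \in \mathcal{N}^{k+1}(S)$. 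Setting $h_k := [\varphi^k]$ and iterating up to $k = n-1$, the terminal residue $g_n$ lies in $\mathcal{N}^n(S) = \{\mathrm{Id}\}$, producing $h = h_2 h_3 \cdots h_{n-1}$.

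For uniqueness, suppose $h = h_2 \cdots h_{n-1} = h_2' \cdots h_{n-1}'$. Since $h_j, h_j' \in \mathcal{N}^j(S) \subset \mathcal{N}^3(S)$ for $j \geq 3$, they act trivially on $\mu_3$ of every $n$-complex structure, so $\mu_3(Ih_2) = \mu_3(Ih) = \mu_3(Ih_2')$ for all $I$. The ``determination to degree $k$'' clause of Lemma \ref{partial-autonomy} applied to the autonomous flows of $H^2$ and $H^{2\prime}$ forces $j_2^0(H^2) = j_2^0(H^{2\prime})$, and because both are homogeneous of degree $2$ this forces $H^2 = H^{2\prime}$, whence $h_2 = h_2'$. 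Cancelling and comparing $\mu_4$-actions of $h_3 h_4 \cdots$ and $h_3' h_4' \cdots$ (using that $h_j, h_j' \in \mathcal{N}^j \subset \mathcal{N}^4$ for $j \geq 4$ fix $\mu_4$) lets me repeat the argument one degree higher, and iterating completes the uniqueness.

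The main subtlety to keep in mind is that the averaged Hamiltonian $\int_0^1 H_t^{(k)}\,dt$ produced by Lemma \ref{partial-autonomy} generically has nonvanishing terms of every degree $\geq k$, and its autonomous flow differs from the autonomous flow of its homogeneous degree-$k$ truncation. Passing from the former to the latter is legitimate precisely because, by formula \eqref{basic-auto-formula}, the $\mu_{k+1}$-variation of a $k$-stationary autonomous flow is determined entirely by the degree-$k$ jet of its Hamiltonian mod $I$. This same observation is what makes the ``homogeneous degree $k$'' qualifier essential in the uniqueness statement: Lemma \ref{partial-autonomy} only pins down the $k$-jet of the averaged Hamiltonian, so without the homogeneity restriction one could modify $H^k$ freely by higher-order terms without changing the extracted factor's action on $\mathbb{M}^{k+1}(S)$.
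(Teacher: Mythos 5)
Your proof is correct and follows essentially the same route as the paper: peel off factors inductively using Lemma \ref{full-connectivity} to represent $g_k$ by a $k$-stationary flow, Lemma \ref{partial-autonomy} to average, and the determination-to-degree-$k$ clause for uniqueness. You are in fact more explicit than the paper about the one genuine subtlety — replacing the averaged Hamiltonian $\int_0^1 H_t^{(k)}\,dt$ by its homogeneous degree-$k$ truncation via formula \eqref{basic-auto-formula} — which the paper's proof compresses into the phrase ``Lemma \ref{partial-autonomy} allows us to find a homogeneous degree-$2$ Hamiltonian.''
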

 
 \begin{proof}
    Let $h \in \mathcal{N}(S)$ and let $\varphi_t$ be a 2-stationary generating flow. Lemma \ref{partial-autonomy} allows us to find a homogeneous degree-$2$ Hamiltonian $H^2$ whose autonomous time-$1$ flow $\psi$ satisfies that for any $I \in \mathbb{M}^n(S)$, $\mu_3(I\varphi)= \mu_3(I \psi)$. Then $\psi^{-1}\varphi$ is $3$-stationary. Putting $h_2 = [\psi]$, we have $h_2^{-1}h \in \mathcal{N}^3(S)$. Uniqueness in Lemma \ref{partial-autonomy} gives uniqueness.
    
    Proceeding analogously, using Lemma \ref{full-connectivity} to produce initial $k$-stationary flows, yields for each $2 \leq k \leq n-1$ a unique $h_k$ so that so that $h_{k}^{-1}...h_{3}^{-1} h_2^{-1} h$ is $k+1$-stationary. The case $k=n-1$ gives $h = h_2 h_3...h_{n-1}$, and uniqueness follows from uniqueness at each step.
\end{proof}

The bijection provided by Proposition \ref{auto-decomposition} between higher diffeomorphisms $h\in \mathcal{N}(S)$ and degree $n-1$ jets $H^2 + ... + H^{n-1}$ gives a parametrization of $\mathcal{N}(S)$ as $\mathcal{J}_{n-1}^2$. We call this parametrization \textit{inductive coordinates} for $\mathcal{N}(S)$. The terminology is due to both their iterated construction and their utility for inductive arguments.

We note that the product structure on $\mathcal{N}(S)$ in this parametrization and the product structure on $\mathcal{J}_{n-1}^2$ as an algebra do not agree. To distinguish the two appearances of jets and to emphasize that $h \in \mathcal{N}(S)$ is represented by a \textit{sequence} of autonomous flows in this parametrization, we denote the coordinates of $h \in \mathcal{N}(S)$ in inductive coordinates by $(H^2(h), ..., H^{n-1}(h))$, where $H^k(h) \in \Gamma(\mathcal{S}^k(TS))$. In inductive coordinates, $\mathcal{N}^k(S) = \{ (H^j)_{j=2}^{n-1} \mid H_j = 0 \text{ for } 2 \leq j \leq k-1\}$. 

Examination of the process that produces the decomposition in Proposition \ref{auto-decomposition} yields smoothness in an appropriate sense. For $K$ a compact submanifold (with boundary) of $T^*S$ containing the zero section in its interior, let $\text{Ham}_K^0(T^*S)$ and $\text{Symp}_K^0(T^*S)$ denote the subgroups of $\text{Ham}_c^0(T^*S)$ and $\text{Symp}_c^0(T^*S)$ consisting of elements supported on $K$.

\begin{proposition}\label{smoothness}
    Let $K\subset T^*S$ be a compact submanifold containing $Z^*S$ in its interior. The map {\rm{$\text{Ham}_K^0(T^*S) \mapsto \mathcal{J}_{n-1}^2$}} given by $\varphi \mapsto (H^2([\varphi]), ...,H^{n-1}([\varphi]))$ is smooth.
\end{proposition}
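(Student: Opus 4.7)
The plan is to prove smoothness by induction on $k$, decomposing each $H^k([\varphi])$ into a composition of smooth operations reflecting the construction in Proposition \ref{auto-decomposition}. Since smoothness is local, right-translation by $\varphi_0^{-1}$ in the Fréchet Lie group $\text{Ham}_K^0(T^*S)$ reduces the problem to a neighborhood of the identity.

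Near $\text{Id}$, the first ingredient is a smooth assignment $\varphi \mapsto H_t^\varphi$ of a time-dependent generating Hamiltonian vanishing on $Z^*S$. I would obtain this by choosing a smooth map $\varphi \mapsto \gamma_\varphi$ from a neighborhood of $\text{Id}$ into the path space of $\text{Ham}_K^0(T^*S)$, with $\gamma_\varphi(0) = \text{Id}$ and $\gamma_\varphi(1) = \varphi$, then differentiating and normalizing so that the generating Hamiltonian vanishes on $Z^*S$. Granting this smooth choice, the averaged Hamiltonian $\bar H^\varphi = \int_0^1 H_t^\varphi \, dt$ depends smoothly on $\varphi$, and taking the $(n-1)$-jet $j_{n-1}^0(\bar H^\varphi)$ along $Z^*S$ is a continuous linear operation. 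By Lemma \ref{partial-autonomy}, the degree-$2$ homogeneous part of $j_{n-1}^0(\bar H^\varphi)$ is exactly $H^2([\varphi])$, which handles the base case.

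For the inductive step, I would fix a cutoff function $\eta : T^*S \to [0,1]$ equal to $1$ on a neighborhood of $Z^*S$ and supported in the interior of $K$. For each homogeneous degree-$j$ polynomial Hamiltonian $H^j$, the function $\eta H^j$ lies in the Lie algebra of $\text{Ham}_K^0(T^*S)$; its time-$1$ autonomous flow $\psi_{H^j}$ agrees with the flow of $H^j$ on a neighborhood of $Z^*S$, and $H^j \mapsto \psi_{H^j}$ is smooth by smooth dependence of ODE flows on their vector fields. Assuming inductively that $H^2([\varphi]), \ldots, H^{k-1}([\varphi])$ depend smoothly on $\varphi$, I would set $\tilde\varphi_k = \varphi \circ \psi_{H^{k-1}([\varphi])}^{-1} \circ \cdots \circ \psi_{H^2([\varphi])}^{-1}$, smooth in $\varphi$ by smoothness of group operations. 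By Proposition \ref{auto-decomposition}, $[\tilde\varphi_k] \in \mathcal{N}^k(S)$ and its first nontrivial inductive coordinate equals $H^k([\varphi])$; applying the base-case extraction procedure to $\tilde\varphi_k$ then produces $H^k([\varphi])$ smoothly in $\varphi$, completing the induction.

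The main obstacle is the smooth choice of a generating Hamiltonian path near the identity. This relies on the regular Fréchet Lie group structure on $\text{Ham}_K^0(T^*S)$ established in Section \ref{frechet-lie-subsection}: regularity provides precisely a smooth way to pass between paths through the identity and their time-dependent Hamiltonians, and a local-section result for the endpoint map yields the desired smooth family $\varphi \mapsto \gamma_\varphi$. The uniqueness clauses in Lemma \ref{partial-autonomy} and Corollary \ref{k-stationary-condition} guarantee that the jets extracted at each inductive step are independent of the particular smooth choice of generating path, justifying well-definition and functoriality of the whole construction.
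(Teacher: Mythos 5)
Your overall strategy -- extract each $H^k$ by choosing a generating Hamiltonian smoothly, averaging it in time, and peeling off one homogeneous degree at a time -- is the same as the paper's, but there is a genuine gap in how you invoke Lemma \ref{partial-autonomy}. That lemma only applies to \emph{$k$-stationary} Hamiltonians ($j_{k-1}^0(H_t)\equiv 0$ for all $t$), and its conclusion genuinely fails without that hypothesis: for a general path $\gamma_\varphi$ from $\text{Id}$ to $\varphi$, the intermediate diffeomorphisms need not fix $Z^*S$ pointwise, so $H_t^\varphi$ has a nonzero degree-$1$ part at intermediate times, and then $\mu_3(I\gamma_\varphi(t))$ evolves by the nonlinear terms $2\mu_3\del w_1 - w_1\del\mu_3$ and $\mu_2\del w_2 - 2w_2\del\mu_2$ of the first variation formula rather than by $\delbar w_2(t)$ alone. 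Consequently the degree-$2$ part of $j_{n-1}^0(\bar H^\varphi)$ is \emph{not} $H^2([\varphi])$ in general, and the same problem recurs at every inductive step (your path from $\text{Id}$ to $\tilde\varphi_k$ is not known to be $k$-stationary, even though $[\tilde\varphi_k]\in\mathcal{N}^k(S)$). The uniqueness clause you cite for well-definition has the same stationarity hypothesis. The missing ingredient is the paper's explicit production of stationary isotopies before averaging: for the base case one replaces $\gamma_\varphi(t)$ by $(\gamma_\varphi(t)|_{Z^*S}^\#)^{-1}\gamma_\varphi(t)$ (a $2$-stationary path, as in Lemma \ref{2-stationary-connected}), and at the inductive step one uses the rescaling $s\mapsto(\text{time-}1\text{ flow of }sG_t^{k+1})$, which Corollary \ref{k-stationary-condition} shows is a $(k+1)$-stationary isotopy. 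With these insertions your induction goes through; without them the key identities are unjustified and in fact false.

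Two smaller points. First, your appeal to ``the regular Fr\'echet Lie group structure on $\text{Ham}_K^0(T^*S)$ established in Section \ref{frechet-lie-subsection}'' is circular as cited: that section constructs the structure on $\mathcal{G}^n(S)$ and its proof uses Proposition \ref{smoothness} itself. The smooth dependence of a generating Hamiltonian on the isotopy should instead come from the Ebin--Marsden ILH structure on $\text{Symp}_K^0(T^*S)$ (Appendix \ref{appendix-community-service}) together with the explicit primitive $G_t(p)=\int_{p_0}^p\omega_{\text{can}}(X_t,\cdot)$, using $H^1_c(T^*S)=0$, which is how the paper produces $H_t^\varphi$ smoothly. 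Second, your $\tilde\varphi_k=\varphi\circ\psi_{H^{k-1}}^{-1}\circ\cdots\circ\psi_{H^2}^{-1}$ conjugates the factors in the wrong order relative to Proposition \ref{auto-decomposition}; it does still land in $\mathcal{N}^k(S)$ with leading coordinate $H^k([\varphi])$, but only because $\mathcal{N}^k(S)/\mathcal{N}^{k+1}(S)$ is central in $\mathcal{N}(S)/\mathcal{N}^{k+1}(S)$, which you should say explicitly.
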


\begin{proof}
     We begin by showing the Hamiltonian function generating a flow $\psi_t \in \text{Ham}_K^0(T^*S)$ can be chosen smoothly in $\{ \psi_t \}_{0\leq t \leq 1}$. To see this, choose a Hamiltonian $G_t$ generating $\psi_t$ as follows. The flow $\psi_t$ is generated by a family of compactly supported vector fields $X_{t}$. That $\psi_t$ is a flow by symplectic diffeomorphisms is equivalent to the compactly supported forms $\lambda_t = \omega_{\text{can}}(X_t, \cdot)$ being closed. As $H_c^1(T^*S) = 0$, the forms $\lambda_t$ are exact and compactly supported potential functions can be chosen as $$G_{t}(p) = \int_{p_0}^p \lambda_t,$$ where $p_0$ is any point sufficiently far away from $K$. That $X_t$ is tangent to $Z^*S$ forces $G_t$ to be constant on $Z^*S$ (see Section \ref{2-complex}). This construction of $G_t$ is smooth in $\{ \psi_t \}_{0\leq t \leq 1}$.
    
    Subtracting appropriate constants from $G_t$ and multiplying by a cutoff function yields compactly supported Hamiltonians vanishing on $Z^*S$ and generating the same classes in $\mathcal{G}^n(S)$ as $[\psi_t]$, chosen smoothly in $\{\psi_t\}_{0\leq t\leq 1}$. Every $\varphi \in \text{Ham}_K^0(T^*S)$ is generated by a Hamiltonian flow supported in $K$ (see Appendix \ref{appendix-community-service}) and these flows can locally be chosen smoothly in $\varphi$. With this known, the proof is by checking smooth dependence in the steps taken by our construction of $H^2(\varphi),..., H^{n-1}(\varphi)$ from $\varphi$. We detail the steps below for clarity.
    
    We show that $H^k(\varphi)$ depends smoothly on a generating Hamiltonian $H_t$ for $\varphi$. Write our decomposition as $[\varphi] = h_2h_3...h_{n-1}$. As $H^k(\varphi)$ depends smoothly on a $k$-stationary flow, it suffices to show that given $H_t$ supported on $K$ with time-$1$ flow $\varphi$ so that $[\varphi] \in \mathcal{N}(S)$, a $2$-stationary Hamiltonian $H^2_t(\varphi)$ supported on $K$ generating $[\varphi]$ can be chosen smoothly in $H_t$ and that given a $k$-stationary Hamiltonian $H_t^k$ generating $h_k...h_{n-1}$ and supported on $K$, a $(k+1)$-stationary Hamiltonian $H^{k+1}_t$ generating $h_{k+1}...h_{n-1}$ and supported on $K$ can be chosen smoothly in $H^k_t$.
    
    Let $\varphi_t$ denote the flow of $H_t$. For $k =2$, let $\psi_t$ be a compactly supported Hamiltonian flow agreeing with $(\varphi_t|_{Z^*S})^\#$ on a neighborhood of $Z^*S$. Then a $2$-stationary isotopy from which $H^2_t$ is determined is given by $(\psi_t)^{-1} \varphi_t$.
    
    Now let $H^k_t$ be $k$-stationary and re-define $\varphi_t$ to be the flow of $H^k_t$. We have $[\varphi_1] = h_{k}...h_{n-1}$ in the notation of Proposition \ref{auto-decomposition}. The Hamiltonian ${H}^k$ is given by the $k$-jet $j_k^0\left( \int_0^1 H_t^k dt \right)$; denote the flow of $H^k$ by $\widetilde{\varphi}_t'$. Denote $\varphi_t' = (\widetilde{\varphi}_t')^{-1}$, and let $\psi_t$ be the smoothed concatenation of the isotopies $\varphi_t'$ and $\varphi_t$. Then $[\psi_1] = h_{k+1}...h_{n-1} \in \mathcal{N}^{k+1}(S)$, and $\psi_t$ determines a $k$-stationary Hamiltonian $G^{k+1}_t$ vanishing on $Z^*S$ smoothly in $\{\psi_t \}_{0\leq t\leq 1}$ as above. The isotopy $\{\psi_s'\}_{0\leq s \leq 1}$ where $\psi_s'$ is the time-$1$ flow of $sG_t^{k+1}$ is $(k+1)$-stationary by Corollary \ref{k-stationary-condition}, and $H^{k+1}_s$ is determined as above from $\psi_s'$. Each dependence is smooth.
\end{proof}

We now describe a central series for $\mathcal{N}(S)$. The key observation is that $\mathcal{N}^{n-1}(S)$ is in the center of $\mathcal{N}(S)$. We can see this as follows. Work in natural coordinates. If $H$ is an $(n-1)$-stationary Hamiltonian, then Lemma \ref{lemma-work-mod-I} and the first variation formula imply that the first variation of any $I \in \mathbb{M}^n(S)$ under the flow generated by $H$ depends only on $H$ and $\Sigma(I)$. More specifically, formula \eqref{basic-auto-formula} shows that the product operation on $\mathcal{N}^{n-1}(S)$ in inductive coordinates is addition and inversion is negation. 

On the other hand, if $H$ is a $k$-stationary Hamiltonian for some $k \geq 2$, then Lemma \ref{lemma-work-mod-I} and the first variation formula imply that the first variation of $\mu_{n}(I)$ under the autonomous flow of $H$ depends only on $H$ and $\Sigma(I), \mu_3(I),..., \mu_{n-k+1}(I)$. In particular, $\mu_n(I)$ has no effect on the first variation of $\mu_n(I)$ under the flow of $H$. It follows that $\mathcal{N}^{n-1}(S)$ is in the center of $\mathcal{N}(S)$.

Induction now shows that a central series of length $n-2$ for $\mathcal{N}(S)$ is given by $0 \lhd \mathcal{N}^{n-1}(S) \lhd ... \lhd \mathcal{N}^3(S) \lhd \mathcal{N}^2(S) = \mathcal{N}(S)$, so that $\mathcal{N}(S)$ is nilpotent. Proposition \ref{smoothness} implies that the group operations on $\mathcal{N}(S)$ in inductive coordinates are smooth.

We now address the structure of the whole of $\mathcal{G}^n(S)$. Let $\mathcal{D}_0(S)$ denote the subgroup of $\mathcal{G}^n(S)$ consisting of elements of the form $[\psi]$ with $\psi = f^\#$ on a neighborhood of the zero section for some $f \in \text{Diff}_0(S)$. As in Section \ref{2-complex}, every $\psi \in \text{Ham}_{c}^0(T^*S)$ determines an element of $\mathcal{D}_0(S)$ by restriction: $\varphi \mapsto [(\varphi|_{Z^*S})^\#]$. Here, we are implicitly using Lemma \ref{lifts-get-realized}. Similarly, let $\mathcal{D}^+(S)$ denote the subgroup of the $n$-standard diffeomorphism group $\mathcal{H}^{n,S}(S)$ consisting of $[f^\#]$ for $f \in \text{Diff}^+(S)$.

\begin{theorem}\label{structure-of-higher-diffeos}
    $\mathcal{G}^n(S) = \mathcal{D}_0(S) \ltimes \mathcal{N}(S)$ and $\mathcal{H}^{n,S}(S) = \mathcal{D}^+(S) \ltimes \mathcal{N}(S)$. For any compact submanifold $K \subset T^*S$ containing $Z^*S$ in its interior and smooth path {\rm{$\varphi_t \in \text{Ham}_K^0(T^*S)$}}, the map {\rm{$[0,1] \to \text{Diff}_0(S) \times \mathcal{J}^2_{n-1}$}} given by $t \mapsto (\varphi_t|_{Z^*S}, H^2((\varphi_t|_{Z^*S}^{-1})^\# \varphi_t), ..., H^{n-1}((\varphi_t|_{Z^*S}^{-1})^\#\varphi_t))$ is smooth.
\end{theorem}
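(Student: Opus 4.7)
The plan is to verify the three standard semidirect product criteria for each decomposition, and then to reduce the smoothness claim to Proposition \ref{smoothness} by first ``dividing out'' the $\text{Diff}_0(S)$-factor using a smooth family of compactly supported lifts.

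For the decomposition $\mathcal{G}^n(S) = \mathcal{D}_0(S) \ltimes \mathcal{N}(S)$, the normality of $\mathcal{N}(S)$ follows from identifying it with the kernel of the surjective homomorphism $\mathcal{G}^n(S) \to \mathcal{G}^2(S)$ induced by the projection $p_{n,2}$ of Section \ref{section-mapping-class}. The trivial intersection $\mathcal{D}_0(S) \cap \mathcal{N}(S) = \{\text{Id}\}$ is an application of Lemma \ref{2-diffeos-just-diffeos}: if $[\varphi]$ lies in both, then $\varphi$ agrees on a neighborhood of $Z^*S$ with $f^\#$ for some $f \in \text{Diff}_0(S)$, and $2$-stationarity forces $f_\varphi = f = \text{Id}$, so $\varphi$ is the identity near $Z^*S$. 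For the factorization $\mathcal{G}^n(S) = \mathcal{D}_0(S) \cdot \mathcal{N}(S)$, given $[\varphi]$ I would invoke Lemma \ref{lifts-get-realized} to produce $\psi \in \text{Ham}_c^0(T^*S)$ agreeing with $(f_\varphi)^\#$ on a neighborhood of $Z^*S$; then $\psi^{-1}\varphi$ restricts to the identity on $Z^*S$, so $[\varphi] = [\psi]\cdot[\psi^{-1}\varphi]$ with $[\psi] \in \mathcal{D}_0(S)$ and $[\psi^{-1}\varphi] \in \mathcal{N}(S)$.

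The decomposition $\mathcal{H}^{n,S}(S) = \mathcal{D}^+(S) \ltimes \mathcal{N}(S)$ follows by exactly the same argument, using the algebraic semidirect product decomposition $\text{Symp}_T^{0,S}(T^*S) = \text{Ham}_c^0(T^*S) \rtimes \mathcal{D}^+(S)$ recorded in Section \ref{section-mapping-class} to represent classes in $\mathcal{H}^{n,S}(S)$ by products $f^\# \cdot \psi$ with $f \in \text{Diff}^+(S)$ and $\psi \in \text{Ham}_c^0(T^*S)$, and applying the previous paragraph to $\psi$.

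For smoothness, the first component $\varphi_t \mapsto \varphi_t|_{Z^*S}$ is smooth by standard smoothness of restriction of Fr\'echet-smooth families. For the $\mathcal{J}_{n-1}^2$-factors, the key step is to construct a smooth family $\psi_t \in \text{Ham}_c^0(T^*S)$ supported in a fixed compact set that agrees with $(\varphi_t|_{Z^*S})^\#$ on a neighborhood of $Z^*S$. Following the construction in the proof of Lemma \ref{lifts-get-realized}, this is done by smoothly choosing generating Hamiltonians for the isotopy $\varphi_t|_{Z^*S}$ on $S$, lifting them to $T^*S$, and multiplying by a cutoff; every step depends smoothly on $t$. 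Then $\psi_t^{-1}\varphi_t$ is a smooth path of $2$-stationary diffeomorphisms lying in $\text{Ham}_{K'}^0(T^*S)$ for an enlarged compact $K'$, and its class in $\mathcal{G}^n(S)$ coincides with that of $\bigl((\varphi_t|_{Z^*S})^{-1}\bigr)^\# \varphi_t$. Proposition \ref{smoothness} applied to $\psi_t^{-1}\varphi_t$ then yields smoothness of the $H^k$ coordinates.

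The main obstacle I anticipate is the smooth construction of the lifts $\psi_t$: one must track smoothness carefully through the choice of generating functions, the integration of $\lambda_t = \omega_{\text{can}}(X_t, \cdot)$ to potentials, and the application of cutoffs, all in the Fr\'echet category. The algebraic portions are straightforward consequences of Lemmas \ref{2-diffeos-just-diffeos} and \ref{lifts-get-realized} together with the identification of $\mathcal{N}(S)$ as the kernel of projection to $\mathcal{G}^2(S)$.
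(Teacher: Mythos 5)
Your proposal is correct and follows essentially the same route as the paper: trivial intersection via Lemma \ref{2-diffeos-just-diffeos}, the factorization $[\varphi] = [\psi][\psi^{-1}\varphi]$ using lifts from Lemma \ref{lifts-get-realized}, normality of $\mathcal{N}(S)$ as the kernel of the induced map to $\mathcal{G}^2(S)$, and reduction of the smoothness claim to Proposition \ref{smoothness} together with smoothness of restriction to $Z^*S$. The extra care you take with the smooth family of compactly supported lifts $\psi_t$ is exactly the content already handled inside the paper's proof of Proposition \ref{smoothness}, so nothing new is needed there.
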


Here, $H^k((\varphi_t|_{Z^*S}^{-1})^\# \varphi)$ should be taken to mean $H^k(\psi_t^{-1}\varphi_t)$ for any $\psi_t \in \text{Ham}_K^0(T^*S)$ agreeing with $\varphi_t$ on a neighborhood of $Z^*S$. Another remark is that in Theorem \ref{structure-of-higher-diffeos}, we have given $\mathcal{G}^n(S)$ the product topology of the subgroups $\mathcal{D}_0(S)$ and $\mathcal{N}(S)$, where $\mathcal{D}_0(S)$ is given the topology induced by the map $\phi \mapsto [\psi]$ where $\psi $ is any element of $\text{Ham}_c^0(T^*S)$ that agrees with $\phi^\#$ on a neighborhood of $Z^*S$, and $\mathcal{N}(S)$ is given the topology induced by inductive coordinates. The naturality of this choice of topology is addressed in Section \ref{frechet-lie-subsection}.

\begin{proof}
Lemma \ref{2-diffeos-just-diffeos} shows the intersection $\mathcal{D}_0(S) \cap \mathcal{N}(S) = \text{Id}$ is trivial as the action of $\text{Diff}_0(S)$ on $\mathbb{M}(S)$ is free. On the other hand, for any $g = [\varphi],$ we have the decomposition $g = [(\varphi|_{Z^*S})^\#] [(\varphi|_{Z^*S}^\#)^{-1} \varphi] \in \mathcal{D}_0(S) \mathcal{N}(S)$. As $\mathcal{N}(S)$ is normal in $\mathcal{G}^n(S)$, this shows that $\mathcal{G}^n(S)$ is the algebraic
semidirect product $\mathcal{D}_0(S) \ltimes \mathcal{N}(S)$. The proof that $\mathcal{H}^{n,S}(S) = \mathcal{D}^+(S) \ltimes \mathcal{N}(S)$ is nearly identical.

The map $t \mapsto (\varphi_t|_{Z^*S}, H^2((\varphi_t|_{Z^*S}^{-1})^\# \varphi_t), ..., H^{n-1}((\varphi_t|_{Z^*S}^{-1})^\#\varphi_t))$ is smooth due to Proposition \ref{smoothness} and the smoothness of $\varphi \mapsto \varphi|_{Z^*S}$. 
\end{proof}

Theorem \ref{thm-structure-summary}, which summarizes the basic structure of $\mathcal{G}^n(S)$, now follows from Theorem \ref{structure-of-higher-diffeos} and the preceding remarks.

\subsection{A Realization Lemma}

 There is one more way we need to control the lack of freeness of the action of $\mathcal{G}^n(S)$ on $\mathbb{M}^n(S)$ in our proofs of our main results. The phenomenon to understand is that an element $ h \in \mathcal{N}(S)$ can act on a specific $I \in \mathbb{M}^n(S)$ so that $\mu_j(Ih) = \mu_j(I)$ for all $j < k+1$, while in fact $h$ is not $k$-stationary (see Example \ref{example-not-free}). It is crucial to our proof of Theorem \ref{headline-result} to ensure that such examples do not provide examples of actions of $\mathcal{N}(S)$ on $n$-complex structures $I$ that are not already realized by an element of $\mathcal{N}^k(S)$. In this subsection, we prove this (Lemma \ref{non-collapse}).
 
 The proof of Lemma \ref{non-collapse} is organizationally simplified by representing elements of $\mathcal{N}^k(S)$ as a single autonomous flow rather than as a sequence of autonomous flows, which we verify is possible.

\begin{proposition}\label{exponential-coords-lemma}
    Any $h \in \mathcal{N}(S)$ can be written uniquely as the class of an autonomous time-$1$ flow of a degree $n-1$ Hamiltonian $H$ that has no degree-$1$ term and vanishes on $Z^*S$.
\end{proposition}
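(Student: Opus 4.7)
The plan is to combine the sequence of autonomous flows provided by Proposition \ref{auto-decomposition} into a single autonomous flow, using the nilpotency of the Poisson structure on $\mathcal{J}_{n-1}^2$. The key algebraic observation is that the filtration $\mathcal{J}_{n-1}^{2} \supset \mathcal{J}_{n-1}^{3} \supset \cdots \supset \mathcal{J}_{n-1}^{n} = \{0\}$ satisfies $\{\mathcal{J}_{n-1}^{j}, \mathcal{J}_{n-1}^{k}\} \subset \mathcal{J}_{n-1}^{j+k-1}$, so iterated Poisson brackets of elements of $\mathcal{J}_{n-1}^2$ vanish after finitely many steps. Consequently, a Baker-Campbell-Hausdorff (BCH) series $H \star H' = H + H' + \tfrac{1}{2}\{H, H'\} + \cdots$ truncates to a well-defined finite polynomial expression in $\mathcal{J}_{n-1}^{2}$.

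For existence, I would start from the decomposition $h = h_{2} h_{3} \cdots h_{n-1}$ of Proposition \ref{auto-decomposition}, with $h_{k}$ the class of the autonomous flow $\varphi_{H^{k}}$ of a homogeneous degree-$k$ Hamiltonian $H^{k}$. I then show inductively on $k$ that each partial product $h_{2} \cdots h_{k}$ is the class in $\mathcal{G}^n(S)$ of the time-$1$ autonomous flow of some $F_{k} \in \mathcal{J}_{n-1}^{2}$, by setting $F_{k} = F_{k-1} \star H^{k}$. To justify that $[\varphi_{F_{k-1}} \varphi_{H^{k}}] = [\varphi_{F_{k-1} \star H^{k}}]$, I would note that the action of either side on $\mathbb{M}^n(S)$ is entirely determined by $(n-1)$-jets, so one may compare the two inside the nilpotent object $\mathcal{J}_{n-1}^{2}$ equipped with the Poisson Lie bracket. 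For this comparison, the standard BCH theorem (in its formulation for Lie algebras with vanishing iterated brackets beyond a finite depth) identifies the composition of time-$1$ Hamiltonian flows of $F_{k-1}$ and $H^{k}$ with the time-$1$ flow of $F_{k-1}\star H^{k}$ after truncation---this is essentially the statement that $\exp$ is a global diffeomorphism for simply connected nilpotent groups. Taking $H = F_{n-1}$ then realizes $h$ as claimed.

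For uniqueness, I would argue that if $H, H' \in \mathcal{J}_{n-1}^{2}$ both have time-$1$ autonomous flows representing $h$, then their homogeneous components agree in order of increasing degree. The degree-$2$ components must agree because they coincide with the degree-$2$ inductive coordinate $H^{2}(h)$ from Proposition \ref{auto-decomposition} (which is itself unique by the uniqueness part of Lemma \ref{partial-autonomy}). Given that, replacing $H$ and $H'$ by their BCH-inverses against $H^{2}(h)$ and applying the same reasoning recovers $H^{3}(h)$, and so on. Equivalently, the map $H \mapsto (H^{2}([\varphi_{H}]), \ldots, H^{n-1}([\varphi_{H}]))$ from $\mathcal{J}_{n-1}^{2}$ to itself is upper-triangular in the filtration, with identity on each graded piece (by nilpotency, higher BCH terms live in deeper filtration layers), hence invertible; so the inductive coordinates of $h$ determine $H$ uniquely.

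The main obstacle is rigorously justifying BCH at the level of $\mathcal{G}^n(S)$, since $\text{Ham}_c^0(T^*S)$ is an infinite-dimensional group where BCH need not hold globally. The way through is to reduce everything to the finite-depth nilpotent computation in $\mathcal{J}_{n-1}^{2}$, which is permitted because the action on $\mathbb{M}^n(S)$ factors through $(n-1)$-jets along $Z^*S$ (Lemma \ref{lemma-work-mod-I}); all compositions can then be computed via the first variation formula together with the nilpotent BCH identity, without any convergence concerns.
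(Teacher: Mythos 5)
Your overall strategy---assemble the sequence of autonomous flows from Proposition \ref{auto-decomposition} into a single one by exploiting nilpotency of the filtration on $\mathcal{J}_{n-1}^2$---is a genuinely different route from the paper's, but as written it has a real gap at its central step. Everything hinges on the identity $[\varphi_{F}\varphi_{G}] = [\varphi_{F\star G}]$ in $\mathcal{N}(S)$, where $\star$ is the truncated Baker--Campbell--Hausdorff product for the Poisson bracket on $\mathcal{J}_{n-1}^2$, and this is asserted rather than proven. ``The standard BCH theorem'' is a statement about finite-dimensional (or Banach) Lie groups; $\mathcal{N}(S)$ is neither, and the reduction to $(n-1)$-jets does not by itself supply the identity. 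To make it rigorous you would need at least two further facts: (i) that the map sending $H \in \mathcal{J}_{n-1}^2$ to the first-variation vector field it induces on $\mathbb{M}^n(S)$ is a Lie algebra (anti)homomorphism for the Poisson bracket---note that this vector field is \emph{not} linear in the jet coordinates, since the first variation formula involves $H \pmod I$ and products such as $\mu_{l-k+1}\,\del w_k$---and (ii) an integration theorem asserting that time-$1$ flows of such a nilpotent family of vector fields on the infinite-dimensional space $\mathbb{M}^n(S)$ obey BCH. Neither is standard here, and the paper never establishes anything of the sort; indeed it explicitly warns that the group law on $\mathcal{N}(S)$ in inductive or exponential coordinates does not coincide with the algebra structure on $\mathcal{J}_{n-1}^2$, and the regular Fr\'echet--Lie structure on $\mathcal{G}^n(S)$ is only constructed afterwards (Proposition \ref{prop-frechet-lie}), in a category where global BCH is not automatic. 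The uniqueness half of your argument (``BCH-inverses,'' triangularity of $H \mapsto (H^2,\dots,H^{n-1})$) inherits the same unproven input.

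For contrast, the paper's proof deliberately avoids any such global identity. It sets $G^k = G^{k-1} + \widetilde H^k$, where $\widetilde H^k$ is \emph{re-chosen} at each stage by applying Proposition \ref{auto-decomposition} to $(\psi^{k-1})^{-1}h$ (so no universal correction formula like $F_{k-1}\star H^k$ is ever needed, and $\widetilde H^k$ may differ from $H^k(h)$), and then verifies only that the flow of the sum agrees with the composition on $\mu_j$ for $j \le k+1$. That weaker, degree-by-degree claim follows from the additivity $\frac{d}{dt}\mu_{k+1}(I\psi_t^k) = \frac{d}{dt}\mu_{k+1}(I\psi_t^{k-1}) + \frac{d}{dt}\mu_{k+1}(I\varphi_t^k)$, which in turn is read off from the dependency structure of the first variation formula (the variation of $\mu_{k+1}$ involves only $\mu_j$ with $j\le k$, which agree along both flows by induction). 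If you want to salvage your approach, the honest path is to prove the BCH identity for the action by exactly this kind of integration of the first variation formula---at which point you will essentially have reproduced the paper's computation, with the added burden of controlling the explicit bracket terms.
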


\begin{proof}
    Proposition \ref{auto-decomposition} yields a decomposition $h = h_2...h_{n-1}$ where $h_k$ is the class of a time-$1$ autonomous flow of a degree $k$ homogeneous Hamiltonian $H^{k}$ for $2\leq k\leq n-1$. The time-$1$ flow of $G^2 = H^2$ yields $\psi^2$ so $[\psi^2] = {h}_2$. The homogeneous Hamiltonian $H^2$ is uniquely determined by this requirement.
    
    Now suppose that ${G}^{k-1}$ is a Hamiltonian function of degree $k-1$ whose autonomous time-$1$ flow $\psi^{k-1}$ satisfies $\mu_j(I\psi^{k-1}) = \mu_j(Ih)$ for all $3\leq j \leq k$. This is of course equivalent to requiring that $(\psi^{k-1})^{-1} h \in \mathcal{N}^{k}(S)$. Then we see from Proposition \ref{auto-decomposition} that there is a unique homogeneous degree-$k$ Hamiltonian $\widetilde{H}^{k}$ with flow $\varphi_t^k$ so that $(\varphi^k_1)^{-1} (\psi^{k-1})^{-1} h \in \mathcal{N}^{k+1}(S)$, or equivalently that $\mu_j(I\psi^{k-1}_1 \varphi^k_1) = \mu_j(I h)$ for all $I \in \mathbb{M}^n(S)$ and $3\leq j \leq k+1$. Note that $\widetilde{H}^k$ may not be the same as $H^k(h)$, due to $\psi_t^{k-1}$ being generated by a single autonomous flow instead of a sequence of autonomous flows.
    
    Next, work in natural coordinates and let $I$ be given. Let $\psi_t^k$ be the flow of $G^k = {G}^{k-1} + \widetilde{H}^k$, and write $\widetilde{H}^{k} = w_{k}$ (mod $I$, degree $ \geq k+1$ terms). The first variation formula shows that for all $I, I' \in \mathbb{M}^n(S)$ with $\Sigma(I) = \Sigma(I')$, we have $\frac{d}{dt}\mu_{k+1}(I'\varphi_t^{k}) = \delbar w_{k}$, and Lemma \ref{lemma-work-mod-I} shows that $\frac{d}{dt}\mu_{k+1}(I'\psi_t^{k-1})$ depends only on $\Sigma(I'), \mu_3(I'), ..., \mu_k(I')$, and so is independent of $\mu_{k+1}(I')$.
    
    Now, for $j < k+1$, $\mu_j(I\psi^{k-1}_1) = \mu_j(I\psi_1^k)$ for all $I \in \mathbb{M}^n(S)$ as the $(k-1)$-jets of $G^k$ and $G^{k-1}$ agree. So from our tracking of dependencies of first variations, we see that for $0\leq t \leq 1$, $$\frac{d}{dt} \mu_{k+1}(I\psi_t^k) = \frac{d}{dt} \mu_{k+1}(I\psi_t^{k-1}) + \frac{d}{dt}\mu_{k+1}(I\varphi_t^k).$$ 
    
    So $\mu_{k+1}(I\psi_1^k) = \mu_{k+1}(I \psi_1^{k-1}) +  \delbar w_k$. On the other hand, $$\mu_{k+1}(Ih) = \mu_{k+1}(I\psi^{k-1}_1 \varphi^k_1) = \mu_{k+1}(I\psi_1^{k-1}) + \int_0^1 \frac{d}{dt} \mu_{k+1}(I\psi_1^{k-1} \varphi_t^k)dt = \mu_{k+1}(I\psi_1^{k-1}) + \delbar w_k.$$ We conclude that ${G}^k $ is a degree-$k$ Hamiltonian with autonomous time-$1$ flow $\psi^k_1$ satisfying $\mu_j(I\psi^k_1) = \mu_j(Ih)$ for all $3\leq j \leq k+1$. Repeating this process yields $H = {G}^{n-1}$ with time-$1$ flow $\psi$ so that $I\psi = Ih$ for all $I \in \mathbb{M}^n(S)$, and hence $[\psi] = h$. Uniqueness is given by uniqueness at each stage of our construction.
\end{proof}

 We denote by $H(h)$ the Hamiltonian associated to $h \in \mathcal{N}(S)$ by the previous proposition. We mention that these generating Hamiltonians can be used to give coordinates on $\mathcal{N}(S)$. We call these coordinates \textit{exponential coordinates}. We note that if $h = (H^2(h),...,H^{n-1}(h))$ in inductive coordinates and $h = H(h)$ in exponential coordinates, it may happen that $H (h) \neq \sum_{k=2}^{n-1} H^k(h)$, even though $H(h) \mapsto (H^2(h),..., H^{n-1}(h))$ and $(H^2(h),...,H^{n-1}(h)) \mapsto H(h)$ are smooth inverses for each other and $j_2^0(H(h)) = H^2(h)$. An analogous parametrization to Theorem \ref{structure-of-higher-diffeos} holds in exponential coordinates.
 
 In any matter, we shall only use the existence of $H(h)$ from Proposition \ref{exponential-coords-lemma} to prove our realization lemma, which we now proceed to.

\begin{lemma}[Realization Lemma]\label{non-collapse}
Suppose that $I, I' \in \mathbb{M}^n(S)$ and $Ih = I' $ for some $h \in \mathcal{N}(S)$. Let $k \geq 3$ be the least integer so that $\mu_k(I) \neq \mu_k(I')$ in natural coordinates. Then there exists $h' \in \mathcal{N}^{k-1}(S)$ so that $Ih' = I'$.
\end{lemma}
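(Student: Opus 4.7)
The plan is to prove this iteratively by establishing the following single-step claim: \emph{if $h \in \mathcal{N}^m(S)$ with $m \geq 2$ and $\mu_{m+1}(Ih) = \mu_{m+1}(I)$, then there exists $\bar g \in \mathcal{N}^{m+1}(S)$ with $I\bar g = Ih$.} Starting from $h \in \mathcal{N}^2(S) = \mathcal{N}(S)$ and applying the claim iteratively from $m = 2$ up to $m = k-2$---which is legitimate because the hypothesis $\mu_\ell(I) = \mu_\ell(I')$ for $3 \leq \ell < k$ supplies the needed vanishing at every step---one obtains the desired $h' \in \mathcal{N}^{k-1}(S)$. The base case $k=3$ is trivial, since then $\mathcal{N}^{k-1}(S) = \mathcal{N}(S)$ and $h' = h$ works.

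For the claim, I would invoke Proposition \ref{auto-decomposition} to write $h = h_m h_{m+1}\cdots h_{n-1}$ in inductive coordinates, where $h_m \in \mathcal{N}^m(S)$ is the class of the time-$1$ autonomous flow $\psi_t$ of a homogeneous degree-$m$ Hamiltonian $H^m$ and $h_{m+1}\cdots h_{n-1}\in\mathcal{N}^{m+1}(S)$. Since $\mathcal{N}^{m+1}(S)$ fixes $\mu_{m+1}$ universally, the hypothesis reduces to $\mu_{m+1}(Ih_m) = \mu_{m+1}(I)$. The first variation formula, combined with constancy of $\Sigma(I)$ along the $2$-stationary flow of $H^m$, identifies the integrated change in $\mu_{m+1}$ as $\delbar w_m^{H^m}$, where $w_m^{H^m}\in\Gamma(K_\Sigma^{-m})$ is the $(m,0)$-component of $H^m$ relative to $\Sigma = \Sigma(I)$. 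Genus $g\geq 2$ makes the kernel of $\delbar$ on $\Gamma(K^{-m}_\Sigma)$ trivial, forcing the $(m,0)$ and (by reality of $H^m$) $(0,m)$ components to vanish; consequently $\mu_\ell(I\psi_t) = \mu_\ell(I)$ for every $\ell \leq m+1$ and $t \in [0,1]$.

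Since $\bar g h_{m+1}\cdots h_{n-1}\in\mathcal{N}^{m+1}(S)$ whenever $\bar g\in\mathcal{N}^{m+1}(S)$, it suffices to produce $\bar g$ with $I\bar g = Ih_m$. I would construct $\bar g$ as the class of the time-$1$ map of a flow generated by a time-dependent $(m+1)$-stationary Hamiltonian $\widetilde G_t$, chosen so that its normal form $\sum_\ell w_\ell(t) p^\ell$ modulo $I_t := I\psi_t$ matches that of $H^m$ at every $t$. By Lemma \ref{lemma-work-mod-I}, matched normal forms produce identical first variations at each $I_t$, so by ODE uniqueness the two flows trace the same path starting at $I$, and the flow of $\widetilde G_t$ sends $I$ to $I\psi_1 = Ih_m$ at time $1$. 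Both $w_\ell^{H^m}(t)$ and $w_\ell^{\widetilde G_t}(t)$ vanish automatically for $\ell \leq m$---the former by the vanishing of the $(m,0)$-component together with homogeneity, the latter by the $(m+1)$-stationarity of $\widetilde G_t$---so the matching is nontrivial only for $\ell \geq m+1$, where the $(\ell,0)$- and $(0,\ell)$-components of $\widetilde G_t$ are prescribed recursively to cancel contributions from $\overline{p}^{\ell'}$-reductions of lower-degree $(0,\ell')$-components of $\widetilde G_t$ and hit the target $w_\ell^{H^m \pmod{I_t}}$.

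The main technical obstacle is this recursive construction of $\widetilde G_t$: one needs to verify that the prescription produces a real Hamiltonian, smooth in $t$, which can be realized as a compactly supported Hamiltonian generating an $(m+1)$-stationary flow in $\text{Ham}_c^0(T^*S)$. This is a degree-by-degree bookkeeping of how the $(a,b)$-components of a Hamiltonian contribute to its modular projection against the evolving ideal $I_t$, guided by the reduction $\overline{p} \equiv -\mu_3 p^2 - \mu_4 p^3 - \cdots \pmod{I_t}$ in $\Sigma(I)$-natural coordinates. Once this is in place, the matched first variations along the shared trajectory $I_t$ give the desired $\bar g$, and hence $h'$.
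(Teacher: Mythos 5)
Your proposal is correct and follows essentially the same route as the paper: induct on the stationarity degree $m$, represent the relevant element of $\mathcal{N}^m(S)$ by an autonomous flow (the paper uses Proposition \ref{exponential-coords-lemma} for the whole element where you peel off the leading factor via Proposition \ref{auto-decomposition}, an immaterial difference), kill the $(m,0)$- and $(0,m)$-components of the Hamiltonian using $\delbar w_m = 0 \Rightarrow w_m = 0$ in genus $\geq 2$, and replace the flow by one generated by a real, time-dependent, $(m+1)$-stationary Hamiltonian whose normal form modulo $I(t)$ matches the original along the trajectory. The "technical obstacle" you flag is exactly the paper's $G_i(t)$ recursion -- add the complex conjugate to restore reality, reduce modulo $I(t)$ via $\overline{p} \equiv -\mu_3 p^2 - \cdots$, observe the resulting error terms start one degree higher, and correct again -- which terminates after at most $n-m-1$ steps since degrees are capped at $n-1$, so the argument goes through as you describe.
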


\begin{proof}
    The case $k = 3$ is vacuous, so we are free to assume $k \geq 4$. We shall show that if there is an $m$-stationary diffeomorphism $h_m \in \mathcal{N}^{m}(S)$ with $2\leq m < k-1$ so that $Ih_m = I'$, then there is an $(m+1)$-stationary diffeomorphism $h_{m+1}$ so that $Ih_{m+1} = I'$. Our hypothesis and the assumption that $k > 3$ guarantee the existence of such a diffeomorphism for $m =2$. So after showing this, induction gives the claim.
    
    So let $h_m \in \mathcal{N}^m(S)$ be a $m$-stationary diffeomorphism so $Ih_m = I'$. Write $\Sigma = \Sigma(I) = \Sigma(I')$. By Proposition \ref{exponential-coords-lemma}, we can find a $m$-stationary autonomous Hamiltonian $H(h_m)$ whose time-$1$ flow represents $h_m$. Let $\psi_t$ denote the flow of $H(h_m)$, and write $I\psi_t = I(t) = (\Sigma, \mu_3(t), ...,\mu_n(t))$. Our strategy to find $h_{m+1}$ is to examine $H(h_m) \pmod {I(t)}$, and to find an adjusted (time-dependent) Hamiltonian $G(t)$ that is equivalent to $H(h_m) \pmod {I(t)}$ for all times $t$ but only has terms of degree at least $m+1$, and show that $I(t)$ is the time-$t$ flow of $I$ under $G(t)$.

    Write $H(h_m)$ as \begin{align}\label{initial-bad-hamiltonian}H(h_m) =  H_0 + \sum_{j=1}^{m-1} w_{m-j,j} \equiv  H_0 + \sum_{l=m+1}^{n-1} P_l(t) \pmod {I(t)}\end{align} where $w_{m-j,j} \in \Gamma(K_\Sigma^{j-m}\overline{K}_\Sigma^{-j})$ and  $P_l(t) \in \Gamma(K_\Sigma^{-l})$. Here, $H_0 \in \Gamma \left(\bigoplus_{l={m+1}}^{n-1} \mathcal{S}^l(TM)\right)$ is real and has no terms of degree less than $m+1$. The lack of a $w_{m,0}$ term in formula \eqref{initial-bad-hamiltonian} is due to $H(h_m)$ being a Hamiltonian whose autonomous flow fixes $\mu_{m+1}(I)$, written in natural coordinates. The cause of the sum of $P_l(t)$ beginning at the index $l=m+1$ in \eqref{initial-bad-hamiltonian} is the lack of a $w_{m,0}$ term in $H(h_m)$.
    
    We now describe a procedure that yields the desired adjusted Hamiltonian. Define \begin{align*} G_1(t) &= H_0 + \sum_{l=m+1}^{n-1} (P_l(t) + \overline{P_l(t)}) \\ &\equiv H_0 + \sum_{l=m+1}^{n-1} P_l(t) + \sum_{l=m+2}^{n-1} Q_1^l(t) \pmod {I(t)}. \end{align*} Here, $H_0$ and $P_l(t)$ are the same sections as in formula \eqref{initial-bad-hamiltonian} and $Q_1^l(t) \in \Gamma(K^{-l}_\Sigma)$. Note the start of the sum of $\mathcal{Q}_1^l(t)$ at the index $l=m+2$.

    Now, assume that we are given $G_i(t)$ satisfying the following requirements. We require that that $G_i$ has the form $ G_i(t) = H_0 + R_i(t),$ 
    with $H_0$ the same section as above and so that $R_i(t)$ has the form $$R_i(t) = \sum_{l=m+1}^{n-1} (P_l(t) + \overline{P_l(t)}) + \sum_{l=m+2}^{m+i} (A_l(t) + \overline{A_l(t)}),$$ with $H_0$ and $P_l(t)$ the same sections as above and $A_l(t) \in \Gamma(K^{-l}_\Sigma)$. We also require that $G_i(t)\pmod{I(t)}$ admit an expression of the form
    \begin{align*}
    G_i(t) &\equiv H_0 + \sum_{l=m+1}^{n-1} P_l(t) + \sum_{l=m+i+1}^{n-1} Q_i^l(t) \pmod {I(t)},
    \end{align*} where $Q_i^l(t) \in \Gamma(K^{-l}_\Sigma).$ Note the $i$-dependence of the starting index of the sum of $Q_i^l(t)$. For $i=1$, $G_1(t)$ satisfies these requirements.
    
    Given such a $G_i(t)$, define \begin{align*}
        G_{i+1}(t) &= H_0 + R_i(t) - (Q_i^{m+i+1}(t) + \overline{Q_i^{m+i+1}(t)}). \end{align*}
    Then $G_{i+1}(t)$ also satisfies the same requirements that $G_i$ was specified to satisfy with $i+1$ in place of $i$.
    
    This construction is so that $G(t) = G_{n-m-1}(t)$ has the form
    \begin{align}
        G(t) = H_0 + \sum_{l=m+1}^{n-1} (P_l(t)  + \overline{P_l(t)} ) + \sum_{l=m+2}^{n-1} (A_l(t) + \overline{A_l(t)})
    \end{align} with the same notational requirements as above and satisfies 
    \begin{align}\label{non-collapse-verifying-requirement} G(t) &\equiv H_0 + \sum_{l=m+1}^{n-1} P_l(t) &\pmod{I(t)}  \\ &\equiv H(h_m) &\pmod{I(t)}.\label{non-collapse-cond-2} \end{align} Let $\varphi_t$ denote the time-$t$ flow of $G(t) = G_{n-m-1}(t)$. 
    
    We claim that $I \varphi_t = I(t)$ for $0 \leq t \leq 1$. That is, that the flow of $I$ under our adjusted Hamiltonian $G(t)$ agrees with the flow of $I$ under the original autonomous flow of $H(h_m)$. We shall prove by induction on $l$ that $\mu_l(I\varphi_t) = \mu_l(I(t))$ for all $3\leq l \leq n$ and $0 \leq t \leq 1$. Note that for $l \leq m$, we have $\mu_l(I\varphi_t) = \mu_lI(t)$ for $0 \leq t \leq 1$ as $h_m$ is $m$-stationary. Also, since $G_t$ is $(m+1)$-stationary by construction and $\mu_{m+1}(I(t))$ is constant, we must have $\mu_{m+1}(I\varphi_t) = \mu_{m+1}(I(t))$ for $0\leq t\leq 1$. 
    
    Now suppose that $l > m+1$ and $\mu_j(I\varphi_t) = \mu_j(I(t))$ for all $j < l$. We claim that $\frac{d}{dt} \mu_{l}(I\varphi_t) = \frac{d}{dt} \mu_l(I(t))$ for $0\leq t \leq 1$. By the first variation formula, it suffices to show that the $K^{1-j}_\Sigma$-terms of the normalized forms of $G(t) \pmod{I\varphi_t}$ and $H(h_m) \pmod {I(t)}$ agree for all $j \leq l$ and times $0\leq t\leq 1$. For any $j \leq l,$ the $K^{1-j}_\Sigma$ term of $G(t) \pmod{I\varphi_t}$ depends only upon $\mu_3(I\varphi_t),..., \mu_{l-1}(I\varphi_t)$, which agree with $\mu_3(I(t)), ..., \mu_{l-1}(I(t))$ by the induction hypothesis. Equations \eqref{non-collapse-verifying-requirement}-\eqref{non-collapse-cond-2} thus ensure that for $j \leq l$, the $K^{1-j}_\Sigma$-terms of the normalized forms of $G(t) \pmod{I\varphi_t}$ and $H(h_m) \pmod {I(t)}$ agree for all $t$. This proves that $\frac{d}{dt}\mu_l(I\varphi_t) = \frac{d}{dt}\mu_l(I(t))$ for all $0 \leq t \leq 1$, completing the induction.
    
    This shows that in particular that $I\varphi_1 = I(1) = Ih$. So $h_{m+1} = [\varphi_1]$ is the desired $(m+1)$-stationary diffeomorphism, completing the initial induction.
\end{proof}

\subsection{Other Features of Higher Diffeomorphism Groups}\label{frechet-lie-subsection}

At this point, we have accumulated everything used about the degree-$n$ diffeomorphism group $\mathcal{G}^n(S)$ in the coming sections. This subsection fills out the picture of $\mathcal{G}^n(S)$ a bit further, addressing the naturality of the topological group structure on $\mathcal{G}^n(S)$ given through inductive coordinates and first matters about $\mathcal{G}^n(S)$ as an infinite-dimensional Lie group.

The degree-$n$ diffeomorphism group $\mathcal{G}^n(S)$ obtains topologies through inductive coordinates and as a quotient group. These coincide, as is to be expected. The proof comes down to a verification that the map $\varphi \mapsto (\varphi|_{Z^*S}, H((\varphi|_{Z^*S}^{-1})^\# \varphi)$ is open.

\begin{proposition}\label{quotient-topology-is-product}
Let $\widetilde{\mathcal{G}}^n(S)$ denote the quotient of {\rm{$\text{Ham}_c^0(T^*S)$}} by the stabilizer of its action on $\mathbb{M}^n(S)$, topologized as the quotient. Then $\widetilde{\mathcal{G}}^n(S)$ is a topological group. Furthermore, map $\varphi \mapsto [\varphi]$ from {\rm{$\text{Ham}_c^0(T^*S)$}} to $ \mathcal{G}^n(S)$ with the topological group structure induced by Hamiltonian or exponential coordinates induces an isomorphism of topological groups $\widetilde{\mathcal{G}}^n(S) \to \mathcal{G}^n(S)$.
\end{proposition}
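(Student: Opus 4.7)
The plan is to show that the quotient map $\pi: \text{Ham}_c^0(T^*S) \to \mathcal{G}^n(S)$ (with $\mathcal{G}^n(S)$ carrying the product topology from Theorem \ref{structure-of-higher-diffeos}) is a topological quotient map, and that $\mathcal{G}^n(S)$ with this topology is a topological group. Together these yield that $\widetilde{\mathcal{G}}^n(S)$ is a topological group and the induced bijection $\widetilde{\mathcal{G}}^n(S) \to \mathcal{G}^n(S)$ is an isomorphism of topological groups.

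Continuity of $\pi$ is immediate from Theorem \ref{structure-of-higher-diffeos}: restricted to each $\text{Ham}_K^0(T^*S)$, the coordinate map $\varphi \mapsto (\varphi|_{Z^*S}, H^2([\varphi]), \ldots, H^{n-1}([\varphi]))$ into $\text{Diff}_0(S) \times \mathcal{J}_{n-1}^2 \cong \mathcal{G}^n(S)$ is smooth, so continuity on all of $\text{Ham}_c^0(T^*S)$ follows from its direct limit topology. To upgrade $\pi$ to a quotient map, I build continuous local sections. Near $(f_0, h_0) \in \mathcal{G}^n(S)$, I take $s(f, h) = \psi_f \cdot \chi_h$, where $\chi_h$ is the time-$1$ flow of the unique Hamiltonian $H(h)$ provided by Proposition \ref{exponential-coords-lemma} multiplied by a fixed bump function supported on a chosen compact $K \supset Z^*S$; and $\psi_f \in \text{Ham}_c^0(T^*S)$ is obtained as in Lemma \ref{lifts-get-realized} from a smoothly varying family of isotopies $f_{t,f}$ from Id to $f$ on a neighborhood of $f_0$ (such a family exists because $\text{Diff}_0(S)$ is a Fr\'echet manifold), using the identity $(f_{t,f}^\#)^* \lambda_{\text{taut}} = \lambda_{\text{taut}}$ to canonically produce a time-dependent Hamiltonian, then cutting off with a fixed bump. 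Both $\psi_f$ and $\chi_h$ depend continuously on their arguments, and a direct check with the semidirect-product decomposition of Theorem \ref{structure-of-higher-diffeos} gives $\pi(\psi_f \chi_h) = (f, h)$.

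With continuity and continuous local sections, $\pi$ is a topological quotient map, which forces the quotient topology on $\widetilde{\mathcal{G}}^n(S)$ to coincide with the pushed-forward product topology on $\mathcal{G}^n(S)$, so the induced bijection is a homeomorphism. It remains to show that the product topology on $\mathcal{G}^n(S) = \mathcal{D}_0(S) \ltimes \mathcal{N}(S)$ makes it a topological group. Multiplication on $\mathcal{D}_0(S) \cong \text{Diff}_0(S)$ is continuous since the latter is a Fr\'echet Lie group. For multiplication on $\mathcal{N}(S)$ and for the conjugation action of $\mathcal{D}_0(S)$ on $\mathcal{N}(S)$, I lift pairs of elements via the continuous sections $\chi_h$ and $\psi_f$ into the topological group $\text{Ham}_K^0(T^*S)$ (which is a Fr\'echet Lie group for fixed compact $K$), compose there, and recover inductive coordinates of the product through the smooth map of Proposition \ref{smoothness}.

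The main obstacle is ensuring the diffeomorphism lift $\psi_f$ depends continuously on $f$: this requires smoothly varying the isotopy from Id to $f$ across a neighborhood, canonically generating the Hamiltonian via $(f^\#)^* \lambda_{\text{taut}} = \lambda_{\text{taut}}$, and using a fixed compactly supported cutoff. Once this local section is set up, the remainder of the argument is routine bookkeeping via Proposition \ref{smoothness} and the standard Fr\'echet-Lie-group structure on $\text{Ham}_K^0(T^*S)$.
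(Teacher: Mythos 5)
Your proposal is correct, and its skeleton matches the paper's: continuity of the quotient map comes from Proposition \ref{smoothness}, the quotient topology is identified with the product topology of Theorem \ref{structure-of-higher-diffeos}, and the topological group property is inherited from the smoothness of the group operations in inductive/exponential coordinates. Where you diverge is in how you identify the two topologies. The paper shows directly that $q: \text{Ham}_c^0(T^*S) \to \mathcal{G}^n(S)$ is \emph{open}: given a neighborhood $U$, it passes to a symmetric identity neighborhood $V_K$ in some $\text{Ham}_K^0(T^*S)$ with $V_K^2 \subset \varphi^{-1}U_K$ and observes that $V_K$ already contains enough lifts of small diffeomorphisms and enough autonomous flows with small jets that $q(V_K)q(V_K)$ covers an identity neighborhood of $\mathcal{G}^n(S)$; no continuity of any choice is needed, only a set containment. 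You instead build genuine continuous local sections $s(f,h) = \psi_f \chi_h$ using exponential coordinates (Proposition \ref{exponential-coords-lemma}) and a continuously varying cut-off lift of an isotopy. This is a strictly stronger conclusion (local triviality of the quotient) and it does yield that $\pi$ is a quotient map, hence the desired homeomorphism; but it shifts the burden onto verifying continuity of $f \mapsto \psi_f$ and $h \mapsto \chi_h$, which you address adequately (contractibility of $\text{Diff}_0(S)$ or a chart-level interpolation for the isotopies, exactness of $(f_t^\#)^*\lambda_{\text{taut}} = \lambda_{\text{taut}}$ for a canonical generating Hamiltonian, and the tautological continuity of $h \mapsto H(h)$ in exponential coordinates). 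The paper's route is the more economical one precisely because it avoids this continuity bookkeeping; yours buys a local section that could be reused elsewhere (e.g., in the appendix's analysis of $\mathcal{H}^n(S)$).
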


\begin{proof}
    The proposition follows from the induced map $\Phi: \widetilde{\mathcal{G}}^n(S) \to \mathcal{G}^n(S)$ being a homeomorphism. The map $\Phi$ is a bijective homomorphism, and is continuous by Proposition \ref{smoothness}. So it remains to verify that $\Phi$ is open. It suffices to show that $q: \text{Ham}_c^0(T^*S) \to \mathcal{G}^n(S)$ given by $q: \varphi \mapsto [\varphi]$ is open.
    
    Let $U$ be a nonempty neighborhood in $\text{Ham}_c^0(T^*S)$. There must exist a compact submanifold $K$ of $T^*S$ containing $Z^*S$ in its interior and $\varphi \in \text{Ham}_K^0(T^*S) \cap U$. As $U$ is open in $\text{Ham}_c^0(T^*S)$, there must be a neighborhood $U_K$ of $\varphi$ in $\text{Ham}_K^0(T^*S)$ contained in the intersection $U \cap \text{Ham}_K^0(T^*S)$. Then $W_K = \varphi^{-1}U_K$ is a neighborhood of the identity in $\text{Ham}_K^0(T^*S)$. Take a nonempty neighborhood $V_K \subset W_K$ so $V_K^2 \subset W_K$.
    
    The neighborhood $V_K$ contains autonomous flows whose generating Hamiltonians' $(n-1)$-jets along $Z^*S$ contain a neighborhood of $0$ in $\mathcal{J}_{n-1}^2$, along with elements agreeing with $\varphi^\#$ on a neighborhood of $Z^*S$ for every $\varphi$ in some neighborhood of the identity in $\text{Diff}_0(S)$. So $q(W_K)$ contains a neighborhood of the identity in $\mathcal{G}^n(S),$ and so $q(U) \supset q(U_K) = q(\varphi) q(W_K)$, which is an open neighborhood of $q(\varphi)$ as $\mathcal{G}^n(S)$ is a topological group.
\end{proof}

We next construct a regular Fr\'echet-Lie group structure on $\mathcal{G}^n(S)$ equipped with $C^\infty$ projections $q_k: \mathcal{G}^n(S) \to \mathcal{G}^k(S)$ (see \cite{omori1996infinite} for basic definitions and properties). We do this through a straightforward application of a foundational proposition on extensions of regular Fr\'echet-Lie groups:

\begin{proposition}[\cite{kobayashi1983regular}\label{awful-regular-FL-condition}, Proposition 5.2]
    Consider an exact sequence of groups $$\begin{tikzcd} 1 \arrow[r]&  N  \arrow[r]&  G   \arrow[r, "\pi"]& G/N  \arrow[r] & 1 \end{tikzcd}$$  with $N, G/N$ regular Fr\'echet-Lie groups. Suppose that there is a neighborhood $U$ of $e \in G/N$ and a map $\gamma: U \to G$ so that:
    \begin{enumerate}
        \item The map $\gamma$ is a cross-section in the sense that $\gamma(e_{G/N}) = e_G$, $\pi \circ \gamma = \text{Id}_U$, and $(g, n) \mapsto \gamma(g)n$ is a bijection from $U \times N \to \pi^{-1}(U)$,
        \item For some neighborhood $V \subset U$ containing $e \in G/N$ so that $V^2 \subset U$, $V = V^{-1}$, and $\pi^{-1}(U)$ generates $G$, the map $r_\gamma : V \times V \to N$ defined by $$r_\gamma(g, h) = \gamma(gh)^{-1} \gamma(g) \gamma(h) $$ is $C^\infty$,
        \item The conjugation map $\alpha_\gamma: V \times N \to N$ given by $\alpha_\gamma(g,m) = \gamma(g)^{-1} m \gamma(g)$ is $C^\infty$.
    \end{enumerate}
     Then there is a natural regular Fr\'echet-Lie group structure on $G$ so that $\pi: G \to G/N$ is $C^\infty$.
\end{proposition}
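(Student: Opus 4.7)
The plan is to transport the regular Fr\'echet-Lie structures on $N$ and $G/N$ to $G$ using the cross-section $\gamma$, by declaring the bijection $U \times N \to \pi^{-1}(U)$, $(g, n) \mapsto \gamma(g) n$, to be a chart near the identity, and then propagating via left translations. Since $\pi^{-1}(U)$ generates $G$, every element of $G$ is a finite product of elements from $\pi^{-1}(U)$, so left-translated charts $L_h(\pi^{-1}(V))$ cover $G$. The Fr\'echet-Lie manifold structure on $G$ is thus determined once the transition functions between overlapping left-translated charts are shown to be smooth.

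The core computation is to express multiplication in these coordinates. For $g_1, g_2 \in V$ with $g_1 g_2 \in U$ and $n_1, n_2 \in N$, a direct manipulation using $r_\gamma(g_1, g_2) = \gamma(g_1 g_2)^{-1} \gamma(g_1) \gamma(g_2)$ and $\alpha_\gamma(g_2, n_1) = \gamma(g_2)^{-1} n_1 \gamma(g_2)$ gives
\[
\gamma(g_1) n_1 \cdot \gamma(g_2) n_2 = \gamma(g_1 g_2) \cdot r_\gamma(g_1, g_2) \cdot \alpha_\gamma(g_2, n_1) \cdot n_2,
\]
so in coordinates the multiplication reads
\[
(g_1, n_1) \cdot (g_2, n_2) = \bigl(g_1 g_2,\, r_\gamma(g_1, g_2)\, \alpha_\gamma(g_2, n_1)\, n_2\bigr),
\]
which is smooth by hypotheses (2) and (3) and the smoothness of the group operations on $G/N$ and $N$. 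A parallel calculation expresses inversion smoothly. Transition functions between $L_h(\pi^{-1}(V))$ and $L_{h'}(\pi^{-1}(V))$ are then obtained by composing this coordinate multiplication with the fixed left-translation data, and hence are smooth.

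For regularity, given a smooth path $X : [0, 1] \to \mathfrak{g} = \mathrm{Lie}(G)$, the plan is to project to $\bar X : [0, 1] \to \mathrm{Lie}(G/N)$ and integrate it via regularity of $G/N$ to obtain a smooth path $\bar g(t) \in G/N$. Lifting via $\gamma$ locally and re-expressing the ODE for $g(t) \in G$ in the product coordinates, the $G/N$-component is already solved by $\bar g(t)$ and the $N$-component reduces to a nonautonomous ODE in $N$ whose time-dependent right-hand side is smooth in $t$ (because $\gamma$, $r_\gamma$, and $\alpha_\gamma$ are smooth). Regularity of $N$ then solves the $N$-component, yielding $g(t) \in G$.

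The main obstacle is the non-local aspect: one must verify carefully that the transition functions between left-translated charts based at arbitrary group elements are smooth, which reduces via the multiplication formula above to repeated applications of smoothness of $r_\gamma$ and $\alpha_\gamma$ on the relevant overlaps (this is where the condition $V^2 \subset U$ plays a role, and one may need to use hypothesis (1) to ensure $\gamma$ is globally well-behaved on $U$ rather than just $V$). The regularity argument similarly depends on patching a local construction via the cross-section into a global solution, which must be shown to be independent of the particular lifts chosen. Once these patching issues are handled, the resulting Fr\'echet-Lie structure is forced to make $\pi$ smooth since in coordinates $\pi$ is literally the projection $V \times N \to V$.
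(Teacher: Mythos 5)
The paper does not prove this statement: it is quoted verbatim from the cited reference (Proposition 5.2 of \cite{kobayashi1983regular}) and used as a black box to assemble the regular Fr\'echet-Lie structure on $\mathcal{N}(S)$ and then on $\mathcal{G}^n(S)$, so there is no in-paper proof to compare against. That said, your sketch follows the standard route for such extension theorems, which is essentially the argument of the cited source: use the bijection $(g,n)\mapsto\gamma(g)n$ as a chart at the identity, note that the multiplication formula $(g_1,n_1)\cdot(g_2,n_2)=\bigl(g_1g_2,\,r_\gamma(g_1,g_2)\,\alpha_\gamma(g_2,n_1)\,n_2\bigr)$ (which you compute correctly) is smooth by hypotheses (2) and (3), propagate by left translations using that $\pi^{-1}(U)$ generates $G$, and integrate paths first in $G/N$ and then in $N$. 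Two caveats on completeness rather than correctness: first, the chart-patching you flag as the ``main obstacle'' is genuinely where the work lies, since a transition map between $L_h(\pi^{-1}(V))$ and $L_{h'}(\pi^{-1}(V))$ involves left translation by an arbitrary word in $\pi^{-1}(V)$, and one must check that iterating the multiplication formula keeps all intermediate $G/N$-components inside $U$ (this is exactly what $V^2\subset U$ and shrinking $V$ buy you); you name the issue but do not resolve it. Second, regularity in the sense required here is more than the existence of product integrals: the evolution map from smooth paths in the Lie algebra to paths in the group must itself depend smoothly on the path, and your argument only addresses existence of the lift, not this smooth dependence. Both points are handled in the cited reference, so for the purposes of this paper the statement is correctly imported, but a self-contained proof would need to fill them in.
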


Where the degree of the higher diffeomorphism group of interest is ambiguous in the following, we adopt the notation that $\mathcal{N}^{k,j}(S)$ is the subgroup $\mathcal{N}^k(S) \subset \mathcal{G}^j(S)$.

We begin by assembling a regular Fr\'echet-Lie structure on $\mathcal{N}(S)$. The groups $\mathcal{N}^{k, k+1}(S)$ have regular Fr\'echet-Lie structures as additive groups of Fr\'echet spaces, and the quotient $\mathcal{N}^{k,n}(S)/\mathcal{N}^{k+1,n}(S)$ has a regular Fr\'echet-Lie structure inherited from its identification with $\mathcal{N}^{k,k+1}(S)$. So begin by considering the extension $\mathcal{N}^{n-1,n}(S) \to \mathcal{N}^{n-2,n}(S) \to \mathcal{N}^{n-2,n-1}(S)$. Define a cross-section in inductive coordinates by $\gamma: (H^{n-2}(\varphi)) \mapsto (H^{n-2}(\varphi),0)$ for $U = \mathcal{N}^{n-2,n-1}(S)$. Condition $(1)$ is immediate, and smoothness of group operations in inductive coordinates gives (2) and (3) for $V = U= \mathcal{N}^{n-2,n-1}(S)$. Proceeding inductively with the same argument produces a regular Fr\'echet-Lie group structure on $\mathcal{N}(S)$.

To extend this regular Fr\'echet-Lie structure to all of $\mathcal{G}^n(S)$, we use the extension $\mathcal{N}(S) \to \mathcal{G}^n(S) \to\text{Diff}_0(S)$ defining the algebraic semidirect product structure of Theorem \ref{structure-of-higher-diffeos}. As $S$ is compact, $\text{Diff}_0(S)$ is a regular Fr\'echet-Lie group (\cite{omori1996infinite}, for instance). Here, we take the cross-section $\gamma : \varphi \mapsto (\varphi, 0,...,0)$ in inductive coordinates for $U = \text{Diff}_0(S)$. Then (1) is clear and (2) is satisfied as $r_\gamma$ is the constant map $(g,h) \mapsto (0,...,0)$. Condition (3) holds due to Proposition \ref{smoothness} and smoothness of conjugation in $\text{Ham}_K^0(T^*S)$ for $K$ a compact submanifold of $T^*S$ containing the zero section $Z^*S$ in its interior. Summarizing, we have shown:

\begin{proposition}\label{prop-frechet-lie}
    The degree-$n$ diffeomorphism group $\mathcal{G}^n(S)$ with the topology induced from inductive coordinates admits the structure of a regular Fr\'echet-Lie group so that for $k<n$ the projections $\mathcal{G}^n(S) \to \mathcal{G}^k(S)$ are smooth.
\end{proposition}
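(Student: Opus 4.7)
The plan is to apply the extension result for regular Fr\'echet-Lie groups (Proposition \ref{awful-regular-FL-condition}) iteratively, building up from Fr\'echet-space factors through the central series of $\mathcal{N}(S)$ and then incorporating $\text{Diff}_0(S)$. The inputs are that each successive quotient $\mathcal{N}^{k,n}(S)/\mathcal{N}^{k+1,n}(S)$ is isomorphic to $\Gamma(\mathcal{S}^k(TS))$ with addition and therefore carries the natural Fr\'echet-Lie structure of an additive Fr\'echet space, that $\text{Diff}_0(S)$ is a regular Fr\'echet-Lie group since $S$ is compact, and that inductive coordinates give smooth group operations via Proposition \ref{smoothness} and Theorem \ref{structure-of-higher-diffeos}.

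The first stage of the argument handles $\mathcal{N}(S)$ by induction on the length of its central series. I would begin at the innermost extension $1 \to \mathcal{N}^{n-1,n}(S) \to \mathcal{N}^{n-2,n}(S) \to \mathcal{N}^{n-2,n-1}(S) \to 1$ and choose the cross-section $\gamma$ that in inductive coordinates sends $(H^{n-2})$ to $(H^{n-2},0)$, taking $U = V = \mathcal{N}^{n-2,n-1}(S)$. Condition (1) of Proposition \ref{awful-regular-FL-condition} is immediate from the fact that inductive coordinates parametrize $\mathcal{N}(S)$ bijectively; conditions (2) and (3) reduce to smoothness of the group operations and conjugation, which in inductive coordinates are smooth by Proposition \ref{smoothness}. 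Iterating this procedure (at each step pulling in one more layer of the filtration as the normal subgroup) produces a regular Fr\'echet-Lie structure on $\mathcal{N}(S)$.

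The second stage extends this to $\mathcal{G}^n(S)$ via the exact sequence $1 \to \mathcal{N}(S) \to \mathcal{G}^n(S) \to \text{Diff}_0(S) \to 1$ supplied by Theorem \ref{structure-of-higher-diffeos}. Here I would take the cross-section $\gamma: \varphi \mapsto (\varphi, 0, \ldots, 0)$ determined by lifts of diffeomorphisms and inductive coordinates, with $U = V = \text{Diff}_0(S)$. The advantage of this particular $\gamma$ is that it is a genuine group homomorphism, so $r_\gamma$ is identically the identity of $\mathcal{N}(S)$ and condition (2) is automatic. Condition (3) requires smoothness of conjugating an element of $\mathcal{N}(S)$ (represented by an autonomous flow in a fixed $\text{Ham}_K^0(T^*S)$) by the lift of a diffeomorphism, which follows from smoothness of composition in $\text{Ham}_K^0(T^*S)$ combined with Proposition \ref{smoothness}. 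This yields the regular Fr\'echet-Lie structure on $\mathcal{G}^n(S)$, and the fact that this structure agrees with the topology coming from inductive coordinates is built into the construction.

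Finally, smoothness of the projection $q_k: \mathcal{G}^n(S) \to \mathcal{G}^k(S)$ follows because in inductive coordinates on both sides, $q_k$ is given by forgetting the top $n-k$ components $(H^k, \ldots, H^{n-1})$ of the $\mathcal{N}(S)$-factor while preserving the $\text{Diff}_0(S)$-factor, which is a smooth projection between product Fr\'echet manifolds. I expect the main obstacle to be the verification of condition (3) in the first stage: conjugation in $\mathcal{N}(S)$ intertwines the autonomous-flow parametrization in a genuinely nonlinear way, and smoothness requires us to know that the passage between Hamiltonians and group elements (obtained by solving ODEs on a noncompact manifold with $t$-dependent cutoffs) is smooth as a map of Fr\'echet spaces. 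Proposition \ref{smoothness}, which was established precisely to handle this, is the essential input that makes the inductive step go through.
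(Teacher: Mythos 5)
Your proposal is correct and follows essentially the same route as the paper: the same iterated application of Proposition \ref{awful-regular-FL-condition} up the central series of $\mathcal{N}(S)$ with the cross-section $(H^{n-2}) \mapsto (H^{n-2},0)$, the same final extension $1 \to \mathcal{N}(S) \to \mathcal{G}^n(S) \to \text{Diff}_0(S) \to 1$ with the homomorphic cross-section $\varphi \mapsto (\varphi,0,\dots,0)$, and the same appeal to Proposition \ref{smoothness} and smoothness of conjugation in $\text{Ham}_K^0(T^*S)$ for condition (3). Your closing observation that Proposition \ref{smoothness} is the essential input for the conjugation conditions matches the role it plays in the paper's argument.
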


\begin{remark}
    The group {\rm{$\text{Diff}_0(S)$}} has more structure than that of a regular Fr\'echet-Lie group: it has structures as inverse limits of Hilbert- and Banach-Lie [ILH, ILB] groups in the sense of Omori \cite{ebin1970manifold}, \cite{omori1970group}. A natural question is if ILB-Lie group structures on the symplectic diffeomorphisms supported on fixed compact sets give rise to isomorphisms in the category of ILB-Lie groups between $\widetilde{\mathcal{G}}^n(S)$ and an appropriate ILB-Lie group structure on $\mathcal{G}^n(S)$.
\end{remark}

\section{Harmonic Higher Complex Structures}\label{quotient-section}

We have now developed enough machinery to study $\mathcal{T}^n(S)$ as the quotient $\mathbb{M}^n(S)/\mathcal{G}^n(S)$. Our approach is to work in natural coordinates and show that every orbit of the action of $\mathcal{N}(S)$ on $\mathbb{M}^n(S)$ has a unique representative in $\mathcal{H}\mathbb{M}^n(S)$. As the action of $\text{Diff}_0(S)$ on $\mathbb{M}^n(S)$ in natural coordinates is by pullback, this identification of $\mathbb{M}^n(S)/\mathcal{N}(S)$ and $\mathcal{H}\mathbb{M}^n(S)$ descends to a vector bundle structure on $\mathcal{T}^n(S)$. The main result is Theorem \ref{main-bundle-thm}, which is an analogue of Labourie's conjecture for $\mathcal{T}^n(S)$. Throughout this section, the genus of $S$ is taken to be at least $2$.

We shall make use of a Hodge theorem for $k$-Beltrami differentials on $\Sigma \in \mathbb{M}(S)$ with respect to the Petersson $L^2$ product $\langle \mu_k, \nu_k \rangle = \int_{\Sigma} \mu_k \overline{\nu}_k g_\Sigma^{k-1}$. The statement for $2$-Beltrami differentials is quite well-known and widely applied. The proof for all $k \geq 2$ follows standard arguments, and is included for completeness.

\begin{lemma}\label{hodge} Let $\Sigma \in \mathbb{M}(S)$. If $\mu_k$ is a smooth $k$-Beltrami differential on $\Sigma$ for some $k \geq 2$, then $\mu_k$ can be expressed uniquely as $$\mu_k = \widetilde{\mu}_k + \delbar w_{k-1}$$ with $\widetilde{\mu}_k$ harmonic and $w_{k-1} \in \Gamma(K^{1-k}_\Sigma)$ smooth. Both $\widetilde{\mu}_k$ and $\delbar w_{k-1}$ depend smoothly and linearly on $\mu_k$.
\end{lemma}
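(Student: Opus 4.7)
The plan is to apply the standard Hodge decomposition for the Dolbeault operator on the holomorphic line bundle $K^{1-k}_\Sigma$ over the compact Riemann surface $\Sigma$, and to identify the harmonic $k$-Beltrami differentials as the kernel of the formal $L^2$-adjoint of $\delbar$. First I would introduce $L^2$-pairings on $\Gamma(K^{1-k}_\Sigma)$ and on $\Gamma(K^{1-k}_\Sigma\overline K_\Sigma)$ built from $g_\Sigma$ (the second being the Petersson pairing in Theorem \ref{free-from-labourie}) and compute the formal adjoint $\delbar^*$ of $\delbar:\Gamma(K^{1-k}_\Sigma) \to \Gamma(K^{1-k}_\Sigma\overline K_\Sigma)$. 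In an isothermal coordinate $z$ with $g_\Sigma=\sigma^2|dz|^2$ and $\mu_k$ written as $\mu_0\, d\bar z/dz^{k-1}$, integration by parts on the closed surface $\Sigma$ gives, up to a positive constant,
\begin{equation*}
\delbar^*\mu_k \;=\; -\,\sigma^{-2k}\,\partial\!\bigl(\mu_0\,\sigma^{2(k-1)}\bigr)\,dz^{1-k}.
\end{equation*}
Hence $\delbar^*\mu_k=0$ iff $\overline{\mu_k}\,g_\Sigma^{k-1}=\overline{\mu_0}\,\sigma^{2(k-1)}\,dz^k$ is holomorphic, which identifies the harmonic $k$-Beltrami differentials of Definition \ref{harmonic-n-cx} with $\ker\delbar^*$.

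With that identification, standard elliptic Hodge theory for the Dolbeault complex of a holomorphic line bundle over a closed Riemann surface yields an $L^2$-orthogonal splitting
\begin{equation*}
\Gamma\bigl(K^{1-k}_\Sigma\overline K_\Sigma\bigr) \;=\; \ker\delbar^* \;\oplus\; \delbar\,\Gamma\bigl(K^{1-k}_\Sigma\bigr),
\end{equation*}
with the corresponding Green's operator for the self-adjoint second-order elliptic Laplacian $\delbar\delbar^*$ continuous in all Sobolev norms. Applied to $\mu_k$ this produces the decomposition $\mu_k=\widetilde\mu_k+\delbar w_{k-1}$ with $\widetilde\mu_k$ harmonic; linearity of the two summands in $\mu_k$ is automatic from linearity of the orthogonal projection, and smoothness of $\mu_k\mapsto\widetilde\mu_k$ and $\mu_k\mapsto w_{k-1}$ as maps of Fr\'echet spaces of smooth sections follows from continuity of the Green's operator together with elliptic regularity.

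For uniqueness, if $\widetilde\mu_k+\delbar w_{k-1} = \widetilde\mu_k'+\delbar w_{k-1}'$ then $\widetilde\mu_k-\widetilde\mu_k'$ lies in both $\ker\delbar^*$ and $\delbar\,\Gamma(K^{1-k}_\Sigma)$, and so vanishes by the $L^2$-orthogonality of the splitting. It remains to observe that $\delbar$ is injective on $\Gamma(K^{1-k}_\Sigma)$: its kernel is $H^0(\Sigma,K^{1-k}_\Sigma)$, which is zero because $\deg K^{1-k}_\Sigma=(1-k)(2g-2)<0$ for $k\geq 2$ and $g\geq 2$, and a line bundle of negative degree on a compact Riemann surface has no nonzero holomorphic sections.

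The main (and really only) nontrivial step I expect is the formal-adjoint computation together with the verification that $\ker\delbar^*$ matches the harmonicity condition stated in Definition \ref{harmonic-n-cx}; one has to track the factors of $\sigma$ coming from the hyperbolic metric carefully in order to extract $\overline{\mu_k}\,g_\Sigma^{k-1}$ holomorphic as the precise vanishing condition. Once this adjoint identification is in hand, the remainder of the argument is a direct invocation of Hodge theory for elliptic complexes on compact manifolds and the standard degree argument ruling out holomorphic sections of negative line bundles.
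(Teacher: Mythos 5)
Your proposal is correct and follows essentially the same route as the paper: both arguments rest on the $L^2$-orthogonal splitting of $\Gamma(K^{1-k}_\Sigma\overline K_\Sigma)$ into $\delbar\,\Gamma(K^{1-k}_\Sigma)$ and its Petersson-orthogonal complement, with the integration-by-parts/formal-adjoint computation identifying that complement with the harmonic $k$-Beltrami differentials, and the vanishing of $H^0(\Sigma,K^{1-k}_\Sigma)$ in genus $g\geq 2$ giving uniqueness of $w_{k-1}$. The only difference is packaging: you invoke the Hodge theorem for the Dolbeault complex with its Green's operator, while the paper runs the closed-range (Fredholm) and Weyl's-lemma arguments by hand.
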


In fact, the map $P: \mu_k \mapsto \widetilde{\mu}_k$ is the $L^2$ orthogonal projection to the space of harmonic Beltrami differentials on $\Sigma$.

\begin{proof}
	We begin by enlarging our space of sections. Denote the space of $L^2$ sections of $\overline{K}_\Sigma K_\Sigma^{1-k}$ by $\Gamma^2(\overline{K}_\Sigma K_\Sigma^{1-k})$. Let $V$ denote the space of sections of $\overline{K}_\Sigma K_\Sigma^{1-k}$ of Sobolev class $W^{1,2}$, define $E = \delbar (V)$ and let $\mathcal{H}$ be the orthogonal complement of $E$ under the $L^2$ inner product. We obtain $\Gamma^2(\overline{K}_\Sigma K_\Sigma ^{1-k}) = \overline{E} \oplus  \mathcal{H}$. Now $\delbar$ is an elliptic operator on a compact manifold, and so must be Fredholm and hence have closed range in $\Gamma^2(\overline{K}_\Sigma K_\Sigma^{1-k})$. Thus $E = \overline{E}$.
	
	Now, if $\mu_k$ is an element of $\mathcal{H}$, we have
	$$ 0 = \langle \delbar w_{k-1}, \mu_k  \rangle = \int(\delbar w_{k-1}) \overline{\mu}_k g_\Sigma^{k-1} $$ for all smooth $w_{k-1}$. Integrating by parts shows $\delbar (\overline{\mu}_k g_\Sigma^{k-1}) = 0$ weakly. Applying Weyl's lemma in coordinate patches shows $\overline{\mu}_k g_\Sigma^{k-1}$ is smooth. So $\mu_k$ is smooth and harmonic. This shows $\mathcal{H}$ consists of exactly the harmonic $k$-Beltrami differentials.
	
	Now, let $\mu_k = \widetilde{\mu}_k + \alpha$ be the orthogonal decomposition of $\mu_k$ as above. As $\alpha \in E$, we must have that $\alpha = \delbar w_{k-1}$ weakly for some $w_{k-1} \in V$. As $\alpha = \mu_k - \widetilde{\mu}_k$ is the difference of smooth tensors, it is smooth. As $\alpha = \delbar w_{k-1}$, by Weyl's lemma $w_{k-1}$ is smooth. This yields existence of the desired decomposition. Uniqueness and smooth dependence of $\widetilde{ \mu}_k$ and $\alpha$ on $\mu_k$ follow from being obtained via an orthogonal decomposition. Finally, $w_{k-1}$ is unique as if $\delbar w_{k-1}' = \delbar w_{k-1}$, then $\omega = w_{k-1}' - w_{k-1}$ is a smooth section of $K_\Sigma^{1-k}$ with $\delbar \omega = 0$ and so must be identically $0$ as the genus of $S$ is at least $2$. 
\end{proof}

We now produce harmonic representatives of $\mathcal{N}(S)$-orbits. This is the place where all the machinery we have developed comes together.

\begin{theorem}\label{harmonic-representatives}
    Let $I = (\Sigma, \mu_3, ..., \mu_n) \in \mathbb{M}^n(S)$. Then $I$ is equivalent modulo $\mathcal{G}^n(S)$ to a unique $\widetilde{I} \in \mathcal{H}\mathbb{M}^n(S)$ with $\Sigma(\widetilde{I}) = \Sigma(I)$. The dependence of $\widetilde{I}$ on $I$ is smooth.
\end{theorem}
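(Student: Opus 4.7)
The plan is inductive: build $\widetilde{I}$ by making $\mu_3, \mu_4, \ldots, \mu_n$ harmonic one at a time by flowing along increasingly stationary autonomous Hamiltonians. Set $I^{(2)} := I$. For $k = 3, \ldots, n$, suppose inductively that $I^{(k-1)} = (\Sigma, \widetilde{\mu}_3, \ldots, \widetilde{\mu}_{k-1}, \mu_k^{(k-1)}, \ldots, \mu_n^{(k-1)})$ has been constructed with $\widetilde{\mu}_j$ harmonic for $j < k$. Apply Lemma \ref{hodge} to obtain $\mu_k^{(k-1)} = \widetilde{\mu}_k + \delbar \omega_{k-1}$ with $\widetilde{\mu}_k$ harmonic and $\omega_{k-1} \in \Gamma(K_\Sigma^{1-k})$. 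Now choose a compactly supported $(k-1)$-stationary Hamiltonian $H_k$ (so $j_{k-2}^0(H_k) = 0$) whose $K_\Sigma^{1-k}$-component modulo $I^{(k-1)}$ and terms of degree $\geq k$ equals $-\omega_{k-1}$; this is arranged by extending $-\omega_{k-1}$ to a real function on $T^*S$ near $Z^*S$ and applying a cutoff as in Lemma \ref{lifts-get-realized}. Formula \eqref{basic-auto-formula} then gives $\mu_k(I^{(k-1)}\varphi_k) = \mu_k^{(k-1)} - \delbar \omega_{k-1} = \widetilde{\mu}_k$, where $\varphi_k$ is the time-$1$ flow of $H_k$, and $(k-1)$-stationarity of $H_k$ ensures $\Sigma, \widetilde{\mu}_3, \ldots, \widetilde{\mu}_{k-1}$ are preserved. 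Setting $I^{(k)} := I^{(k-1)} \varphi_k$ advances the induction, and $\widetilde{I} := I^{(n)} \in \mathcal{H}\mathbb{M}^n(S)$ lies in the $\mathcal{N}(S)$-orbit of $I$.

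For uniqueness, suppose $\widetilde{I}, \widetilde{I}' \in \mathcal{H}\mathbb{M}^n(S)$ lie in the same $\mathcal{G}^n(S)$-orbit and satisfy $\Sigma(\widetilde{I}) = \Sigma(\widetilde{I}')$; by Theorem \ref{structure-of-higher-diffeos} we may write $\widetilde{I} h = \widetilde{I}'$ with $h \in \mathcal{N}(S)$. If $\widetilde{I} \neq \widetilde{I}'$, let $k \geq 3$ be the minimal index where they differ. The Realization Lemma (Lemma \ref{non-collapse}) produces $h' \in \mathcal{N}^{k-1}(S)$ with $\widetilde{I} h' = \widetilde{I}'$. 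Representing $h'$ as the time-$1$ flow of a single autonomous $(k-1)$-stationary Hamiltonian via Proposition \ref{exponential-coords-lemma} and applying formula \eqref{basic-auto-formula}, we conclude $\mu_k(\widetilde{I}') - \mu_k(\widetilde{I}) = \delbar v$ for some $v \in \Gamma(K_\Sigma^{1-k})$. The left side is a difference of harmonic $k$-Beltrami differentials, hence harmonic, while the right side is $\delbar$-exact; the orthogonality in Lemma \ref{hodge} forces both to vanish, contradicting the choice of $k$.

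Smoothness of $I \mapsto \widetilde{I}$ follows from smoothness at each stage: the Hodge projection in Lemma \ref{hodge} is smooth and linear, the Hamiltonian $H_k$ can be manufactured smoothly from $\omega_{k-1}$ and $I^{(k-1)}$ via a fixed cutoff-and-extension procedure, and Proposition \ref{smoothness} ensures the time-$1$ flow of $H_k$ acts smoothly on $\mathbb{M}^n(S)$. Composing these smooth steps yields smooth dependence of $\widetilde{I}$ on $I$. The main subtlety I anticipate is bookkeeping the ``$\bmod I^{(k-1)}$'' reduction as the auxiliary ideal $I^{(k-1)}$ itself varies with $I$; working in natural coordinates based at $\Sigma(I)$ fixes the underlying tensor bundles, and the reduction of a fixed-degree jet modulo $I^{(k-1)}$ onto its $K_\Sigma^{1-k}$-component is polynomial in the lower-index components $\widetilde{\mu}_3, \ldots, \widetilde{\mu}_{k-1}$, so is jointly smooth in all its arguments.
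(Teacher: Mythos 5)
Your proposal is correct and follows essentially the same route as the paper: existence by inductively applying the Hodge decomposition (Lemma \ref{hodge}) and cancelling the $\delbar$-exact part with the time-$1$ flow of a real $(k-1)$-stationary Hamiltonian via formula \eqref{basic-auto-formula}, and uniqueness by reducing to $\mathcal{N}(S)$ through Theorem \ref{structure-of-higher-diffeos}, invoking the Realization Lemma and Proposition \ref{exponential-coords-lemma} at the first index of disagreement, and contradicting uniqueness of the Hodge decomposition. The smoothness discussion likewise matches the paper's.
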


\begin{proof}
We address the main difficulty first: uniqueness. Suppose for contradiction that $I$ is equivalent under $\mathcal{G}^n(S)$ to two distinct harmonic $n$-complex structures $I_1$ and $I_2$ with $\Sigma(I_1) = \Sigma(I_2) = \Sigma$. Then there is some $g \in \mathcal{G}^n(S)$ so $I_2 = I_1g$. Using Theorem \ref{structure-of-higher-diffeos}, write $g = [f^\#] h$ with $f \in \text{Diff}_0(S)$ and $h \in \mathcal{N}(S)$. As the action of $\text{Diff}_0(S)$ on $\mathbb{M}(S)$ is free, the hypothesis that $\Sigma(I_1) = \Sigma(I_2)$ forces $f = \text{Id}$. So there is some $h \in \mathcal{N}(S)$ so $I_2 = I_1 h$.

Now let $k \geq 3$ be the smallest $k$ so that $\mu_k(I_1) \neq \mu_k(I_2)$. Lemma \ref{non-collapse} shows that there is a $(k-1)$-stationary $h' \in \mathcal{N}^{k-1}(S)$ so that $I_1 h' = I_2$. There is then by Proposition \ref{exponential-coords-lemma} (or Proposition \ref{auto-decomposition}) a $(k-1)$-stationary Hamiltonian $H$ whose autonomous time-$1$ flow $\varphi'$ satisfies $\mu_k(I_2) = \mu_{k}(I_1h') = \mu_k(I_1\varphi')$. Writing $H \equiv w_{k-1} \pmod {I, \text{degree $\geq k$ terms}}$ with $w_{k-1} \in \Gamma(K^{1-k}_\Sigma)$, formula \eqref{basic-auto-formula} shows that $\mu_k(I_2) = \mu_k(I_1) + \delbar w_{k-1}$ with $\delbar w_{k-1} \neq 0$. This contradicts uniqueness in Lemma \ref{hodge}.

We next address existence. Lemma \ref{hodge} produces the existence of a decomposition $\mu_3 = \widetilde{\mu}_3 + \delbar w_{2}$. Take the autonomous degree-$2$ Hamiltonian $H = - w_{2} - \overline{w_{2}}$. Then $H \equiv -w_2 \pmod {I', \text{ degree $\geq 3$ terms}}$ for any $I'$ with $\Sigma(I') = \Sigma(I)$. With $h_2 \in \mathcal{N}_2$ the time-$1$ flow of $H$, we have $\mu_3(I h_2) = \widetilde{\mu}_3$. Replace $I$ with $Ih_2$. Continuing in this matter produces $h \in \mathcal{N}(S)$ so that $\mu_k(Ih)$ is harmonic for $3 \leq k \leq n$. The construction in the proof of existence of $\widetilde{I}$ depends smoothly on $I$, giving smoothness.
\end{proof}

\begin{remark} The main use of our structural results about $\mathcal{N}(S)$ in the preceding proof is to rule out the existence of actions of elements of $\mathcal{N}(S)$ not seen infinitesimally or not seen through the simplest examples. This reduces the analysis of the action of $\mathcal{N}(S)$ to an infinitesimal analysis. If it were known beforehand that $\mathcal{T}^n(S)$ is a manifold of dimension {\rm{$-\chi(S) \text{dim}(\text{PSL}(n,\bbR))$}} (c.f. \cite{fock2021higher}, Theorem 2), this can be used in place of representation of elements of $\mathcal{N}^k(S)$ by autonomous flows and the Realization Lemma in the proof above. We outline the alternative method here.

The place where a different argument is needed is in proving uniqueness of harmonic representatives of $\mathcal{N}(S)$-orbits. If there are two distinct harmonic $n$-complex structures $I$ and $I'$ in a $\mathcal{N}(S)$-orbit, then a $2$-stationary flow $\varphi_t$ connects them. Taking harmonic representatives of $I \varphi_t$ following the method of the proof of existence in Theorem \ref{harmonic-representatives} yields a nontrivial path of harmonic $n$-complex structures in the $\mathcal{N}(S)$ orbit of $I$. Uniqueness then follows from the observation that together with tangent vectors to autonomous flows, tangent vectors to this path would produce an unallowable upper bound on the dimension of $\mathcal{T}^n(S)$. \end{remark}

Theorem $\ref{harmonic-representatives}$ shows that $\mathbb{M}^n(S)/\mathcal{N(S)}$ is canonically identified with $\mathcal{H}\mathbb{M}^n(S)$. By Corollary \ref{harmonic-natural}, the induced action of $\text{Diff}_0(S) \cong \mathcal{G}^n(S)/\mathcal{N}(S)$ on $\mathcal{H}\mathbb{M}^n(S)$ is given by pullback. Recall the notation from the introduction that $\mathcal{B}^n(S) = \mathcal{H}\mathbb{M}^n(S)/\text{Diff}_0(S)$. Let $q_n: \mathcal{B}^n(S) \to \mathcal{T}(S)$ be the projection $[(\Sigma, \nu_3, ..., \nu_n) ] \mapsto [\Sigma]$.

    \begin{theorem}\label{main-bundle-thm}
    The degree-$n$ Fock-Thomas space  $(\mathcal{T}^n(S), \pi_2)$ equipped with the natural projection $\pi_2$ is identified with $(\mathcal{B}^n(S), q_n)$ as a vector bundle over $\mathcal{T}(S)$. This identification is {\rm{$\text{Mod}(S)$}}-equivariant.
    \end{theorem}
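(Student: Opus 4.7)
The plan is to assemble the theorem directly from Theorem \ref{harmonic-representatives} and the naturality of harmonic representatives under pullback. First I would define a smooth map $\Psi \colon \mathbb{M}^n(S) \to \mathcal{H}\mathbb{M}^n(S)$ sending $I$ to its unique harmonic representative $\widetilde{I}$ produced by Theorem \ref{harmonic-representatives}, and verify that $\Psi$ descends to a bijection $\overline{\Psi} \colon \mathcal{T}^n(S) \to \mathcal{B}^n(S)$ covering the identity on $\mathcal{T}(S)$.

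For well-definedness, suppose $I' = I g$ with $g \in \mathcal{G}^n(S)$. Using the semidirect-product decomposition $\mathcal{G}^n(S) = \mathcal{D}_0(S) \ltimes \mathcal{N}(S)$ from Theorem \ref{structure-of-higher-diffeos}, write $g = [f^\#] h$ with $f \in \text{Diff}_0(S)$ and $h \in \mathcal{N}(S)$. Then $I'$ lies in the $\mathcal{N}(S)$-orbit of $If^\#$ and $\Sigma(I') = \Sigma(If^\#) = f^*\Sigma(I)$, so uniqueness in Theorem \ref{harmonic-representatives} gives $\widetilde{I'} = \widetilde{If^\#}$. Since $\mathcal{N}(S)$ is normal in $\mathcal{G}^n(S)$, the element $\widetilde{I}f^\#$ lies in the $\mathcal{N}(S)$-orbit of $If^\#$, and it is harmonic by Corollary \ref{harmonic-natural}; uniqueness again forces $\widetilde{If^\#} = \widetilde{I}f^\#$. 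Thus $[\widetilde{I'}] = [\widetilde{I}f^\#] = [\widetilde{I}]$ in $\mathcal{B}^n(S)$. Injectivity reverses the argument: if $\widetilde{I'} = \widetilde{I}f^\#$ for some $f \in \text{Diff}_0(S)$, then $I$ and $I'$ share a $\mathcal{G}^n(S)$-orbit. Surjectivity is immediate since $\Psi$ restricts to the identity on $\mathcal{H}\mathbb{M}^n(S)$.

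For the bundle structure, the forgetful map $\mathcal{H}\mathbb{M}^n(S) \to \mathbb{M}(S)$ is a vector bundle whose fiber over $\Sigma$ is $\bigoplus_{k=3}^n \mathcal{B}(k,\Sigma)$, each summand a finite-dimensional complex vector space by Riemann--Roch applied to $K_\Sigma^k$. The action of $\text{Diff}_0(S)$ on $\mathcal{H}\mathbb{M}^n(S)$ is by pullback (Proposition \ref{diff-lift-in-natural} together with Corollary \ref{harmonic-natural}), hence fiberwise linear and covering the standard action on $\mathbb{M}(S)$, so $\mathcal{B}^n(S) \to \mathcal{T}(S)$ inherits a vector bundle structure. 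The relation $q_n \circ \overline\Psi = \pi_2$ is exactly the identity $\Sigma(\widetilde I) = \Sigma(I)$. For $\text{Mod}(S)$-equivariance, with actions $[I][\phi] = [I\phi^\#]$ and $[\widetilde I][\phi] = [\widetilde I \phi^\#]$, the same naturality argument as above yields $\overline\Psi([I\phi^\#]) = [\widetilde{I\phi^\#}] = [\widetilde{I}\phi^\#] = \overline\Psi([I])[\phi]$.

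The main subtlety is upgrading $\overline\Psi$ from a bijection of sets to a diffeomorphism of bundles. Smoothness of $\overline\Psi$ follows from smoothness of $\Psi$ (Theorem \ref{harmonic-representatives}) and the quotient topology on $\mathcal{T}^n(S)$. For the inverse, the smooth composition $\mathcal{H}\mathbb{M}^n(S) \hookrightarrow \mathbb{M}^n(S) \twoheadrightarrow \mathcal{T}^n(S)$ factors through $\mathcal{B}^n(S)$ since $\Psi$ is the identity on $\mathcal{H}\mathbb{M}^n(S)$, and induces $\overline{\Psi}^{-1}$; this simultaneously transports the concrete finite-rank bundle structure of $\mathcal{B}^n(S)$ to $\mathcal{T}^n(S)$.
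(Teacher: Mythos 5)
Your proposal is correct and follows essentially the same route as the paper: reduce to the uniqueness and existence of harmonic representatives (Theorem \ref{harmonic-representatives}), use the semidirect product decomposition of $\mathcal{G}^n(S)$, normality of $\mathcal{N}(S)$, and Corollary \ref{harmonic-natural} to establish well-definedness and $\text{Mod}(S)$-equivariance. The only cosmetic difference is that the paper carries out the equivariance bookkeeping in the marked description $\dot{\mathbb{M}}^n(S)/\mathcal{H}^{n,S}(S)$, while you work directly with $[I][\phi]=[I\phi^\#]$ on unmarked structures; the underlying argument is the same.
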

    
    \begin{proof}
        To work with the $\text{Mod}(S)$ action, we used marked coordinates and the description $\mathcal{T}^n(S) = \dot{\mathbb{M}}^n(S)/\mathcal{H}^{n,S}(S)$. The identification $\Phi^n: (\mathcal{T}^n(S), \pi_2) \to (\mathcal{B}^n(S), q_n)$ is given by $[(I,\phi)] \mapsto [(\Sigma(I), \nu_3(I), ..., \nu_n(I),\phi)]$, where $\nu_k(I)$ ($3\leq k\leq n$) are the harmonic $k$-Beltrami differentials representing the unique harmonic $n$-complex structure in the $\mathcal{N}(S)$-equivalence class of $I$.
        
        To see that this map is well-defined, let $(\Sigma, \mu_3, ..., \mu_n, \phi) \in \dot{\mathbb{M}}^n(S)$ have harmonic $\mathcal{N}(S)$-orbit representative $I = (\Sigma, \nu_3,..., \nu_n,\phi)$, and suppose that $(\Sigma, \mu_3,...,\mu_n, \phi)g = (\Sigma', \mu_3',...,\mu_n',\phi')$ with $g = [f^\#]h \in \mathcal{H}^{n,S}(S)$, and that the $\mathcal{N}(S)$ orbit of $(\Sigma', \mu_3',...\mu_n', \phi')$ has harmonic representative $(\Sigma', \nu_3',...,\nu_n',\phi')$. Write $(\Sigma, \mu_3, ..., \mu_n, \phi) = (\Sigma, \nu_3, ..., \nu_n,\phi)h'$ for some $h' \in \mathcal{N}(S)$. Then \begin{align*}
            (\Sigma', \mu_3', ..., \mu_n', \phi') &= (\Sigma, \nu_3, ..., \nu_n, \phi)h' [f^\#] h \\
            &=  (f^* \Sigma, f^*\nu_3, ..., f^*\nu_n, \phi \circ f) \widetilde{h}
        \end{align*} for some $\widetilde{h} \in \mathcal{N}(S)$ by normality of $\mathcal{N}(S)$ and Proposition \ref{diff-lift-in-natural}.
        
        So $(\Sigma', \mu_3',..., \mu_n',\phi')$ has harmonic $\mathcal{N}(S)$-orbit representative $(f^*\Sigma, f^*\mu_3, ..., f^*\nu_n, \phi\circ f)$. This forces $(\Sigma', \nu_3',...,\nu_n',\phi') = (f^*\Sigma, f^*\nu_3, ..., f^*\nu_n, \phi \circ f )$, showing well-definition.

        The map $\Phi^n$ is a bijection by Theorem \ref{harmonic-representatives}. Note that $\pi_2 = q_n \circ \Phi^n$, so that the pulled-back bundle structure on $\mathcal{T}^n(S)$ from $(\mathcal{B}^n(S), q_n)$ under $\Phi^n$ is given by $(\mathcal{T}^n(S), \pi_2)$.
        
        The final thing to verify is $\text{Mod}(S)$-equivariance. For this, let $ [f] \in \text{Mod}(S)$. For any $[(I,\phi)] \in \mathcal{T}^n(S)$, take a representative $(I, \phi)$ of $[(I, \phi)]$ and let $\widetilde{I} = (\Sigma, \nu_3,...,\nu_n,\phi)$ be the harmonic $\mathcal{N}(S)$-orbit representative of $I$. Then \begin{align*}
            \Phi^n([(I,\phi)][f]) &= \Phi^n([(\widetilde{I}, \phi)][f]) = \Phi^n([(\widetilde{I}, f^{-1} \circ \phi)]) =  [(\Sigma, \nu_3,...,\nu_n,f^{-1} \circ \phi )] \\ &= [(\Sigma, \nu_3,...,\nu_n, \phi)][f] = \Phi^n([(I,\phi)])[f].
        \end{align*}
    \end{proof}

We obtain as a corollary:

\begin{theorem}\label{projection-structure}
     The degree-$n$ Fock-Thomas space $(\mathcal{T}^n(S), \pi_k)$ equipped with the natural projection $\pi_k$ is a vector bundle over $\mathcal{T}^k(S)$ with fiber over $[I] \in \mathcal{T}^k(S)$ identified with the space of classes of tuples $[(\mu_{k+1},...,\mu_n)]$ with $\mu_j$ a harmonic $j$-Beltrami differential on a representative of $\pi_2(I)$ for $j=k+1,...,n$. The projections $\pi_k$ are {\rm{$\text{Mod}(S)$}}-equivariant.
\end{theorem}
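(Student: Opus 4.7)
The plan is to leverage the identification $\Phi^n : \mathcal{T}^n(S) \xrightarrow{\sim} \mathcal{B}^n(S)$ from Theorem \ref{main-bundle-thm} to recast $\pi_k$ as an evident forgetful map between harmonic coordinate bundles, for which the vector bundle structure is transparent. Concretely, define $r_k : \mathcal{B}^n(S) \to \mathcal{B}^k(S)$ in marked natural coordinates by
\[
[(\Sigma, \nu_3, \ldots, \nu_n, \phi)] \mapsto [(\Sigma, \nu_3, \ldots, \nu_k, \phi)].
\]
This is well-defined because $\text{Diff}_0(S)$ acts by simultaneous pullback on all components (Proposition \ref{diff-lift-in-natural}, Corollary \ref{harmonic-natural}). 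The map $r_k$ is a surjective fiberwise-linear morphism of vector bundles over $\mathcal{T}(S)$ whose kernel over $[\Sigma]$ is canonically $\bigoplus_{j=k+1}^n \mathcal{B}(j,\Sigma)$, so $(\mathcal{B}^n(S), r_k)$ is naturally the pullback $q_k^*(\ker r_k)$, giving it the structure of a vector bundle over $\mathcal{B}^k(S)$ with the claimed fiber.

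The key step is the commutativity $\Phi^k \circ \pi_k = r_k \circ \Phi^n$. In natural coordinates, $\pi_k$ acts as $(\Sigma, \mu_3, \ldots, \mu_n) \mapsto (\Sigma, \mu_3, \ldots, \mu_k)$, as one sees directly from the construction in Section \ref{section-natural-coords}: truncating a local generator $g = w_{0,1} + w_{2,0} + \cdots + w_{n-1,0}$ to degree $k-1$ preserves both the almost complex structure $J(I)$ and the ratios $\mu_j = w_{j-1,0}/w_{0,1}$ for $3 \leq j \leq k$. Since $\text{Symp}^0(T^*S)$ respects the filtration of jets by degree, the natural homomorphism $\mathcal{G}^n(S) \to \mathcal{G}^k(S)$ sends $\mathcal{N}(S)$ into the analogous 2-stationary subgroup of $\mathcal{G}^k(S)$. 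Thus, if $\widetilde{I} = (\Sigma, \nu_3, \ldots, \nu_n)$ is the harmonic representative of $I$, then $\pi_k(\widetilde{I}) = (\Sigma, \nu_3, \ldots, \nu_k)$ is a harmonic $k$-complex structure lying in the 2-stationary orbit of $\pi_k(I)$, and so by uniqueness in Theorem \ref{harmonic-representatives} is \emph{the} harmonic representative of $\pi_k(I)$.

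Mod$(S)$-equivariance of $\pi_k$ follows immediately from the mapping class group action $[I][f] = [If^\#]$ together with the equivariance of the jet projection under $f^\#$. The main obstacle in this plan is the commutativity step; however, uniqueness of harmonic representatives converts a potentially intricate comparison of iterated flow constructions into a clean structural assertion, so no new dynamical computation is required.
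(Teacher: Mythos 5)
Your proposal is correct and follows essentially the same route as the paper: both reduce $\pi_k$ to the forgetful map $\mathcal{B}^n(S) \to \mathcal{B}^k(S)$ via the identifications $\Phi^n$, $\Phi^k$, with the key commutativity step being that the harmonic representative of $\pi_k(I)$ is $\pi_k(\widetilde{I})$, established through equivariance of the jet projection and uniqueness in Theorem \ref{harmonic-representatives}. The paper makes the same point by explicitly factoring the connecting element $h \in \mathcal{N}(S)$ into a part acting on $k$-complex structures and a $k$-stationary remainder, but this is only a cosmetic difference from your argument.
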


\begin{proof}
 If $(\Sigma, \nu_3,..., \nu_n,\phi) = (\Sigma,\mu_3, ..., \mu_n,\phi)h$ is the harmonic representative of the $\mathcal{N}(S)$ orbit of $I = (\Sigma, \mu_3,...,\mu_n,\phi)$, then writing $h = h_k h'$ with $h' \in \mathcal{N}^k(S)$, for any $\varphi \in \text{Symp}_T^{0,S}(T^*S)$ so $[\varphi] = h$ in $\mathcal{H}^{n,S}(S)$, we have $(\Sigma, \mu_3, ..., \mu_k,\phi)[\varphi] = (\Sigma, \mu_3,..., \mu_k,\phi)h_k = (\Sigma, \nu_3, ..., \nu_k,\phi)$. So for any $n$-complex structure $I$ with harmonic $\mathcal{N}(S)$-orbit representative $\widetilde{I}$, the harmonic representative of $p_{n,k}(I)$ in the $\mathcal{N}(S) \subset \mathcal{G}^k(S)$ orbit is $p_{n,k}(\widetilde{I})$.
 
 Define for $2\leq k \leq n$ maps $q_{n,k}: \mathcal{B}^n(S) \to \mathcal{B}^k(S)$ by $[(\Sigma,\nu_3,..., \nu_n,\phi)] \mapsto [(\Sigma, \nu_3 ,..., \nu_k,\phi)]$. Following the notation of the proof of Theorem \ref{main-bundle-thm}, the identifications $\Phi^n: (\mathcal{T}^n(S), \pi_2) \to (\mathcal{B}^n(S), q_n)$ satisfy that $\pi_k =(\Phi^{k})^{-1} \circ q_{n,k} \circ \Phi^n$. As $\Phi^n$ and $\Phi^k$ are $\text{Mod}(S)$-equivariant vector bundle isomorphisms and $q_{n,k}$ is a surjective $\text{Mod}(S)$-equivariant bundle morphism, this gives the claim. 
\end{proof}

We remark that Theorem \ref{main-bundle-thm} endows $\mathcal{T}^n(S)$ with a manifold structure and Theorem \ref{projection-structure} confirms a strengthening of a conjecture of Thomas (\cite{thomas2020thesis}, Conjecture 18.8). 

The harmonic representatives in Theorem \ref{main-bundle-thm} can be interpreted as minima of the norm of $\mu_{k+1}(I\varphi)$ among $\mathcal{N}^k(S)$ orbits. Note that as the space of coordinates $\mu_{k+1}(I\varphi)$ under a $\mathcal{N}^{k-1}(S)$ orbit is affine, any critical point of $||\mu_{k+1}(Ih)||^2$ in a $\mathcal{N}(S)$-orbit must be a minimum.

	\begin{proposition}\label{euler-lagrange}
		For $\Sigma \in \mathbb{M}^n(S)$ and $k\geq 3$, the critical points of $||\mu_k||^2 = \int_{\Sigma} |\mu_k|^2 g^{k-1}_\Sigma $ under variations by $(k-1)$-stationary Hamiltonian flows are harmonic Beltrami differentials.
	\end{proposition}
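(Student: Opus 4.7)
The approach is a direct Euler--Lagrange computation, broken into two pieces: identifying the tangent directions along $(k-1)$-stationary flows, and then integrating by parts to extract a PDE.

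First I would pin down the space of admissible variations. By Lemma \ref{action-kernel}, a Hamiltonian $H_t$ generates a $(k-1)$-stationary flow precisely when $j_{k-2}^0(H_t) \equiv 0$, so its $(k-1)$-jet is purely homogeneous of degree $k-1$. Working in natural coordinates (so the base point has $\mu_2 = 0$), the ideal $I$ contains $\overline{p}$ up to higher-order corrections, and the only part of the $(k-1)$-jet of $H$ that survives reduction modulo $I$ is a section $w_{k-1} \in \Gamma(K_\Sigma^{1-k})$. The first variation formula (Proposition \ref{first-variation-formula}) then gives $\dot{\mu}_k = \delbar w_{k-1}$. Conversely, for any prescribed complex $w_{k-1}$ the real Hamiltonian $H = w_{k-1} + \overline{w_{k-1}}$ has $j_{k-2}^0(H)=0$ and reduces to $w_{k-1}$ modulo $I$, so the tangent space to the $\mathcal{N}^{k-1}(S)$-orbit through $\mu_k$, projected onto its $\mu_k$-factor, is exactly $\delbar(\Gamma(K_\Sigma^{1-k}))$.

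Next I would differentiate: $\delta \|\mu_k\|^2 = 2\,\mathrm{Re} \int_\Sigma (\delbar w_{k-1})\, \overline{\mu}_k\, g_\Sigma^{k-1}$, and replacing $w_{k-1}$ by $iw_{k-1}$ recovers the imaginary part, so vanishing of the first variation for all admissible $w_{k-1}$ is equivalent to
\[
\int_\Sigma (\delbar w_{k-1})\, \overline{\mu}_k\, g_\Sigma^{k-1} = 0 \qquad \forall \, w_{k-1} \in \Gamma(K_\Sigma^{1-k}).
\]
Since $w_{k-1} \otimes \overline{\mu}_k g_\Sigma^{k-1}$ is a smooth section of $K_\Sigma$, Stokes' theorem on the closed surface $\Sigma$ together with the Leibniz rule for $\delbar$ rewrites this as $\int_\Sigma w_{k-1}\, \delbar(\overline{\mu}_k g_\Sigma^{k-1}) = 0$ for every such $w_{k-1}$. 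Weyl's lemma, applied exactly as in the proof of Lemma \ref{hodge}, then forces $\delbar(\overline{\mu}_k g_\Sigma^{k-1}) = 0$ pointwise, which is precisely the condition that $\mu_k$ be a harmonic $k$-Beltrami differential. The one nontrivial step is the identification of the admissible variations with the full image $\delbar(\Gamma(K_\Sigma^{1-k}))$ in the first paragraph; once that is in hand, the rest is a standard integration-by-parts argument.
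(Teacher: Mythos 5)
Your proposal is correct and follows essentially the same route as the paper: compute the first variation $\dot{\mu}_k = \delbar w_{k-1}$ from the first variation formula, integrate by parts on the closed surface, and conclude $\delbar(\overline{\mu}_k g_\Sigma^{k-1})=0$ by testing against enough $w_{k-1}$ (the paper localizes with $w_{k-1}$ supported in coordinate patches and real in charts, while you realize arbitrary $w_{k-1}$ via the real Hamiltonians $w_{k-1}+\overline{w_{k-1}}$ — an equivalent and slightly cleaner device already used elsewhere in the paper). One small correction: since $\mu_k$ is already smooth, the final step is the fundamental lemma of the calculus of variations rather than Weyl's lemma, which is only needed in Lemma \ref{hodge} to upgrade a weak solution to a smooth one.
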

	
	\begin{proof}
		Let $I \in \mathbb{M}^n(S)$ and write $\mu_k = \mu_k(I)$. Let $H$ be a $(k-1)$-stationary Hamiltonian with $H \equiv w_{k-1} + ... + w_{n-1} \pmod I$. The first variation formula tells us that $\frac{d}{dt} \mu_k = \delbar w_{k-1},$ hence under the flow generated by $H$ the $k$-Beltrami differential transforms as $\mu_k(t) = \mu_k + t \delbar w_2 + \delta(t)$ with $\delta = O(t^2)$. So $\mu_k$ being a critical point of $||\cdot ||^2$ in a $\mathcal{N}(S)$-orbit amounts to having for all $w_{k-1} \in \Gamma(K^{1-k}_\Sigma)$
		\begin{align*}
		0 &= \frac{d}{dt}\bigg|_{t=0} \int_{\Sigma} ||\mu_k + t \delbar w_{k-1} + \delta(t) ||^2 \\
		&=  \int_{\Sigma} \frac{d}{dt}\bigg|_{t=0}  \left(\mu_k  + t\delbar w_{k-1}+ \delta(t) \right) \left(\overline{\mu}_{k}  +t\partial \overline{w}_{k-1} + \overline{\delta(t)}  \right) g_\Sigma^{k-1} \\
		&=  \int_{\Sigma} (\overline{\mu}_k \delbar{w_{k-1}} + \mu_{k} \del \overline{w}_{k-1}) g_\Sigma^{k-1} \\
		&=-  \int_{\Sigma} \delbar (\overline \mu_k g_\Sigma^{k-1}) w_{k-1} + \del (\mu_k g_\Sigma^{k-1}) \overline{w}_{k-1} \\
		&= - \int_\Sigma \del (\mu_k g_\Sigma^{k-1}) \overline{w}_{k-1} + \overline{(\del(\mu_k g_\Sigma^{k-1}) \overline{w}_{k-1} )} \\
		&= - 2\displaystyle{\int_\Sigma} \text{Re}(\del(\mu_k g_\Sigma^{k-1}) \overline{w}_{k-1}) 
		\end{align*}
		If $\mu_k$ is harmonic, then this holds by inspection. For the other implication, if $w_{k-1}$ is taken to be supported on a uniformized coordinate patch $(U, z)$ and real in this chart, we have 
		\begin{align*}- \int_\Sigma \text{Re}(\del (\mu_k g_\Sigma^{k-1} )) \overline{w}_{k-1} &=- \int_U \text{Re}(\del (\mu_k g_\Sigma^{k-1} )) \overline{w}_{k-1} = 0,  \\  
		- \int_\Sigma \text{Re}(\del (\mu_k g_\Sigma^{k-1} ) i\overline{w}_{k-1})  &= \int_U \text{Im}(\del (\mu_k g_\Sigma^{k-1})) \overline{w}_{k-1}  = 0 .\end{align*} And since this holds for all $w_{k-1}$ supported in small coordinate balls and real in appropriate charts, we must have $\delbar(\overline{\mu}_kg_\Sigma^{k-1}) = 0$. So $\mu_k$ is harmonic. 
 	\end{proof}

\section{Relation to Hitchin Components}\label{section-formal-hitchin-link}

We now use the results of Section \ref{quotient-section} to connect $\mathcal{T}^n(S)$ and $\text{Hit}(S, \text{PSL}(n, \bbR))$ by constructing the maps of Theorem \ref{headline-result}. We also explain the results giving rise to Theorem \ref{free-from-labourie} and Propositions \ref{free-from-labourie-a}-\ref{free-from-labourie-b}. The proof of Theorem \ref{headline-result} given here is efficient in a way that obscures some of the underlying geometry of the diffeomorphisms between $\mathcal{T}^n(S)$ and $\text{Hit}(S, \text{SL}(3, \mathbb{R}))$, which we explain in Section \ref{section-3-complex} by explicitly describing how a Hitchin representation is obtained from a $3$-complex structure.

As before, every $\Sigma \in \mathbb{M}(S)$ has a unique hyperbolic metric in its conformal class, which we denote by $g_\Sigma$. The Petersson pairing on differentials induced by the hyperbolic metrics $g_\Sigma$ produces a $\text{Mod}(S)$-equivariant identification $\Phi': {\mathcal{B}}^n(S) \to {\mathcal{Q}}^n(S)$, where $\mathcal{Q}^n(S) = \mathcal{Q}(3, S) \oplus ... \oplus \mathcal{Q}(n,S)$ is the bundle over $\mathcal{T}(S)$ whose fiber over $\Sigma$ consists of classes of tuples $(q_3,...,q_n)$ with $q_k$ a holomorphic section of $K^k_\Sigma$ for $3\leq k \leq n$. Denote the identification ${\mathcal{T}}^n(S) \to \mathcal{Q}^n(S)$ arising from $\Phi'$ and Theorem \ref{main-bundle-thm} by $\Phi$.

Hitchin gives a description of the now-called Hitchin component $\text{Hit}(S, \text{PSL}(n,\bbR))$ of $\text{Rep}(S, \text{PSL}(n,\bbR))$ using Higgs bundles techniques in \cite{hitchin1992lie}. The input data for Hitchin's construction are a fixed Riemann surface $\Sigma$ and holomorphic differentials $(q_k)_{2\leq k \leq n}$ with $q_k \in \Gamma(K^k_\Sigma)$. The holonomy $\rho_{\Sigma, q_2, ..., q_n}$ of a solution to the self-duality equations for an associated Higgs bundle is associated to these invariants; for any fixed $\Sigma$, these $\rho_{\Sigma, q_2, ..., q_n}$ parametrize $\text{Hit}(S, \text{PSL}(n,\bbR))$. Hitchin shows in \cite{hitchin1987self} that such representations with $q_3 = ... = q_n = 0$ parametrize the Fuchsian representations, generalizing work of Wolf \cite{wolf1989teichmuller}.

In \cite{labourie2004anosov}, Labourie considered the map $\Psi: \mathcal{Q}^n(S) \to \text{Hit}(S,\text{PSL}(n,\bbR))$ given by $$(\Sigma, q_3, ..., q_n) \mapsto \rho_{\Sigma, 0, q_3, ..., q_n},$$ and conjectured that $\Psi$ was a diffeomorphism. This conjecture was known to be true for $n=2$ from \cite{wolf1989teichmuller} or \cite{hitchin1987self} and for $n=3$ by independent work of Labourie and Loftin \cite{labourie2007flat}, \cite{loftin2001affine}. It is true for general rank $2$ Hitchin components \cite{labourie2017cyclic}, but the analogous conjecture is false for $\text{PSL}(2, \bbR) \times \text{PSL}(2, \bbR) \times \text{PSL}(2, \bbR)$ \cite{markovic2022non}. In general $\Psi$ is surjective \cite{labourie2008cross}. 

The map $\Phi$ gives an identification of ${\mathcal{T}}^n(S)$ with the domain of Labourie's map depending on no choices, and through this a relationship with Hitchin components can be established. The general case is infinitesimal in flavor. The case for $n=3$ is also global and admits a concrete geometric description (see Section \ref{section-3-complex}).

Let $\pi_{Q}$ denote the projection $\mathcal{Q}^n(S) \to \mathcal{T}(S)$, and let $\mathcal{Q}^n_0$ be the sub-bundle of the tangent bundle to $\mathcal{Q}^n(S)$ along the zero section given by $\text{ker}(\pi_Q)$. The bundle $\mathcal{Q}_0^n$ is identified with $\mathcal{Q}^n$ by elementary means.

\begin{proof}[Proof of Theorem \ref{headline-result}]
    Labourie's map is a $\text{Mod}(S)$-equivariant diffeomorphism for $n=3$, so $\Psi \circ \Phi$ is a $\text{Mod}(S)$-equivariant diffeomorphism ${\mathcal{T}}^3(S) \to \text{Hit}(S, \text{SL}(3, \bbR)).$
    
    Let $n \geq 3$. As Hitchin's parametrization of $\text{Hit}(S, \text{PSL}(n,\bbR))$ is a diffeomorphism mapping tuples $(q_2, 0,...,0)$ to the Fuchsian locus, $$D\Psi_{\Sigma}: T_\Sigma \mathcal{T}(S) \oplus \mathcal{Q}(3,\Sigma) \oplus ... \oplus \mathcal{Q}(n, \Sigma) \to T_{\rho_{\Sigma, 0, ...,0}} \text{Hit}(S, \text{PSL}(n,\bbR))$$ has full rank and $D\Psi_\Sigma((\mathcal{Q}^n_0)_\Sigma)$ has trivial intersection with the tangent space to the Fuchsian locus at $\Sigma$. So $(D\Psi)|_{\mathcal{Q}_0^n}$ is a bundle isomorphism onto its image. Identifying ${\mathcal{T}}^n(S)$ with $\mathcal{Q}^n(S)$ then $\mathcal{Q}_0^n(S)$, then applying $D\Psi$ gives the result. 
\end{proof}

Some results obtained on $\mathcal{B}^n(S)$ and the Labourie map $\Psi$ give rise to structure on $\mathcal{T}^n(S)$, which is documented by Theorem \ref{free-from-labourie} and Propositions \ref{free-from-labourie-a}-\ref{free-from-labourie-b}. We explain the relevant results here.

Let $\mathbf{m} =(m_1,...,m_p)$ be a collection of integers each at least $2$. Let $\mathcal{B}^\mathbf{m}(S)$ be the vector bundle over $\mathcal{T}(S)$ whose fiber over $\Sigma \in \mathcal{T}(S)$ is $\mathcal{B}(m_1,\Sigma) \oplus ... \oplus \mathcal{B}(m_p,\Sigma)$. In our notation $\mathcal{B}^n(S) = \mathcal{B}^\mathbf{m}(S)$ with $\mathbf{m} = (3,4,...,n)$. Labourie constructed families of K\"ahler metrics on the bundles $\mathcal{B}^\mathbf{m}(S)$ \cite{labourie2017cyclic}, following the construction of Kim-Zhang  \cite{kim2017kahler} in the $\mathbf{m} = (3)$ case.

\begin{proposition}[\cite{labourie2017cyclic}, Propositions 1.3.1, 9.0.1]\label{kahler-metrics-existence} Let $\mathbf{m} = (m_1, ..., m_p)$ be a tuple of integers at least $2$. Then $\mathcal{B}^\mathbf{m}(S)$ admits a real $p$-dimensional family $\mathcal{K}^\mathbf{m}$ of K\"ahler metrics that are {\rm{$\text{Mod}(S)$}}-invariant and restrict to a multiple of the $L^2$ pairing on each $\mathcal{B}(m_j,\Sigma)$-subspace of the fibers.

For every $h \in \mathcal{K}^\mathbf{m}$, the zero section of $\mathcal{B}^{\mathbf{m}}(S)$ is a totally geodesic submanifold and $h$ restricted to the zero section is the Weil-Petersson metric on $\mathcal{T}(S)$. For every $h \in \mathcal{K}^\mathbf{m}$, for distinct indicies $m_{i_1}$ and $m_{i_2}$ the subspaces $\mathcal{B}(m_{i_1}, \Sigma)$ and $\mathcal{B}(m_{i_2}, \Sigma)$ to tangent spaces of the fibers are orthogonal.
\end{proposition}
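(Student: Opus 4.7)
The plan is to build the family $\mathcal{K}^\mathbf{m}$ by pulling back a Kähler structure from a moduli space of Higgs bundles via the Hitchin parametrization. The first step is to use the hyperbolic metric $g_\Sigma$ to produce a canonical $\text{Mod}(S)$-equivariant bundle isomorphism $\mathcal{B}^\mathbf{m}(S) \to \bigoplus_{j} \mathcal{Q}(m_j, S)$, so that harmonic $m_j$-Beltrami differentials are exchanged with holomorphic $m_j$-differentials via $\mu_{m_j} \mapsto \overline{\mu}_{m_j} g_\Sigma^{m_j - 1}$. Each tuple $(\Sigma, q_{m_1}, \dots, q_{m_p})$ is then assembled, via the Hitchin section for a principal $\mathfrak{sl}(n,\mathbb{R})$-triple for a suitable $n$ (divisible by the $m_j$'s or chosen so that cyclic-type fillings of the Hitchin base component are available), into a polystable Higgs bundle on $\Sigma$; solving the corresponding Hitchin equations---a Toda-type elliptic system---yields a harmonic metric, and thereby a flat connection, in a way that depends smoothly on all parameters.

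Next, one pulls back the standard Kähler structure on the Hitchin moduli space $\mathcal{M}_H$, arising from its hyperkähler reduction, along the map $(\Sigma, q_{m_1}, \dots, q_{m_p}) \mapsto [\text{associated flat connection}] \in \mathcal{M}_H$. A real $p$-parameter family of Kähler structures on $\mathcal{B}^\mathbf{m}(S)$ is then obtained by rescaling each summand $q_{m_j} \mapsto \lambda_j q_{m_j}$ with $\lambda_j > 0$ before running the construction; the resulting distinct pullback metrics give $\mathcal{K}^\mathbf{m}$. Mapping-class-group invariance is immediate from the naturality of the Hitchin parametrization under pullback by orientation-preserving diffeomorphisms.

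The remaining assertions are checked by restricting to the zero section, where the Higgs bundle reduces to the Fuchsian one. There, tangent vectors along $\mathcal{T}(S)$ correspond to infinitesimal variations of the uniformizing Higgs bundle and recover the Weil--Petersson pairing, while tangent vectors in each $\mathcal{B}(m_j, \Sigma)$ correspond to varying $q_{m_j}$; the $L^2$ norm on Higgs-field deformations then computes to a constant multiple of the Petersson pairing $\int_\Sigma q_{m_j} \overline{q_{m_j}} g_\Sigma^{1-m_j}$. Orthogonality of the subspaces $\mathcal{B}(m_{i_1}, \Sigma)$ and $\mathcal{B}(m_{i_2}, \Sigma)$ reflects the block structure of the Hitchin section, whose distinct differential summands land in $L^2$-orthogonal eigenspaces of the principal $\mathfrak{sl}_2$-weight decomposition. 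Total geodesy of the zero section follows by exhibiting a $\mathcal{K}^\mathbf{m}$-isometric involution that fixes it; the natural candidate is the involution $(q_{m_1},\dots,q_{m_p}) \mapsto (-q_{m_1},\dots,-q_{m_p})$, which lifts to a gauge-equivalent symmetry of the Hitchin equations.

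The hard part will be the analytic step: producing the family of harmonic metrics and verifying smooth (indeed real-analytic, as the appendix notes) dependence on $(\Sigma, q_{m_1}, \dots, q_{m_p})$ requires an implicit function theorem argument on suitable weighted Sobolev spaces applied to the Hitchin equations, together with uniform elliptic estimates of Toda type. The Kähler condition itself is inherited cleanly from the hyperkähler reduction on $\mathcal{M}_H$ and the holomorphicity of the Hitchin section, but the explicit identification of the restriction of each $h \in \mathcal{K}^\mathbf{m}$ with a multiple of the Petersson pairing on each $\mathcal{B}(m_j,\Sigma)$ requires a direct computation linearizing the Toda system at a Fuchsian point.
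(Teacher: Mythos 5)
The paper does not prove this proposition; it is quoted from Labourie \cite{labourie2017cyclic} (Propositions 1.3.1 and 9.0.1), and the construction behind it is recorded in Appendix \ref{appendix-kahler-metrics-analytic}: the metrics are $\epsilon\,\del\delbar h + \pi^* g_{\mathrm{WP}}$, where $h$ is a positive linear combination of the fiberwise Petersson pairings $h'_{m_j}$ and $\pi$ is the holomorphic projection to $\mathcal{T}(S)$. That is an elementary $\del\delbar$-potential construction on the holomorphic vector bundle $\mathcal{B}^{\mathbf{m}}(S)$; all the stated properties (Weil--Petersson on the zero section, total geodesy via the fiberwise involution $v\mapsto -v$, the $L^2$ pairing on vertical subspaces, orthogonality of distinct summands, and the $p$-parameter family coming from the $p$ coefficients) fall out of that formula with no PDE input beyond uniformization. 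Your route through Higgs bundles is genuinely different, and it has two gaps that I think are fatal rather than cosmetic.

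First, the proposition is asserted for an \emph{arbitrary} tuple $\mathbf{m}=(m_1,\dots,m_p)$ of integers at least $2$ --- including repeated entries and entries that are not exponents of any split simple group. For such $\mathbf{m}$ there is no Lie group whose Hitchin base is $\bigoplus_j H^0(K_\Sigma^{m_j})$, so the step ``assemble $(\Sigma,q_{m_1},\dots,q_{m_p})$ into a Higgs bundle via the Hitchin section for a suitable $n$'' cannot be carried out in the stated generality; the construction has to live intrinsically on $\mathcal{B}^{\mathbf{m}}(S)$. Second, even when a group is available, the hyperk\"ahler structure on $\mathcal{M}_H$ is attached to a \emph{fixed} Riemann surface, whereas your map varies $\Sigma$ over Teichm\"uller space; the pullback of any fixed such K\"ahler structure is not $\mathrm{Mod}(S)$-invariant (and the $L^2$ metric on the Betti side depends on the reference complex structure), so ``invariance is immediate from naturality of the Hitchin parametrization'' does not hold --- the Hitchin section is exactly the non-equivariant object at the heart of Labourie's conjecture. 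Relatedly, the identification $\mu_{m_j}\mapsto \overline{\mu}_{m_j} g_\Sigma^{m_j-1}$ is conjugate-linear on fibers (the paper calls the resulting identification $\mathcal{B}^n(S)\to\mathcal{Q}^n(S)$ non-holomorphic), so holomorphicity of the Hitchin section does not transfer the K\"ahler condition to $\mathcal{B}^{\mathbf{m}}(S)$ the way you claim. Note also that in \cite{labourie2017cyclic} the traffic runs in the opposite direction from your proposal: the metrics are built on $\mathcal{B}^{\mathbf{m}}(S)$ first and then pushed forward to $\mathrm{Hit}(S,G)$ in rank $2$, precisely because no natural $\mathrm{Mod}(S)$-invariant K\"ahler structure on the Hitchin component is available a priori.
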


The case $\mathbf{m} = (3, 4,...,n)$ yields Theorem \ref{free-from-labourie} except for the real-analyticity of the K\"ahler metrics there, which is proved in Appendix \ref{appendix-kahler-metrics-analytic}.

The construction of K\"ahler metrics on $\text{Hit}(S, G)$ for $G$ of rank $2$ in \cite{labourie2017cyclic} is accomplished by using the Petersson pairing on fibers to (non-holomorphically) identify $\mathcal{B}^{n}(S)$ with $\mathcal{Q}^n(S)$, then applying appropriate Labourie maps $\Psi$. The K\"ahler metrics and compatible complex structures on $\text{Hit}(S,G)$ are then obtained by pullback. The construction of the diffeomorphisms of Theorem \ref{headline-result} then immediately implies Proposition \ref{free-from-labourie-a}.

Finally, in \cite{labourie2018variations}, Labourie and Wentworth show that for any holomorphic $k$-adic differential $q_k$ on $\Sigma \in \mathcal{T}(S)$, the pressure metric $P$ at $\rho_{\Sigma, 0,...,0}$ can be evaluated on pairs of differentials to be $$P(D \Psi_\Sigma(q_k), D\Psi_\Sigma(q_k)) = C(k) \int_\Sigma \frac{ q_k \overline{q}_k}{g_\Sigma^{k-1}}, $$ where $C(k)$ is an explicit constant depending only on the tensor type of $q_k$ and the underlying topological surface $S$. It is also shown in \cite{labourie2018variations} that for $k \neq j$, any holomorphic differentials $q_k \in \Gamma(K^k_\Sigma)$ and $q_j \in \Gamma(K^j_\Sigma)$ are orthogonal with respect to the pressure metric: $P(D\Psi_\Sigma(q_k), D\Psi_\Sigma(q_j)) = 0$. This yields Proposition \ref{free-from-labourie-b}.

\section{3-Complex Structures and Affine Spheres}\label{section-3-complex}

We now give an explicit and geometric description of the Hitchin representation associated to $[I] \in \mathcal{T}^3(S)$. Following Labourie's parametrization \cite{labourie2007flat} (see also \cite{loftin2001affine}) of $\text{Hit}(S, \text{SL}(3,\bbR))$, we explain a geometric interpretation of the map used to identify $\mathcal{T}^3(S)$ and $\text{Hit}(S, \text{SL}(3, \bbR))$. We begin with a condensed overview of relevant facts about affine spheres in $\mathbb{R}^3$. An introductory text on affine surfaces that contains the basic theory we cover is \cite{nomizu1994affine}, and \cite{loftin2010survey} is a survey on affine spheres.

The study of affine surfaces is a rich field concerned with the structure induced on surfaces $S$ from immersions $f: S \to \mathbb{R}^3$ that is invariant under the special affine linear group $\text{SAff}(\mathbb{R}^3)$ generated by $\text{SL}(3, \bbR)$ and translations of $\mathbb{R}^3$.

Denote by $\overline{\nabla }$ the standard connection on $\mathbb{R}^3$ and the standard volume form on $\mathbb{R}^3$ by $\lambda$. From an immersion $f: S \to \mathbb{R}^3$ and transverse vector field $\xi$ one can produce a torsion-free connection $\nabla$, symmetric $(0,2)$-tensor $h$, a $(1,1)$-tensor $Sh$, a $1$-form $\tau$, and a nowhere vanishing $2$-form $\theta$ on $S$ by defining \begin{align}
    \overline{\nabla}_X Df(Y) &= Df(\nabla_XY) + h(X,Y)\xi, \label{affine-connection-metric} \\
    \overline \nabla_X \xi &= - Df(Sh(X)) + \tau(X)\xi, \label{affine-shape} \\
    \theta(X,Y) &= \lambda(Df (X),Df( Y), \xi).
\end{align}

Here, $\nabla$ is called the \textit{affine connection}, $h$ the \textit{affine fundamental form}, $Sh$ the \textit{affine shape operator}, $\tau$ the \textit{transverse connection form}, and $\theta$ the \textit{induced volume form}. These depend upon the choice of transverse vector field $\xi$.

In the following we restrict to strictly convex immersions $f$ so that $h$ is definite. In this setting, there is a unique transversal vector field $\xi$, called the \textit{affine normal}, specified by the requirements that $h$ be a Riemannian metric, $\nabla \theta = 0$, and $\theta = d\text{Vol}_h$. The condition $\nabla \theta = 0$ is equivalent to $\tau = 0$. The affine normal is equivariant under $\text{SAff}(\mathbb{R}^3)$, and the corresponding affine shape operator, fundamental form, and connection specified by the affine normal are all well-defined objects on $S$.

The tuple $(\nabla, h, Sh)$ obtained from a strictly convex immersion $f: S \to \mathbb{R}^3$ is called a \textit{Blaschke structure} on $S$, the surface $f(S)$ is called a \textit{Blaschke surface}, and $f$ is called a \textit{Blaschke immersion}. The metric $h$ given by the affine second fundamental form is called the \textit{Blaschke metric} and $\nabla$ the \textit{Blaschke connection}. Standard arguments adapted from Riemannian geometry show that the condition that $(\nabla, h,Sh)$ arise from an immersion $f: S \to \mathbb{R}^3$ produce the following constraints:
\begin{align}
    R^\nabla(Z, Y)Z &= h(Y, Z) Sh(X) - h(X,Z) Sh(Y),\label{first-integrability} \\
    (\nabla_X h)(Y, Z) &= (\nabla_Y h)(X, Z), \\
    (\nabla_X Sh)Y &= (\nabla_Y Sh)X , \\
    h(X, Sh(Y)) &= h(Sh(X), Y).\label{last-integrability}
\end{align}

The restrictions we have found so far on Blaschke structures provide all the necessary integrability conditions to develop an affine surface, in the following sense.

\begin{theorem}[\cite{nomizu1994affine}, Theorem 8.1]\label{sphere-development}
    Let $M$ be a simply connected oriented surface, $\nabla$ a torsion-free connection on $M$, $h$ a Riemannian metric on $M$, and $Sh$ a $(1,1)$-tensor on $M$. Suppose that $(\nabla, h, Sh)$ satisfy \eqref{first-integrability}-\eqref{last-integrability} and that $\nabla d\text{Vol}_h = 0$. Then there exists a Blaschke immersion $f: M \to \mathbb{R}^3$ so that the Blaschke structure induced on $M$ is given by $(\nabla, h, Sh)$. The Blaschke immersion $f$ is unique up to the action of {\rm{$\text{SAff}(\bbR^3)$}} on the target.
    \end{theorem}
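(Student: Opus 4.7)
The plan is to convert the datum $(\nabla, h, Sh)$ on $M$ into a first-order linear PDE system for an $\bbR^3$-valued map $f$ together with an adapted frame along it, and then to show that the hypothesized identities supply exactly the integrability conditions needed to solve that system globally on the simply connected surface $M$. The general strategy is that of the fundamental theorem of surface theory in classical (Euclidean) differential geometry, adapted to the affine setting.

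Concretely, I would work in a local coordinate chart on $M$ and consider tuples $(f, F_1, F_2, \xi)$ of $\bbR^3$-valued functions solving
\begin{align*}
\del_i f &= F_i, \\
\del_i F_j &= \Gamma^k_{ij} F_k + h_{ij}\, \xi, \\
\del_i \xi &= -Sh^k_i\, F_k,
\end{align*}
which are the component forms of \eqref{affine-connection-metric}-\eqref{affine-shape} with transverse connection form $\tau = 0$. Computing the commutator $[\del_i, \del_j]$ applied to each equation turns integrability into a list of tensor identities on $M$. The commutator applied to the $F$-equation will yield the Gauss identity \eqref{first-integrability} together with the Codazzi symmetry $(\nabla_X h)(Y,Z) = (\nabla_Y h)(X,Z)$; the commutator applied to the $\xi$-equation will yield the Codazzi symmetry for $Sh$ and the self-adjointness condition \eqref{last-integrability}. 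The hypothesis $\nabla d\text{Vol}_h = 0$ is what forces $\tau = 0$ to be consistent globally and ensures the frame $(F_1, F_2, \xi)$ maintains constant determinant.

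With integrability in hand, Frobenius (or equivalently Picard--Lindel\"of applied along paths, using simple-connectedness to kill monodromy) produces a unique global solution $(f, F_1, F_2, \xi)$ once one prescribes an initial frame at a basepoint $p_0 \in M$, normalized so that $\lambda(F_1(p_0), F_2(p_0), \xi(p_0)) = d\text{Vol}_h(e_1,e_2)|_{p_0}$ in a positively oriented local frame $\{e_1, e_2\}$. The map $f$ is recovered by integrating the first equation of the system. By construction the Blaschke data induced by $f$ via \eqref{affine-connection-metric}-\eqref{affine-shape} reproduces $(\nabla, h, Sh)$, and strict convexity is guaranteed since $h$ is positive definite by hypothesis.

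For uniqueness up to $\text{SAff}(\bbR^3)$: any two solutions differ at $p_0$ only by a choice of adapted initial frame, and the set of admissible initial frames is a torsor under $\text{SAff}(\bbR^3)$ (the translation part acting on the value $f(p_0)$, the $\text{SL}(3,\bbR)$ part acting on the frame). So there is a unique affine motion identifying the two sets of initial data, and rigidity of the ODE system propagates the identification over all of $M$. The main technical obstacle is the bookkeeping in the integrability step: one must verify that the brackets on the frame equations produce precisely the four stated identities \eqref{first-integrability}-\eqref{last-integrability} with no further hidden constraints, which amounts to carrying out the full affine-geometric Gauss--Codazzi computation cleanly.
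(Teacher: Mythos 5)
The paper does not prove this statement: it is quoted verbatim from Nomizu--Sasaki (\cite{nomizu1994affine}, Theorem 8.1) and used as a black box, so there is no internal proof to compare against. Your proposal is the standard argument for that theorem --- set up the affine Gauss--Weingarten frame system with $\tau = 0$, check that the brackets reproduce exactly \eqref{first-integrability}--\eqref{last-integrability}, integrate by Frobenius on the simply connected $M$, and read off uniqueness from the $\text{SAff}(\bbR^3)$-torsor of admissible initial frames --- and it is sound. The one point worth stating more carefully is the role of $\nabla d\text{Vol}_h = 0$: with $\tau = 0$ built into the system, the induced volume form $\theta = \lambda(F_1, F_2, \xi)$ is automatically $\nabla$-parallel, and the hypothesis is what makes $\theta$ and $d\text{Vol}_h$ two parallel volume forms, hence proportional by a constant; the normalization at $p_0$ then forces $\theta = d\text{Vol}_h$ globally, which is precisely what certifies that $\xi$ is the affine normal and $f$ is a Blaschke (not merely equiaffine) immersion.
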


    A Blaschke surface is called an \textit{affine sphere} if $Sh = k\text{Id}$ for some $k \in \mathbb{R}$. An affine sphere is \textit{proper} if $k \neq 0$ and \textit{improper} if $k = 0$. A Blaschke surface is a proper affine sphere if and only if all of its affine normals meet at one point. Such a point is called the \textit{center} of the affine sphere. Our interest is in hyperbolic affine spheres, which are the affine spheres with $k = -1$.
    
    For a hyperbolic affine sphere $S$ with Blaschke data $(\nabla, h, -\text{Id})$, consider the difference tensor $A = \nabla^h - \nabla$, where $\nabla^h$ is the Levi-Civita connection of $h$. A computation shows that there is a unique holomorphic cubic differential $\varphi$ on $S$ with respect to the conformal structure of $h$ so that the cubic tensor $C$ defined by $C(X,Y,Z) = h(A(X)Y,Z)$ is given by $C(X,Y,Z) = \text{Re}(\varphi(X,Y,Z))$. The tensor $\varphi$ is called the \textit{Pick differential} of $f$.
    
    On the other hand, let $S$ be a closed surface of genus at least $2$ and let $\varphi$ be a holomorphic cubic differential on $\Sigma \in \mathbb{M}(S)$. If $h$ is a metric in the conformal class of $\Sigma$, let $A_\varphi^h$ denote the cubic tensor defined by $h(A_\varphi^h(X)Y, Z) = \text{Re}(\varphi(X,Y,Z)).$ A torsion-free connection can be built from $h$ and $A_\varphi^h$ by $\nabla^{h,\varphi} = \nabla^h + A_\varphi^h$. Lifting $h$ and $\nabla^{h, \varphi}$ to the universal cover of $S$ produces a metric $\widetilde{h}$ and a torsion-free connection $\widetilde{\nabla}^{h, \varphi}$ on the disk $\mathbb{D}$ that are both invariant under the action of covering transformations.
    
    Through reducing the integrability conditions of Theorem \ref{sphere-development} to the Wang equation \cite{changping1991some} and following analysis analogous to Wang, Labourie and Loftin showed the following.
    
    \begin{theorem}[\cite{changping1991some}, \cite{labourie2004anosov}, \cite{loftin2001affine}]\label{differentials-and-spheres}
        Let $\Sigma \in \mathbb{M}(S)$ and let $\varphi$ be a holomorphic cubic differential on $\Sigma$. In the conformal class of $\Sigma$, there is a unique metric $h$ so that {\rm{$(\widetilde{\nabla}^{h, \varphi}, \widetilde{h}, -\text{Id})$}} is a Blaschke structure on $\mathbb{D}$.
    \end{theorem}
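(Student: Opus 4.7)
The plan is to reduce the integrability conditions \eqref{first-integrability}--\eqref{last-integrability} for a Blaschke structure with $Sh = -\text{Id}$ to a single semilinear elliptic PDE on $\Sigma$---Wang's equation---and then to solve this PDE on $\Sigma$. Because the sought-after integrability conditions are tensorial and the lifted data on $\mathbb{D}$ is $\pi_1(\Sigma)$-equivariant by construction, solving the PDE on $\Sigma$ produces all of the desired structure on $\mathbb{D}$ by pullback.

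Parametrize candidate metrics in the conformal class of $\Sigma$ by $h = e^{2u} g_\Sigma$, where $g_\Sigma$ is the hyperbolic metric. Since $Sh = -\text{Id}$ is a multiple of the identity, condition \eqref{last-integrability} is immediate, while the Codazzi-type identities involving $\nabla h$ and $\nabla Sh$ either hold tautologically or reduce to $\delbar_\Sigma \varphi = 0$, which is given. Using $\nabla^{h,\varphi} = \nabla^h + A_\varphi^h$, condition \eqref{first-integrability} after tracing with $h$ becomes (up to conventions) the scalar PDE
\begin{equation*}
\Delta_{g_\Sigma} u \;=\; e^{2u} \;-\; \|\varphi\|_{g_\Sigma}^2 \, e^{-4u} \;-\; 1,
\end{equation*}
which is Wang's equation for the conformal factor of the Blaschke metric of a hyperbolic affine sphere with Pick differential $\varphi$. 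Uniqueness of solutions follows from the maximum principle: the right-hand side is strictly monotone increasing in $u$, so a positive interior maximum of $w = u_1 - u_2$ for two solutions would force $\Delta_{g_\Sigma} w > 0$ there, contradicting $\Delta_{g_\Sigma} w \le 0$ at a maximum.

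For existence I would apply the method of sub- and super-solutions of Wang \cite{changping1991some}: a sufficiently large positive constant depending on $\max_\Sigma \|\varphi\|_{g_\Sigma}^2$ provides a super-solution and a sufficiently negative constant a sub-solution, so monotone iteration yields a solution, which elliptic regularity promotes to $C^\infty$. Lifting $h$ to $\mathbb{D}$ then gives $\pi_1(\Sigma)$-equivariant data $(\widetilde\nabla^{h,\varphi}, \widetilde h, -\text{Id})$, and $\nabla\, d\text{Vol}_h = 0$ follows from the construction (the transverse connection form $\tau$ associated to the affine normal vanishes). The main obstacle is the existence half of Wang's equation, which requires uniform $L^\infty$ a priori estimates for $u$ in terms of $\|\varphi\|_{g_\Sigma}$: this is precisely the step where compactness of $\Sigma$, inherited from closedness of $S$, enters essentially, since without it the constant sub- and super-solutions would not be available and the developed affine sphere on $\mathbb{D}$ would fail to close up equivariantly.
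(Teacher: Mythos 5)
This theorem is quoted in the paper from Wang, Labourie, and Loftin, and the paper gives no proof beyond noting that it follows by "reducing the integrability conditions of Theorem \ref{sphere-development} to the Wang equation" --- which is exactly the route you take. Your sketch (Codazzi conditions reducing to holomorphicity of $\varphi$, the Gauss equation tracing to Wang's equation, uniqueness by the maximum principle from monotonicity of the right-hand side, existence by constant sub- and super-solutions on the compact surface) is a correct outline of the argument in the cited references.
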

    
    From an affine sphere $f$ with Blaschke data $(\widetilde{\nabla}^{h, \varphi}, \widetilde{h}, -\text{Id})$, a representation $\rho: \pi_1(S) \to \text{SL}(3, \mathbb{R})$ can be produced as follows. We are free to require $f$ have its center be located at $0$. Any $\gamma \in \pi_1(S)$ acts on $\mathbb{D}$ by a covering transformation whose induced action on $\widetilde{\nabla}^{h, \varphi}$ and $\widetilde{h}$ leaves both fixed. So the map $z \mapsto f(\gamma z)$ is also an affine sphere with Blaschke data $(\widetilde{\nabla}^{h, \varphi}, \widetilde{h}, -\text{Id})$. The uniqueness assertion of Theorem \ref{sphere-development} shows that there is some $A_\gamma \in \text{SAff}(\mathbb{R}^3)$ so that $f (\gamma z) = A_\gamma(f(z))$ for all $z \in \mathbb{D}$. The center of $A_\gamma \circ f$ is $0$ as the image of $f$ is invariant under precomposition by covering transformations, so $A_\gamma \in \text{SL}(3, \bbR)$. The map $\rho: \gamma \to A_\gamma$ produces a representation $\pi_1(S) \to \text{SL}(3, \mathbb{R})$ so that $f$ is $\rho$-equivariant.
    
    By considering the lift of the affine sphere $f$ to a $\rho$-equivariant minimal surface in $\text{SL}(3, \mathbb{R})/\text{SO}(3)$, Labourie shows in \cite{labourie2007flat} that the representations constructed through affine spheres and through Hitchin's parametrization have a simple relationship.
    
    \begin{theorem}[\cite{labourie2007flat}]
        The representation $\rho$ associated to $(\Sigma, \varphi)$ via Theorem \ref{differentials-and-spheres} is given in Hitchin's parametrization based at $\Sigma$ by $\rho_{0, \varphi/12}$.
    \end{theorem}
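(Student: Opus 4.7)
The plan is to identify both parametrizations via a common harmonic map from $\tilde{S}$ to the symmetric space $X = \mathrm{SL}(3,\mathbb{R})/\mathrm{SO}(3)$. From the affine sphere $f: \mathbb{D} \to \mathbb{R}^3$ with center $0$ and shape operator $-\mathrm{Id}$, equation \eqref{affine-shape} gives $\xi = -f$, so $(f, \partial_x f, \partial_y f)$ is a frame along $f$; after rescaling by a suitable positive function (possible since $\theta$ never vanishes) this yields a lift $F: \mathbb{D} \to \mathrm{SL}(3,\mathbb{R})$. The $\rho$-equivariance of $f$ gives $F \circ \gamma = \rho(\gamma) F$, so $F$ descends to a $\rho$-equivariant map $\tilde{f}: \tilde{S} \to X$, which plays the role both of the minimal surface produced by the affine sphere and of the harmonic map predicted by non-abelian Hodge theory from the Higgs data.

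Next I would show that $\tilde f$ is conformal and harmonic with respect to $\Sigma$. Writing $\alpha = F^{-1}dF$ and decomposing into its $\mathfrak{so}_3$-part and $\mathfrak{p}$-part (symmetric traceless) under the Cartan involution, the structure equations \eqref{affine-connection-metric}--\eqref{affine-shape} show that the $\mathfrak{so}_3$-part records the Levi--Civita connection of the Blaschke metric $h$, while the $\mathfrak{p}$-part encodes $h$ together with the cubic deviation $A_\varphi^h = \nabla^h - \nabla$. Since $h$ is conformal to $\Sigma$, the $(2,0)$-part of the pullback metric $\tilde{f}^* g_X$ vanishes, which is exactly the statement that $\tilde f$ is conformal, i.e.\ its Hopf differential $q_2$ is zero. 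The integrability conditions \eqref{first-integrability}--\eqref{last-integrability} specialized to a hyperbolic affine sphere are equivalent to the harmonic map equation $d^\nabla(\star d\tilde f) = 0$ into $X$.

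The final step is to match the cubic data. The Hitchin section Higgs field for $\mathrm{SL}(3,\mathbb{R})$, in a trivialization adapted to the principal $\mathfrak{sl}_2$-triple, is a companion matrix whose distinguished coefficient is $q_3$; the $(1,0)$-part of $\alpha|_{\mathfrak{p}^\mathbb{C}}$ realizes this Higgs field once the trivialization is chosen to match $F$. Unpacking, $q_3$ becomes proportional to the holomorphic $(3,0)$-part of the Pick tensor $C(X,Y,Z) = \mathrm{Re}\,\varphi(X,Y,Z)$, giving $q_3 = c\cdot \varphi$ for a universal real constant $c$.

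The main obstacle is pinning down $c = 1/12$: this requires careful tracking of the normalizations in the companion-matrix form of the Hitchin section, the combinatorial factor passing from the real trilinear form $C$ to its $(3,0)$-type part, and the conventions in the definition of $\rho_{q_2,q_3}$ via the self-duality equations. A clean sanity check is the Fuchsian locus $\varphi = 0$, where the affine sphere is the standard hyperboloid and produces the Fuchsian representation, matching $\rho_{0,0}$; once proportionality $q_3 = c\varphi$ is established, the value of $c$ can be determined at a single explicit non-Fuchsian example.
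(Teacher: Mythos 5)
The paper does not prove this statement itself: it is quoted from \cite{labourie2007flat}, and the surrounding text only records that Labourie's proof proceeds ``by considering the lift of the affine sphere $f$ to a $\rho$-equivariant minimal surface in $\mathrm{SL}(3,\mathbb{R})/\mathrm{SO}(3)$.'' Your outline follows exactly that route --- frame the affine sphere by $(\xi, Df)$, descend to a conformal harmonic map into the symmetric space, and match the Maurer--Cartan form against the flat connection $\nabla_A + \Phi + \Phi^*$ of the Hitchin-section Higgs bundle --- so the strategy is the intended one, and the qualitative steps (vanishing Hopf differential, Pick form corresponding to $q_3$ up to a universal constant) are sound. One small point of bookkeeping: with the conventions of \eqref{affine-shape}, $Sh=-\mathrm{Id}$ and $\tau=0$ give $\overline{\nabla}_X\xi = Df(X)$, so a hyperbolic affine sphere centered at $0$ has $\xi = +f$ rather than $-f$; this does not damage the argument but will flip signs downstream if not corrected.

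The genuine gap is the constant. The whole quantitative content of the statement is the factor $1/12$, and your proposal leaves $c$ undetermined. Worse, the method you suggest for pinning it down --- evaluating both sides ``at a single explicit non-Fuchsian example'' --- is not actually available: on a closed surface of genus at least $2$ there are no explicit non-Fuchsian solutions of either the Wang equation or the $\mathrm{SL}(3,\mathbb{R})$ self-duality equations, so there is no example at which the two parametrizations can both be computed in closed form. The constant must instead be extracted algebraically: write the Maurer--Cartan form of the affine-sphere frame in a complex coordinate for $\Sigma$ (this is where the combinatorial factor from passing between the real trilinear form $C=\mathrm{Re}\,\varphi$ and its $(3,0)$-part enters), and compare it entry by entry with $\nabla_A + \Phi + \Phi^*$ in the normalization Hitchin fixes for the principal $\mathfrak{sl}_2$-triple and the highest-weight vector carrying $q_3$. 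That term-by-term comparison is precisely where the $12$ arises, and without carrying it out the theorem as stated --- as opposed to the statement ``$\rho = \rho_{0, c\varphi}$ for some universal $c\neq 0$'' --- is not proved.
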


    In light of the preceding, the Hitchin representation $\rho \in \text{Hit}(S, \text{SL}(3, \mathbb{R}))$ produced from $[I] \in \mathcal{T}^3(S)$ in the proof of Theorem \ref{headline-result} is as follows.

    From a $3$-complex structure $I$ representing $[I]$, the linear order terms in generators distinguish a complex structure $\Sigma$ as in Section \ref{section-natural-coords}. Taking local normalized representatives modulo $I$ whose degree-$2$ terms are purely of type $(-2, 0)$ gives rise to a $(-2,1)$-tensor $\mu_3$ on $\Sigma$. The pairing on tensors induced by the hyperbolic metric $g_\Sigma$ in the conformal class of $\Sigma$ associates to $\mu_3$ a type $(3,0)$ tensor $\varphi_I = \overline{\mu}_3 g_\Sigma^{2}/12$.
    
    For any metric $h$ in the conformal class of $\Sigma$, a connection $\nabla^{h,\varphi_I}$ can be formed as above. As affine spheres have holomorphic Pick differential, for almost all $3$-complex structures $I$ there are no metrics $h$ in the conformal class of $\Sigma(I)$ so that $(\widetilde{\nabla}^{\varphi_I, h}, \widetilde{h}, -\text{Id})$ is a Blaschke structure. 
    
    So we seek $3$-complex structures equivalent to $I$ under higher diffeomorphisms that do develop to affine spheres. Theorem \ref{main-bundle-thm} shows there is a unique higher complex structure $\widetilde{I}$ in the $\mathcal{N}(S)$-orbit of $I$ so that there is a metric $h$ in the conformal class of $\Sigma$ so $({\widetilde{\nabla}}^{{\varphi_{\widetilde{I}}}, h}, \widetilde{h}, -\text{Id})$ is a Blaschke structure on $\mathbb{D}$. Such a metric $h$ is unique in the conformal class of $\Sigma$.
    
    Let $f$ be an affine sphere associated to $({\widetilde{\nabla}}^{{\varphi_{\widetilde{I}}}, h}, \widetilde{h}, -\text{Id})$ with center $0$. The Hitchin representation $\rho$ associated to $I$ is so that $\rho(\gamma)f(z) = f(\gamma z)$ for all $z \in \mathbb{D}$.

\appendix

\section{Real-Analyticity of Some K\"ahler Metrics}\label{appendix-kahler-metrics-analytic}

Our assertion in Theorem \ref{free-from-labourie} that the K\"ahler metrics on $\mathcal{T}^n(S)$ considered there are real-analytic K\"ahler relies on the fact that the K\"ahler metrics constructed in \cite{labourie2017cyclic} and \cite{kim2017kahler} on bundles over $\mathcal{T}(S)$ of differentials are real-analytic. We prove this here. A corollary of this is that the K\"ahler metrics on Hitchin components of rank $2$ constructed by Labourie and Kim-Zhang are real-analytic K\"ahler. Throughout the following, let $S$ be a closed, oriented surface of genus $g \geq 2$ and fixed topological type.

Let $\mathbf{m} = (m_1,...,m_p)$ be a tuple of integers all at least $2$. Denote by $\mathcal{B}^\mathbf{m}(S)$ the bundle over $\mathcal{T}(S)$ with fiber over $\Sigma$ the classes $\mathcal{B}(m_1, \Sigma) \oplus ... \oplus \mathcal{B}(m_p, \Sigma)$ where $\mathcal{B}(k, \Sigma)$ is the space of classes of $k$-Beltrami differentials on $\Sigma
$. Let $\mathcal{Q}^\mathbf{m}(S)$ be the bundle over $\mathcal{T}(S)$ whose fiber over $\Sigma$ is the space of classes of differentials $\mathcal{Q}(m_1, \Sigma) \oplus ... \oplus \mathcal{Q}(m_p, \Sigma)$ where $\mathcal{Q}(k,\Sigma)$ is the space of classes of holomorphic $k$-adic differentials on $\Sigma$. Let $\mathcal{B}_k(S)$ and $\mathcal{Q}_k(S)$ denote the sub-bundles of $\mathcal{B}^\mathbf{m}(S)$ and $\mathcal{Q}^\mathbf{m}(S)$ corresponding to a single index of $\mathbf{m}$, and consisting of differentials of a fixed degree $k$. The integration pairing $(q,\mu) \mapsto \int_\Sigma q \mu $ identifies $\mathcal{B}_k(S)$ with the dual of $\mathcal{Q}_k(S)$, and $\mathcal{B}^\mathbf{m}(S)$ so inherits a holomorphic vector bundle structure. 

There are pairings on $\mathcal{Q}^\mathbf{m}(S)$ and $\mathcal{B}^\mathbf{m}(S)$ specified by \begin{align*} h_k(q_1, q_2) &= \int_{\Sigma} \frac{q_1 \overline{q}_2}{g_\Sigma^{k-1}} \qquad \qquad (q_1, q_2 \in \mathcal{Q}(k,\Sigma) ), \\ h'_k(\mu_1, \mu_2) & = \int_\Sigma \mu_1 \overline{\mu}_2 g_\Sigma^{k-1}  \qquad (\mu_1, \mu_2 \in \mathcal{B}(k,\Sigma)), \end{align*} and the requirement that differentials corresponding to different elements $m_p$ of the tuple $\mathbf{m}$ are orthogonal. Here, $g_\Sigma$ is the unique hyperbolic metric in the conformal class of $\Sigma$. Let $g_{\text{WP}}$ be the Weil-Petersson metric on $\mathcal{T}(S)$. The metrics defined by Labourie in \cite{labourie2017cyclic} are given by $$\epsilon \del \delbar h + \pi^* g_{\text{WP}}$$ where $\epsilon > 0$, $\delbar$ and $\del$ are the Dobeault operators on $\mathcal{B}^\mathbf{m}(S)$, the bilinear form $h$ on fibers is a linear combination with positive real coefficients of the pairings $h_{m_p}'$, and $\pi$ is the holomorphic projection $\mathcal{B}^\mathbf{m}(S) \to \mathcal{T}(S)$. So our goal follows from:

\begin{lemma}\label{real-analytic-lemma}
    The pairings $h_k'$ are real-analytic on $\mathcal{B}_k(S)$.
\end{lemma}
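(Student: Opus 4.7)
The plan is to work locally on $\mathcal{T}(S)$ and to exhibit a real-analytic local frame for $\mathcal{B}_k(S)$ in which the matrix entries of $h_k'$ are manifestly real-analytic functions of the Teichm\"uller parameter. Since $h_k'$ is Hermitian sesquilinear on each fiber, once such a frame is produced the real-analyticity of $h_k'$ on the total space $\mathcal{B}_k(S)$ follows by sesquilinear expansion in the fiber coordinates with real-analytic matrix coefficients depending on the base point.

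Two ingredients would be imported from the classical theory. First, $\mathcal{Q}_k(S) \to \mathcal{T}(S)$ is a holomorphic vector bundle over the complex-analytic manifold $\mathcal{T}(S)$ and admits local holomorphic frames $q_1(\Sigma),\ldots,q_N(\Sigma)$ near any fixed $\Sigma_0$; this follows by standard deformation theory by solving the $\bar\partial$-equation for smooth $k$-differentials as the Beltrami parameter $\nu$ on $\Sigma_0$ varies, via the implicit function theorem in suitable H\"older or Sobolev spaces. Second, the assignment $\Sigma \mapsto g_\Sigma$ of hyperbolic metrics depends real-analytically on $\Sigma$. This latter input is a classical consequence of the Ahlfors--Bers machinery: pulled back to $\Sigma_0$ via the quasiconformal map $f^\nu$ that solves the Beltrami equation with coefficient $\nu$ (and itself depends real-analytically on $\nu$), the conformal factor of $g_\Sigma$ satisfies an elliptic Liouville-type equation whose solution depends real-analytically on $\nu$ by another implicit function theorem argument.

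Granting these two inputs, I would define $\mu_j(\Sigma) := \overline{q_j(\Sigma)}/g_\Sigma^{k-1}$. The map $q \mapsto \overline{q}/g_\Sigma^{k-1}$ is an antilinear isomorphism $\mathcal{Q}(k,\Sigma) \to \mathcal{B}(k,\Sigma)$ by Definition~\ref{harmonic-n-cx}, and it varies real-analytically in $\Sigma$ by the two inputs above, so $\mu_1(\Sigma),\ldots,\mu_N(\Sigma)$ is a real-analytic local frame of $\mathcal{B}_k(S)$ over a neighborhood of $\Sigma_0$. A direct computation gives
\[
h_k'(\mu_i(\Sigma),\mu_j(\Sigma)) = \int_\Sigma \mu_i(\Sigma)\overline{\mu_j(\Sigma)}\,g_\Sigma^{k-1} = \int_\Sigma \frac{\overline{q_i(\Sigma)}\, q_j(\Sigma)}{g_\Sigma^{k-1}}.
\]
Pulling back via $f^\nu$ to the fixed underlying smooth surface converts the right-hand side into an integral on $\Sigma_0$ whose integrand depends real-analytically on $\nu$ with values in a fixed space of continuous densities; such an integral is then a real-analytic function of $\nu$ by uniform convergence of the relevant power series on $\Sigma_0$. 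The main obstacle is really the second input --- real-analytic dependence of the uniformizing conformal factor on Teichm\"uller parameters --- after which the remaining assembly is routine.
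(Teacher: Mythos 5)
Your proposal is correct and follows essentially the same route as the paper: both produce a frame for $\mathcal{B}_k(S)$ of the form $\overline{q}/g_\Sigma^{k-1}$ from a local holomorphic frame of $\mathcal{Q}_k(S)$, pull the $L^2$ integrals back to a fixed fundamental domain via the quasiconformal maps $f^\nu$, and reduce everything to real-analytic dependence of $f^\nu$ and of the pulled-back hyperbolic metric on the Teichm\"uller parameter. The only difference is sourcing: the paper cites Bers for the holomorphic frames and Wolpert for the real-analyticity of $f^{\mu*}g_\mu$, where you sketch re-derivations (a $\bar\partial$/implicit-function-theorem argument and a Liouville-equation argument, respectively), both of which are standard and would go through.
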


This statement is shown in \cite{wolpert2017equiboundedness} for $h_2'$. The coordinate system on $\mathcal{T}(S)$ that seems best adapted for this question is the Bers embedding \cite{bers1961correction}, which represents $\mathcal{T}(S)$ as a bounded domain in $\mathbb{C}^{3g-3}$. We review the Bers embedding and the construction of holomorphic frames for $\mathcal{Q}_n(S)$ in the approach of Bers \cite{bers1961holomorphic}, since Lemma \ref{real-analytic-lemma} follows from their description.

Fix a base marked Riemann surface $\Sigma$ and uniformizing Fuchsian group $G$ for $\Sigma$. The Riemann sphere is divided into the disk $\mathbb{D}$ and the complementary region. To any Beltrami differential $\mu$ on $\mathbb{D}$, an extension of $\mu$ to $\mathbb{C}$ is made by setting $\mu = 0$ off $\mathbb{D}$. A unique quasiconformal map $f^{\mu} : \mathbb{S}^2 \to \mathbb{S}^2$ exists that solves the Beltrami equation $f^\mu_{\overline{z}} = \mu f^{\mu}_z $ and is normalized so $f^{\mu}(0) = 0, f^\mu(1) = 1,$ and $f^\mu(\infty) = \infty$. The map $f^\mu$ is then conformal on the complement of the closed disk, and computing the Schwarzian derivative $\phi^\mu$ of $f^\mu$ on the region $|z| > 1$ yields an embedding of $\mathcal{T}(S)$ into the space of holomorphic quadratic differentials on $\overline{\Sigma}$. This is known as the Bers embedding.

When working with the Bers embedding, we use $\tau$ to represent an element of $\mathcal{T}(S)$ and $\Sigma_\tau$ a corresponding marked Riemann surface. Bers shows (\cite{bers1961holomorphic} Theorem I, also see \cite{bers1960simultaneous}) that M\"obius transformations $A_i(\tau, z), B_i(\tau,z)$ ($1\leq i \leq g$) that give a standard generating list for quasi-Fuchsian groups $G(\tau)$ uniformizing $\Sigma_\tau$ can be chosen holomorphically in $\tau$. Here, a \textit{standard generating list} is a tuple $(A_1, ..., A_g, B_1, ..., B_g)$ generating $\pi_1(S)$ and satisfying the single relation $\prod_{j=1}^g[A_j,B_j] = \text{Id}.$ Bounded domains $D(\tau)$ of discontinuity for these representations can also be chosen analytically in $\tau$, so that the space of pairs $\hat{\mathcal{T}}(S) = \{(\tau, z) \mid \tau \in \mathcal{T}(S), z \in D(\tau) \}$ is a bounded domain in $\mathbb{C}^{3g-2}$.

Let $W_k(\tau)$ be the space of holomorphic functions $\varphi(z)$ on $D(\tau)$ so that $\varphi(z) dz^k$ is a holomorphic $k$-adic differential on $D(\tau)$ invariant under $G(\tau)$, and $W_k$ the space of holomorphic functions $\varphi(\tau,z)$ on $\hat{\mathcal{T}}(S)$ so that $\varphi(\tau,\cdot) \in W_k(\tau)$ for all $\tau \in \mathcal{T}(S)$. A remarkable theorem of Bers \cite{bers1961holomorphic} that produces local holomorphic frames of $\mathcal{Q}_k(S)$ is that every element of $W_k(\tau)$ is the restriction of an element of $W_k$.

\begin{proof}[Proof of Lemma \ref{real-analytic-lemma}]
Let $\mu$ be a local holomorphic coordinate on $\mathcal{T}(S)$ given by harmonic Beltrami differentials on $\Sigma$ (see e.g. \cite{ahlfors1961some}). Then in this set-up, $D(\mu) = f^\mu(\mathbb{D})$ and for $A \in G$, the corresponding element $A^\mu$ of $G(\mu)$ is defined by $A^\mu \circ f^{\mu} = f^{\mu} \circ A$. Let $R \subset \mathbb{D}$ be a fundamental domain for the action of $G$ on $\mathbb{D}$. One sees that $R_\mu = f^{\mu}(R)$ is a fundamental domain for the action of $G(\mu)$ on $D(\mu)$.

After possibly restricting the coordinate neighborhood, let $Q_1(\mu, z),..., Q_{N}(\mu,z)\in W_k$ give a local holomorphic frame for $\mathcal{Q}_k(S)$. That is, for any fixed $\mu$ in our coordinate chart, $\{Q_i(\mu, z) dz^k \}$ is a basis for $\mathcal{Q}(k,\mu)$. Then as the integration pairing induces the holomorphic structure on $\mathcal{B}_k(S)$, we have a dual holomorphic frame for $\mathcal{B}_k(S)$ given by $$B_i(\mu,z) = \left( \int_{R_\mu} \frac{Q_i(\mu, z) \overline{Q_i(\mu, z)}}{g_\mu^{k-1}} \right)^{-1} \frac{\overline{Q_i(\mu, z)}}{g_\mu^{k-1}}$$ where $g_\mu$ is the hyperbolic metric for $\Sigma_\mu$ pulled back to $D(\mu)$.

Let $\mu = \mu(t)$ be real-analytic in $t$. By pulling back integration to $R$ via $f^{\mu}$, we have for all $i, j$ that with $f^{\mu*}$ the pullback on appropriate tensors, \begin{align*}
    h_k'(B_i(\mu,z),& B_j(\mu,z)) \\ &= \left(\int_R \frac{|f^{\mu *} Q_i(\mu,z)|^2}{f^{\mu*} g_\mu^{k-1}}  \right)^{-1} \left( \int_R \frac{|f^{\mu*} Q_j(\mu,z)|^2 }{f^{\mu*} g_\mu^{k-1}} \right)^{-1} \int_R \frac{f^{\mu *} Q_i(\mu,z) \overline{f^{\mu*} Q_j(\mu,z) }}{f^{\mu*} g_\mu^{k-1}}.
\end{align*}
Each $Q_k(\mu,z)$ varies analytically in $(\mu,z)$ by construction. Work of Ahlfors and Bers \cite{ahlfors1960riemann} shows that $f^{\mu}$ varies real-analytically in an appropriate Banach space, and work of Wolpert \cite{wolpert1990bers} shows that $f^{\mu*}g_\mu$ varies real-analytically in $\mu$. The result follows.
\end{proof}

\section{Groups of Hamiltonian and Symplectic Diffeomorphisms}\label{appendix-community-service}

In this appendix, we describe some features of the full degree-$n$ diffeomorphism group $\mathcal{H}^n(S)$ as a topological space, and investigate the relationship between $\mathcal{H}^n(S)$ and $\mathcal{G}^n(S)$. The result of broader interest is that $\text{HMod}^n(T^*S)$ is discrete. The following also ensures that the basic definitions used for higher degree diffeomorphism groups are reasonable, and answers some foundational questions on the framework used to define higher degree complex structures.

For an instance of a potential pathology addressed, $\text{Ham}_c^0(T^*S)$ was defined in terms of the existence of a Hamiltonian flow that fixes the zero section setwise. If there were Hamiltonian diffeomorphisms arbitrarily close to the identity that setwise fix $Z^*S$ but are only generated by flows that do not always fix $Z^*S$, this definition would be unnatural and somewhat pathological, but at the same time essential for effective use of the first variation formula. It is a matter of good housekeeping to ensure this does not occur (Propositions \ref{appendix-reduction}, \ref{hams-open}). The main result is:

\begin{theorem}\label{things-not-totally-awful} Let $\mathcal{G}^n(S)$ be the degree-$n$ diffeomorphism group and $\mathcal{H}^n(S)$ the quotient of {\rm{$\text{Symp}_T^0(T^*S)$}} by the kernel of its action on $\dot{\mathbb{M}}^n(S)$. Then:
\begin{enumerate}[label=(\alph*).] \item The full degree-$n$ diffeomorphism group $\mathcal{H}^n(S)$ is a topological group with the topology it inherits as the quotient of {\rm{$\text{Symp}_{T}^0(T^*S)$}} by the stabilizer of its action on $\dot{\mathbb{M}}^n(S)$,
\item The quotient {\rm{$\text{HMod}^n(S) = \mathcal{H}^n(S)/\mathcal{G}^n(S)$}} is discrete,
\item The identity component of $\mathcal{H}^n(S)$ is open and equal to $\mathcal{G}^n(S)$.
\end{enumerate}
\end{theorem}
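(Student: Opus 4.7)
The plan is to reduce everything to showing that $\mathcal{G}^n(S)$ is an open, connected subgroup of $\mathcal{H}^n(S)$. Connectedness is straightforward from Theorem \ref{structure-of-higher-diffeos}: $\mathcal{G}^n(S) = \mathcal{D}_0(S) \ltimes \mathcal{N}(S)$ with the factor $\mathcal{D}_0(S) \cong \text{Diff}_0(S)$ contractible by Earle--Eells \cite{earle1969fibre}, and $\mathcal{N}(S)$ homeomorphic in inductive coordinates to the vector space $\mathcal{J}^2_{n-1}$. Given openness, the deductions are general: an open subgroup $\mathcal{G}^n(S)$ will be clopen in $\mathcal{H}^n(S)$ (its complement is a union of open cosets), so the identity component of $\mathcal{H}^n(S)$ will coincide with the connected subgroup $\mathcal{G}^n(S)$ containing the identity, proving (c). The projection to $\text{HMod}^n(S)$ will then be open with discrete image, giving (b). For (a), since $\mathcal{G}^n(S)$ is an open subgroup that is already a topological group by Proposition \ref{quotient-topology-is-product}, continuity of multiplication and inversion on $\mathcal{H}^n(S)$ reduces, via translation by a fixed element, to continuity of left multiplication and conjugation by a fixed element of $\mathcal{H}^n(S)$; these reduce in turn, for representatives of the form $\varphi \circ f^\#$ with $\varphi \in \text{Ham}_c^0(T^*S)$ and $f \in \text{Diff}^+(S)$, to the smooth structure on $\mathcal{G}^n(S)$ from Proposition \ref{prop-frechet-lie} together with the explicit pullback formula of Proposition \ref{diff-lift-in-natural}.

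The main obstacle is openness of $\mathcal{G}^n(S)$ in $\mathcal{H}^n(S)$. Because $\text{Symp}_T^0(T^*S) = \text{Symp}_c^0(T^*S) \rtimes \mathcal{D}^+(S)$ carries the product topology and $\text{Diff}_0(S)$ is open in $\text{Diff}^+(S)$, each $[\varphi] \in \mathcal{H}^n(S)$ close to the identity will admit a representative $\varphi = \varphi_c \circ f^\#$ with $\varphi_c \in \text{Symp}_c^0(T^*S)$ near $\text{Id}$ and $f \in \text{Diff}_0(S)$. It then suffices to show that $\varphi_c$ agrees with an element of $\text{Ham}_c^0(T^*S)$ on a neighborhood of $Z^*S$, and this is exactly where Propositions \ref{appendix-reduction} and \ref{hams-open} must enter. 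The method I have in mind is to connect $\varphi_c$ to the identity by a smooth path $\{\varphi_t\}$ inside $\text{Symp}_c^0(T^*S)$, extract its generating time-dependent symplectic vector field $X_t$, and use $H^1_c(T^*S) = 0$ together with the exactness of $\omega_{\mathrm{can}}$ to integrate the compactly supported closed form $\iota_{X_t}\omega_{\mathrm{can}}$ to a compactly supported primitive $H_t$. Because each $\varphi_t$ preserves $Z^*S$ setwise, $X_t$ will be tangent to $Z^*S$ and so $dH_t|_{TZ^*S} \equiv 0$; this forces $H_t$ to be constant along the connected submanifold $Z^*S$, and subtracting this constant produces a generating Hamiltonian in $\text{Ham}_c^0(T^*S)$. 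The local path-connectivity required to produce $\{\varphi_t\}$ should follow from the infinite-dimensional Lie group structure on symplectomorphisms with fixed compact support preserving $Z^*S$ setwise; this is the delicate technical point, and I expect most of the work to live there.
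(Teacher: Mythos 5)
Your proposal is correct in its essentials and rests on the same two hard inputs as the paper: (i) local path-connectivity of $\text{Symp}_K^0(T^*S)$ for admissible compact $K$, obtained from the Ebin--Marsden ILH structure on symplectomorphisms of an exact symplectic manifold with boundary intersected with the diffeomorphisms preserving $Z^*S$, and (ii) the flux argument using $H^1_c(T^*S)=0$ and exactness of $\omega_{\text{can}}$, together with the observation that tangency of $X_t$ to $Z^*S$ forces the primitive $H_t$ to be constant there. This is precisely Proposition \ref{appendix-reduction} and Proposition \ref{hams-open}, and you have correctly located the delicate point. Where you diverge from the paper is in the bookkeeping for statement (a): you invoke the open-subgroup criterion (an open topological subgroup plus homeomorphic left translations plus continuity of conjugation at the identity implies the ambient group is topological), whereas the paper proves joint continuity of multiplication component-by-component via Lemma \ref{components-lemma}, which shows that every connected component of $\mathcal{H}^n(S)$ is homeomorphic, under inclusion, to a component of a fixed-support quotient $\mathcal{H}^n_K(S)$ that is a genuine topological group. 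Your route is cleaner to state but defers the same difficulty: because $\text{Symp}_c^0(T^*S)$ carries the direct limit topology and is not itself a topological group, the claims that left translation by a fixed $[\psi]\in\mathcal{H}^n(S)$ is a homeomorphism of the quotient and that conjugation is continuous at the identity are not free; they require checking that the relevant maps send each $\text{Symp}_{T,K}^0(T^*S)$ continuously into some $\text{Symp}_{T,K'}^0(T^*S)$ and, crucially, that the subspace topology on $\mathcal{G}^n(S)\subset\mathcal{H}^n(S)$ agrees with the topology for which Proposition \ref{quotient-topology-is-product} applies. That agreement is exactly what the openness of the quotient map established in Proposition \ref{quotient-topology-is-product} (hence the Section \ref{section-group-structure} structure theory) buys, and it is the content that the paper packages into Lemma \ref{lemma-appendix-leverage}. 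So your outline is viable, but to be complete it should make these direct-limit verifications explicit rather than treating translation-invariance of the quotient topology as automatic.
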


Along the way, we mention some relevant facts about $\text{Ham}_c(T^*S)$ and $\text{Symp}_c(T^*S)$ we make use of throughout the paper. Many of the basic facts we address about these groups can be found in \cite{mcduff2017introduction}, especially Chapter $10$. 

In the following, we shall repeatedly have cause to consider compact submanifolds $K \subset T^*S$ containing $Z^*S$ in their interior. We shall call such a submanifold $K$ an \textit{admissible submanifold} of $T^*S$. For admissible $K$, denote by $\text{Ham}_K^0(T^*S)$ the group of Hamiltonian diffeomorphisms of $T^*S$ generated by Hamiltonian flows setwise fixing the zero section and supported on $K$, and $\text{Symp}_K^0(T^*S)$ the group of symplectomorphisms supported on $K$ and setwise fixing $Z^*S$. Our first goal is the following.

\begin{proposition}\label{appendix-reduction}
Let $K \subset T^*S$ be admissible. Then {\rm{$\text{Ham}_K^0(T^*S)$}} is the identity component of {\rm{$\text{Symp}_K^0(T^*S)$}} and open in {\rm{$\text{Symp}_K^0(T^*S)$}}. \end{proposition}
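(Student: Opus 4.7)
The plan is to show $\text{Ham}_K^0(T^*S)$ is the path component of the identity in $\text{Symp}_K^0(T^*S)$ and is open there; combined with the $\text{Symp}_K^0(T^*S)$ group structure this gives both conclusions. Path-connectedness of $\text{Ham}_K^0(T^*S)$ is immediate, since each $\varphi = \psi^H_1 \in \text{Ham}_K^0$ comes with the evident path $\{\psi^H_t\}_{t \in [0,1]} \subset \text{Ham}_K^0$ from $\text{Id}$ to $\varphi$. The two substantive points are (I) that every smooth path in $\text{Symp}_K^0$ emanating from $\text{Id}$ lands in $\text{Ham}_K^0$, and (II) that $\text{Symp}_K^0$ is locally path-connected at the identity.

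For (I), let $\{\psi_t\}_{t\in[0,1]}$ be such a path and let $X_t := (d\psi_t/dt)\circ \psi_t^{-1}$ be the generating time-dependent vector field, compactly supported in $K$. From $\psi_t^*\omega_{\text{can}} = \omega_{\text{can}}$ one gets $\mathcal{L}_{X_t}\omega_{\text{can}} = 0$, so $\iota_{X_t}\omega_{\text{can}}$ is a closed $1$-form compactly supported in $K$. Poincaré duality and the deformation retraction $T^*S \simeq S$ give $H^1_c(T^*S) \cong H^3(S) = 0$, so $\iota_{X_t}\omega_{\text{can}} = dH_t$ for some smooth compactly supported family $H_t$, chosen smoothly in $t$. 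Because each $\psi_t$ setwise preserves $Z^*S$, $X_t$ is tangent to $Z^*S$; combined with the Lagrangian condition $\omega_{\text{can}}|_{TZ^*S} = 0$, this forces $dH_t|_{TZ^*S} = 0$, so $H_t$ is constant on the connected set $Z^*S$. The corresponding Hamiltonian vector field $X_{H_t} = X_t$ is supported in $K$, and its flow is compactly supported (in $K$) and setwise preserves $Z^*S$, placing $\psi_1$ in $\text{Ham}_K^0(T^*S)$.

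For (II), I would apply Weinstein's Lagrangian neighborhood theorem to the diagonal $\Delta$ in $(T^*S \times T^*S,\, \omega_{\text{can}} \oplus (-\omega_{\text{can}}))$, identifying a neighborhood of $\Delta$ with a neighborhood of the zero section in $T^*\Delta \cong T^*(T^*S)$. Symplectomorphisms $\psi$ sufficiently $C^1$-close to $\text{Id}$ then correspond to closed $1$-forms $\alpha_\psi$ on $T^*S$ (with $\psi = \text{Id}$ corresponding to $\alpha_\psi = 0$). The support condition $\psi|_{T^*S \setminus K} = \text{Id}$ translates to $\alpha_\psi$ being supported in $K$ (as the graph of $\psi$ coincides with $\Delta$ there), and the $Z^*S$-setwise condition cuts out a linear subspace (infinitesimally, the symplectic vector fields tangent to the Lagrangian $Z^*S$ are exactly those whose associated $1$-forms satisfy a linear restriction along $Z^*S$). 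Both conditions are preserved by linear scaling, so $\alpha_t := t\alpha_\psi$ yields a smooth path in $\text{Symp}_K^0$ from $\text{Id}$ to $\psi$; combining with (I), $\psi \in \text{Ham}_K^0$. Hence $\text{Ham}_K^0$ contains a neighborhood of $\text{Id}$ in $\text{Symp}_K^0$, and by left translation, a neighborhood of each of its elements. The main obstacle will be verifying, in the Weinstein chart, that the $Z^*S$-setwise preservation condition cuts out a linear subspace of closed $1$-forms preserved under scaling; this reduces to identifying the Weinstein model compatibly with the splitting of $T(T^*S)|_{Z^*S}$ induced by $\omega_{\text{can}}$, so that the infinitesimal tangency condition at $Z^*S$ becomes a clean linear constraint on $\alpha_\psi$.
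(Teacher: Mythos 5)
Your step (I) is correct and is essentially the paper's argument: the paper packages the same computation as the vanishing of the flux homomorphism, using the exact sequence $0 \to \text{Ham}_c(T^*M) \to \text{Symp}_{c,0}(T^*M) \to H^1_c(T^*M) \to 0$ together with $H^1_c(T^*S) = 0$, and the observation that a Hamiltonian generating a flow tangent to the Lagrangian $Z^*S$ is locally constant there. So any smooth path in $\text{Symp}_K^0(T^*S)$ from the identity is indeed a compactly supported Hamiltonian flow setwise fixing $Z^*S$, hence lies in $\text{Ham}_K^0(T^*S)$. The whole weight of the proposition therefore rests on your step (II), local smooth path-connectivity of $\text{Symp}_K^0(T^*S)$ at the identity, and that is where there is a genuine gap.

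The obstacle you flag at the end is not a technicality to be checked later; it is the missing content. The standard Weinstein chart identifies a neighborhood of $\Delta$ in $(T^*S \times T^*S, \omega_{\text{can}} \oplus (-\omega_{\text{can}}))$ with a neighborhood of the zero section of $T^*\Delta$, but nothing in that construction guarantees compatibility with the extra Lagrangian $Z^*S \times Z^*S$ (which meets $\Delta$ cleanly in the diagonal copy of $Z^*S$). Without arranging that the chart carries $Z^*S \times Z^*S$ onto the conormal bundle of $Z^*S \subset \Delta$, the condition ``$\psi$ preserves $Z^*S$ setwise'' does not translate into the linear condition $\alpha_\psi|_{TZ^*S} = 0$, and the scaled family $t\alpha_\psi$ has no reason to produce symplectomorphisms preserving $Z^*S$ for $0 < t < 1$; the path then exits $\text{Symp}_K^0(T^*S)$ and step (I) cannot be applied. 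To close this you would need to prove a relative Lagrangian neighborhood theorem for the pair $(\Delta,\, Z^*S \times Z^*S)$ (a Moser-type argument respecting both submanifolds), which is doable but is a substantive addition you have not supplied. The paper avoids this entirely: it obtains local smooth path-connectivity by exhibiting $\text{Symp}_K^0(T^*S)$ as an intersection of inverse-limit-of-Hilbert subgroups of $\text{Diff}(L)$ for a larger compact $L \supset K$ --- namely the diffeomorphisms fixing $\partial L$ and preserving $Z^*S$, those fixing $L \setminus \text{int}(K)$ pointwise, and the symplectomorphisms of the exact form $\omega_{\text{can}}$ (Ebin--Marsden, Theorem 8.3, where exactness is essential) --- and ILH subgroups are locally connected by smooth paths. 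If you prefer to keep the Weinstein-chart route, you must either prove the relative normal form or find another way to see that nearby elements of $\text{Symp}_K^0(T^*S)$ are joined to the identity by paths staying in $\text{Symp}_K^0(T^*S)$.
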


One way that Proposition \ref{appendix-reduction} rules out potential pathology is that it follows that $\text{Ham}_K^0(T^*S)$ is the identity component of the group of all Hamiltonian diffeomorphisms of $T^*S$ supported on $K$ and setwise fixing $Z^*S$, without the restriction to flows that setwise fix $Z^*S$.

The first step we take towards Proposition \ref{appendix-reduction} is to show that $\text{Symp}_K^0(M)$ is locally path-connected and has an open identity component. The local path-connectivity of $\text{Symp}(M, \omega)$ for closed symplectic manifolds is due to Weinstein and is a consequence of the Lagrangian neighborhood theorem \cite{weinstein1971symplectic}. We explain how local path-connectivity in our case follows from celebrated work of Ebin and Marsden \cite{ebin1970groups}.

Ebin and Marsden show that for $M$ a compact manifold with boundary and $K$ a closed submanifold of $M$, \begin{align*} \text{Diff}_{\partial M}^K(M) &= \{\varphi \in \text{Diff}(M) \mid \varphi(K) \subset K, \, \varphi|_{\partial M} = \text{Id} \}, \\ 
\text{Diff}_{\partial M}^{K,P}(M) &= \{\varphi \in \text{Diff}(M) \mid \varphi|_{K} = \text{Id}, \, \varphi|_{\partial M} = \text{Id} \}
\end{align*} are inverse limit of Hilbert [ILH] subgroups of $\text{Diff}(M)$ in the sense of Omori, and in particular are locally path-connected. Furthermore, Ebin and Marsden show: 

\begin{theorem}[\cite{ebin1970groups}, Theorem 8.3]\label{exact-boundary-fixed}
    Let $M$ be a manifold with boundary, and $\omega$ be an exact symplectic form on $M$. Then $$\text{{\rm{Symp}}}_{\partial M}(M, \omega) = \{ \varphi \in \text{{\rm{Diff}}}(M) \mid \varphi^* \omega = \omega, \varphi|_{\partial M } = \text{{\rm{Id}}} \}$$ is an ILH subgroup of {\rm{$\text{Diff}(M)$}}.
\end{theorem}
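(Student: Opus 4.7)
The plan is to follow the general Ebin--Marsden template for producing ILH subgroup structures on groups of diffeomorphisms defined by closed geometric conditions. One works level by level: for each $s>\tfrac{1}{2}\dim M+1$, show that the Sobolev completion $\text{Symp}^s_{\partial M}(M,\omega)$ is a closed Hilbert submanifold of $\text{Diff}^s_{\partial M}(M)$, and then extract the ILH structure by passing to the inverse limit and verifying compatibility of the submanifold charts across $s$.

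Concretely, I would build a chart at the identity as follows. Fix an auxiliary Riemannian metric on $M$ in product form near $\partial M$, so its exponential map preserves the locus of maps fixing the boundary. This gives an identification of a neighborhood of $\text{Id}$ in $\text{Diff}^s_{\partial M}(M)$ with a neighborhood of $0$ in the Hilbert space $V^s$ of $W^{s,2}$ vector fields vanishing on $\partial M$. Define
\[
F:V^s \supset U \to Z^{2,s-1}(M), \qquad F(X)=\exp(X)^*\omega-\omega,
\]
whose image automatically lies in closed 2-forms (since $\omega$ is closed and pullback commutes with $d$). Its derivative at $0$ is $DF_0(X)=d(\iota_X\omega)$. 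To apply the implicit function theorem one must show $DF_0$ is a submersion with closed complemented kernel.

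This is exactly where the hypothesis that $\omega$ is exact enters. Given a target closed 2-form $\alpha\in Z^{2,s-1}(M)$, the exactness of $\omega$ and a relative Hodge decomposition adapted to the Dirichlet-type boundary conditions produce a 1-form $\beta$ (of appropriate Sobolev class, vanishing on $\partial M$) with $d\beta=\alpha$. Non-degeneracy of $\omega$ then uniquely solves $\iota_X\omega=\beta$ for a vector field $X\in V^s$, automatically vanishing on $\partial M$ since $\beta$ does. Standard elliptic estimates give bounded dependence and a continuous right inverse, so $DF_0$ splits. The implicit function theorem identifies $\text{Symp}^s_{\partial M}(M,\omega)$ with a Hilbert submanifold in a neighborhood of $\text{Id}$, and right-translation (which is smooth on fixed Sobolev classes) extends this to a global Hilbert submanifold structure. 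The kernel of $DF_0$ is the Hilbert Lie algebra of symplectic vector fields vanishing on $\partial M$, which by exactness of $\omega$ is canonically identified with the space of appropriate Hamiltonian functions modulo constants.

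The main obstacle will be the boundary analysis. One must choose the relative Hodge/Poincaré decomposition so that primitives $\beta$ of closed forms can always be taken to vanish on $\partial M$, and one must verify that the resulting solution operators have norms and regularity behavior uniform in $s$. This uniformity is what allows the level-$s$ Hilbert submanifold charts to patch together into an ILH structure in the Omori sense, and it is also what guarantees that the Ebin--Marsden spray construction restricts compatibly to $\text{Symp}^s_{\partial M}(M,\omega)$ for every $s$. Exactness of $\omega$ removes the cohomological obstruction to solving $d\beta=\alpha$ and is the essential analytic input without which the implicit function theorem step fails.
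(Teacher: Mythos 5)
This statement is not proved in the paper at all: it is quoted verbatim as Theorem 8.3 of Ebin--Marsden \cite{ebin1970groups} and used as a black box, so there is no internal proof to compare your argument against. Judged as a reconstruction of the Ebin--Marsden argument, your proposal has the right overall shape (Sobolev completions level by level, a chart at the identity via the exponential map of a product-form metric, an implicit function theorem applied to $F(X)=\exp(X)^*\omega-\omega$ with $DF_0(X)=d(\iota_X\omega)$, and right translation plus uniformity in $s$ to pass to the ILH inverse limit). This is indeed the standard template from \cite{ebin1970groups}.

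There is, however, a concrete gap in the submersion step: you take the target of $F$ to be the closed $2$-forms $Z^{2,s-1}(M)$, but $DF_0(X)=d(\iota_X\omega)$ is always \emph{exact}, and when $X$ vanishes on $\partial M$ its image even has vanishing pullback to $\partial M$ (one checks $i^*(L_X\omega)=0$ pointwise using $X|_{\partial M}=0$). So $DF_0$ cannot surject onto $Z^{2,s-1}(M)$ whenever $H^2(M;\bbR)\neq 0$, and more importantly the implicit function theorem as you have set it up would cut out the wrong submanifold. The correct target is the subspace of $2$-forms admitting a primitive of class $H^s$ vanishing on $\partial M$; the role of the exactness hypothesis $\omega=d\lambda$ is precisely to guarantee that $F$ lands in this subspace, since $\eta^*\omega-\omega=d(\eta^*\lambda-\lambda)$ and $\eta|_{\partial M}=\mathrm{Id}$ forces the relevant boundary vanishing of $\eta^*\omega - \omega$. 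Your closing remark that exactness ``removes the cohomological obstruction to solving $d\beta=\alpha$'' conflates two different things: exactness of $\omega$ does not make an arbitrary closed $\alpha$ exact, it constrains the image of $F$. Once the target is corrected, the rest of your outline (solving $\iota_X\omega=\beta$ by non-degeneracy, elliptic estimates for the splitting, uniformity in $s$) is the right program, though the boundary-adapted Hodge theory you defer to is where the real work lies.
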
 Exactness of $\omega$ is essential here: the result is false if the hypothesis is omitted.

Now, let $K \subset T^*S$ be admissible, let $L \supset K$ be a compact submanifold of $T^*S$ containing $K$ in its interior, and let $K'=L - \text{interior}(K)$. The structure we shall use in the following that follows from the results of Ebin-Marsden is that $$\text{Symp}_K^0(T^*S) = \text{Diff}_{\partial L}^{Z^*S}(L) \cap \text{Symp}_{\partial L}(L, \omega_{\text{can}}) \cap \text{Diff}^{K', P}_{\partial L}(L) $$ has an open identity component that is path-connected through smooth paths.

We next turn to the group of Hamiltonian diffeomorphisms as a subgroup of the symplectic diffeomorphisms. In general, for a symplectic manifold $(M, \omega)$ how $\text{Ham}(M)$ sits inside of the identity component $\text{Symp}_0(M)$ is a difficult question and is the subject of recent research (for instance \cite{lalonde1997flux}, \cite{ono2006floer}, \cite{buhovsky2015towards}). However, in the case of cotangent bundles $(T^*M, \omega_{\text{can}})$, it is well-known that that $\text{Ham}_c(T^*M) = \text{Symp}_{c,0}(T^*M)$. To see this, as $(T^*M, \omega_{\text{can}})$ is exact there is an exact sequence $$\begin{tikzcd} 0 \arrow[r] & \text{Ham}_c(T^*M) \arrow[r] & \text{Symp}_{c,0}(T^*M) \arrow[r] & H^1_c(T^*M) \arrow[r] & 0, \end{tikzcd}$$where the first map is inclusion and the second map is given by $\varphi \mapsto [\lambda_{\text{taut}} - \varphi^* \lambda_{\text{taut}}]$. By Poincar\'e duality, $(H^1_c(T^*M))^* \cong H^{2\dim(M) -1}(T^*M) \cong \{0\}$, so that $\text{Ham}_c(T^*M) = \text{Symp}_{c,0}(T^*M)$. A corollary of this is that any isotopy of compactly supported Hamiltonian diffeomorphisms of $T^*M$ is a Hamiltonian flow.

We have now accumulated everything necessary to prove Proposition \ref{appendix-reduction}.

\begin{proof}[Proof of \ref{appendix-reduction}]
Let $\varphi_t$ be a path in $\text{Symp}_K^0(T^*S)$ beginning at $\varphi_0 = \text{Id}$. Then $\varphi_t$ is a path in $\text{Symp}_{c,0}(T^*M)$ and is hence a path in $\text{Ham}_K^0(T^*S)$. This path is represented by a Hamiltonian flow supported on $K$ that setwise fixes $Z^*S$. Path-connectivity of $\text{Symp}_K^0(T^*S)$ through smooth paths gives that $\text{Ham}_K^0(T^*S)$ is the identity component of $\text{Symp}_K^0(T^*S)$, which is open.
\end{proof}

The following lemma and proposition will show that the restriction to Hamiltonian diffeomorphisms generated by flows fixing $Z^*S$ provides the entire identity component of $\text{Symp}_c^0(T^*S)$.

\begin{lemma}\label{symp-id-open}
The identity component {\rm{$(\text{Symp}_c^0(T^*S))_0$}} of {\rm{$\text{Symp}_c^0(T^*S)$}} is open.
\end{lemma}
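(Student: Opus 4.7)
The plan is to exhibit an explicit open path-connected subgroup of $\text{Symp}_c^0(T^*S)$ containing the identity, which must then lie in (and in fact coincide with) the identity component.

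Set $V := \bigcup_K \text{Ham}_K^0(T^*S)$, where the union is over admissible compact submanifolds $K \subset T^*S$. Each $\text{Ham}_K^0(T^*S)$ is path-connected by Proposition \ref{appendix-reduction} and contains $\text{Id}$, so $V$ is path-connected and thus contained in the identity component $(\text{Symp}_c^0(T^*S))_0$. It is moreover a subgroup: for $\varphi \in \text{Ham}_K^0(T^*S)$ and $\psi \in \text{Ham}_L^0(T^*S)$, picking admissible $M \supset K \cup L$ places both in the group $\text{Ham}_M^0(T^*S) \subset V$, so $\varphi \psi^{-1} \in V$.

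The heart of the argument is showing $V$ is open in $\text{Symp}_c^0(T^*S)$. I would use that the topology on $\text{Symp}_c^0(T^*S)$ is the inductive-limit topology from the subgroups $\text{Symp}_K^0(T^*S)$, so it suffices to check $V \cap \text{Symp}_L^0(T^*S)$ is open in $\text{Symp}_L^0(T^*S)$ for every admissible $L$. Given $\varphi \in V \cap \text{Symp}_L^0(T^*S)$ lying in $\text{Ham}_K^0(T^*S)$, choose admissible $M \supset K \cup L$; then $\varphi \in \text{Ham}_M^0(T^*S)$, which is open in $\text{Symp}_M^0(T^*S)$ by Proposition \ref{appendix-reduction}. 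Continuity of the inclusion $\text{Symp}_L^0(T^*S) \hookrightarrow \text{Symp}_M^0(T^*S)$ then pulls this back to an open neighborhood of $\varphi$ in $\text{Symp}_L^0(T^*S)$ contained in $V$. Once $V$ is shown to be open, openness of the identity component at every one of its points follows by translating $V$, using that left-multiplication by a fixed element is continuous on $\text{Symp}_c^0(T^*S)$ even when joint multiplication fails.

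The main obstacle will be the topological bookkeeping: justifying that a subset of $\text{Symp}_c^0(T^*S)$ is open precisely when its intersection with every $\text{Symp}_L^0(T^*S)$ is open, and that individual translations are homeomorphisms on $\text{Symp}_c^0(T^*S)$, despite the ambient group not being assumed to be a topological group. Both facts are standard for compactly supported diffeomorphism groups once the inductive-limit topology is adopted, but need to be stated and used carefully.
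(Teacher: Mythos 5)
Your argument is correct, but it takes a different route from the paper's. The paper never leaves the groups $\text{Symp}_K^0(T^*S)$: it observes that $(\text{Symp}_c^0(T^*S))_0 \cap \text{Symp}_K^0(T^*S)$ contains the identity component of $\text{Symp}_K^0(T^*S)$ (which is an open neighborhood of the identity, by local path-connectivity of the ILH group $\text{Symp}_K^0(T^*S)$), and then uses the subgroup-containing-a-neighborhood-of-the-identity criterion inside the topological group $\text{Symp}_K^0(T^*S)$ to get openness of each such intersection, after which the direct limit topology finishes the job. You instead exhibit the explicit subgroup $V = \bigcup_K \text{Ham}_K^0(T^*S) = \text{Ham}_c^0(T^*S)$, prove it is open via Proposition \ref{appendix-reduction} and the direct limit criterion, and then translate $V$ around the identity component. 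Two consequences of your choice: first, you are front-loading the openness of $\text{Ham}_c^0(T^*S)$ in $\text{Symp}_c^0(T^*S)$, which in the paper is the first claim of the \emph{subsequent} Proposition \ref{hams-open}, so your lemma does strictly more work than it needs to (the paper's version needs only local path-connectivity of $\text{Symp}_K^0(T^*S)$, not Proposition \ref{appendix-reduction}); second, you must establish that individual left translations are continuous on $\text{Symp}_c^0(T^*S)$ itself, a point the paper's proof avoids entirely by doing all translation arguments inside the honest topological groups $\text{Symp}_K^0(T^*S)$. That continuity does hold for the inductive limit topology (check preimages against each $\text{Symp}_L^0(T^*S)$ and route through $\text{Symp}_M^0(T^*S)$ for $M$ containing $L$ and the support of the translating element), and you flag it appropriately, but it is an extra obligation. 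On the other hand, your route has the virtue of never needing to assert that $(\text{Symp}_c^0(T^*S))_0 \cap \text{Symp}_K^0(T^*S)$ is a subgroup, which the paper's proof does use and which is slightly delicate before one knows the identity component is itself a subgroup. Your parenthetical claim that $V$ coincides with the identity component is not needed for the argument and should either be dropped or justified separately (it does follow, since an open subgroup is closed, hence a union of components).
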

\begin{proof}
For a fixed admissible $K$, the intersection $(\text{Symp}_c^0(T^*S))_0 \cap \text{Symp}_K^0(T^*S)$ contains the identity component of $\text{Symp}_K^0(T^*S)$. This is because $\text{Symp}_K^0(T^*S)$ is locally path-connected and a path in $\text{Symp}_K^0(T^*S)$ is also contained in $(\text{Symp}_c^0(T^*S))_0$.

So $(\text{Symp}_c^0(T^*S))_0 \cap \text{Symp}_K^0(T^*S)$ is a subgroup containing a neighborhood of the identity in $\text{Symp}_K^0(T^*S)$, and hence open as $\text{Symp}_K^0(T^*S)$ is a topological group. As $\text{Symp}_c^0(T^*S)$ is topologized as the direct limit of $\text{Symp}_K^0(T^*S)$ under inclusion, this gives the claim.
\end{proof}

\begin{proposition}\label{hams-open}
{\rm{$\text{Ham}_c^0(T^*S) = (\text{Symp}_c^0(T^*S))_0$}}. In particular, {\rm{$(\text{Symp}_c^0(T^*S))_0$}} is a subgroup of {\rm{$\text{Symp}_c^0(T^*S)$}}.
\end{proposition}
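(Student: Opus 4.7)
The plan is to prove the two containments separately. The inclusion $\text{Ham}_c^0(T^*S) \subseteq (\text{Symp}_c^0(T^*S))_0$ is immediate: any $\varphi \in \text{Ham}_c^0(T^*S)$ is by definition the time-$1$ map of a compactly supported Hamiltonian flow $\varphi_t$ that setwise fixes $Z^*S$ for all $t$, and $\{\varphi_t\}_{0\leq t \leq 1}$ is a continuous path in $\text{Symp}_c^0(T^*S)$ from $\text{Id}$ to $\varphi$.

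For the reverse inclusion, I would take $\varphi \in (\text{Symp}_c^0(T^*S))_0$ and pick a continuous path $\gamma: [0,1] \to \text{Symp}_c^0(T^*S)$ with $\gamma(0) = \text{Id}$ and $\gamma(1) = \varphi$. The key step is that since $\text{Symp}_c^0(T^*S)$ is topologized as the direct limit of the groups $\text{Symp}_K^0(T^*S)$ as $K$ ranges over admissible compact submanifolds of $T^*S$, the image of the compact set $[0,1]$ under $\gamma$ must be contained in a single $\text{Symp}_K^0(T^*S)$ for some admissible $K$. This is the standard fact that continuous maps from compact spaces into such direct limits factor through a finite stage. Then $\gamma$ provides a path in $\text{Symp}_K^0(T^*S)$ from $\text{Id}$ to $\varphi$, placing $\varphi$ in the identity component of $\text{Symp}_K^0(T^*S)$. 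By Proposition \ref{appendix-reduction}, this identity component is exactly $\text{Ham}_K^0(T^*S) \subseteq \text{Ham}_c^0(T^*S)$, giving $\varphi \in \text{Ham}_c^0(T^*S)$.

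The main obstacle is justifying the factorization of the path $\gamma$ through some $\text{Symp}_K^0(T^*S)$. I would argue it by contradiction using the direct limit definition: if no such $K$ existed, one could extract a sequence $\gamma(t_n)$ whose supports escape every compact set, contradicting the continuity of $\gamma$ in the direct limit topology (any open neighborhood of $\text{Id}$ in a direct limit meets each $\text{Symp}_K^0$ in an open set, and the image of a compact set is contained in finitely many such open sets hence in some $\text{Symp}_K^0$). The ``in particular'' assertion that $(\text{Symp}_c^0(T^*S))_0$ is a subgroup then follows immediately, since $\text{Ham}_c^0(T^*S)$ is manifestly closed under composition and inversion: concatenating (with reparametrization) and time-reversing Hamiltonian flows that setwise fix $Z^*S$ produces Hamiltonian flows that setwise fix $Z^*S$.
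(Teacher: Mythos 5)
Your first inclusion and your use of Proposition \ref{appendix-reduction} are fine, and the factorization of a compact set through a finite stage of the direct limit is indeed a standard fact here (the admissible $K$ admit a countable cofinal subfamily, and each $\text{Symp}_K^0(T^*S)$ is closed in the later stages, so the usual ``infinite discrete closed subset of a compact set'' argument applies --- though your parenthetical sketch via ``finitely many open sets'' is not quite the right argument for this). The genuine gap is at the very first step of the reverse inclusion: you take $\varphi \in (\text{Symp}_c^0(T^*S))_0$ and ``pick a continuous path'' from $\text{Id}$ to $\varphi$. But $(\text{Symp}_c^0(T^*S))_0$ is the \emph{connected} component of the identity, not a priori the path component, and $\text{Symp}_c^0(T^*S)$ is not even known to be a topological group, so you cannot simply assert such a path exists. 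Your argument as written proves that the path component of the identity equals $\text{Ham}_c^0(T^*S)$; to upgrade this to the connected component you still need to show the path component is both open and closed (e.g.\ by checking that its intersection with each $\text{Symp}_K^0(T^*S)$ is open, using the local path-connectivity of $\text{Symp}_K^0(T^*S)$ coming from the Ebin--Marsden ILH structure, and likewise for its complement). That clopen argument is essentially the entire content of the proof, so omitting it is not a cosmetic oversight.

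For comparison, the paper avoids paths in the big group altogether: it shows $\text{Ham}_c^0(T^*S) \cap \text{Symp}_K^0(T^*S)$ is a subgroup containing the open subgroup $\text{Ham}_K^0(T^*S)$ of Proposition \ref{appendix-reduction}, hence is open, and that its complement in $\text{Symp}_K^0(T^*S)$ is open because open subgroups of topological groups are closed; by the definition of the direct limit topology, $\text{Ham}_c^0(T^*S)$ is then clopen in $\text{Symp}_c^0(T^*S)$, and since it is path-connected (via Hamiltonian flows) and contains the identity it must be the identity component. If you graft that clopen argument onto the path component of the identity, your double-inclusion strategy closes up; what it buys you over the paper's route is not much, since you would then no longer need the compactness/factorization lemma at all.
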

\begin{proof}
    We first claim that $\text{Ham}_c^0(T^*S)$ is open in $\text{Symp}_c^0(T^*S)$. Once again, we use the definition of the direct limit topology. So fix $K \subset T^*S$ admissible. As $\text{Ham}_c^0(T^*S)$ contains $\text{Ham}_K^0(T^*S)$, Proposition \ref{appendix-reduction} shows that $\text{Ham}_c^0(T^*S) \cap \text{Symp}_K^0(T^*S)$ is a subgroup containing a neighborhood of the identity, hence open. So $\text{Ham}_c^0(T^*S)$ is open in $\text{Symp}_c^0(T^*S).$
    
    The claim would immediately follow if it were known that $\text{Symp}_c^0(T^*S)$ were a topological group (c.f. \cite{tatsuuma1998group}, Theorem 6.1). In place of knowing this, we show that $\text{Symp}_c^0(T^*S) - \text{Ham}_c^0(T^*S)$ is open directly. So let $K \subset T^*S$ be admissible. Then as $\text{Ham}_c^0(T^*S) \cap \text{Symp}_K^0(T^*S)$ is an open subgroup, the complement $(\text{Symp}_c^0(T^*S) - \text{Ham}_c^0(T^*S)) \cap\text{Symp}_K^0(T^*S)$ is also open. By the definition of the direct limit topology, $\text{Symp}_c^0(T^*S) - \text{Ham}_c^0(T^*S)$ is open in $\text{Symp}_c^0(T^*S)$. So $\text{Ham}_c^0(T^*S)$ is a connected component of $\text{Symp}_c^0(T^*S)$.
\end{proof}

The following is an immediate corollary of Lemma \ref{symp-id-open} and the definition of the topologies on $\text{Ham}_T^0(T^*S)$ and $\text{Symp}_T^0(T^*S)$ (see Section \ref{2-complex}).

\begin{corollary}
{\rm{$\text{Ham}_T^0(T^*S)$}} is the identity component of the group of tame symplectic diffeomorphisms {\rm{$\text{Symp}_T^0(T^*S)$}} and open in {\rm{$\text{Symp}_T^0(T^*S)$}}.
\end{corollary}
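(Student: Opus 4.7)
The plan is to unpack the product topology that the excerpt puts on $\text{Symp}_T^0(T^*S)$ and verify that, under that identification, $\text{Ham}_T^0(T^*S)$ corresponds to the product of two identity components, each of which is open in its respective factor.

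First I would recall from the excerpt that the algebraic internal semidirect product decomposition $\text{Symp}_T^0(T^*S) = \text{Symp}_c^0(T^*S) \rtimes \mathcal{D}^+(S)$ is given the product topology through the bijection $\varphi \leftrightarrow (\varphi \circ (f^\varphi)^{-\#}, f^\varphi) \in \text{Symp}_c^0(T^*S) \times \text{Diff}^+(S)$, where $f^\varphi \in \text{Diff}^+(S)$ is the unique diffeomorphism of $S$ whose lift agrees with $\varphi$ outside a compact set. The key claim is then that, under this identification, $\text{Ham}_T^0(T^*S)$ is precisely $\text{Ham}_c^0(T^*S) \times \text{Diff}_0(S)$.

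For this identification, suppose $\varphi \in \text{Ham}_T^0(T^*S)$, so that there is some $f \in \text{Diff}_0(S)$ with $\varphi \circ f^\# \in \text{Ham}_c^0(T^*S)$. Writing $\varphi = \psi \cdot (f^\varphi)^\#$ with $\psi \in \text{Symp}_c^0(T^*S)$, we have $\psi \cdot (f^\varphi \circ f)^\# \in \text{Ham}_c^0(T^*S) \subset \text{Symp}_c^0(T^*S)$; since compactly supported symplectomorphisms composed with a lift $g^\#$ are compactly supported only when $g = \text{Id}$, this forces $f^\varphi \circ f = \text{Id}$, so $f^\varphi = f^{-1} \in \text{Diff}_0(S)$ and $\psi \in \text{Ham}_c^0(T^*S)$. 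Conversely, if $f^\varphi \in \text{Diff}_0(S)$ and $\psi \in \text{Ham}_c^0(T^*S)$, then $f = (f^\varphi)^{-1} \in \text{Diff}_0(S)$ witnesses $\varphi \in \text{Ham}_T^0(T^*S)$.

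With that identification in hand, I would invoke Lemma \ref{symp-id-open} and Proposition \ref{hams-open} to conclude that $\text{Ham}_c^0(T^*S)$ is open in $\text{Symp}_c^0(T^*S)$ and equal to the identity component there, and separately use the classical fact (Earle–Eells) that $\text{Diff}_0(S)$ is both open in $\text{Diff}^+(S)$ and equal to its identity component. The product $\text{Ham}_c^0(T^*S) \times \text{Diff}_0(S)$ is therefore open in the product $\text{Symp}_c^0(T^*S) \times \text{Diff}^+(S)$; since both factors are connected (the former being path-connected through Hamiltonian flows by Proposition \ref{appendix-reduction} applied to a directed system of admissible $K$, the latter by contractibility of $\text{Diff}_0(S)$), their product is connected, and its complement is a union of open translates of the same product. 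Transporting through the homeomorphism yields the statement: $\text{Ham}_T^0(T^*S)$ is open in $\text{Symp}_T^0(T^*S)$ and equals its identity component.

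I do not expect any serious obstacle; the only point that requires even mild care is verifying in the first claim that the algebraic decomposition $\varphi = \psi \cdot (f^\varphi)^\#$ used to define the product topology interacts correctly with the defining condition of $\text{Ham}_T^0(T^*S)$. Everything else is a formal consequence of the stated lemmas and the definition of the product topology.
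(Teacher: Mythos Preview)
Your proposal is correct and follows essentially the same approach as the paper, which simply states the corollary is ``an immediate corollary of Lemma \ref{symp-id-open} and the definition of the topologies on $\text{Ham}_T^0(T^*S)$ and $\text{Symp}_T^0(T^*S)$.'' You have supplied the details the paper omits: unpacking the product decomposition $\text{Symp}_T^0(T^*S) \cong \text{Symp}_c^0(T^*S) \times \text{Diff}^+(S)$, verifying that $\text{Ham}_T^0(T^*S)$ corresponds to $\text{Ham}_c^0(T^*S) \times \text{Diff}_0(S)$, and invoking Proposition \ref{hams-open} together with the openness of $\text{Diff}_0(S)$ in $\text{Diff}^+(S)$ to conclude. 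Two minor remarks: the openness of $\text{Diff}_0(S)$ in $\text{Diff}^+(S)$ is really a consequence of local path-connectedness of $\text{Diff}^+(S)$ rather than Earle--Eells specifically, and the connectedness of $\text{Ham}_c^0(T^*S)$ is immediate from Proposition \ref{hams-open} (it is by definition an identity component) without needing to pass through a directed system of admissible $K$.
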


We now turn towards a proof of Proposition \ref{things-not-totally-awful}. We begin by introducing relevant notation. For $K$ admissible, let $\text{Symp}_{T,K}^0(T^*S)$ be the subgroup of $\text{Symp}_T^0(S)$ consisting of $\varphi$ so that there is a diffeomorphism $f \in \text{Diff}^+(S)$ so that $\varphi(\alpha) = f^\#(\alpha)$ for all $\alpha \notin K$. Similarly, define $\text{Ham}_{T,K}^0(T^*S)$ to be the subgroup of $\text{Ham}_T^0(T^*S)$ of diffeomorphisms agreeing with $f^\#$ for some $f \in \text{Diff}_0(S)$ outside of $K$. For all admissible $K$, let $\mathcal{H}^n_K(S)$ denote the quotient of $ \text{Symp}_{T,K}^0(T^*S)$ by the stabilizer of its action on $\dot{\mathbb{M}}^n(S)$, and let $q_K$ denote the quotient map. Let $q_T: \text{Symp}_T^0(S) \to \mathcal{H}^n(S)$ denote the quotient.

\begin{lemma}
    For $K$ admissible, {\rm{$\text{Ham}_{T,K}^0(S)$}} and {\rm{$\text{Symp}_{T,K}^0(S)$}} are topological groups.
\end{lemma}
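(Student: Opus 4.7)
The plan is to identify $\text{Symp}_{T,K}^0(T^*S)$ with the product $\text{Diff}^+(S) \times \text{Symp}_K^0(T^*S)$ and verify directly that the induced group operations are continuous. The bijection is the obvious one: every $\varphi \in \text{Symp}_{T,K}^0(T^*S)$ decomposes uniquely as $\varphi = (f^\varphi)^\# \circ \psi^\varphi$ with $f^\varphi \in \text{Diff}^+(S)$ and $\psi^\varphi = (f^\varphi)^{-\#} \circ \varphi \in \text{Symp}_K^0(T^*S)$, since $\varphi$ agrees with $(f^\varphi)^\#$ outside the compact set $K$. This is compatible with the convention that $\text{Symp}_T^0(T^*S) = \text{Symp}_c^0(T^*S) \rtimes \mathcal{D}^+(S)$ carries the product topology, so I would take the corresponding product topology on the right as the definition of the topology on $\text{Symp}_{T,K}^0(T^*S)$.

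The next step is to note that both factors are themselves topological groups. For $\text{Diff}^+(S)$ this is classical, because $S$ is a closed manifold. For $\text{Symp}_K^0(T^*S)$, all elements have support in the fixed compact submanifold $K$, so composition and inversion reduce to operations on $C^\infty(K,K)$ extended by the identity off $K$, and these are continuous in the $C^\infty$ topology; the pathologies of $\text{Diff}_c$ on noncompact manifolds from \cite{tatsuuma1998group} do not appear once supports are confined to a fixed compact set.

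Continuity of the group operations on $\text{Symp}_{T,K}^0(T^*S)$ then comes down to the explicit formula
\[
(f_1^\# \psi_1)(f_2^\# \psi_2) = (f_1 f_2)^\# \cdot (f_2^{-\#} \psi_1 f_2^\#) \cdot \psi_2
\]
and an analogous formula for inversion. The hard part, and the essential technical point, is continuity of the conjugation map $(f, \psi) \mapsto f^{-\#} \psi f^\#$ jointly in both variables, because the support $f^{-\#}(K)$ of $f^{-\#} \psi f^\#$ shifts with $f$ and is generally not contained in $K$. I would handle this locally: near any $f_0 \in \text{Diff}^+(S)$, a $C^\infty$-neighborhood $U$ of $f_0$ can be chosen small enough that $f^{-\#}(K) \subset L$ for all $f \in U$ and some fixed admissible compact submanifold $L \subset T^*S$, which is possible because $f \mapsto f^{-\#}$ is $C^\infty$-continuous and $K$ is compact. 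The conjugation then factors through $\text{Symp}_L^0(T^*S)$, where composition is continuous by the preceding paragraph. The Hamiltonian statement for $\text{Ham}_{T,K}^0(T^*S)$ is handled identically upon replacing $\text{Diff}^+(S)$ by $\text{Diff}_0(S)$.
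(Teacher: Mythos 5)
Your approach coincides with the paper's: the paper's entire proof consists of asserting the decomposition $\text{Symp}_{T,K}^0(T^*S) = \text{Diff}^+(S)\ltimes \text{Symp}_K^0(T^*S)$ together with continuity of the conjugation action of lifts $f^\#$ on $\text{Symp}_K^0(T^*S)$, and concluding that a semidirect product of topological groups with continuous twisting is a topological group. You supply exactly the details the paper omits, and you correctly isolate joint continuity of $(f,\psi)\mapsto (f^\#)^{-1}\psi f^\#$ as the one nontrivial point; your local argument (shrink the neighborhood of $f_0$ until all conjugates are supported in a fixed admissible $L\supset K$, where composition is continuous) is the right way to get that continuity.

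However, the support-shifting you notice is a more serious problem than you treat it as, and it affects the paper's proof equally. Since $(f^\#)^{-1}\psi f^\#$ is supported on $(f^\#)^{-1}(K)$, which is not contained in $K$ for general $f$, conjugation does not preserve $\text{Symp}_K^0(T^*S)$; hence $\text{Symp}_K^0(T^*S)$ is not normal in $\text{Symp}_{T,K}^0(T^*S)$, and in fact $\text{Symp}_{T,K}^0(T^*S)$ as defined is not closed under composition: $(f_1^\#\psi_1)(f_2^\#\psi_2)$ agrees with $(f_1 f_2)^\#$ only off $K\cup (f_2^\#)^{-1}(K)$. Your product formula accordingly lands in $\text{Diff}^+(S)\times\text{Symp}_L^0(T^*S)$ rather than in the set you started from, so as written neither your argument nor the paper's verifies that the object in question is a group at all. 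The statement can be repaired---for instance by replacing $\text{Symp}_{T,K}^0(T^*S)$ with the subgroup of $\text{Symp}_T^0(T^*S)$ generated by $\mathcal{D}^+(S)$ and $\text{Symp}_K^0(T^*S)$, or by phrasing the continuity claims as maps $\text{Symp}_{T,K}^0(T^*S)\times\text{Symp}_{T,K}^0(T^*S)\to\text{Symp}_{T,L}^0(T^*S)$ for suitable $L\supset K$, which is what your local argument already produces---but the repair needs to be made explicit before the semidirect-product reasoning can be invoked.
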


\begin{proof}

As the action of $\text{Diff}_0(S)$ on $\text{Ham}_K^0(T^*S)$ by conjugation by lifts $f^\#$ of diffeomorphisms $f \in \text{Diff}_0(S)$ is continuous, and the analogous action of $\text{Diff}^+(S)$ on $\text{Symp}_{K}^0(T^*S)$ is continuous, the semidirect products $\text{Ham}_{T,K}^0(S) = \text{Diff}_0(S) \ltimes \text{Ham}_K^0(T^*S)$ and $\text{Symp}_{T,K}^0(T^*S) = \text{Diff}^+(S) \ltimes \text{Symp}_K^0(T^*S)$ are topological groups.
\end{proof}

The principal difficulty of the remainder of the appendix is to show that $\mathcal{H}^n(S)$ inherits a topological group structure from its definition as a quotient of $\text{Symp}_T^0(T^*S)$ (c.f. \cite{tatsuuma1998group} Theorem 6.1). An outline of our approach is to confirm that the group operations on $\mathcal{H}^n(S)$ are continuous by locating components of $\mathcal{H}^n(S)$ inside $\mathcal{H}_K^n(S)$ and reducing to the fact that $\mathcal{H}_K^n(S)$ is a topological group for $K$ admissible. Our structural results for $\mathcal{G}^n(S)$ from Section \ref{section-group-structure} are essential for this. The way that we involve our analysis of $\mathcal{G}^n(S)$ in Section \ref{section-group-structure} in the proof is following lemma.

\begin{lemma}\label{lemma-appendix-leverage}
    Let $\mathcal{G}^n_K(S) \subset \mathcal{H}_K^n(S)$ be the quotient of {\rm{$\text{Ham}_{T,K}^0(S)$}} by its stabilizer on $\dot{\mathbb{M}}^n(S)$ and $\widetilde{\mathcal{G}}^n(S)$ the quotient of {\rm{$\text{Ham}_{T}^0(S)$}} by its stabilizer on $\dot{\mathbb{M}}^n(S)$. Then $\mathcal{G}^n_K(S)$ is the identity component of $\mathcal{H}^n_K(S)$ for all admissible $K$. Furthermore, if $K' \supset K$ is admissible the map $\mathcal{G}^n_K(S) \to \mathcal{G}^n_{K'}(S)$ induced by the inclusion {\rm{$\text{Ham}_{T,K}^0(T^*S) \to \text{Ham}_{T,K'}^0(T^*S)$}} is an isomorphism of topological groups.

    Additionally, $\widetilde{\mathcal{G}}^n(S)$ is the identity component of $\mathcal{H}^n(S)$ and open in $\mathcal{H}^n(S)$, and the maps $\mathcal{G}^n_K(S) \to \widetilde{\mathcal{G}}^n(S)$ induced by the inclusions {\rm{$\text{Ham}_{T,K}(S) \to \text{Ham}_T^0(T^*S)$}} are all isomorphisms of topological groups.
\end{lemma}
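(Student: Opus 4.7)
The plan is to identify each of $\mathcal{G}^n_K(S)$ and $\widetilde{\mathcal{G}}^n(S)$ with the common parametrization $\text{Diff}_0(S) \times \mathcal{J}^2_{n-1}$ coming from Theorem \ref{structure-of-higher-diffeos}, and then deduce the identity-component and isomorphism statements. First, each of $\mathcal{H}^n_K(S)$, $\mathcal{H}^n(S)$, $\mathcal{G}^n_K(S)$, and $\widetilde{\mathcal{G}}^n(S)$ is a topological group: each is the quotient of a topological group by the kernel of its action on $\dot{\mathbb{M}}^n(S)$, which is a normal subgroup, and quotients of topological groups by normal subgroups are topological groups with open quotient map. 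For the identity-component claim, I would observe that $\text{Ham}_{T,K}^0(T^*S) = \text{Diff}_0(S) \ltimes \text{Ham}_K^0(T^*S)$ is open and connected in $\text{Symp}_{T,K}^0(T^*S)$: openness of each factor follows from Proposition \ref{appendix-reduction} together with the fact that $\text{Diff}_0(S)$ is the identity component of $\text{Diff}^+(S)$, while connectivity of both factors is standard. Since the quotient map $q_K$ is open, the image $\mathcal{G}^n_K(S)$ is an open connected subgroup of $\mathcal{H}^n_K(S)$ containing the identity, and hence equals the identity component. The same argument, using Proposition \ref{hams-open} in place of Proposition \ref{appendix-reduction}, shows that $\widetilde{\mathcal{G}}^n(S)$ is the identity component of $\mathcal{H}^n(S)$ and open therein.

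For the comparison maps $\mathcal{G}^n_K(S) \to \mathcal{G}^n_{K'}(S)$ and $\mathcal{G}^n_K(S) \to \widetilde{\mathcal{G}}^n(S)$, continuity is immediate from continuity of the underlying inclusions, so the content is bijectivity and openness. Injectivity is immediate from the quotient definition: two elements of $\text{Ham}_{T,K}^0(T^*S)$ acting identically on $\dot{\mathbb{M}}^n(S)$ already represent the same class on both sides. For surjectivity, any class $[\varphi]$ in $\mathcal{G}^n_{K'}(S)$ (respectively $\widetilde{\mathcal{G}}^n(S)$) decomposes via Theorem \ref{structure-of-higher-diffeos} as $[f^\#] \cdot h$ with $f \in \text{Diff}_0(S)$ and $h \in \mathcal{N}(S)$. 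I would represent $h$ by the time-$1$ autonomous flow of the Hamiltonian $H(h)$ from Proposition \ref{exponential-coords-lemma}, then multiply $H(h)$ by a cutoff function supported in $K$ and equal to $1$ on a neighborhood of $Z^*S$ to obtain a representative of $h$ in $\text{Ham}_K^0(T^*S)$. Together with the $f^\#$ factor, this realizes $[\varphi]$ by an element of $\text{Ham}_{T,K}^0(T^*S)$.

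The main obstacle is openness of the comparison maps. My strategy is to adapt the argument of Proposition \ref{quotient-topology-is-product}: for each admissible $K$, exhibit an explicit continuous bijection $\bar{\Phi}_K : \mathcal{G}^n_K(S) \to \text{Diff}_0(S) \times \mathcal{J}^2_{n-1}$ induced by $\varphi \mapsto (\varphi|_{Z^*S},\, H^2((\varphi|_{Z^*S}^{-1})^\#\varphi),\, \ldots,\, H^{n-1}((\varphi|_{Z^*S}^{-1})^\#\varphi))$, which is continuous by Proposition \ref{smoothness} combined with continuity of the semidirect-product projection. To show $\bar{\Phi}_K$ is a homeomorphism, it suffices, as in Proposition \ref{quotient-topology-is-product}, to verify that the image under $q_K$ of every open neighborhood $U$ of the identity in $\text{Ham}_{T,K}^0(T^*S)$ contains a product of a neighborhood of $\text{Id}$ in $\text{Diff}_0(S)$ and a neighborhood of $0$ in $\mathcal{J}^2_{n-1}$. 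The first comes from the $\mathcal{D}_0(S)$-factor of $U$, and the second from time-$1$ flows of Hamiltonians compactly supported in $K$ whose $(n-1)$-jets sweep out a neighborhood of $0$ in $\mathcal{J}^2_{n-1}$. Once each $\bar{\Phi}_K$ (and its analogue $\bar{\Phi}_\infty$ for $\widetilde{\mathcal{G}}^n(S)$) is shown to be a homeomorphism onto the same target $\text{Diff}_0(S) \times \mathcal{J}^2_{n-1}$, each comparison map reduces to the identity on this target, yielding the claimed isomorphisms of topological groups.
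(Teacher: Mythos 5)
Your treatment of the $K$-supported groups and of the comparison maps essentially matches the paper's: $\mathcal{G}^n_K(S)$ is the image of the open connected subgroup $\text{Ham}_{T,K}^0(T^*S) \subset \text{Symp}_{T,K}^0(T^*S)$ under the open quotient map of a genuine topological group, hence the identity component of $\mathcal{H}^n_K(S)$; and the comparison maps $\mathcal{G}^n_K(S) \to \mathcal{G}^n_{K'}(S)$, $\mathcal{G}^n_K(S) \to \widetilde{\mathcal{G}}^n(S)$ are handled by rerunning the argument of Proposition \ref{quotient-topology-is-product} (the paper simply cites that proof). That part is fine.

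The gap is in your opening sentence and in the final claim it is meant to support. You assert that $\mathcal{H}^n(S)$ and $\widetilde{\mathcal{G}}^n(S)$ are topological groups with open quotient maps because they are ``quotients of topological groups by normal subgroups.'' But the ambient groups $\text{Symp}_T^0(T^*S)$ and $\text{Ham}_T^0(T^*S)$ carry the direct limit topology on their compactly supported factor, and---as the paper emphasizes, citing \cite{tatsuuma1998group}---such groups are \emph{not} topological groups: joint multiplication is not continuous. Worse, the assertion that $\mathcal{H}^n(S)$ is a topological group is precisely Theorem \ref{things-not-totally-awful}(a), whose proof relies on the present lemma, so invoking it here is circular. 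Consequently your one-line deduction that $\widetilde{\mathcal{G}}^n(S)$ is the open identity component of $\mathcal{H}^n(S)$---which needs both that $q_T$ is an open map and that an open connected subgroup of $\mathcal{H}^n(S)$ is closed, so that it contains rather than merely meets the identity component---has no support as written. The paper replaces this with a direct verification in the direct limit topology: using the bijections $\mathcal{G}^n_K(S) \to \widetilde{\mathcal{G}}^n(S)$ already established, one checks that $q_T^{-1}(\widetilde{\mathcal{G}}^n(S)) \cap \text{Symp}_{T,K}^0(T^*S) = q_K^{-1}(\mathcal{G}^n_K(S))$ and $q_T^{-1}(\mathcal{H}^n(S) \setminus \widetilde{\mathcal{G}}^n(S)) \cap \text{Symp}_{T,K}^0(T^*S) = q_K^{-1}(\mathcal{H}^n_K(S) \setminus \mathcal{G}^n_K(S))$ are open for every admissible $K$, the latter because open subgroups of the topological group $\mathcal{H}^n_K(S)$ are closed. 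You need to supply an argument of this kind (or, at minimum, prove directly that left and right translations on $\text{Symp}_T^0(T^*S)$ and hence on $\mathcal{H}^n(S)$ are homeomorphisms) rather than appealing to a topological group structure that is not yet available.
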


\begin{proof}
For all admissible $K$, Lemma \ref{appendix-reduction} and the fact that quotients in the category of topological groups are open shows that $\mathcal{G}^n_K(S)$ is a connected open subgroup of $\mathcal{H}^n_K(S)$ containing the identity, hence the identity component of $\mathcal{H}^n_K(S)$. The proof of Proposition \ref{quotient-topology-is-product} shows that the maps $\mathcal{G}_K^n(S) \to \mathcal{G}^n_{K'}(S)$ and $\mathcal{G}^n_K(S) \to \widetilde{\mathcal{G}}^n(S)$ induced by appropriate inclusions are all homeomorphisms.

So what remains to be shown is that $\widetilde{\mathcal{G}}^n(S)$ is the identity component of $\mathcal{H}^n(S)$ and open in $\mathcal{H}^n(S)$. The proof is reminiscent of Lemma \ref{hams-open}. We must show that $q_T^{-1}(\widetilde{\mathcal{G}}^n(S))$ and $q_T^{-1}((\widetilde{\mathcal{H}}^n(S) - \widetilde{\mathcal{G}}^n(S))$ are both open, which is verified by showing their intersections with $\text{Symp}_K^0(T^*S)$ are open for all admissible $K$.

If $K$ is admissible, then $q_T^{-1}( \widetilde{\mathcal{G}}^n(S)) \cap \text{Symp}_K^0(T^*S) = q_K^{-1}(\mathcal{G}^n_K(S))$, which is open as $\mathcal{G}_K^n(S)$ is open. Also, $q_T^{-1}( \mathcal{H}^n(S) - \widetilde{\mathcal{G}}^n(S)) \cap \text{Symp}_K^0(T^*S) = q_K^{-1}(\mathcal{H}^{n}_K(S) - \mathcal{G}^n_K(S))$, which is also open as open subgroups of topological groups are closed. This proves that $\widetilde{\mathcal{G}}^n(S)$ is the identity component in $\mathcal{H}^n(S)$ and open in $\mathcal{H}^n(S)$, as desired.
\end{proof}

In the proof of the following lemma, we transfer our knowledge of $\mathcal{G}^n(S)$ to arbitrary connected components of $\mathcal{H}^n(S)$.

\begin{lemma}\label{components-lemma}
    Let $\varphi$ represent an element $[\varphi] \in \mathcal{H}^n(S)$. Let $K$ be an admissible submanifold of $T^*S$ containing the support of $\varphi$, and let $U_K$ be the connected component of $\mathcal{H}^n_K(S)$ containing $[\varphi]$. Viewed as a subset of $\mathcal{H}^n(S)$, the set $U_K$ is the connected component of $\mathcal{H}^n(S)$ containing $[\varphi]$. The inclusion $\mathcal{H}^n_K(S) \to \mathcal{H}^n(S)$ restricts to a homeomorphism of $U_k$ onto its image.
\end{lemma}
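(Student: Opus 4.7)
The plan is to reduce everything to the identity-component case established in Lemma \ref{lemma-appendix-leverage}. From that lemma I will use: $\mathcal{G}^n_K(S)$ is the identity component of the topological group $\mathcal{H}^n_K(S)$; the inclusion-induced map $\iota_0: \mathcal{G}^n_K(S) \to \widetilde{\mathcal{G}}^n(S)$ is an isomorphism of topological groups; and $\widetilde{\mathcal{G}}^n(S)$ is the identity component of $\mathcal{H}^n(S)$ and open there.

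The main technical step is showing that left translation by any $[\varphi_0] \in \mathcal{H}^n(S)$ is a homeomorphism. To establish this, I would first show that left translation $L_{\varphi_0}$ is a homeomorphism of $\text{Symp}_T^0(T^*S)$. Given $\varphi_0 \in \text{Symp}_{T,K_0}^0(T^*S)$ and an admissible $K'$, picking a sufficiently large admissible $\widetilde{K}$ containing both $K_0$ and $(f^{\varphi_0})^\#(K')$ makes $\text{Symp}_{T,K'}^0(T^*S)$ and its image $\varphi_0 \cdot \text{Symp}_{T,K'}^0(T^*S)$ both subgroups of the topological group $\text{Symp}_{T,\widetilde{K}}^0(T^*S) = \text{Symp}_{\widetilde{K}}^0(T^*S) \rtimes \mathcal{D}^+(S)$, in which left multiplication by $\varphi_0$ is continuous. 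The universal property of the direct limit topology on $\text{Symp}_c^0(T^*S)$ then yields continuity of $L_{\varphi_0}$ on $\text{Symp}_T^0(T^*S)$; the same argument applied to $\varphi_0^{-1}$ makes $L_{\varphi_0}$ a homeomorphism. Passing to $\mathcal{H}^n(S)$: since $L_{[\varphi_0]} \circ q_T = q_T \circ L_{\varphi_0}$ and $q_T$ is a quotient map, $L_{[\varphi_0]}$ is continuous on $\mathcal{H}^n(S)$, and its inverse $L_{[\varphi_0^{-1}]}$ is too, making it a homeomorphism.

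With left translations as homeomorphisms, $\mathcal{H}^n_K(S)$ being a topological group with open identity component $\mathcal{G}^n_K(S)$ yields the coset characterization $U_K = [\varphi] \cdot \mathcal{G}^n_K(S)$. Applying the homomorphism $\iota$ and using $\iota_0$ gives $\iota(U_K) = [\varphi] \cdot \widetilde{\mathcal{G}}^n(S)$ in $\mathcal{H}^n(S)$. Since $L_{[\varphi]}$ is a homeomorphism of $\mathcal{H}^n(S)$ sending $\text{Id}$ to $[\varphi]$, it sends the identity component $\widetilde{\mathcal{G}}^n(S)$ onto the connected component of $\mathcal{H}^n(S)$ containing $[\varphi]$; thus $\iota(U_K) = [\varphi] \cdot \widetilde{\mathcal{G}}^n(S)$ is exactly that component.

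Finally, $\iota|_{U_K}: U_K \to \iota(U_K)$ fits in the commutative square whose top arrow is $\iota_0$, bottom arrow is $\iota|_{U_K}$, and vertical arrows are left translation by $[\varphi]$ in $\mathcal{H}^n_K(S)$ and in $\mathcal{H}^n(S)$ respectively. The top arrow is a homeomorphism by Lemma \ref{lemma-appendix-leverage}, and both vertical arrows are homeomorphisms by the preceding paragraphs, so $\iota|_{U_K}$ is too. The main obstacle is establishing continuity of left translation in $\mathcal{H}^n(S)$, which is delicate because $\text{Symp}_c^0(T^*S)$ need not be a topological group in its direct limit topology; this is navigated by observing that left translation by a fixed element takes each $\text{Symp}_{T,K'}^0(T^*S)$ into the larger topological group $\text{Symp}_{T,\widetilde{K}}^0(T^*S)$, where it becomes continuous.
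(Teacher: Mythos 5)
Your argument is correct in substance but follows a genuinely different route from the paper's. The paper never establishes that translations are homeomorphisms of all of $\mathcal{H}^n(S)$; instead it works entirely inside the topological groups $\mathcal{H}^n_{K'}(S)$ for larger and larger admissible $K'$, identifying $U_K$ with the component $[\varphi]\mathcal{G}^n_{K'}(S)$ of each $\mathcal{H}^n_{K'}(S)$ and then checking by hand that $q_T^{-1}(U_K)\cap \text{Symp}_{T,K'}^0(T^*S)$ and $q_T^{-1}(\mathcal{H}^n(S)-U_K)\cap\text{Symp}_{T,K'}^0(T^*S)$ are open for every $K'$ (and running the analogous two-sided check for the homeomorphism claim). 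Your version front-loads a single global fact --- that $L_{[\varphi_0]}$ is a homeomorphism of $\mathcal{H}^n(S)$, obtained by descending $L_{\varphi_0}$ through the quotient $q_T$ --- after which everything reduces cleanly to the identity-component statements of Lemma \ref{lemma-appendix-leverage}. What this buys is a reusable lemma and a shorter derivation of both conclusions (openness of $\iota(U_K)$ comes for free from openness of $\widetilde{\mathcal{G}}^n(S)$); what it costs is the extra care needed to prove continuity of a translation on a direct limit, which the paper sidesteps by never leaving the groups $\mathcal{H}^n_{K'}(S)$.

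One point you should tighten: the claim that a single admissible $\widetilde{K}$ works for all of $\varphi_0\cdot\text{Symp}_{T,K'}^0(T^*S)$ is sensitive to the order of composition. If the group law is $\varphi_0\cdot\psi=\psi\circ\varphi_0$, the locus where the product differs from a lift is contained in $K_0\cup\varphi_0^{-1}(K')\subset K_0\cup ((f^{\varphi_0})^{-1})^{\#}(K')$, a fixed compact set, and your argument goes through (note the inverse, which your formula omits). With the opposite convention the relevant set is $K'\cup\psi^{-1}(K_0)$, which involves $((f^{\psi})^{-1})^{\#}(K_0)$ and is \emph{not} uniformly bounded as $\psi$ ranges over $\text{Symp}_{T,K'}^0(T^*S)$, so no single $\widetilde{K}$ suffices. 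This is fixable either by running the argument with the translation on the side that enjoys the uniform support bound (left and right cosets of the normal identity component agree, so the rest of your proof is unaffected), or by writing the problematic translation as $\op{inv}\circ R_{\varphi_0^{-1}}\circ \op{inv}$ and noting that inversion preserves each $\text{Symp}_{T,K'}^0(T^*S)$ and is therefore continuous on the limit. Also, $\varphi_0\cdot\text{Symp}_{T,K'}^0(T^*S)$ is a coset rather than a subgroup, though nothing in your argument depends on it being one.
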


\begin{proof}
    Let $\varphi$ and $K$ be as given. Let $U_K$ be the component of $\mathcal{H}^n_K(S)$ containing $[\varphi]$ and consider $U_K$ as a subgroup of $\mathcal{H}^n(S)$. We claim that $U_k$ is an open connected component of $\mathcal{H}^n(S)$. The criterion used is the same as in Lemma \ref{lemma-appendix-leverage} and Lemma \ref{hams-open}. We begin by showing that $U_K \subset \mathcal{H}^n(S)$ is open.
    
    As $\mathcal{H}^n_K(S)$ is a topological group, Lemma \ref{lemma-appendix-leverage} shows that $U_K$ is the open connected component of $\mathcal{H}^n_K(S)$ given by $[\varphi] \mathcal{G}^n_K(S)$. For any admissible $K' \supset K$, we have that $[\varphi]\mathcal{G}_{K'}^n(S)$ is the connected component of $\mathcal{H}^n_{K'}(S)$ containing $U_K$. Using Lemma \ref{lemma-appendix-leverage} to identify $\mathcal{G}^n_{K'}$ and the inclusion of $\mathcal{G}_K^n(S)$ into $\mathcal{H}^n_{K'}(S)$, we see that the image of $U_K$ in $\mathcal{H}^n_{K'}(S)$ under inclusion is open. In fact, it is a connected component of $\mathcal{H}^n_{K'}(S)$. So we see that $q_T^{-1}(U_K) \cap \text{Symp}_{T,K'}^0(T^*S) = q_{K'}^{-1}(U_K)$ is open. As $K' \supset K$ was arbitrary, $q^{-1}_T(U_K)$ is open in $\text{Symp}_{T}^0(T^*S)$ and so $U_K$ is open in $\mathcal{H}^n(S)$.
    
    We now claim that $\mathcal{H}^n(S) - U_K$ is open in $\mathcal{H}^n(S)$. To see this, let $[\psi] \in \mathcal{H}^n(S) - U_K$ be represented by $\psi \in \text{Symp}_{T,L}^0(T^*S)$ for some admissible $L$. Let $L'$ be admissible and contain $K$ and $L$, and let $V$ be the connected component of $\mathcal{H}^n_{L}(S)$ containing $[\psi]$. Since $[\psi]$ is not in the connected component $U_K$ of $\mathcal{H}^n_L(S)$, we must have $V \cap U_K = \emptyset$. Then by the argument used to show that $U_K$ is open, $V$ is open in $\mathcal{H}^n(S)$. As $V$ contains $[\psi]$ and has empty intersection with $U_K$, we see that $\mathcal{H}^n(S) - U_K$ is open. We conclude that $U_K$ is a connected component of $\mathcal{H}^n(S)$.
    
    We finish the proof by showing that the inclusion of $U_K$ into $\mathcal{H}^n(S)$ is a homeomorphism with its image. First, let $U \subset U_K$ be open in $\mathcal{H}^n(S)$. Then $q_{T}^{-1}(U)$ is open in $\text{Symp}_{T}^0(T^*S)$, so $q_{K}^{-1}(U) = \text{Symp}_{T,K}^0(T^*S) \cap q_{T}^{-1}(U)$ is open and we conclude that $U$ is open in $\mathcal{H}^n_K(S)$.
    
    On the other hand, let $U \subset U_K$ be open in $\mathcal{H}^n_K(S)$. Then for any admissible $K' \supset K$, we have that $U = [\varphi]([\varphi]^{-1}U)$, where $[\varphi]: \mathcal{H}^n_K(S) \to \mathcal{H}^n_K(S)$ is a homeomorphism and $[\varphi]^{-1}U \subset \mathcal{G}^n_{K'}(S)$. Now, $U$ is open in $\mathcal{H}^n_{K'}(S)$. This is because $[\varphi]^{-1}(U)$ is open in $\mathcal{G}^n_K(S)$ by hypothesis and the fact that $\mathcal{H}^n_K(S)$ is a topological group, and the inclusion $\mathcal{G}^n_K(S) \to \mathcal{G}^n_{K'}(S)$ is a homeomorphism by Lemma \ref{lemma-appendix-leverage}. So we have $q_T^{-1}(U) \cap \text{Symp}_{T,K'}^0(T^*S) = q_{K'}^{-1}(U)$ is open. We conclude that $U$ is open in $\mathcal{H}^n(S)$, as desired.
\end{proof}

We are at last ready to prove Theorem \ref{things-not-totally-awful}.

\begin{proof}[Proof of \ref{things-not-totally-awful}]
We begin by showing statement (a): that $\mathcal{H}^n(S)$ is a topological group. Let $[\varphi], [\psi] \in \mathcal{H}^n(S)$ have representatives $\psi$ and $\varphi$. Let $K$ be an admissible submanifold so that $\psi, \varphi \in \text{Symp}_{T,K}^0(T^*S)$. Let $U_K, V_K,$ and $W_K$ be the connected components of $\mathcal{H}^n_K(S)$ containing $[\varphi], [\psi]$, and $[\varphi][\psi]$ respectively.

As $\mathcal{H}^n_K(S)$ is a topological group, the restriction of the product to $U_K \times V_K \to W_K$ is continuous. Lemma \ref{components-lemma} shows that the inclusions of $U_K, V_K$ and $W_K$ into $\mathcal{H}^n(S)$ are homeomorphisms, so that the product on the open set $U_K \times V_K \subset \mathcal{H}^n(S)\times \mathcal{H}^n(S)$ is continuous. We conclude from the local characterization of continuity that the product $\mathcal{H}^n(S) \times \mathcal{H}^n(S) \to \mathcal{H}^n(S)$ is continuous. The proof of continuity of inversion is similar.

Lemma \ref{lemma-appendix-leverage} shows statement (b). Statement (c) follows immediately from (a) and (b). 
\end{proof}

\bibliographystyle{plain}
\bibliography{refs}

\end{document}